\DeclareMathOperator{\dom}{dom}
\DeclareMathOperator*{\esssup}{ess\,sup}
\DeclareMathOperator{\Var}{Var}
\newcommand{\dd}{\mathrm{d}}
\newcommand{\ep}{\varepsilon}
\newcommand{\amd}{\mspace{10mu} \text{and} \mspace{10mu}}
\newcommand{\argdot}{\mspace{1.5mu} \cdot \mspace{1.5mu}}
\newcommand{\abs}[1]{{\left\lvert #1 \right\rvert}}
\newcommand{\norm}[1]{{\left\lVert #1 \right\rVert}}
\newcommand{\dv}[1]{\frac{\dd}{\dd #1}}
\newcommand{\Set}[2]{{\left\{ \mspace{1mu} #1 : #2 \mspace{1mu} \right\}}}
\theoremstyle{plain}
\newtheorem{theorem}{Theorem}[section]
\newtheorem{lemma}[theorem]{Lemma}
\newtheorem{corollary}[theorem]{Corollary}
\theoremstyle{definition}
\newtheorem{definition}[theorem]{Definition}
\newtheorem{example}[theorem]{Example}
\newtheorem{notation}{Notation}
\theoremstyle{remark}
\newtheorem{remark}[theorem]{Remark}
\numberwithin{equation}{section}
\begin{document}

\title{Mild solutions, variation of constants formula, and linearized stability for delay differential equations}
\author{Junya Nishiguchi\thanks{Mathematical Science Group, Advanced Institute for Materials Research (AIMR), Tohoku University, Katahira 2-1-1, Aoba-ku, Sendai, 980-8577, Japan}
\footnote{E-mail: junya.nishiguchi.b1@tohoku.ac.jp}}
\date{}

\maketitle

\begin{abstract}
The method and the formula of variation of constants for ordinary differential equations (ODEs) is a fundamental tool to analyze the dynamics of an ODE near an equilibrium.
It is natural to expect that such a formula works for delay differential equations (DDEs), however, it is well-known that there is a conceptual difficulty in the formula for DDEs.
Here we discuss the variation of constants formula for DDEs by introducing the notion of a \textit{mild solution}, which is a solution under an initial condition having a discontinuous history function.
Then the \textit{principal fundamental matrix solution} is defined as a matrix-valued mild solution, and we obtain the variation of constants formula with this function.
This is also obtained in the framework of a Volterra convolution integral equation, but the treatment here gives an understanding in its own right.
We also apply the formula to show the principle of linearized stability and the Poincar\'{e}-Lyapunov theorem for DDEs, where we do not need to assume the uniqueness of a solution.

\begin{flushleft}
\textbf{2020 Mathematics Subject Classification}.
Primary 34K05, 34K06, 34K20;
Secondary 34K08

\end{flushleft}

\begin{flushleft}
\textbf{Keywords}.
Delay differential equations; discontinuous history functions; fundamental matrix solution; variation of constants formula; principle of linearized stability; Poincar\'{e}-Lyapunov theorem
\end{flushleft}

\end{abstract}

\tableofcontents

\section{Introduction}

Studies concerning with the variation of constants formula for delay differential equations (DDEs) have a long history of over fifty years.
Nevertheless, the reason why we try to discuss the variation of constants formula in this paper is that such a consideration gives rise to a conceptual difficulty that is peculiar to the theory of DDEs.
Specifically, it is usual to discuss DDEs within the scope of continuous history functions, but a class of discontinuous history functions emerges as initial conditions when we try to obtain the variation of constants formula.
In connection with this, a matrix-valued solution having a certain discontinuous matrix-valued function as the initial condition is called the \textit{fundamental matrix solution}.
However, it is quite difficult to understand why the solution is called the ``fundamental matrix solution'' when compared with the theory of ordinary differential equations (ODEs).

This conceptual difficulty has arisen in the theoretical development about the variation of constants formula in the texts~\cite{Hale_1971_Springer} and \cite{Hale_1977} by Jack Hale.
In the revised edition~\cite{Hale--VerduynLunel_1993}, the theoretical development is rewritten based on the consideration in \cite{VerduynLunel_1989}.
There also exist studies to understand the conceptual difficulty of the variation of constants formula for DDEs within the framework of Functional Analysis (e.g., see \cite{Clement--Diekmann--Gyllenberg--Heijmans--Thieme_1987_dual_semigroups_I}, \cite{Diekmann_1987}, and \cite{Diekmann--Gyllenberg_2012}).
In this framework, it is essential that the Banach space of continuous functions on closed and bounded interval endowed with the supremum norm is not reflexive, and the theory is constructed by using the so called ``sun-star calculus''.
See \cite{Diekmann--vanGils--Lunel--Walther_1995} for the details.
See also \cite{Walther_2014_topics} for a survey article.

The idea of discussing the variation of constants formula for DDEs in this paper is to define a solution under an initial condition having a discontinuous history function as a \textit{mild solution}.
This concept comes from the analogy of the notion of mild solutions of abstract linear evolution equations, and its terminology also originates from this.
It can be said that the notion of mild solutions is to elevate the technique to exchange the order of integration to a concept.

The dependence of the derivative $\Dot{x}(t)$ of an unknown function $x$ on the past value of $x$ is abstracted to the concept of \textit{retarded functional differential equations} (RFDEs).
In this paper, we consider an autonomous linear RFDE
\begin{equation}\label{eq: linear RFDE}
	\Dot{x}(t) = Lx_t
	\mspace{25mu}
	(t \ge 0)
\end{equation}
for a continuous linear map $L \colon C([-r, 0], \mathbb{K}^n) \to \mathbb{K}^n$.
Here $\mathbb{K} = \mathbb{R}$ or $\mathbb{C}$, $n \ge 1$ is an integer, and $r > 0$ is a constant, which are fixed throughout this paper.
The derivative of $x$ at $0$ is interpreted as the right-hand derivative.
We are using the following notations:
\begin{itemize}
\item $C([-r, 0], \mathbb{K}^n)$ denotes the Banach space of all continuous functions from $[-r, 0]$ to $\mathbb{K}^n$ endowed with the supremum norm $\norm{\argdot}$.
Here a norm $\abs{\argdot}$ on $\mathbb{K}^n$, which is not necessarily the Euclidean norm, is fixed throughout this paper.
\item For each $t \ge 0$, $x_t \colon [-r, 0] \to \mathbb{K}^n$ is a continuous function defined by
\begin{equation*}
	x_t(\theta) \coloneqq x(t + \theta)
	\mspace{25mu}
	(\theta \in [-r, 0])
\end{equation*}
when $x \colon [-r, \infty) \to \mathbb{K}^n$ is continuous.
See also Definition~\ref{dfn: history segment}.
\end{itemize}
In addition to the linear RFDE~\eqref{eq: linear RFDE}, we also consider a non-homogeneous linear RFDE
\begin{equation}\label{eq: non-homogeneous linear RFDE}
	\dot{x}(t) = Lx_t + g(t)
	\mspace{25mu}
	(\text{a.e.\ $t \ge 0$})
\end{equation}
for some $g \in \mathcal{L}^1_\mathrm{loc}([0, \infty), \mathbb{K}^n)$.
Here $\mathcal{L}^1_\mathrm{loc}([0, \infty), \mathbb{K}^n)$ denotes the linear space of all locally Lebesgue integrable functions from $[0, \infty)$ to $\mathbb{K}^n$ defined almost everywhere.
See also the notations given below.
We refer the reader to \cite{Stein--Shakarchi_2005} and \cite{Rudin_1987} as references of the theory of Lebesgue integration for scalar-valued functions.

To study these differential equations, the following expression of $L$ by a \textit{Riemann-Stieltjes integral}
\begin{equation}\label{eq: L as RS integral}
	L\psi
	= \int_{-r}^0 \mathrm{d}\eta(\theta) \mspace{2mu} \psi(\theta)
\end{equation}
for $\psi \in C([-r, 0], \mathbb{K}^n)$ is useful.
Here $\eta \colon [-r, 0] \to M_n(\mathbb{K})$ is an $n \times n$ matrix-valued function of bounded variation.
The above representability is ensured by a corollary of the Riesz representation theorem (see Corollary~\ref{cor: a cor of Riesz rep thm}).
It is a useful convention that the domain of definition of $\eta$ is extended to $(-\infty, 0]$ by letting
\begin{equation*}
	\eta(\theta) \coloneqq \eta(-r)
\end{equation*}
for $\theta \in (-\infty, -r]$.
See Appendix~\ref{sec: RS integrals wrt matrix-valued functions} for the Riemann-Stieltjes integrals with respect to matrix-valued functions.
For the use of Riemann-Stieltjes integrals in the context of RFDEs, see \cite[Chapters 6 and 7]{Hale_1977}, \cite[Chapter 2]{VerduynLunel_1989}, \cite[Chapter 4]{Hino--Murakami--Naito_1991}, \cite[Chapters 6 and 7]{Hale--VerduynLunel_1993}, and \cite[Chapter I]{Diekmann--vanGils--Lunel--Walther_1995}, for example.

\vspace{0.5\baselineskip}
This paper is organized as follows:

In Section~\ref{sec: mild sol and fundamental matrix sol}, we introduce the notion of a history segment $x_t$ for a discontinuous function $x \colon [-r, \infty) \supset \dom(x) \to \mathbb{K}^n$.
By using this, we also introduce the notion of a mild solution to the linear RFDE~\eqref{eq: linear RFDE} under an initial condition
\begin{equation}\label{eq: initial condition}
	x_0 = \phi \in \mathcal{M}^1([-r, 0], \mathbb{K}^n).
\end{equation}
Here $\mathcal{M}^1([-r, 0], \mathbb{K}^n)$ consists of elements of $\mathcal{L}^1([-r, 0], \mathbb{K}^n)$ that are defined at $0$.
Roughly speaking, a function $x \colon [-r, \infty) \supset \dom(x) \to \mathbb{K}^n$ is said to be a mild solution of \eqref{eq: linear RFDE} under the initial condition~\eqref{eq: initial condition} if it satisfies
\begin{equation*}
	x(t)
	= \phi(0) + L\int_0^t x_s \mspace{2mu} \mathrm{d}s
	\mspace{25mu}
	(t \ge 0).
\end{equation*}
Here $\int_0^t x_s \mspace{2mu} \mathrm{d}s \in C([-r, 0], \mathbb{K}^n)$ is defined by
\begin{equation*}
	\biggl( \int_0^t x_s \mspace{2mu} \mathrm{d}s \biggr)(\theta)
	\coloneqq \int_0^t x(s + \theta) \mspace{2mu} \mathrm{d}s
	\mspace{25mu}
	(\theta \in [-r, 0]).
\end{equation*}
See Definitions~\ref{dfn: mild sol} and \ref{dfn: int_0^t x_s ds} for the details.
After proving the existence and uniqueness of a mild solution of the linear RFDE~\eqref{eq: linear RFDE} under the initial condition~\eqref{eq: initial condition}, we define the \textit{principal fundamental matrix solution} of \eqref{eq: linear RFDE} as a matrix-valued mild solution $X^L \colon [-r, \infty) \to M_n(\mathbb{K})$ under the initial condition $X^L_0 = \Hat{I}$.
Here $\Hat{I} \colon [-r, 0] \to M_n(\mathbb{K})$ is a discontinuous function defined by
\begin{equation}\label{eq: Hat{I}}
	\Hat{I}(\theta) \coloneqq
	\begin{cases}
		O & (\theta \in [-r, 0)), \\
		I & (\theta = 0).
	\end{cases}
\end{equation}

In Section~\ref{sec: DE satisfied by X^L}, we derive a differential equation
\begin{equation*}
	\Dot{X}^L(t)
	= \int_{-t}^0 \mathrm{d}\eta(\theta) \mspace{2mu} X^L(t + \theta)
\end{equation*}
satisfied by the principal fundamental matrix solution $X^L$ of \eqref{eq: linear RFDE}.
In the derivation, it is useful to use the notions of \textit{Volterra operator} and \textit{Riemann-Stieltjes convolution}.
See Subsection~\ref{subsec: definitions of Volterra op and RS convolution} for the definitions and Subsection~\ref{subsec: properties of Volterra op and RS convolution} for the fundamental properties.
The above differential equation is the key to obtain a variation of constants formula.

In Section~\ref{sec: non-homogeneous liner RFDEs}, we consider the non-homogeneous linear RFDE~\eqref{eq: non-homogeneous linear RFDE}.
To study a mild solution of \eqref{eq: non-homogeneous linear RFDE} under the initial condition~\eqref{eq: initial condition}, we also consider an integral equation
\begin{equation}\label{eq: integral eq with G, t_0 = 0}
	x(t)
	= \phi(0) + L\int_0^t x_s \mspace{2mu} \mathrm{d}s + G(t)
	\mspace{25mu}
	(t \ge 0)
\end{equation}
for a continuous function $G \colon [0, \infty) \to \mathbb{K}^n$ with $G(0) = 0$.
We show that the above integral equation has a unique solution $x^L(\argdot; \phi, G)$ under the initial condition~\eqref{eq: initial condition}.

In Section~\ref{sec: convolution and Volterra op}, we consider a non-homogeneous linear RFDE
\begin{equation}\label{eq: non-homogeneous linear RFDE, continuous forcing term}
	\Dot{x}(t) = Lx_t + f(t)
	\mspace{25mu}
	(t \ge 0)
\end{equation}
for a continuous function $f \colon [0, \infty) \to \mathbb{K}^n$ to motivate the use of the convolution for locally Riemann integrable functions.
We show that the function $x(\argdot; f) \colon [-r, \infty) \to \mathbb{K}^n$ defined by $x(\argdot; f)_0 = 0$ and
\begin{equation}\label{eq: X^L * f}
	x(t; f) \coloneqq \int_0^t X^L(t - u)f(u) \mspace{2mu} \mathrm{d}u
\end{equation}
for $t \ge 0$ is a solution to Eq.~\eqref{eq: non-homogeneous linear RFDE, continuous forcing term} after developing the results of convolution for locally Riemann integrable functions.
See Subsection~\ref{subsec: convolution and RS convolution} for the developments.

In Section~\ref{sec: VOC formula}, we study the non-homogeneous linear RFDE~\eqref{eq: non-homogeneous linear RFDE} under the initial condition~\eqref{eq: initial condition} and find a variation of constants formula expressed by the principal fundamental matrix solution $X^L$.
For this purpose, we indeed consider the integral equation~\eqref{eq: integral eq with G, t_0 = 0} for some continuous function $G \colon [0, \infty) \to \mathbb{K}^n$ with $G(0) = 0$.
One of the main results of this paper is that the solution $x^L(\argdot; \phi, G)$ of \eqref{eq: integral eq with G, t_0 = 0} under the initial condition~\eqref{eq: initial condition} satisfies
\begin{align}\label{eq: VOC formula, integral eq}
	&x^L(t; \phi, G) \notag \\
	&= X^L(t)\phi(0) + \bigl[ G^L(t; \phi) + G(t) \bigr] + \int_0^t \Dot{X}^L(t - u) \bigl[ G^L(u; \phi) + G(u) \bigr] \mspace{2mu} \mathrm{d}u
\end{align}
for all $t \ge 0$.
Here $\Dot{X}^L(t)$ denotes the derivative of the locally absolutely continuous function $X^L|_{[0, \infty)}$ at $t \ge 0$ (when it exists), and $G^L(\argdot; \phi) \colon [0, \infty) \to \mathbb{K}^n$ is a function determined by the initial history function $\phi$.
See Subsection~\ref{subsec: derivation of a general forcing term} for the detail of the derivation of the function $G^L(\argdot; \phi)$.
We note that before we obtain the variation of constants formula~\eqref{eq: VOC formula, integral eq}, we show that
\begin{equation}\label{eq: VOC formula, phi = 0}
	x^L(t; 0, G) = G(t) + \int_0^t \dot{X}^L(t - u)G(u) \mspace{2mu} \mathrm{d}u
\end{equation}
holds for all $t \ge 0$.
Then the derivation of \eqref{eq: VOC formula, integral eq} is performed by defining a function $z^L(\argdot; \phi) \colon [-r, \infty) \to \mathbb{K}^n$ by $z^L(\argdot; \phi)_0 = 0$ and
\begin{equation}\label{eq: z^L(argdot; phi)}
	z^L(t; \phi)
	\coloneqq x^L(t; \phi, 0) - X^L(t)\phi(0)
\end{equation}
for $t \ge 0$ and showing that $z \coloneqq z^L(\argdot; \phi)$ satisfies an integral equation
\begin{equation}\label{eq: integral eq with G^L(argdot; phi)}
	z(t)
	= L\int_0^t z_s \mspace{2mu} \mathrm{d}s + G^L(t; \phi)
	\mspace{25mu}
	(t \ge 0),
\end{equation}
because \eqref{eq: integral eq with G^L(argdot; phi)} shows that
\begin{equation*}
	z^L(\argdot; \phi) = x^L(\argdot; 0, G^L(\argdot; \phi))
\end{equation*}
holds.
Here we need to know the regularity of the function $G^L(\argdot; \phi)$, which is discussed in Subsection~\ref{subsec: regularity of general forcing term}.

In Section~\ref{sec: exponential stability}, we discuss the exponential stability of the principal fundamental matrix solution $X^L$ of the linear RFDE~\eqref{eq: linear RFDE} and the uniform exponential stability of the $C_0$-semigroup $\left( T^L(t) \right)_{t \ge 0}$ on the Banach space $C([-r, 0], \mathbb{K}^n)$ defined by
\begin{equation}\label{eq: C_0-semigroup}
	T^L(t)\phi \coloneqq x^L(\argdot; \phi, 0)_t
\end{equation}
for $(t, \phi) \in [0, \infty) \times C([-r, 0], \mathbb{K}^n)$.
We show that $X^L$ is $\alpha$-exponentially stable if and only if $\left( T^L(t) \right)_{t \ge 0}$ is uniformly $\alpha$-exponentially stable.
See Theorems~\ref{thm: exponential stability of T^L} and \ref{thm: exponential stability of X^L} for the details.

In Section~\ref{sec: linearized stability}, we apply the obtained variation of constants formulas to a proof of the stability part of the principle of linearized stability and Poincar\'{e}-Lyapunov theorem for RFDEs.
This is indeed an appropriate modification of the proof for ODEs.
However, the given proof makes clear the importance of the principal fundamental matrix solution.
In the statement, we do not need to assume the uniqueness of a solution.
Therefore, this should be compared with the proof relying on the nonlinear semigroup theory.

We have five appendices.
In Appendix~\ref{sec: RS integrals wrt matrix-valued functions}, we collect results on Riemann-Stieltjes integrals for matrix-valued functions that are needed for this paper.
In Appendix~\ref{sec: Riesz rep thm}, we give a proof of the representability of $L$ by a Riemann-Stieltjes integral~\eqref{eq: L as RS integral} because there does not seem to be any proof of the representability in the literature.
In Appendix~\ref{sec: variants of Gronwall's ineq}, we discuss Gronwall's inequality and its variants used in the context of RFDEs.
In Appendix~\ref{sec: lemmas on fixed pt argument}, we give lemmas that are used in the fixed point argument in this paper.
In Appendix~\ref{sec: convolution continued}, we continue to discuss the convolution.
The contents of this appendix will not be used in this paper, but it will be useful to share the proofs of results on the convolution for matrix-valued locally Lebesgue integrable functions in the literature of RFDEs.

\subsection*{Notations}

Throughout this paper, the following notations will be used.
\begin{itemize}
\item Let $E = (E, \norm{\argdot})$ be a Banach space.
For each subset $I \subset \mathbb{R}$, let $C(I, E)$ denote the linear space of all continuous functions from $I$ to $E$.
When the subset $I$ is a closed and bounded interval, the linear space $C(I, E)$ is considered as the Banach space of continuous functions endowed with the supremum norm $\norm{\argdot}$ given by
\begin{equation*}
	\norm{f} \coloneqq \sup_{x \in I} \norm{f(x)}
\end{equation*}
for $f \in C(I, E)$.
\item For each pair of Banach spaces $E = (E, \norm{\argdot})$ and $F = (F, \norm{\argdot})$, let $\mathcal{B}(E, F)$ denote the linear space of all continuous linear maps (i.e., all bounded linear operators) from $E$ to $F$.
For each $T \in \mathcal{B}(E, F)$, its operator norm is denoted by $\norm{T}$.
Then $\mathcal{B}(E, F)$ is considered as the Banach space of continuous linear maps endowed with the operator norm.
When $F = E$, $\mathcal{B}(E, F)$ is also denoted by $\mathcal{B}(E)$.
\item An $n \times n$ matrix $A \in M_n(\mathbb{K})$ is considered as a continuous linear map on the Banach space $\mathbb{K}^n$ endowed with the given norm $\abs{\argdot}$.
The operator norm of $A$ is denoted by $\abs{A}$.
The linear space $M_n(\mathbb{K})$ of all $n \times n$ matrices is considered as the Banach space of matrices endowed with the operator norm.
\item Let $d \ge 1$ be an integer, $X$ be a measurable set of $\mathbb{R}^d$, and $Y = \mathbb{K}^n$ or $M_n(\mathbb{K})$.
	\begin{itemize}[leftmargin=*]
	\item We say that a function $f \colon X \supset \dom(f) \to Y$ is a \textit{Lebesgue integrable function defined almost everywhere} if (i) $\dom(f)$ is measurable, (ii) $X \setminus \dom(f)$ has measure $0$, and (iii) $f|_{\dom(f)} \colon \dom(f) \to Y$ is Lebesgue integrable, i.e., it is measurable and
	\begin{equation*}
		\norm{f}_1
		\coloneqq \int_X \abs{f(x)} \mspace{2mu} \mathrm{d}x
		\coloneqq \int_{\dom(f)} \abs{f(x)} \mspace{2mu} \mathrm{d}x
	\end{equation*}
	is finite.
	We note that the function $\dom(f) \ni x \mapsto \abs{f(x)} \in [0, \infty)$ is also measurable by the continuity of the norm $\abs{\argdot}$, and the above integral is the unsigned Lebesgue integral.
	\item Let $\mathcal{L}^1(X, Y)$ be the set of all Lebesgue integrable functions from $X$ to $Y$ defined almost everywhere.
	For $f \in \mathcal{L}^1(X, Y)$, let
	\begin{equation*}
		\int_X f(x) \mspace{2mu} \mathrm{d}x
		\coloneqq \int_{\dom(f)} f(x) \mspace{2mu} \mathrm{d}x.
	\end{equation*}
	Then one can prove that
	\begin{equation*}
		\abs{\int_X f(x) \mspace{2mu} \mathrm{d}x}
		\le \int_{\dom(f)} \abs{f(x)} \mspace{2mu} \mathrm{d}x
		= \norm{f}_1
	\end{equation*}
	holds.
	\item For $f, g \in \mathcal{L}^1(X, Y)$, the addition $f + g \colon X \supset \dom(f) \cap \dom(g) \to Y$ is defined by
	\begin{equation*}
		(f + g)(x) \coloneqq f(x) + g(x)
	\end{equation*}
	for $x \in \dom(f) \cap \dom(g)$.
	Then $f + g \in \mathcal{L}^1(X, Y)$.
	The scalar multiplication $\alpha f$ for $\alpha \in \mathbb{K}$ is also defined, and it holds that $\alpha f \in \mathcal{L}^1(X, Y)$.
	\end{itemize}
\item Let $X$ be an interval of $\mathbb{R}$ and $Y = \mathbb{K}^n$ or $M_n(\mathbb{K})$.
Let $\mathcal{L}^1_\mathrm{loc}(X, Y)$ be the set of all functions $f \colon X \supset \dom(f) \to Y$ satisfying (i) $\dom(f)$ is measurable, (ii) $X \setminus \dom(f)$ has measure $0$, and (iii) for each closed and bounded interval $I$ contained in $X$, the restriction $f|_I \colon I \supset \dom(f) \cap I \to Y$ belongs to $\mathcal{L}^1(I, Y)$.
\end{itemize}

\section{Mild solutions and fundamental matrix solutions}\label{sec: mild sol and fundamental matrix sol}

\subsection{Definitions}

\subsubsection{History segments and memory space}

We first make clear the notion of history segments in our setting.

\begin{definition}\label{dfn: history segment}
Let $x \colon [-r, \infty) \supset \dom(x) \to \mathbb{K}^n$ be a function.
For each $t \ge 0$, we define a function $x_t \colon [-r, 0] \supset \dom(x_t) \to \mathbb{K}^n$ by
\begin{align*}
	\dom(x_t)
	&\coloneqq \Set{\theta \in [-r, 0]}{t + \theta \in \dom(x)}, \\
	x_t(\theta)
	&\coloneqq x(t + \theta)
	\mspace{25mu}
	(\theta \in \dom(x_t)).
\end{align*}
We call $x_t$ the \textit{history segment} of $x$ at $t$.
\end{definition}

We note that $\dom(x_t)$ is expressed by
\begin{equation*}
	\dom(x_t) = (\dom(x) - t) \cap [-r, 0],
\end{equation*}
where $\dom(x)$ is not necessarily equal to $[-r, \infty)$.

In this paper, we need discontinuous initial history functions.
For this purpose, we adopt the following space of history functions.

\begin{definition}[cf.\ \cite{Delfour--Mitter_1972_hereditary}]\label{dfn: memory space}
We define a linear subspace $\mathcal{M}^1([-r, 0], \mathbb{K}^n)$ of $\mathcal{L}^1([-r, 0], \mathbb{K}^n)$ by
\begin{equation*}
	\mathcal{M}^1([-r, 0], \mathbb{K}^n)
	\coloneqq \Set{\phi \in \mathcal{L}^1([-r, 0], \mathbb{K}^n)}{0 \in \dom(\phi)}
\end{equation*}
and call it the \textit{memory space} of $\mathcal{L}^1$-type.
We consider $\mathcal{M}^1([-r, 0], \mathbb{K}^n)$ as a seminormed space endowed with the seminorm $\norm{\argdot}_{\mathcal{M}^1} \colon \mathcal{M}^1([-r, 0], \mathbb{K}^n) \to [0, \infty)$ defined by
\begin{equation*}
	\norm{\phi}_{\mathcal{M}^1} \coloneqq \norm{\phi}_1 + \abs{\phi(0)}.
\end{equation*}
\end{definition}

\begin{remark}
Let $1 \le p < \infty$ and $E$ be a Banach space.
The memory space of $\mathcal{L}^1$-type should be compared with a Banach space $M^p([-r, 0], E)$ introduced by Delfour and Mitter~\cite{Delfour--Mitter_1972_hereditary}.
It is isomorphic to the product Banach space
\begin{equation*}
	L^p([-r, 0], E) \oplus E.
\end{equation*}
See also \cite{Bernier--Manitius_1978}, \cite{Delfour_1980}, and references therein for the use of the product space.
\end{remark}

\begin{definition}\label{dfn: continuous prolongation}
For each $\phi \in \mathcal{M}^1([-r, 0], \mathbb{K}^n)$, we will call a function $x \colon [-r, \infty) \supset \dom(x) \to \mathbb{K}^n$ a \textit{continuous prolongation} of $\phi$ if it satisfies the following properties: (i) $x_0 = \phi$, (ii) $[0, \infty) \subset \dom(x)$, and (iii) $x|_{[0, \infty)}$ is continuous.
\end{definition}

For a continuous prolongation $x$ of $\phi$,
\begin{equation*}
	\dom(x) = \dom(\phi) \cup [0, \infty)
\end{equation*}
holds.

\subsubsection{Mild solutions}

The following is the notion of a mild solution, whose introduction is a one of the contribution of this paper.
We use the expression of $L$ by the Riemann-Stieltjes integral~\eqref{eq: L as RS integral}
\begin{equation*}
	L\psi = \int_{-r}^0 \mathrm{d}\eta(\theta) \mspace{2mu} \psi(\theta)
\end{equation*}
for $\psi \in C([-r, 0], \mathbb{K}^n)$.

\begin{definition}[cf.\ \cite{Webb_1976}]\label{dfn: mild sol}
Let $\phi \in \mathcal{M}^1([-r, 0], \mathbb{K}^n)$ be given.
We say that a function $x \colon [-r, \infty) \supset \dom(x) \to \mathbb{K}^n$ is a \textit{mild solution} of the linear RFDE~\eqref{eq: linear RFDE} under the initial condition $x_0 = \phi$ if the following conditions are satisfied:
(i) $x$ is a continuous prolongation of $\phi$ and (ii) for all $t \ge 0$,
\begin{equation}\label{eq: mild sol}
	x(t)
	= \phi(0)
	+ \int_{-r}^0 \mathrm{d}\eta(\theta) \mspace{2mu}
		\biggl( \int_0^t x(s + \theta) \mspace{2mu} \mathrm{d}s \biggr)
\end{equation}
holds.
Here $\int_0^t x(s + \theta) \mspace{2mu} \mathrm{d}s$ is a Lebesgue integral.
\end{definition}

Since
\begin{equation*}
	\int_0^t x(s + \theta) \mspace{2mu} \mathrm{d}s
	= \int_{\theta}^{t + \theta} x(s) \mspace{2mu} \mathrm{d}s,
\end{equation*}
the integrand in Eq.~\eqref{eq: mild sol} is continuous with respect to $\theta \in [-r, 0]$.
Therefore, the integral in \eqref{eq: mild sol} is meaningful as a Riemann-Stieltjes integral.
Eq.~\eqref{eq: mild sol} is also expressed by
\begin{equation}\label{eq: mild sol'}
	x(t)
	= \phi(0) + \int_{-r}^0 \mathrm{d}\eta(\theta) \mspace{2mu}
		\biggl( \int_\theta^0 \phi(s) \mspace{2mu} \mathrm{d}s \biggr)
	+ \int_{-r}^0 \mathrm{d}\eta(\theta) \mspace{2mu}
		\biggl( \int_0^{t + \theta} x(s) \mspace{2mu} \mathrm{d}s \biggr),
\end{equation}
where the third term of the right-hand side may depend on $\phi$.

\begin{remark}
Eq.~\eqref{eq: mild sol} appeared at \cite[(5.19) in Corollary~5.13]{Webb_1976} after developing a nonlinear semigroup theory for some class of RFDEs.
Compared with this approach, the method of this paper is considered to be taking the notion of mild solutions as a starting point.
\end{remark}

\subsubsection{Notation \texorpdfstring{$\int_0^t x_s \mspace{2mu} \mathrm{d}s$}{int0t xs ds}}

For ease of notation, we introduce the following.

\begin{definition}\label{dfn: int_0^t x_s ds}
Let $x \in \mathcal{L}^1_\mathrm{loc}([-r, \infty), \mathbb{K}^n)$ be given.
For each $t \ge 0$, we define $\int_0^t x_s \mspace{2mu} \mathrm{d}s \in C([-r, 0], \mathbb{K}^n)$ by
\begin{equation*}
	{\biggl( \int_0^t x_s \mspace{2mu} \mathrm{d}s \biggr)}(\theta)
	\coloneqq \int_0^t x_s(\theta) \mspace{2mu} \mathrm{d}s
	= \int_\theta^{t + \theta} x(s) \mspace{2mu} \mathrm{d}s
\end{equation*}
for $\theta \in [-r, 0]$.
\end{definition}

We note that $\int_0^t x_s \mspace{2mu} \mathrm{d}s \in C([-r, 0], \mathbb{K}^n)$ introduced above is not an integral of a vector-valued function
\begin{equation*}
	[0, t] \ni s \mapsto x_s \in \mathcal{X}
\end{equation*}
for some function space $\mathcal{X}$.

\subsection{\texorpdfstring{$\int_0^t x_s \mspace{2mu} \mathrm{d}s$}{int0t xs ds} and its properties}

We have the following lemma.

\begin{lemma}\label{lem: continuity of int_0^t x_s ds}
If $x \in \mathcal{L}^1_\mathrm{loc}([-r, \infty), \mathbb{K}^n)$, then
\begin{equation}\label{eq: integrated history curve}
	[0, \infty) \ni t \mapsto \int_0^t x_s \mspace{2mu} \mathrm{d}s \in C([-r, 0], \mathbb{K}^n)
\end{equation}
is continuous.
\end{lemma}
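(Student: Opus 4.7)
The plan is to reduce the claim to the absolute continuity of the indefinite Lebesgue integral of $\abs{x}$. Fix $t_0 \ge 0$ and work on a bounded window $t \in [0, t_0 + 1]$, which is enough for sequential continuity at $t_0$. For any such $t$ and any $\theta \in [-r, 0]$, I first rewrite
\begin{equation*}
	\biggl( \int_0^t x_s \mspace{2mu} \dd s \biggr)(\theta)
	- \biggl( \int_0^{t_0} x_s \mspace{2mu} \dd s \biggr)(\theta)
	= \int_{t_0 + \theta}^{t + \theta} x(u) \mspace{2mu} \dd u
\end{equation*}
and bound its norm by $\abs{F(t + \theta) - F(t_0 + \theta)}$, where
\begin{equation*}
	F(u) \coloneqq \int_{-r}^{u} \abs{x(s)} \mspace{2mu} \dd s
	\mspace{25mu}
	(u \in [-r, t_0 + 1]).
\end{equation*}
Since $\abs{x}$ is Lebesgue integrable on $[-r, t_0 + 1]$, the function $F$ is continuous — indeed absolutely continuous — hence uniformly continuous on the compact interval $[-r, t_0 + 1]$.

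The uniform continuity of $F$ is exactly what makes the estimate uniform in $\theta$: given $\ep > 0$, choose $\delta \in (0, 1)$ such that $\abs{F(u) - F(v)} < \ep$ whenever $u, v \in [-r, t_0 + 1]$ with $\abs{u - v} < \delta$. For $t \in [0, t_0 + 1]$ with $\abs{t - t_0} < \delta$ and every $\theta \in [-r, 0]$, both $t + \theta$ and $t_0 + \theta$ lie in $[-r, t_0 + 1]$ and differ by $\abs{t - t_0} < \delta$, so the above difference is less than $\ep$ in norm. Taking the supremum over $\theta \in [-r, 0]$ yields
\begin{equation*}
	\norm{\int_0^t x_s \mspace{2mu} \dd s - \int_0^{t_0} x_s \mspace{2mu} \dd s} \le \ep,
\end{equation*}
which proves continuity at $t_0$.

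There is no real obstacle: the only point requiring a little care is ensuring the estimate is uniform in $\theta$, which is handled precisely by the uniform continuity of $F$ on a compact interval. The assumption $x \in \mathcal{L}^1_\mathrm{loc}([-r, \infty), \mathbb{K}^n)$ is used only through the local integrability of $\abs{x}$, so no measurability or integrability concerns beyond those already built into the definition of $\mathcal{L}^1_\mathrm{loc}$ arise.
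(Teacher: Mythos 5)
Your proof is correct and follows essentially the same route as the paper: both arguments reduce the claim to the uniform continuity of an indefinite integral on a compact interval, the paper using the primitive $y(t) = \int_{-r}^t x(s) \mspace{2mu} \dd s$ and you using the primitive $F$ of $\abs{x}$. The estimate uniform in $\theta$ via $\abs{F(t+\theta) - F(t_0+\theta)}$ is a sound, slightly more explicit rendering of the same idea.
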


\begin{proof}
We define a function $y \colon [-r, \infty) \to \mathbb{K}^n$ by
\begin{equation*}
	y(t) = \int_{-r}^t x(s) \mspace{2mu} \mathrm{d}s
\end{equation*}
for $t \ge -r$.
Then $y$ is continuous, and
\begin{equation*}
	y(t + \theta)
	= \int_\theta^{t + \theta} x(s) \mspace{2mu} \mathrm{d}s + \int_{-r}^\theta x(s) \mspace{2mu} \mathrm{d}s
\end{equation*}
holds for all $t \ge 0$ and all $\theta \in [-r, 0]$.
This shows that the function~\eqref{eq: integrated history curve} is continuous if and only if
\begin{equation*}
	[0, \infty) \ni t \mapsto y_t \in C([-r, 0], \mathbb{K}^n)
\end{equation*}
is continuous.
Since the continuity of this function is ensured by the uniform continuity of $y$ on any closed and bounded interval, the conclusion is obtained.
\end{proof}

When $x \in C([-r, \infty), \mathbb{K}^n)$, the Riemann integral
\begin{equation*}
	\mathrm{(R)}\int_0^t x_s \mspace{2mu} \mathrm{d}s \in C([-r, 0], \mathbb{K}^n)
\end{equation*}
of the continuous function
\begin{equation*}
	[0, t] \ni s \mapsto x_s \in C([-r, 0], \mathbb{K}^n)
\end{equation*}
exists.
See Graves~\cite[Section~2]{Graves_1927} for the definition of the Riemann integrability of functions on closed and bounded intervals taking values in normed spaces.
We now show that when $x \in C([-r, \infty), \mathbb{K}^n)$, the Riemann integral $\mathrm{(R)}\int_0^t x_s \mspace{2mu} \mathrm{d}s$ coincides with $\int_0^t x_s \mspace{2mu} \mathrm{d}s$ introduced in Definition~\ref{dfn: int_0^t x_s ds}.
More generally, one can prove the following result.

\begin{lemma}\label{lem: Riemann integral of two variables function}
Let $E$ be a Banach space, $[a, b]$ and $[c, d]$ be closed and bounded intervals of $\mathbb{R}$, and $f \colon [a, b] \times [c, d] \to E$ be a continuous function.
For each $y \in [c, d]$, let $f(\argdot, y) \in C([a, b], E)$ be defined by
\begin{equation*}
	f(\argdot, y)(x) \coloneqq f(x, y)
\end{equation*}
for $x \in [a, b]$.
Then
\begin{equation*}
	{\biggl( \int_c^d f(\argdot, y) \mspace{2mu} \mathrm{d}y \biggr)}(x)
	= \int_c^d f(x, y) \mspace{2mu} \mathrm{d}y
\end{equation*}
holds for all $x \in [a, b]$.
Here $\int_c^d f(\argdot, y) \mspace{2mu} \mathrm{d}y$ is the Riemann integral of the continuous function $[c, d] \ni y \mapsto f(\argdot, y) \in C([a, b], E)$.
\end{lemma}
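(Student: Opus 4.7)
The plan is to invoke the definition of the Riemann integral of a Banach-valued function as a limit of Riemann sums and exploit the fact that point evaluation on $C([a, b], E)$ is a continuous linear map, so it commutes with such limits.

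First, I would check that the integrand $[c, d] \ni y \mapsto f(\argdot, y) \in C([a, b], E)$ is continuous, so that the Riemann integral $\int_c^d f(\argdot, y) \mspace{2mu} \dd y$ exists in the sense of Graves~\cite{Graves_1927}. This follows from the uniform continuity of $f$ on the compact rectangle $[a, b] \times [c, d]$: given $\ep > 0$, any $\delta > 0$ that makes $\norm{f(x, y) - f(x, y')} < \ep$ whenever $\abs{y - y'} < \delta$ and $x \in [a, b]$ also makes $\norm{f(\argdot, y) - f(\argdot, y')} < \ep$ in the supremum norm. Likewise, for each fixed $x \in [a, b]$, the function $[c, d] \ni y \mapsto f(x, y) \in E$ is continuous, so $\int_c^d f(x, y) \mspace{2mu} \dd y \in E$ also exists.

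Next, I would fix $x \in [a, b]$ and consider the evaluation map $\mathrm{ev}_x \colon C([a, b], E) \to E$ defined by $\mathrm{ev}_x(g) \coloneqq g(x)$. This is a bounded linear operator of operator norm at most $1$, hence continuous. For any tagged partition $(P, \tau)$ of $[c, d]$ with subdivision $c = y_0 < y_1 < \cdots < y_N = d$ and tags $\eta_i \in [y_{i - 1}, y_i]$, the corresponding Riemann sum in $C([a, b], E)$ is
\begin{equation*}
    S(P, \tau) \coloneqq \sum_{i = 1}^N f(\argdot, \eta_i) (y_i - y_{i - 1}).
\end{equation*}
Evaluating at $x$ gives
\begin{equation*}
    \mathrm{ev}_x\bigl( S(P, \tau) \bigr) = \sum_{i = 1}^N f(x, \eta_i) (y_i - y_{i - 1}),
\end{equation*}
which is precisely a Riemann sum in $E$ for $\int_c^d f(x, y) \mspace{2mu} \dd y$.

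Finally, I would take a sequence of tagged partitions whose meshes tend to $0$. By the existence of both Riemann integrals, $S(P_k, \tau_k) \to \int_c^d f(\argdot, y) \mspace{2mu} \dd y$ in $C([a, b], E)$ and $\mathrm{ev}_x\bigl( S(P_k, \tau_k) \bigr) \to \int_c^d f(x, y) \mspace{2mu} \dd y$ in $E$. Continuity of $\mathrm{ev}_x$ lets us pass evaluation through the first limit to obtain $\bigl( \int_c^d f(\argdot, y) \mspace{2mu} \dd y \bigr)(x)$, and equating the two limits proves the identity. There is no serious obstacle here; the only point requiring care is ensuring that the Riemann integral in the Banach space $C([a, b], E)$ exists (handled by the uniform continuity argument above), after which the proof is a routine commutation of a continuous linear functional with a Banach-space Riemann integral.
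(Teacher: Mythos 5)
Your proposal is correct and follows essentially the same route as the paper: fix $x$, observe that evaluation at $x$ is a bounded linear operator on $C([a, b], E)$, and commute it with the Riemann integral (the paper simply cites this commutation property, while you re-derive it at the level of Riemann sums). The preliminary continuity check via uniform continuity of $f$ is exactly the remark the paper makes after the lemma statement.
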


We note that the continuity of $[c, d] \ni y \mapsto f(\argdot, y) \in C([a, b], E)$ is a consequence of the uniform continuity of $f$.

\begin{proof}[Proof of Lemma~\ref{lem: Riemann integral of two variables function}]
We fix $x \in [a, b]$.
Let $T \colon C([a, b], E) \to E$ be the evaluation map defined by
\begin{equation*}
	Tg \coloneqq g(x)
\end{equation*}
for $g \in C([a, b], E)$.
Since $T$ is a bounded linear operator, we have
\begin{equation*}
	{\biggl( \int_c^d f(\argdot, y) \mspace{2mu} \mathrm{d}y \biggr)}(x)
	= T\int_c^d f(\argdot, y) \mspace{2mu} \mathrm{d}y
	= \int_c^d Tf(\argdot, y) \mspace{2mu} \mathrm{d}y,
\end{equation*}
where the last term is equal to $\int_c^d f(x, y) \mspace{2mu} \mathrm{d}y$.
This completes the proof.
\end{proof}

As an application of Lemma~\ref{lem: Riemann integral of two variables function}, the following result can be obtained.

\begin{theorem}\label{thm: int_0^t x_s ds for continuous x}
If $x \in C([-r, \infty), \mathbb{K}^n)$, then
\begin{equation*}
	\mathrm{(R)}\int_0^t x_s \mspace{2mu} \mathrm{d}s
	= \int_0^t x_s \mspace{2mu} \mathrm{d}s
\end{equation*}
holds for all $t \ge 0$.
\end{theorem}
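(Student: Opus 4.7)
The plan is to reduce the statement to a direct application of Lemma~\ref{lem: Riemann integral of two variables function}. Fix $t \ge 0$ and define $f \colon [-r, 0] \times [0, t] \to \mathbb{K}^n$ by
\begin{equation*}
	f(\theta, s) \coloneqq x(s + \theta).
\end{equation*}
Since $x$ is continuous on $[-r, \infty)$ and the map $(\theta, s) \mapsto s + \theta$ is continuous with image in $[-r, t]$, the function $f$ is continuous on the product $[-r, 0] \times [0, t]$. For each $s \in [0, t]$, the partial function $f(\argdot, s) \in C([-r, 0], \mathbb{K}^n)$ coincides with the history segment $x_s$ by Definition~\ref{dfn: history segment}, so the continuous curve to which the Riemann integral is applied is indeed $[0, t] \ni s \mapsto x_s \in C([-r, 0], \mathbb{K}^n)$.

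Next I would invoke Lemma~\ref{lem: Riemann integral of two variables function} with the roles $(a, b) = (-r, 0)$, $(c, d) = (0, t)$, and $E = \mathbb{K}^n$. The lemma asserts that the evaluation at $\theta \in [-r, 0]$ of the Banach-space-valued Riemann integral agrees with the scalar (here, $\mathbb{K}^n$-valued) Lebesgue integral of the pointwise values, i.e.
\begin{equation*}
	\biggl( \mathrm{(R)}\int_0^t x_s \mspace{2mu} \mathrm{d}s \biggr)(\theta)
	= \int_0^t x(s + \theta) \mspace{2mu} \mathrm{d}s.
\end{equation*}
By Definition~\ref{dfn: int_0^t x_s ds}, the right-hand side is exactly ${\bigl( \int_0^t x_s \mspace{2mu} \mathrm{d}s \bigr)}(\theta)$.

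Since this pointwise equality holds for every $\theta \in [-r, 0]$, and both sides are elements of $C([-r, 0], \mathbb{K}^n)$ (the left-hand side by Graves' Riemann integrability, the right-hand side by Lemma~\ref{lem: continuity of int_0^t x_s ds} applied on a neighbourhood of $t$, or more directly by the continuity of $x$), they coincide as functions, which is the conclusion.

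I do not anticipate any genuine obstacle: the only subtlety is the bookkeeping between the two different meanings of ``integral'' — a Banach-space-valued Riemann integral on the left and a $\mathbb{K}^n$-valued Lebesgue integral at each point on the right — but Lemma~\ref{lem: Riemann integral of two variables function} is precisely tailored to bridge them via the evaluation functional, so no new estimates or approximation arguments are required.
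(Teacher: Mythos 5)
Your proposal is correct and follows essentially the same route as the paper: fix $t$, set $f(\theta, s) \coloneqq x(s + \theta)$ on $[-r, 0] \times [0, t]$, apply Lemma~\ref{lem: Riemann integral of two variables function} with the evaluation functional, and identify the resulting pointwise integral with ${\bigl( \int_0^t x_s \mspace{2mu} \mathrm{d}s \bigr)}(\theta)$, the only (harmless) bookkeeping being that the Riemann and Lebesgue integrals of the continuous integrand $s \mapsto x(s + \theta)$ coincide.
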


\begin{proof}
Let $t > 0$ be given.
We consider a function $f \colon [-r, 0] \times [0, t] \to \mathbb{K}^n$ defined by
\begin{equation*}
	f(\theta, s) \coloneqq x(s + \theta).
\end{equation*}
Then the function $f(\argdot, s)$ is equal to $x_s$.
By applying Lemma~\ref{lem: Riemann integral of two variables function} with this $f$,
\begin{equation*}
	{\biggl[ \mathrm{(R)}\int_0^t x_s \mspace{2mu} \mathrm{d}s \biggr]}(\theta)
	= \int_0^t x(s + \theta) \mspace{2mu} \mathrm{d}s
\end{equation*}
holds for all $\theta \in [-r, 0]$.
Since the right-hand side is equal to $\bigl( \int_0^t x_s \mspace{2mu} \mathrm{d}s \bigr)(\theta)$, this shows the conclusion.
\end{proof}

\begin{remark}
When $x \in C([-r, \infty), \mathbb{K}^n)$, Theorem~\ref{thm: int_0^t x_s ds for continuous x} yields that
\begin{equation*}
	\dv{t} \int_0^t x_s \mspace{2mu} \mathrm{d}s
	= x_t
	\in C([-r, 0], \mathbb{K}^n)
\end{equation*}
holds by the fundamental theorem of calculus for vector-valued functions.
\end{remark}

We have the following corollary.

\begin{corollary}\label{cor: T*int_0^t x_s ds for continuous x}
Let $F$ be a Banach space over $\mathbb{K}$ and $T \colon C([-r, 0], \mathbb{K}^n) \to F$ be a bounded linear operator.
If $x \in C([-r, \infty), \mathbb{K}^n)$, then
\begin{equation}\label{eq: T*int_0^t u_s ds}
	T\int_0^t x_s \mspace{2mu} \mathrm{d}s = \int_0^t Tx_s \mspace{2mu} \mathrm{d}s
\end{equation}
holds for all $t \ge 0$.
Here the right-hand side is the Riemann integral of the continuous function $[0, t] \ni s \mapsto Tx_s \in F$.
\end{corollary}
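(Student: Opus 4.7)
The plan is to reduce the statement to the fact that bounded linear operators commute with Riemann integrals of continuous vector-valued functions, exactly as was done in the proof of Lemma~\ref{lem: Riemann integral of two variables function}.

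First, I would invoke Theorem~\ref{thm: int_0^t x_s ds for continuous x} to rewrite the left-hand side of \eqref{eq: T*int_0^t u_s ds} as
\begin{equation*}
	T\int_0^t x_s \mspace{2mu} \mathrm{d}s
	= T \biggl[ \mathrm{(R)}\int_0^t x_s \mspace{2mu} \mathrm{d}s \biggr],
\end{equation*}
where the inner object is the genuine Riemann integral of the continuous function $[0, t] \ni s \mapsto x_s \in C([-r, 0], \mathbb{K}^n)$ (continuity here follows from the uniform continuity of $x$ on $[-r, t]$).

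Next, since $T$ is bounded linear and $s \mapsto x_s$ is continuous, the composition $s \mapsto Tx_s \in F$ is continuous, so its Riemann integral exists. I would then apply the standard fact that a bounded linear operator between Banach spaces commutes with Riemann integration of continuous vector-valued functions, which yields
\begin{equation*}
	T \biggl[ \mathrm{(R)}\int_0^t x_s \mspace{2mu} \mathrm{d}s \biggr]
	= \int_0^t Tx_s \mspace{2mu} \mathrm{d}s.
\end{equation*}
Chaining the two equalities gives \eqref{eq: T*int_0^t u_s ds}.

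There is essentially no obstacle: the only point that needs a sentence of justification is the commutation of $T$ with the Riemann integral, which is already being used in the proof of Lemma~\ref{lem: Riemann integral of two variables function} (for the evaluation functional), so citing that proof technique suffices. The case $t = 0$ is trivial since both sides vanish.
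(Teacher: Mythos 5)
Your proposal is correct and follows essentially the same route as the paper: rewrite $\int_0^t x_s \mspace{2mu} \mathrm{d}s$ as the Riemann integral via Theorem~\ref{thm: int_0^t x_s ds for continuous x} and then use that the bounded linear operator $T$ commutes with Riemann integration of continuous Banach-space-valued functions. No gaps.
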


\begin{proof}
From Theorem~\ref{thm: int_0^t x_s ds for continuous x},
\begin{equation*}
	T\int_0^t x_s \mspace{2mu} \mathrm{d}s
	= T {\biggl[ \mathrm{(R)}\int_0^t x_s \mspace{2mu} \mathrm{d}s \biggr]}
	= \int_0^t Tx_s \mspace{2mu} \mathrm{d}s
\end{equation*}
holds since $T$ is a bounded linear operator.
\end{proof}

\begin{remark}
Corollary~\ref{cor: T*int_0^t x_s ds for continuous x} yields the following:
Let $x \colon [-r, \infty) \to \mathbb{K}^n$ be a continuous function satisfying $x_0 = \phi \in C([-r, 0], \mathbb{K}^n)$.
Since $L \colon C([-r, 0], \mathbb{K}^n) \to \mathbb{K}^n$ is a bounded linear operator, $x$ is a mild solution of the linear RFDE~\eqref{eq: linear RFDE} with the initial history function $\phi$ if and only if it satisfies
\begin{equation*}
	x(t) = \phi(0) + \int_0^t Lx_s \mspace{2mu} \mathrm{d}s
\end{equation*}
for all $t \ge 0$.
This shows that a mild solution coincides with a solution in the usual sense when the initial history function $\phi$ is continuous.
\end{remark}

\subsection{Existence and uniqueness of a mild solution}\label{subsec: existence and uniqueness of mild sol}

By using the contraction mapping principle with an \textit{a priori} estimate, we will prove the unique existence of a mild solution of the linear RFDE~\eqref{eq: linear RFDE} under an initial condition~\eqref{eq: initial condition}
\begin{equation*}
	x_0 = \phi \in \mathcal{M}^1([-r, 0], \mathbb{K}^n).
\end{equation*}
We will use the following notation.

\begin{notation}\label{notation: constant prolongation}
For each $\phi \in \mathcal{M}^1([-r, 0], \mathbb{K}^n)$, let $\Bar{\phi} \colon \dom(\phi) \cup [0, \infty) \to \mathbb{K}^n$ be the function defined by
\begin{equation}\label{eq: static prolongation}
	\Bar{\phi}(t) \coloneqq
	\begin{cases}
		\phi(t) & (t \in \dom(\phi)), \\
		\phi(0) & (t \ge 0).
	\end{cases}
\end{equation}
$\Bar{\phi}$ is a constant prolongation of $\phi$.
\end{notation}

\begin{theorem}\label{thm: unique existence of mild sol}
For any $\phi \in \mathcal{M}^1([-r, 0], \mathbb{K}^n)$, the linear RFDE~\eqref{eq: linear RFDE} has a unique mild solution under the initial condition $x_0 = \phi$.
\end{theorem}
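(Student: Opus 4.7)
The plan is to cast the defining integral equation~\eqref{eq: mild sol} as a fixed point problem on $C([0,T], \mathbb{K}^n)$ for sufficiently small $T > 0$, obtain local existence and uniqueness by the contraction mapping principle, extend globally by iteration, and confirm global uniqueness by a Gronwall-type argument.

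First I would set $X_T := \{ y \in C([0,T], \mathbb{K}^n) : y(0) = \phi(0)\}$ (a closed subset of the Banach space $C([0,T], \mathbb{K}^n)$). For $y \in X_T$, define $\tilde{y} \colon \dom(\phi) \cup [0,T] \to \mathbb{K}^n$ by $\tilde{y} = \phi$ on $\dom(\phi) \setminus \{0\}$ and $\tilde{y} = y$ on $[0,T]$; then $\tilde{y} \in \mathcal{L}^1_{\mathrm{loc}}([-r,\infty), \mathbb{K}^n)$ after trivial extension past $T$. Define a map $\Psi \colon X_T \to C([0,T], \mathbb{K}^n)$ by
\begin{equation*}
	(\Psi y)(t) \coloneqq \phi(0) + L \biggl[ \int_0^t \tilde{y}_s \mspace{2mu} \mathrm{d}s \biggr].
\end{equation*}
By Lemma~\ref{lem: continuity of int_0^t x_s ds} together with the boundedness of $L$, $\Psi y$ is continuous with $(\Psi y)(0) = \phi(0)$, so $\Psi$ maps $X_T$ into itself, and fixed points of $\Psi$ correspond bijectively (via $y \mapsto \tilde{y}$) to mild solutions on $[-r, T]$.

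Next I would verify the contraction property. For $y_1, y_2 \in X_T$, using the representation~\eqref{eq: L as RS integral} and the bound $|L\psi| \le \|L\| \cdot \|\psi\|$,
\begin{equation*}
	|(\Psi y_1)(t) - (\Psi y_2)(t)|
	\le \|L\| \sup_{\theta \in [-r,0]} \biggl| \int_0^t [\tilde{y}_1(s+\theta) - \tilde{y}_2(s+\theta)] \mspace{2mu} \mathrm{d}s \biggr|.
\end{equation*}
Since $\tilde{y}_1 - \tilde{y}_2 = 0$ on $\dom(\phi)$, the integrand vanishes (a.e.) whenever $s + \theta < 0$; after the change of variable $u = s + \theta$ and bounding trivially, the right-hand side is at most $T \|L\| \cdot \|y_1 - y_2\|_\infty$. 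Thus $\Psi$ is a contraction on $X_T$ whenever $T \|L\| < 1$, and Banach's theorem yields a unique fixed point, giving a mild solution on $[-r, T]$. Because the threshold $T$ depends only on $\|L\|$, I would extend inductively: knowing a mild solution on $[-r, kT]$, rerun the same fixed point argument on the interval $[kT, (k+1)T]$ anchored at the already-constructed value, which covers $[0, \infty)$.

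For global uniqueness, let $x_1, x_2$ be two mild solutions. Both restrict to $\phi$ on $\dom(\phi)$, so $w := x_1 - x_2$ (extended by $0$ on $\dom(\phi)$) satisfies $w(0) = 0$ and
\begin{equation*}
	w(t) = L \biggl[ \int_0^t w_s \mspace{2mu} \mathrm{d}s \biggr],
\end{equation*}
from which $|w(t)| \le \|L\| \int_0^t |w(u)| \mspace{2mu} \mathrm{d}u$ by the same change-of-variable argument as above. Gronwall's inequality (Appendix~\ref{sec: variants of Gronwall's ineq}) gives $w \equiv 0$. The main obstacle is the bookkeeping involved in the extension $\tilde{y}$: one must check that the Riemann-Stieltjes integral applied to $\int_0^t \tilde{y}_s \mspace{2mu} \mathrm{d}s \in C([-r,0], \mathbb{K}^n)$ is insensitive to the a.e.-representative of $\phi$, and that the iterative construction stitches continuously across the breakpoints $kT$; both facts follow from the anchoring condition $y(0) = \phi(0)$ built into $X_T$ and its analogues at each step.
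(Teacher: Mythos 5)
Your argument is correct, but it takes a genuinely different route from the paper's proof of this theorem. The paper reduces to a continuous unknown $y$ with $y_0 = 0$ by subtracting the constant prolongation $\Bar{\phi}$, derives the a priori bound $\norm{y_t} \le \norm{L}(\norm{\phi}_1 + t\abs{\phi(0)})\mathrm{e}^{\norm{L}t}$ from the RFDE form of Gronwall's inequality (Lemma~\ref{lem: generalized Gronwall's inequality and RFDEs}), and then applies the contraction mapping principle \emph{once, globally}, on the exponentially weighted Banach space $Y_\gamma$ with $\gamma > \norm{L}$, where the contraction constant is $\norm{L}/\gamma < 1$; existence and uniqueness on all of $[0, \infty)$ come from a single fixed point, with no continuation step and no separate uniqueness argument. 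You instead run a local contraction on $C([0, T], \mathbb{K}^n)$ with $T\norm{L} < 1$ (your key observation that $\tilde{y}_1 - \tilde{y}_2$ vanishes on $\dom(\phi)$, so only the part with $s + \theta \ge 0$ contributes and produces the factor $T$, is exactly the right estimate), then continue step by step using the uniform step length, and finish global uniqueness with Gronwall. This is essentially the scheme the paper itself uses later for the more general Theorem~\ref{thm: unique existence, integral eq with G}, of which the present theorem is the case $G = 0$; there the stitching across a breakpoint $\sigma$ rests on the additivity $\int_0^t x_s \mspace{2mu} \mathrm{d}s = \int_0^{\sigma} x_s \mspace{2mu} \mathrm{d}s + \int_{\sigma}^t x_s \mspace{2mu} \mathrm{d}s$ in $C([-r, 0], \mathbb{K}^n)$, applied inside $L$, which turns the original equation on $[\sigma, \sigma + a]$ into an equation of the same type with initial time $\sigma$ and initial history $x_\sigma \in \mathcal{M}^1([-r, 0], \mathbb{K}^n)$ --- you should state this identity explicitly at your breakpoints $kT$ rather than appeal only to the anchoring condition $y(0) = \phi(0)$. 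The trade-off: the weighted-norm proof avoids continuation altogether but needs the a priori estimate and the auxiliary facts about $Y_\gamma$ (Appendix~\ref{sec: lemmas on fixed pt argument}); your route is more elementary, uses only the plain sup norm, and in effect subsumes the theorem under the later, more general existence result, at the cost of the bookkeeping at the breakpoints.
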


In the following, we give a proof based on an \textit{a priori} estimate.
See Chicone~\cite[Subsection 2.1]{Chicone_2003} for a similar argument.

\begin{proof}[Proof of Theorem~\ref{thm: unique existence of mild sol}]
We divide the proof into the following steps.

\vspace{0.5\baselineskip}
\noindent
\textbf{Step 1: Reduction to a continuous unknown function and derivation of an \textit{a priori} estimate.}
For a continuous prolongation $x \colon [-r, \infty) \supset \dom(x) \to \mathbb{K}^n$ of $\phi$, we consider the function $y \colon [-r, \infty) \to \mathbb{K}^n$ defined by
\begin{equation*}
	y(t) \coloneqq
	\begin{cases}
		x(t) - \Bar{\phi}(t) & (t \in \dom(x)), \\
		0 & (t \not\in \dom(x)).
	\end{cases}
\end{equation*}
Then $y$ is a continuous function satisfying $y_0 = 0$.
The problem of finding a mild solution $x \colon [-r, \infty) \supset \dom(x) \to \mathbb{K}^n$ of the linear RFDE~\eqref{eq: linear RFDE} under the initial condition $x_0 = \phi$ is reduced to find a continuous function $y \colon [-r, \infty) \to \mathbb{K}^n$ satisfying $y_0 = 0$ and
\begin{equation}\label{eq: mild solution, continuous unknown}
	y(t)
	= L\int_0^t (y + \Bar{\phi})_s \mspace{2mu} \mathrm{d}s
	= \int_0^t Ly_s \mspace{2mu} \mathrm{d}s + L\int_0^t \Bar{\phi}_s \mspace{2mu} \mathrm{d}s
\end{equation}
for all $t \ge 0$.
Here Corollary~\ref{cor: T*int_0^t x_s ds for continuous x} is used.
By noticing the following estimate from above
\begin{equation*}
	\abs{\int_0^t \Bar{\phi}(s + \theta) \mspace{2mu} \mathrm{d}s}
	\le \norm{\phi}_1 + t\abs{\phi(0)}
	\mspace{25mu}
	(t \ge 0),
\end{equation*}
a continuous function $y \colon [-r, \infty) \to \mathbb{K}^n$ satisfying $y_0 = 0$ and Eq.~\eqref{eq: mild solution, continuous unknown} must satisfy
\begin{equation*}
	\abs{y(t)}
	\le \norm{L}(\norm{\phi}_1 + t\abs{\phi(0)}) + \int_0^t \norm{L}\norm{y_s} \mspace{2mu} \mathrm{d}s
\end{equation*}
for all $t \ge 0$.
By applying Lemma~\ref{lem: generalized Gronwall's inequality and RFDEs},
\begin{equation*}
	\norm{y_t}
	\le \norm{L}(\norm{\phi}_1 + t\abs{\phi(0)})\mathrm{e}^{\norm{L}t}
\end{equation*}
holds for all $t \ge 0$.

\vspace{0.5\baselineskip}
\noindent
\textbf{Step 2: Setting of function space.}
For each $\gamma > \norm{L}$, Step 1 indicates that for a continuous function $y \colon [-r, \infty) \to \mathbb{K}^n$ satisfying $y_0 = 0$ and Eq.~\eqref{eq: mild solution, continuous unknown}, we have
\begin{equation*}
	\mathrm{e}^{-\gamma t}\norm{y_t}
	\le \norm{L}(\norm{\phi}_1 + t\abs{\phi(0)})\mathrm{e}^{(\norm{L} - \gamma)t}.
\end{equation*}
Here the right-hand side converges to $0$ as $t \to \infty$.
Therefore,
\begin{equation*}
	\norm{y}_\gamma
	\coloneqq \sup_{t \ge 0} \mspace{2mu} (\mathrm{e}^{-\gamma t}\norm{y_t})
	= \sup_{t \ge 0} \mspace{2mu} (\mathrm{e}^{-\gamma t}\abs{y(t)})
	< \infty
\end{equation*}
holds (see Lemma~\ref{lem: norm{argdot}_gamma} for the detail).
For each $\gamma > \norm{L}$, let $Y_\gamma$ be the linear subspace of $C([-r, \infty), \mathbb{K}^n)$ given by
\begin{equation*}
	Y_\gamma
	\coloneqq \Set{y \in C([-r, \infty), \mathbb{K}^n)}{\text{$y_0 = 0$, $\norm{y}_\gamma < \infty$}},
\end{equation*}
which is considered as a normed space endowed with the norm $\norm{\argdot}_\gamma$.
Then $Y_\gamma$ is a Banach space (see Lemma~\ref{lem: completeness of Y_gamma}).
We fix $\gamma > \norm{L}$ arbitrarily, and let $Y \coloneqq Y_\gamma$ and $\norm{\argdot}_Y \coloneqq \norm{\argdot}_\gamma$.

\vspace{0.5\baselineskip}
\noindent
\textbf{Step 3: Reduction to fixed point problem.}
We define a transformation $T \colon Y \to C([-r, \infty), \mathbb{K}^n)$ by $(Ty)_0 = 0$ and
\begin{equation*}
	(Ty)(t)
	\coloneqq \int_0^t Ly_s \mspace{2mu} \mathrm{d}s + L\int_0^t \Bar{\phi}_s \mspace{2mu} \mathrm{d}s
	\mspace{25mu}
	(t \ge 0).
\end{equation*}
We now claim that $T(Y) \subset Y$ holds.
Let $y \in Y$ be given.
In the same way as in Step 1,
\begin{equation*}
	\abs{Ty(t)}
	\le \norm{L}(\norm{\phi}_1 + t\abs{\phi(0)}) + \norm{L}\int_0^t \norm{y_s} \mspace{2mu} \mathrm{d}s
\end{equation*}
holds for all $t \ge 0$.
Since $\mathrm{e}^{-\gamma t}\norm{L}(\norm{\phi}_1 + t\abs{\phi(0)}) \to 0$ as $t \to \infty$, we only need to show
\begin{equation*}
	\sup_{t \ge 0} \mspace{2mu} \mathrm{e}^{-\gamma t}\int_0^t \norm{y_s} \mspace{2mu} \mathrm{d}s < \infty
\end{equation*}
in order to obtain $Ty \in Y$.
By the assumption of $y \in Y$, $\norm{y_t} \le \norm{y}_Y\mathrm{e}^{\gamma t}$ holds for all $t \ge 0$.
Therefore, we have
\begin{align*}
	\int_0^t \norm{y_s} \mspace{2mu} \mathrm{d}s
	\le \norm{y}_Y \int_0^t \mathrm{e}^{\gamma s} \mspace{2mu} \mathrm{d}s
	\le \frac{\norm{y}_Y}{\gamma} \mathrm{e}^{\gamma t}
	\mspace{25mu}
	(t \ge 0),
\end{align*}
which implies $\sup_{t \ge 0} \mspace{2mu} \mathrm{e}^{-\gamma t}\int_0^t \norm{y_s} \mspace{2mu} \mathrm{d}s < \infty$.
Thus, $Ty \in Y$ is concluded.

\vspace{0.5\baselineskip}
\noindent
\textbf{Step 4: Application of contraction mapping principle.}
We now claim that the mapping $T \colon Y \to Y$ is a contraction.
For any $y^1, y^2 \in Y$,
\begin{equation*}
	\mathrm{e}^{-\gamma t}\abs{Ty^1(t) - Ty^2(t)}
	\le \mathrm{e}^{-\gamma t}\norm{L} \int_0^t \norm{y^1_s - y^2_s} \mspace{2mu} \mathrm{d}s
\end{equation*}
holds.
Since we have
\begin{align*}
	\norm{y^1_s - y^2_s}
	&= \mathrm{e}^{\gamma s} \cdot \mathrm{e}^{-\gamma s}\norm{(y^1 - y^2)_s} \\
	&\le \mathrm{e}^{\gamma s}\norm{y^1 - y^2}_Y
\end{align*}
for the integrand in the right-hand side,
\begin{align*}
	\mathrm{e}^{-\gamma t}\norm{L} \int_0^t \norm{y^1_s - y^2_s} \mspace{2mu} \mathrm{d}s
	&\le \frac{\norm{L}}{\gamma}(1 - \mathrm{e}^{-\gamma t}) \norm{y^1 - y^2}_Y \\
	&\le \frac{\norm{L}}{\gamma} \norm{y^1 - y^2}_Y
\end{align*}
is concluded.
Therefore, $T \colon Y \to Y$ is a contraction.
By applying the contraction mapping principle, there exists a unique $y_* \in Y$ such that
\begin{equation*}
	Ty_* = y_*.
\end{equation*}
The function $x_* \colon [-r, \infty) \supset \dom(\phi) \cup [0, \infty) \to \mathbb{K}^n$ defined by
\begin{equation*}
	x_*(t) \coloneqq y(t) + \Bar{\phi}(t)
	\mspace{25mu}
	(t \in \dom(\phi) \cup [0, \infty))
\end{equation*}
is a mild solution of the linear RFDE~\eqref{eq: linear RFDE} under the initial condition $x_0 = \phi$.
The uniqueness follows by the above discussion.
\end{proof}

We hereafter use the following notation.

\begin{notation}
For each $\phi \in \mathcal{M}^1([-r, 0], \mathbb{K}^n)$, we denote the unique mild solution of the linear RFDE~\eqref{eq: linear RFDE} under the initial condition $x_0 = \phi$ by $x^L(\argdot; \phi) \colon \dom(\phi) \cup [0, \infty) \to \mathbb{K}^n$.
\end{notation}

We have the following corollary.

\begin{corollary}\label{cor: linearity wrt initial history}
Let $\alpha, \beta \in \mathbb{K}$ and $\phi, \psi \in \mathcal{M}^1([-r, 0], \mathbb{K}^n)$ be given.
Then for all $t \ge 0$,
\begin{equation}\label{eq: linearity of mild sol wrt initial history}
	x^L(t; \alpha\phi + \beta\psi)
	= \alpha x^L(t; \phi) + \beta x^L(t; \psi)
\end{equation}
holds.
\end{corollary}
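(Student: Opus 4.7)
The plan is to verify that the function
$$z(t) \coloneqq \alpha x^L(t; \phi) + \beta x^L(t; \psi),$$
defined on $(\dom(\phi) \cap \dom(\psi)) \cup [0, \infty)$, is itself a mild solution of the linear RFDE~\eqref{eq: linear RFDE} under the initial condition $x_0 = \alpha\phi + \beta\psi$; the conclusion then follows at once from the uniqueness half of Theorem~\ref{thm: unique existence of mild sol}.

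First I would check the continuous-prolongation property. On the initial side, $z_0 = \alpha x^L(\argdot; \phi)_0 + \beta x^L(\argdot; \psi)_0 = \alpha\phi + \beta\psi$ by the definition of $x^L(\argdot; \phi)$ and $x^L(\argdot; \psi)$. On $[0, \infty)$, $z$ is a linear combination of two continuous functions and is therefore continuous. The notational conventions fixed in the notations section give $\dom(\alpha\phi + \beta\psi) = \dom(\phi) \cap \dom(\psi)$, so the domain of $z$ matches $\dom(\alpha\phi + \beta\psi) \cup [0, \infty)$, and $z$ is a continuous prolongation of $\alpha\phi + \beta\psi$ in the sense of Definition~\ref{dfn: continuous prolongation}.

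Next I would check Eq.~\eqref{eq: mild sol} for $z$. Fix $t \ge 0$. Multiplying the mild-solution identities for $x^L(\argdot; \phi)$ and $x^L(\argdot; \psi)$ by $\alpha$ and $\beta$ and adding, we get
\begin{align*}
z(t) &= \bigl( \alpha\phi(0) + \beta\psi(0) \bigr) + \alpha \int_{-r}^0 \mathrm{d}\eta(\theta)\, \biggl( \int_0^t x^L(s+\theta; \phi)\, \mathrm{d}s \biggr) \\
&\qquad + \beta \int_{-r}^0 \mathrm{d}\eta(\theta)\, \biggl( \int_0^t x^L(s+\theta; \psi)\, \mathrm{d}s \biggr).
\end{align*}
Using linearity of the Lebesgue integral on each integrand and then linearity of the Riemann-Stieltjes integral in the integrand (Appendix~\ref{sec: RS integrals wrt matrix-valued functions}), these terms combine to
$$z(t) = (\alpha\phi + \beta\psi)(0) + \int_{-r}^0 \mathrm{d}\eta(\theta)\, \biggl( \int_0^t z(s+\theta)\, \mathrm{d}s \biggr),$$
which is exactly Eq.~\eqref{eq: mild sol} for $z$ with initial history $\alpha\phi + \beta\psi$.

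There is no genuine obstacle: the argument is a bookkeeping exercise using only linearity of the two integrals involved and the definition of $x^L(\argdot; \cdot)$. The one point that deserves a moment's attention is the domain compatibility noted in the first step, which is why the identity in~\eqref{eq: linearity of mild sol wrt initial history} is stated for $t \ge 0$ only. Once $z$ has been shown to be a mild solution for the combined initial history, Theorem~\ref{thm: unique existence of mild sol} gives $z = x^L(\argdot; \alpha\phi + \beta\psi)$, and evaluating at $t \ge 0$ yields the claim.
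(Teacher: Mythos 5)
Your proposal is correct and follows essentially the same route as the paper: define the linear combination, check that it is a mild solution of \eqref{eq: linear RFDE} with initial history $\alpha\phi + \beta\psi$ using linearity of the Lebesgue and Riemann-Stieltjes integrals, and conclude by the uniqueness part of Theorem~\ref{thm: unique existence of mild sol}. The only difference is that you spell out the domain bookkeeping and the integral linearity in more detail than the paper does.
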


\begin{proof}
Let $\chi \coloneqq \alpha\phi + \beta\psi \in \mathcal{M}^1([-r, 0], \mathbb{K}^n)$ and $x \colon [-r, \infty) \supset \dom(x) \to \mathbb{K}^n$ be the function defined by
\begin{equation*}
	\dom(x) \coloneqq \dom(\chi) \cup [0, \infty),
	\mspace{15mu}
	x(t) \coloneqq \alpha x^L(t; \phi) + \beta x^L(t; \psi).
\end{equation*}
Since the map $L$ and the Lebesgue integration are linear, $x$ is a mild solution of the linear RFDE~\eqref{eq: linear RFDE} under the initial condition $x_0 = \chi$ by the definition of mild solutions (see Definition~\ref{dfn: mild sol}).
Therefore, \eqref{eq: linearity of mild sol wrt initial history} is a consequence of Theorem~\ref{thm: unique existence of mild sol}.
\end{proof}

\subsection{Fundamental matrix solutions}

Since ODEs are special DDEs, it is natural to expect that the notions of fundamental systems of solutions and fundamental matrix solutions for linear ODEs are meaningful for DDEs in some way.
However, the solution space of the linear RFDE~\eqref{eq: linear RFDE} is infinite-dimensional.
Therefore, it is impossible to define these notions to \eqref{eq: linear RFDE} as a simple generalization.

A key to this consideration is to focus on a ``finite-dimensionality''.
For this purpose, we consider an ``instantaneous input'' as an initial history function.
We will use the following notation.

\begin{definition}\label{dfn: instantaneous input}
For each $\xi \in \mathbb{K}^n$, we define a function $\Hat{\xi} \colon [-r, 0] \to \mathbb{K}^n$ by
\begin{equation*}
	\Hat{\xi}(\theta) \coloneqq
	\begin{cases}
		0 & (\theta \in [-r, 0)), \\
		\xi & (\theta = 0).
	\end{cases}
\end{equation*}
$\Hat{0}$ is the constant function whose value is identically equal to the zero vector $0 \in \mathbb{K}^n$.
\end{definition}

Since $\Hat{\xi} \in \mathcal{M}^1([-r, 0], \mathbb{K}^n)$ for each $\xi \in \mathbb{K}^n$, one can consider the mild solution
\begin{equation*}
	x^L \bigl( \argdot; \Hat{\xi} \bigr) \colon [-r, \infty) \to \mathbb{K}^n
\end{equation*}
of the linear RFDE~\eqref{eq: linear RFDE} under the initial condition $x_0 = \Hat{\xi}$ from Theorem~\ref{thm: unique existence of mild sol}.
Then Corollary~\ref{cor: linearity wrt initial history} yields that the subset $\mathcal{S}$ given by
\begin{equation*}
	\mathcal{S}
	\coloneqq \Set{x^L \bigl( \argdot; \Hat{\xi} \bigr) \colon [-r, \infty) \to \mathbb{K}^n}{\xi \in \mathbb{K}^n}
\end{equation*}
forms a linear space.
We have the following lemma.

\begin{lemma}\label{lem: linear independence of mild sol}
Let $\xi_1, \dots, \xi_m \in \mathbb{K}^n$ be vectors and let $x_j \coloneqq x^L \bigl( \argdot; \Hat{\xi}_j \bigr) \colon [-r, \infty) \to \mathbb{K}^n$ for each $j \in \{1, \dots, m\}$.
Then the following properties are equivalent:
\begin{enumerate}[label=(\alph*)]
\item The system of vectors $\xi_1, \dots, \xi_m$ is linearly independent.
\item The system of functions $x_1, \dots, x_m$ is linearly independent.
\end{enumerate}
\end{lemma}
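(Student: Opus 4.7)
The plan is to prove the equivalence by handling the two implications separately, with the easy direction done by evaluation at $t = 0$ and the harder direction using the linearity Corollary~\ref{cor: linearity wrt initial history} together with the uniqueness part of Theorem~\ref{thm: unique existence of mild sol}.

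For (a)~$\Rightarrow$~(b), I would start from the observation that since each $x_j$ is a continuous prolongation of $\Hat{\xi}_j$, we have $(x_j)_0 = \Hat{\xi}_j$, and in particular $x_j(0) = \Hat{\xi}_j(0) = \xi_j$. Thus, given any scalars $\alpha_1, \dots, \alpha_m \in \mathbb{K}$ with $\sum_{j=1}^m \alpha_j x_j \equiv 0$ on $[-r, \infty)$, evaluation at $t = 0$ gives $\sum_{j=1}^m \alpha_j \xi_j = 0$, and the linear independence of the $\xi_j$ forces $\alpha_j = 0$ for every $j$.

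For (b)~$\Rightarrow$~(a), suppose $\sum_{j=1}^m \alpha_j \xi_j = 0$. From the pointwise definition of $\Hat{\argdot}$ in Definition~\ref{dfn: instantaneous input}, the map $\mathbb{K}^n \ni \xi \mapsto \Hat{\xi} \in \mathcal{M}^1([-r, 0], \mathbb{K}^n)$ is linear, so $\sum_{j=1}^m \alpha_j \Hat{\xi}_j = \widehat{\sum_{j=1}^m \alpha_j \xi_j} = \Hat{0}$, which is the zero element of $\mathcal{M}^1([-r, 0], \mathbb{K}^n)$. Applying Corollary~\ref{cor: linearity wrt initial history} iteratively (or simply noting that the same proof extends to any finite linear combination), we get
\begin{equation*}
	\sum_{j=1}^m \alpha_j x_j(t) = x^L \bigl( t; \textstyle\sum_{j=1}^m \alpha_j \Hat{\xi}_j \bigr) = x^L(t; 0)
\end{equation*}
for all $t \ge 0$, and the same equality holds on $[-r, 0]$ by construction. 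The identically zero function on $[-r, \infty)$ is obviously a mild solution of \eqref{eq: linear RFDE} under the zero initial condition, so by the uniqueness part of Theorem~\ref{thm: unique existence of mild sol} we conclude $\sum_{j=1}^m \alpha_j x_j \equiv 0$. The linear independence of $x_1, \dots, x_m$ then yields $\alpha_j = 0$ for every $j$.

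I do not expect any substantive obstacle: both implications reduce to routine applications of already-established linearity and uniqueness. The only point that warrants care is verifying that $\xi \mapsto \Hat{\xi}$ is linear into the seminormed space $\mathcal{M}^1([-r, 0], \mathbb{K}^n)$ so that Corollary~\ref{cor: linearity wrt initial history} applies cleanly, and that the zero function in $\mathcal{M}^1([-r, 0], \mathbb{K}^n)$ corresponds to the trivial mild solution.
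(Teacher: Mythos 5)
Your proposal is correct and follows essentially the same route as the paper: (a)~$\Rightarrow$~(b) by evaluating a vanishing linear combination at $t = 0$, and (b)~$\Rightarrow$~(a) by passing from $\sum_j \alpha_j \xi_j = 0$ to $\sum_j \alpha_j \Hat{\xi}_j = 0$ and invoking the linearity in Corollary~\ref{cor: linearity wrt initial history}. Your extra step of citing the uniqueness part of Theorem~\ref{thm: unique existence of mild sol} to identify $x^L(\argdot; 0)$ with the zero function merely makes explicit what the paper leaves implicit, so there is no substantive difference.
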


Here the system of functions $x_1, \dots, x_m$ is said to be \textit{linearly independent} if for any scalars $\alpha_1, \dots, \alpha_m$, $\alpha_1x_1 + \dots + \alpha_mx_m = 0$ implies $\alpha_1 = \dots = \alpha_m = 0$.

\begin{proof}[Proof of Lemma~\ref{lem: linear independence of mild sol}]
(a) $\Rightarrow$ (b): Since $\alpha_1x_1 + \dots + \alpha_mx_m = 0$ implies
\begin{equation*}
	\alpha_1\xi_1 + \dots + \alpha_m\xi_m
	= (\alpha_1x_1 + \dots + \alpha_mx_m)(0)
	= 0,
\end{equation*}
this part follows by the definition of linear independence for functions.

(b) $\Rightarrow$ (a): We suppose $\alpha_1\xi_1 + \dots + \alpha_m\xi_m = 0$ for $\alpha_1, \dots, \alpha_m \in \mathbb{K}$.
Since this implies
\begin{equation*}
	\alpha_1\Hat{\xi}_1 + \dots + \alpha_m\Hat{\xi}_m = 0,
\end{equation*}
\eqref{eq: linearity of mild sol wrt initial history} yields
\begin{equation*}
	\alpha_1x_1 + \dots + \alpha_mx_m = 0.
\end{equation*}
Therefore, we have $\alpha_1 = \dots = \alpha_m = 0$ by the assumption (b).

This completes the proof.
\end{proof}

\begin{theorem}\label{thm: S is n-dimensional}
The linear space $\mathcal{S}$ is $n$-dimensional.
\end{theorem}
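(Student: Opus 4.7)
The plan is to exhibit an explicit linear isomorphism between $\mathbb{K}^n$ and $\mathcal{S}$, using the two tools just established: the linearity of $\phi \mapsto x^L(\argdot; \phi)$ from Corollary~\ref{cor: linearity wrt initial history}, and the linear independence criterion from Lemma~\ref{lem: linear independence of mild sol}.

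First I would consider the map $\Phi \colon \mathbb{K}^n \to \mathcal{S}$ defined by $\Phi(\xi) \coloneqq x^L \bigl( \argdot; \Hat{\xi} \bigr)$. Observe that $\xi \mapsto \Hat{\xi}$ is a linear map from $\mathbb{K}^n$ into $\mathcal{M}^1([-r, 0], \mathbb{K}^n)$, since for $\alpha, \beta \in \mathbb{K}$ and $\xi, \zeta \in \mathbb{K}^n$ the function $\widehat{\alpha\xi + \beta\zeta}$ agrees pointwise with $\alpha\Hat{\xi} + \beta\Hat{\zeta}$ on $[-r, 0]$. Composing with the linear assignment $\phi \mapsto x^L(\argdot; \phi)$ guaranteed by Corollary~\ref{cor: linearity wrt initial history}, the map $\Phi$ is linear. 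By the very definition of $\mathcal{S}$, $\Phi$ is surjective onto $\mathcal{S}$.

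Next I would establish that $\Phi$ is injective. Fix any basis $\xi_1, \dots, \xi_n$ of $\mathbb{K}^n$ (for instance the standard basis). Since this system is linearly independent in $\mathbb{K}^n$, implication (a) $\Rightarrow$ (b) in Lemma~\ref{lem: linear independence of mild sol} yields that $x^L \bigl( \argdot; \Hat{\xi}_1 \bigr), \dots, x^L \bigl( \argdot; \Hat{\xi}_n \bigr)$ is a linearly independent system of functions, i.e., the vectors $\Phi(\xi_1), \dots, \Phi(\xi_n)$ are linearly independent in $\mathcal{S}$. A linear map sending a basis to a linearly independent system is injective, so $\Phi$ is injective.

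Consequently $\Phi \colon \mathbb{K}^n \to \mathcal{S}$ is a linear isomorphism, and the images $\Phi(\xi_1), \dots, \Phi(\xi_n)$ also span $\mathcal{S}$: indeed, any element of $\mathcal{S}$ is of the form $\Phi(\xi)$ for some $\xi = \sum_{j=1}^n \alpha_j \xi_j \in \mathbb{K}^n$, and linearity gives $\Phi(\xi) = \sum_{j=1}^n \alpha_j \Phi(\xi_j)$. Therefore $\mathcal{S}$ is $n$-dimensional. There is no serious obstacle here; the only point to keep in mind is to invoke Corollary~\ref{cor: linearity wrt initial history} to make $\Phi$ literally linear (rather than merely a set-theoretic assignment), which is precisely why that corollary was recorded just above.
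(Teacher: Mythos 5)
Your proof is correct and rests on exactly the same ingredients as the paper's: Corollary~\ref{cor: linearity wrt initial history} and the implication (a)~$\Rightarrow$~(b) of Lemma~\ref{lem: linear independence of mild sol}. You merely package the argument as an explicit linear isomorphism $\xi \mapsto x^L\bigl(\argdot; \Hat{\xi}\bigr)$ (with the upper bound on the dimension coming from surjectivity), whereas the paper obtains the upper bound by noting that any $n+1$ elements of $\mathcal{S}$ are linearly dependent since their values at $0$ are; the two arguments are essentially equivalent.
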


\begin{proof}
Let $\bm{b}_1, \dots, \bm{b}_n$ be a basis of $\mathbb{K}^n$.
From Lemma~\ref{lem: linear independence of mild sol}, the system of functions
\begin{equation*}
	x^L \bigl( \argdot; \Hat{\bm{b}}_1 \bigr), \dots, x^L \bigl( \argdot; \Hat{\bm{b}}_n \bigr) \in \mathcal{S}
\end{equation*}
is linearly independent.
Furthermore, for any $x_1, \dots, x_{n + 1} \in \mathcal{S}$, the system of functions is linearly dependent from Lemma~\ref{lem: linear independence of mild sol} because the system $x_1(0), \dots, x_{n + 1}(0) \in \mathbb{K}^n$ of vectors is linearly dependent.
Therefore, the statement holds.
\end{proof}

Theorem~\ref{thm: S is n-dimensional} naturally leads us to the following definition.

\begin{definition}[cf.\ \cite{Hale_1971_Springer}, \cite{Hale_1977}]\label{dfn: fundamental system of solutions and fundamental matrix sol}
We call a basis of the $n$-dimensional linear space $\mathcal{S}$ a \textit{fundamental system of solutions} to the linear RFDE~\eqref{eq: linear RFDE}.
Equivalently, a fundamental system of solutions is the linear independent system
\begin{equation*}
	x^L \bigl( \argdot; \Hat{\bm{b}}_1 \bigr), \dots, x^L \bigl( \argdot; \Hat{\bm{b}}_n \bigr) \colon [-r, \infty) \to \mathbb{K}^n
\end{equation*}
for some basis $\bm{b}_1, \dots, \bm{b}_n$ of $\mathbb{K}^n$.
We call a matrix-valued function having a fundamental system of solutions as its column vectors a \textit{fundamental matrix solution}.
In particular, we call the fundamental matrix solution
\begin{equation*}
	X \colon [-r, \infty) \to M_n(\mathbb{K})
\end{equation*}
satisfying $X(0) = I$ the \textit{principal fundamental matrix solution}.
Here $I$ denotes the identity matrix.
\end{definition}

The above definition is considered as a natural generalization of the corresponding definition for linear ODEs (see \cite[Definition~2.12 in Section~2.1 of Chapter~2]{Chicone_2006}).
See also \cite[Definition~5.10]{Walther_2020_preprint} for a related definition.

We hereafter use the following notation.

\begin{notation}
Let $X^L \colon [-r, \infty) \to M_n(\mathbb{K})$ denote the principal fundamental matrix solution of the linear RFDE~\eqref{eq: linear RFDE}.
By the above definition,
\begin{equation}\label{eq: X^L}
	X^L(\argdot) = \left( x^L(\argdot; \Hat{\bm{e}}_1) \mspace{5mu} \cdots \mspace{5mu} x^L(\argdot; \Hat{\bm{e}}_n) \right)
\end{equation}
holds.
Here $(\bm{e}_1, \dots, \bm{e}_n)$ denotes the standard basis of $\mathbb{K}^n$.
\end{notation}

\begin{remark}
Let $\xi = (\xi_1, \dots, \xi_n) \in \mathbb{K}^n$ be given.
From \eqref{eq: X^L}, we have
\begin{equation*}
	X^L(\argdot)\xi
	= \xi_1x^L(\argdot; \Hat{\bm{e}}_1) + \dots + \xi_nx^L(\argdot; \Hat{\bm{e}}_n).
\end{equation*}
Here the right-hand side is equal to $x^L(\argdot; \xi_1\Hat{\bm{e}}_1 + \dots + \xi_n\Hat{\bm{e}}_n)$ from \eqref{eq: linearity of mild sol wrt initial history}.
Therefore,
\begin{equation*}
	X^L(\argdot)\xi = x^L \bigl( \argdot; \Hat{\xi} \bigr)
\end{equation*}
holds.
\end{remark}

\begin{remark}
We consider an autonomous linear ODE
\begin{equation}\label{eq: linear ODE}
	\Dot{x} = Ax
\end{equation}
for some $A \in M_n(\mathbb{K})$.
For a system of global solutions $y_1, \dots, y_m \colon \mathbb{R} \to \mathbb{K}^n$ to the linear ODE~\eqref{eq: linear ODE}, the following statements are equivalent:
\begin{enumerate}[label=(\alph*)]
\item For any $t \in \mathbb{R}$, $y_1(t), \dots, y_m(t) \in \mathbb{K}^n$ is linearly independent.
\item For some $t_0 \in \mathbb{R}$, $y_1(t_0), \dots, y_m(t_0) \in \mathbb{K}^n$ is linearly independent.
\item The system of functions $y_1, \dots, y_m$ is linearly independent.
\end{enumerate}
The nontrivial part is (c) $\Rightarrow$ (a), which is proved by the principle of superposition and by the unique existence of a solution of \eqref{eq: linear ODE} under an initial condition
\begin{equation*}
	x(t_0) = \xi \in \mathbb{K}^n.
\end{equation*}
Compared with this situation, the linear independence of vectors $x_1(t_0), \dots, x_m(t_0) \in \mathbb{K}^n$ for each $t_0 > 0$ is not necessarily guaranteed for the functions $x_1, \dots, x_m$ in Lemma~\ref{lem: linear independence of mild sol} under the assumption that (a) or (b) in Lemma~\ref{lem: linear independence of mild sol} holds.
This should be compared with an example given by Popov~\cite{Popov_1972}, which is a three dimensional system of linear DDEs whose solution values are contained in a hyperplane of $\mathbb{R}^3$ after a certain amount of time has elapsed.
See also \cite[Section~3.5]{Hale_1977} and \cite[Section~3.5]{Hale--VerduynLunel_1993}.
\end{remark}

\subsection{Remarks}

\subsubsection{Consideration by Delfour}

The definition of a mild solution in Definition~\ref{dfn: mild sol} is also related to the consideration by Delfour~\cite{Delfour_1980}.
In that paper, the author considered a continuous linear map
\begin{equation*}
	L \colon W^{1, p}((-r, 0), \mathbb{R}^n) \to \mathbb{R}^n
\end{equation*}
for some $p \in [1, \infty)$.
Here $W^{1, p}((-r, 0), \mathbb{R}^n)$ is the Sobolev space (e.g., see Brezis~\cite[Section 8.2]{Brezis_2011}).
The author used the integral representation of $L$ given by
\begin{equation}\label{eq: continuous linear map on W^{1, p}}
	L\phi
	\coloneqq \int_{-r}^0 [A_1(\theta)\phi(\theta) + A_2(\theta)\phi'(\theta)] \mspace{2mu} \mathrm{d}\theta,
\end{equation}
where $A_1, A_2 \colon (-r, 0) \to M_n(\mathbb{R})$ are $n \times n$ real matrix-valued $q$-integrable functions with $(1/p) + (1/q) = 1$.
For the first term of the right-hand side of \eqref{eq: continuous linear map on W^{1, p}}, we have
\begin{equation*}
	\int_0^t
		\biggl( \int_{-r}^0 A_1(\theta)x(s + \theta) \mspace{2mu} \mathrm{d}\theta \biggr)
	\mspace{2mu} \mathrm{d}s
	= \int_{-r}^0 A_1(\theta)
		\biggl( \int_0^t x(s + \theta) \mspace{2mu} \mathrm{d}s \biggr)
	\mspace{2mu} \mathrm{d}\theta
\end{equation*}
under the exchange of order of integration.
Here we have replaced $\phi$ with $x_s$ and have integrated from $0$ to $t$ with respect to $s$.
In view of the above equality, it can be said that the concept of mild solutions in Definition~\ref{dfn: mild sol} is also hidden in \cite{Delfour_1980}.
Theorem~\ref{thm: unique existence of mild sol} and its proof should be compared with the existence and uniqueness result in \cite{Delfour_1980}.

\subsubsection{Mild solutions for linear differential difference equations}

We consider an autonomous linear \textit{differential difference equation}
\begin{equation}\label{eq: linear differential difference eq, multiple delays}
	\Dot{x}(t) = Ax(t) + \sum_{k = 1}^m B_kx(t - \tau_k)
	\mspace{25mu}
	(t \ge 0)
\end{equation}
for $n \times n$ matrices $A, B_1, \dots, B_m \in M_n(\mathbb{K})$ and $\tau_1, \dots, \tau_m \in (0, r]$.
We refer the reader to \cite{Bellman--Cooke_1963} as a general reference of the theory of differential difference equations.

The linear DDE~\eqref{eq: linear differential difference eq, multiple delays} can be expressed in the form of the linear RFDE~\eqref{eq: linear RFDE} by defining a continuous linear map $L \colon C([-r, 0], \mathbb{K}^n) \to \mathbb{K}^n$ by
\begin{equation}\label{eq: L, linear differential difference eq, multiple delays}
	L\psi = A\psi(0) + \sum_{k = 1}^m B_k\psi(-\tau_k)
\end{equation}
for $\psi \in C([-r, 0], \mathbb{K}^n)$.
Let $\phi \in \mathcal{M}^1([-r, 0], \mathbb{K}^n)$ be given and $x \coloneqq x^L(\argdot; \phi)$ for the above continuous linear map $L$.
By the definition of mild solutions (see Definitions~\ref{dfn: mild sol} and \ref{dfn: int_0^t x_s ds}), $x$ satisfies
\begin{align*}
	x(t)
	&= \phi(0) + L\int_0^t x_s \mspace{2mu} \mathrm{d}s \\
	&= \phi(0) + A\int_0^t x(s) \mspace{2mu} \mathrm{d}s + \sum_{k = 1}^m B_k\int_{-\tau_k}^{t - \tau_k} x(s) \mspace{2mu} \mathrm{d}s
\end{align*}
for all $t \ge 0$.
Since the last term is equal to
\begin{equation*}
	\phi(0) + \int_0^t Ax(s) \mspace{2mu} \mathrm{d}s + \sum_{k = 1}^m \int_{-\tau_k}^{t - \tau_k} B_kx(s) \mspace{2mu} \mathrm{d}s,
\end{equation*}
$x$ also satisfies
\begin{equation*}
	\Dot{x}(t) = Ax(t) + \sum_{k = 1}^m B_kx(t - \tau_k)
	\mspace{25mu}
	(\text{a.e.\ $t \ge 0$})
\end{equation*}
by the Lebesgue differentiation theorem (see Subsection~\ref{subsec: motivation, Volterra operator and RS convolution}).

\section{Differential equation satisfied by principal fundamental matrix solution}\label{sec: DE satisfied by X^L}

In this section, we consider the linear RFDE~\eqref{eq: linear RFDE}
\begin{equation*}
	\Dot{x}(t) = Lx_t
	\mspace{25mu}
	(t \ge 0)
\end{equation*}
for a continuous linear map $L \colon C([-r, 0], \mathbb{K}^n) \to \mathbb{K}^n$.
We choose a matrix-valued function $\eta \colon [-r, 0] \to M_n(\mathbb{K})$ of bounded variation so that $L$ is represented as the Riemann-Stieltjes integral~\eqref{eq: L as RS integral}
\begin{equation*}
	L\psi = \int_{-r}^0 \mathrm{d}\eta(\theta) \mspace{2mu} \psi(\theta)
\end{equation*}
for $\psi \in C([-r, 0], \mathbb{K}^n)$.
We recall that the domain of definition of $\eta$ is extended to $(-\infty, 0]$ by letting $\eta(\theta) \coloneqq \eta(-r)$ for $\theta \in (-\infty, -r]$.
We will use the following notation.

\begin{notation}
Let $\Check{\eta} \colon [0, \infty) \to M_n(\mathbb{K})$ be the function given by
\begin{equation*}
	\Check{\eta}(u) \coloneqq -\eta(-u)
\end{equation*}
for $u \in [0, \infty)$.
\end{notation}

In this paper, a function defined on $[0, \infty)$ is said to be of \textit{locally bounded variation} if it is of bounded variation on any closed and bounded interval of $[0, \infty)$.
A function of locally bounded variation is also called a \textit{locally BV function}.
Then the above function $\Check{\eta}$ is a function of locally bounded variation whose value is constant on $[r, \infty)$.
It is related to the reversal formula for Riemann-Stieltjes integrals (see Theorem~\ref{thm: reversal formula for RS integrals}).

It will be turned out that the notions of Volterra operator and Riemann-Stieltjes convolution are useful to deduce a differential equation that is satisfied by the principal fundamental matrix solution $X^L \colon [-r, \infty) \to M_n(\mathbb{K})$ of the linear RFDE~\eqref{eq: linear RFDE}.

\subsection{Definitions}\label{subsec: definitions of Volterra op and RS convolution}

\begin{definition}\label{dfn: Volterra operator}
For each $f \in \mathcal{L}^1_\mathrm{loc}([0, \infty), M_n(\mathbb{K}))$, let $Vf \colon [0, \infty) \to M_n(\mathbb{K})$ be the function defined by
\begin{equation}\label{eq: Volterra operator}
	(Vf)(t) = \int_0^t f(s) \mspace{2mu} \mathrm{d}s.
\end{equation}
Here the right-hand side is a Lebesgue integral.
We call $V$ the \textit{Volterra operator}.
\end{definition}

For details related to the Volterra operator as a linear operator on $C([0, T], \mathbb{K})$ for each $T > 0$, see \cite{Shapiro_2018}.
By using the Lebesgue differentiation theorem (e.g., see \cite[Theorem 1.3 in Section 1 of Chapter 3]{Stein--Shakarchi_2005}) component-wise, it holds that $Vf$ is \textit{locally absolutely continuous} (i.e., locally absolutely continuous on any closed and bounded interval of $[0, \infty)$), differentiable almost everywhere on $[0, \infty)$, and
\begin{equation*}
	(Vf)'(t) = f(t)
\end{equation*}
holds for almost all $t \in [0, \infty)$.

\begin{definition}\label{dfn: RS convolution}
For each function $\alpha \colon [0, \infty) \to M_n(\mathbb{K})$ of locally bounded variation and for each continuous function $f \colon [0, \infty) \to M_n(\mathbb{K})$, we define a function $\mathrm{d}\alpha * f \colon [0, \infty) \to M_n(\mathbb{K})$ by
\begin{equation*}
	(\mathrm{d}\alpha * f)(t)
	\coloneqq \int_0^t \mathrm{d}\alpha(u) \mspace{2mu} f(t - u).
\end{equation*}
Here the right-hand side is a Riemann-Stieltjes integral.
This function is called a \textit{Riemann-Stieltjes convolution}.
\end{definition}

See \cite[Definition 10.3 in Section 10.3]{Shapiro_2018} for the scalar-valued case.
The above definition should be compared with the treatment in \cite[Eq.~(2.13) in Chapter 2]{VerduynLunel_1989} and \cite[Corollary~2.5 in Section~I.2 of Appendix~I]{Diekmann--vanGils--Lunel--Walther_1995}, where an appearing integral is not a Riemann-Stieltjes integral but a Lebesgue-Stieltjes integral.

\subsection{Motivation}\label{subsec: motivation, Volterra operator and RS convolution}

The following lemma motivates the use of Volterra operator and Riemann-Stieltjes convolution.

\begin{lemma}\label{lem: L*int_0^t x_s ds, instantaneous input}
If $x \in \mathcal{L}^1_\mathrm{loc}([-r, \infty), \mathbb{K}^n)$ satisfies $x_0 = \Hat{\xi}$ for some $\xi \in \mathbb{K}^n$, then
\begin{equation*}
	L\int_0^t x_s \mspace{2mu} \mathrm{d}s
	= \int_{-t}^0 \mathrm{d}\eta(\theta) \mspace{2mu}
		\biggl( \int_0^{t + \theta} x(s) \mspace{2mu} \mathrm{d}s \biggr)
	= \int_0^t \mathrm{d}\Check{\eta}(u) \mspace{2mu}
		\biggl( \int_0^{t - u} x(s) \mspace{2mu} \mathrm{d}s \biggr)
\end{equation*}
holds for all $t \ge 0$.
\end{lemma}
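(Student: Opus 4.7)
The plan is to unfold the definition of $L$ and exploit the vanishing of $x$ on $[-r,0)$ that is forced by the assumption $x_0 = \Hat{\xi}$, then deduce the second equality by the reversal formula for Riemann-Stieltjes integrals recalled earlier in the paper.

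First I would write, using the integral representation \eqref{eq: L as RS integral} of $L$ together with Definition~\ref{dfn: int_0^t x_s ds},
\begin{equation*}
    L\int_0^t x_s \mspace{2mu} \dd s
    = \int_{-r}^0 \dd\eta(\theta) \mspace{2mu} \biggl( \int_0^t x(s + \theta) \mspace{2mu} \dd s \biggr)
    = \int_{-r}^0 \dd\eta(\theta) \mspace{2mu} \biggl( \int_{\theta}^{t + \theta} x(s) \mspace{2mu} \dd s \biggr),
\end{equation*}
the second equality being the translation $s \mapsto s + \theta$ in the inner Lebesgue integral. The condition $x_0 = \Hat{\xi}$ means $x(s) = 0$ for almost all $s \in [-r, 0)$, so for every $\theta \in [-r, 0]$ I obtain
\begin{equation*}
    \int_{\theta}^{t + \theta} x(s) \mspace{2mu} \dd s
    = \begin{cases} \int_0^{t + \theta} x(s) \mspace{2mu} \dd s & (\theta \ge -t), \\ 0 & (\theta < -t). \end{cases}
\end{equation*}
Both branches agree at $\theta = -t$, so the resulting integrand $\theta \mapsto \int_0^{(t+\theta) \vee 0} x(s) \mspace{2mu} \dd s$ is continuous on $[-r, 0]$ (continuity in $\theta$ comes from absolute continuity of the indefinite Lebesgue integral of $x$).

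Next I would localize the Riemann-Stieltjes integral to $[-t, 0]$. If $t \le r$, the integrand vanishes identically on $[-r, -t]$, so additivity of RS integrals with continuous integrand on adjacent subintervals yields $\int_{-r}^0 \dd\eta = \int_{-t}^0 \dd\eta$. If $t > r$, the extension $\eta(\theta) \coloneqq \eta(-r)$ on $(-\infty, -r]$ is constant on $[-t, -r]$, so the RS integral over $[-t, -r]$ is $0$, and additivity again gives the same identity. This establishes
\begin{equation*}
    L\int_0^t x_s \mspace{2mu} \dd s
    = \int_{-t}^0 \dd\eta(\theta) \mspace{2mu} \biggl( \int_0^{t + \theta} x(s) \mspace{2mu} \dd s \biggr),
\end{equation*}
which is the first asserted equality.

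For the second equality, I would invoke the reversal formula for Riemann-Stieltjes integrals (the theorem referred to as \textbf{Theorem~\ref{thm: reversal formula for RS integrals}} in the paper): the substitution $u = -\theta$ together with $\Check{\eta}(u) = -\eta(-u)$ turns $\int_{-t}^0 \dd\eta(\theta) \mspace{2mu} f(\theta)$ into $\int_0^t \dd\Check{\eta}(u) \mspace{2mu} f(-u)$ applied to the continuous function $f(\theta) = \int_0^{t + \theta} x(s) \mspace{2mu} \dd s$, giving $f(-u) = \int_0^{t - u} x(s) \mspace{2mu} \dd s$. The main delicate point is the additivity step: one must be careful that the jump of $\eta$ at $\theta = -r$ (when $t > r$) does not contribute when we split at $-r$, but this is ensured because the RS integrand is continuous and the extended $\eta$ is constant on $(-\infty, -r]$, so no mass is picked up to the left of $-r$. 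The rest is a direct calculation.
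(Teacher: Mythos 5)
Your proposal is correct and follows essentially the same route as the paper: unfold $L$ via the Riemann--Stieltjes representation, use that $x$ vanishes almost everywhere on $[-r, 0)$ to reduce the inner integral to $\int_0^{t+\theta} x(s) \, \mathrm{d}s$, localize to $[-t, 0]$ by additivity on sub-intervals (splitting the cases $t \le r$ and $t > r$, the latter using that the extended $\eta$ is constant on $(-\infty, -r]$), and conclude with the reversal formula (Theorem~\ref{thm: reversal formula for RS integrals}). Your additional remarks on the continuity of the integrand and the harmlessness of the split at $-r$ are consistent with the paper's hypotheses for the additivity theorem, so no gap remains.
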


\begin{proof}
Let $t > 0$ be fixed.
Since $\int_\theta^0 x(s) \mspace{2mu} \mathrm{d}s = 0$ by the assumption, we have
\begin{equation*}
	L\int_0^t x_s \mspace{2mu} \mathrm{d}s
	= \int_{-r}^0 \mathrm{d}\eta(\theta) \mspace{2mu}
		\biggl( \int_0^{t + \theta} x(s) \mspace{2mu} \mathrm{d}s \biggr).
\end{equation*}
We examine the right-hand side by dividing the consideration into the following cases:
\begin{itemize}
\item Case: $t \in [0, r)$.
In this case, $t + \theta \ge 0$ is equivalent to $\theta \ge -t$ for each $\theta \in [-r, 0]$.
Since $\int_0^{t + \theta} x(s) \mspace{2mu} \mathrm{d}s = 0$ for $\theta \in [-r, -t)$,
\begin{equation*}
	L\int_0^t x_s \mspace{2mu} \mathrm{d}s
	= \int_{-t}^0 \mathrm{d}\eta(\theta) \mspace{2mu}
		\biggl( \int_0^{t + \theta} x(s) \mspace{2mu} \mathrm{d}s \biggr)
\end{equation*}
holds by the additivity of Riemann-Stieltjes integrals on sub-intervals.
\item Case: $t \ge r$.
In this case, $t + \theta \ge 0$ holds for all $\theta \in [-r, 0]$.
Since $\eta$ is constant on $[-t, -r]$,
\begin{equation*}
	L\int_0^t x_s \mspace{2mu} \mathrm{d}s
	= \int_{-t}^0 \mathrm{d}\eta(\theta) \mspace{2mu}
		\biggl( \int_0^{t + \theta} x(s) \mspace{2mu} \mathrm{d}s \biggr)
\end{equation*}
holds.
\end{itemize}
Therefore, the expressions of $L\int_0^t x_s \mspace{2mu} \mathrm{d}s$ are obtained in combination with the reversal formula for Riemann-Stieltjes integrals (see Theorem~\ref{thm: reversal formula for RS integrals}).
\end{proof}

\subsection{Properties of Volterra operator and Riemann-Stieltjes convolution}\label{subsec: properties of Volterra op and RS convolution}

Throughout this subsection, let $\alpha \colon [0, \infty) \to M_n(\mathbb{K})$ be a function of locally bounded variation and $f \colon [0, \infty) \to M_n(\mathbb{K})$ be a continuous function.

\subsubsection{Continuity and local integrability}

The following is a simple result about the continuity of Riemann-Stieltjes convolution.

\begin{lemma}\label{lem: continuity of RS convolution}
If $f(0) = O$, then $\mathrm{d}\alpha * f$ is continuous.
\end{lemma}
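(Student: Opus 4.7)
The plan is to prove continuity of $\dd\alpha * f$ at an arbitrary fixed $t_0 \ge 0$ by splitting the integral defining $(\dd\alpha * f)(t)$ according to its overlap with the integration domain of $(\dd\alpha * f)(t_0)$. Using the additivity of Riemann-Stieltjes integrals over subintervals, one writes, for $t > t_0$,
\begin{equation*}
(\dd\alpha * f)(t) - (\dd\alpha * f)(t_0)
= \int_0^{t_0} \dd\alpha(u) \mspace{2mu} \bigl[ f(t - u) - f(t_0 - u) \bigr]
+ \int_{t_0}^t \dd\alpha(u) \mspace{2mu} f(t - u),
\end{equation*}
and a symmetric expression for $t < t_0$. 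The task then reduces to showing each of the two pieces tends to $O$ as $t \to t_0$.

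For the first piece, the uniform continuity of $f$ on a closed and bounded interval containing $\{t - u : u \in [0, t_0]\}$ for $t$ near $t_0$ gives $\sup_{u \in [0, t_0]} \abs{f(t - u) - f(t_0 - u)} \to 0$. Combined with the standard Riemann-Stieltjes estimate
\begin{equation*}
\abs{\int_a^b \dd\alpha(u) \mspace{2mu} g(u)} \le \sup_{u \in [a, b]} \abs{g(u)} \cdot \Var(\alpha; [a, b])
\end{equation*}
and the finiteness of $\Var(\alpha; [0, t_0])$, this piece vanishes in the limit. For the second piece, the substitution $s = t - u$ gives $\sup_{u \in [t_0, t]} \abs{f(t - u)} = \sup_{s \in [0, t - t_0]} \abs{f(s)}$, which tends to $\abs{f(0)} = 0$ by continuity of $f$ at $0$ together with the hypothesis $f(0) = O$. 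The same Riemann-Stieltjes estimate bounds the second piece by this supremum times $\Var(\alpha; [t_0, t])$, and the latter remains bounded by $\Var(\alpha; [t_0, t_0 + 1])$ for $t$ close to $t_0$, so the product tends to $O$.

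The main obstacle — and precisely the reason the hypothesis $f(0) = O$ is indispensable — is that $\Var(\alpha; [t_0, t])$ need not vanish as $t \to t_0^+$: if $\alpha$ has a jump at $t_0$, this local variation approaches a positive number (the jump size). Without the assumption $f(0) = O$, the ``new'' piece $\int_{t_0}^t \dd\alpha(u) \mspace{2mu} f(t - u)$ would tend to $[\alpha(t_0^+) - \alpha(t_0)] \mspace{2mu} f(0) \ne O$ in general, so continuity at $t_0$ could fail. The one-sided case $t \to t_0^-$ is handled symmetrically, with the supremum $\sup_{s \in [0, t_0 - t]} \abs{f(s)} \to 0$ playing the same role in controlling the missing piece over $[t, t_0]$.
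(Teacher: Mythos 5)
Your proof is correct and takes essentially the same route as the paper: both split the two convolution integrals over their common interval plus the leftover piece near the moving endpoint, bound the common part by uniform continuity of $f$ together with the estimate $\abs{\int \mathrm{d}\alpha(u)\, g(u)} \le \sup\abs{g}\cdot\Var(\alpha)$, and use $f(0) = O$ to make the leftover piece negligible. The only cosmetic difference is that the paper extends $f$ by $O$ to $(-\infty, 0]$, so the leftover term $\int_s^t \mathrm{d}\alpha(u)\, f(s - u)$ equals $[\alpha(t) - \alpha(s)]f(0) = O$ exactly and the whole difference collapses into a single integral handled by one uniform-continuity estimate, whereas you bound that piece directly via continuity of $f$ at $0$ and the boundedness (not smallness) of $\Var(\alpha)$ on $[t_0, t]$ --- your observation about a possible jump of $\alpha$ at $t_0$ correctly identifies why this is the right estimate and why the hypothesis $f(0) = O$ is indispensable.
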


\begin{proof}
We extend the domain of definition of $f$ to $\mathbb{R}$ by defining $f(t) \coloneqq f(0) = O$ for $t \le 0$.
Then the obtained function $f \colon \mathbb{R} \to M_n(\mathbb{K})$ is continuous.
Let $s, t \in [0, \infty)$ be given so that $s < t$.
By the additivity of Riemann-Stieltjes integrals on sub-intervals,
\begin{align*}
	(\mathrm{d}\alpha * f)(s)
	&= \int_0^s \mathrm{d}\alpha(u) \mspace{2mu} f(s - u) \\
	&= \int_0^t \mathrm{d}\alpha(u) \mspace{2mu} f(s - u) - \int_s^t \mathrm{d}\alpha(u) \mspace{2mu} f(s - u)
\end{align*}
holds.
Since
\begin{equation*}
	\int_s^t \mathrm{d}\alpha(u) \mspace{2mu} f(s - u)
	= [\alpha(t) - \alpha(s)]f(0)
	= O,
\end{equation*}
we have
\begin{equation*}
	(\mathrm{d}\alpha * f)(t) - (\mathrm{d}\alpha * f)(s)
	= \int_0^t \mathrm{d}\alpha(u) \mspace{2mu} [f(t - u) - f(s - u)].
\end{equation*}
By combining this and the uniform continuity of $f$ on closed and bounded intervals, the continuity of $\mathrm{d}\alpha * f$ is obtained.
\end{proof}

See \cite[Lemma~10.4 in Section~10.3]{Shapiro_2018} for the corresponding result for scalar-valued functions.
In this paper, we say that a function is \textit{locally Riemann integrable} if it is Riemann integrable on any closed and bounded interval.

\begin{theorem}\label{thm: local integrability of RS convolution}
$\mathrm{d}\alpha * f$ is a sum of a continuous function and a function of locally bounded variation.
Consequently, $\mathrm{d}\alpha * f$ is locally Riemann integrable.
\end{theorem}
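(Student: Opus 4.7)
The plan is to split $f$ into its value at $0$ plus a continuous function vanishing at $0$, and then to use Lemma~\ref{lem: continuity of RS convolution} on the latter part while handling the former by direct evaluation of the Riemann--Stieltjes integral against a constant integrand.

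More concretely, write $f(t) = g(t) + f(0)$, where $g(t) \coloneqq f(t) - f(0)$. Then $g \colon [0, \infty) \to M_n(\mathbb{K})$ is continuous and $g(0) = O$. By the linearity of the Riemann--Stieltjes integral with respect to the integrand (see Appendix~\ref{sec: RS integrals wrt matrix-valued functions}), we have
\begin{equation*}
	(\mathrm{d}\alpha * f)(t)
	= \int_0^t \mathrm{d}\alpha(u) \mspace{2mu} g(t - u) + \int_0^t \mathrm{d}\alpha(u) \mspace{2mu} f(0)
	= (\mathrm{d}\alpha * g)(t) + [\alpha(t) - \alpha(0)] f(0)
\end{equation*}
for every $t \ge 0$. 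The first summand is continuous by Lemma~\ref{lem: continuity of RS convolution}, and the second summand is the product of the locally BV function $\alpha - \alpha(0)$ with the constant matrix $f(0)$, which is again a function of locally bounded variation. This establishes the first claim.

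For the second claim, I would invoke the two standard facts that (i) any continuous function on a closed and bounded interval is Riemann integrable there, and (ii) any function of bounded variation on a closed and bounded interval is Riemann integrable there, because it is the difference of two monotone functions, each having at most countably many discontinuities. Applying these component-wise to the matrix-valued decomposition above, and using linearity of the Riemann integral, yields that $\mathrm{d}\alpha * f$ is Riemann integrable on every closed and bounded subinterval of $[0, \infty)$.

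I do not expect any real obstacle here; the only point requiring a small check is the linearity of the Riemann--Stieltjes integral used to peel off the constant part $f(0)$ from the integrand, which is provided by the appendix on matrix-valued Riemann--Stieltjes integrals.
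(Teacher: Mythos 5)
Your proposal is correct and follows essentially the same route as the paper: the decomposition $f = (f - f(0)) + f(0)$, the continuity of $\mathrm{d}\alpha * (f - f(0))$ via Lemma~\ref{lem: continuity of RS convolution}, and the explicit evaluation $(\mathrm{d}\alpha * f(0))(t) = [\alpha(t) - \alpha(0)]f(0)$ giving the locally BV part. Your extra remarks on why continuous and BV functions are locally Riemann integrable (component-wise, via monotone decomposition) simply spell out the ``consequently'' step that the paper leaves implicit.
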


\begin{proof}
By using $f = (f - f(0)) + f(0)$, we have
\begin{equation}\label{eq: decomposition of RS convolution}
	\mathrm{d}\alpha * f
	= \mathrm{d}\alpha * (f - f(0)) + \mathrm{d}\alpha * f(0).
\end{equation}
The first term in the right-hand side is continuous from Lemma~\ref{lem: continuity of RS convolution}.
The second term is of locally bounded variation since
\begin{equation*}
	(\mathrm{d}\alpha * f(0))(t)
	= [\alpha(t) - \alpha(0)]f(0)
\end{equation*}
holds for all $t \ge 0$.
Therefore, the conclusion holds.
\end{proof}

\begin{remark}\label{rmk: continuity of RS convolution}
Theorem~\ref{thm: local integrability of RS convolution} yields that $V(\mathrm{d}\alpha * f)$ makes sense.
Furthermore if $\alpha$ is continuous, then \eqref{eq: decomposition of RS convolution} shows that $\mathrm{d}\alpha * f$ is also continuous.
\end{remark}

\subsubsection{Riemann-Stieltjes convolution under Volterra operator}

The Riemann-Stieltjes convolution and Volterra operator are related in the following way.

\begin{theorem}\label{thm: RS convolution under Volterra operator}
The equality
\begin{equation}\label{eq: RS convolution under Volterra operator}
	V(\mathrm{d}\alpha * f) = \mathrm{d}\alpha * Vf
\end{equation}
holds.
Consequently, $\mathrm{d}\alpha * Vf$ is locally absolutely continuous, differentiable almost everywhere, and satisfies
\begin{equation*}
	(\mathrm{d}\alpha * Vf)'(t) = (\mathrm{d}\alpha * f)(t)
\end{equation*}
holds for almost all $t \ge 0$.
\end{theorem}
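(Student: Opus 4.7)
The plan is to verify the identity by exchanging the order of integration between the Riemann--Stieltjes integral in $u$ and the Riemann integral in $s$, after first peeling off the constant value $f(0)$ in the same manner as in the proof of Theorem~\ref{thm: local integrability of RS convolution}. Once (\ref{eq: RS convolution under Volterra operator}) is proved, the rest of the statement follows at once from the Lebesgue-differentiation property of $V$ recorded immediately after Definition~\ref{dfn: Volterra operator}.

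First I would reduce to the case $f(0) = O$. Writing $f = g + c$ with the constant $c \coloneqq f(0)$ and $g \coloneqq f - f(0)$ (so that $g$ is continuous and $g(0) = O$), both $V$ and the operation $\alpha \mapsto \mathrm{d}\alpha * (\argdot)$ are $\mathbb{K}$-linear in the second argument, so it suffices to check (\ref{eq: RS convolution under Volterra operator}) separately for $c$ and for $g$. For the constant piece, direct computation gives $(\mathrm{d}\alpha * c)(s) = [\alpha(s) - \alpha(0)]c$ and $(Vc)(r) = rc$, while Riemann--Stieltjes integration by parts applied to the $C^1$ function $u \mapsto t - u$ yields
\begin{equation*}
	\int_0^t \mathrm{d}\alpha(u) \mspace{2mu} (t - u) c
	= \biggl( \int_0^t \alpha(u) \mspace{2mu} \mathrm{d}u - t\alpha(0) \biggr) c
	= \int_0^t [\alpha(s) - \alpha(0)] \mspace{2mu} \mathrm{d}s \cdot c,
\end{equation*}
which is precisely $V(\mathrm{d}\alpha * c)(t)$.

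For $g$ I would extend the domain of definition to $\mathbb{R}$ by setting $g(r) \coloneqq O$ for $r < 0$; because $g(0) = O$, this extension is continuous. For each $s \in [0, t]$ the integrand $g(s - u)$ vanishes for $u \in [s, t]$, so
\begin{equation*}
	(\mathrm{d}\alpha * g)(s)
	= \int_0^s \mathrm{d}\alpha(u) \mspace{2mu} g(s - u)
	= \int_0^t \mathrm{d}\alpha(u) \mspace{2mu} g(s - u),
\end{equation*}
and therefore $V(\mathrm{d}\alpha * g)(t) = \int_0^t \int_0^t \mathrm{d}\alpha(u) \mspace{2mu} g(s - u) \mspace{2mu} \mathrm{d}s$. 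Applying a Fubini-type theorem for Riemann--Stieltjes integrals against a continuous integrand on the rectangle $[0, t]^2$ (from the results collected in Appendix~\ref{sec: RS integrals wrt matrix-valued functions}, and in any case provable by uniform Riemann-sum approximation using the uniform continuity of $(s, u) \mapsto g(s - u)$) swaps the order of integration, and the inner Riemann integral becomes $\int_0^t g(s - u) \mspace{2mu} \mathrm{d}s = \int_0^{t - u} g(r) \mspace{2mu} \mathrm{d}r = (Vg)(t - u)$, where I again use that $g \equiv O$ on $(-\infty, 0)$. The result is $(\mathrm{d}\alpha * Vg)(t)$, finishing the verification of (\ref{eq: RS convolution under Volterra operator}).

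The main obstacle is the Fubini exchange; everything else is linearity, a direct calculation, and an integration-by-parts identity. Once (\ref{eq: RS convolution under Volterra operator}) is in hand, Theorem~\ref{thm: local integrability of RS convolution} ensures $\mathrm{d}\alpha * f \in \mathcal{L}^1_\mathrm{loc}([0, \infty), M_n(\mathbb{K}))$, so the properties of $V$ stated after Definition~\ref{dfn: Volterra operator} imply that the common function $\mathrm{d}\alpha * Vf = V(\mathrm{d}\alpha * f)$ is locally absolutely continuous, differentiable almost everywhere, and satisfies $(\mathrm{d}\alpha * Vf)'(t) = (\mathrm{d}\alpha * f)(t)$ for almost every $t \ge 0$.
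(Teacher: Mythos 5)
Your proof is correct and follows essentially the same route as the paper: both arguments rest on the iterated-integral theorem for Riemann--Stieltjes integrals (Theorem~\ref{thm: iterated integrals for RS integral}) together with the integration by parts formula, and the final regularity statement is deduced from the properties of the Volterra operator and Theorem~\ref{thm: local integrability of RS convolution} exactly as in the text. The only difference is bookkeeping: you split off the constant $f(0)$ at the outset, so the zero-extended remainder produces no correction terms, whereas the paper extends $f$ constantly by $f(0)$ and absorbs the resulting correction term through the same integration by parts at the end.
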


For the proof, we need the following theorem.
It contains the result on iterated Riemann integrals for continuous functions on rectangles as a special case.

\begin{theorem}\label{thm: iterated integrals for RS integral}
Let $[a, b]$ and $[c, d]$ be closed and bounded intervals of $\mathbb{R}$, $f \colon [a, b] \times [c, d] \to M_n(\mathbb{K})$ be a continuous function, and $\alpha \colon [a, b] \to M_n(\mathbb{K})$ be a function of bounded variation.
Then
\begin{equation}\label{eq: iterated RS integrals}
	\int_a^b \mathrm{d}\alpha(x) \mspace{2mu}
		\biggl( \int_c^d f(x, y) \mspace{2mu} \mathrm{d}y \biggr)
	= \int_c^d
		\biggl( \int_a^b \mathrm{d}\alpha(x) \mspace{2mu} f(x, y) \biggr)
	\mspace{2mu} \mathrm{d}y
\end{equation}
holds.
\end{theorem}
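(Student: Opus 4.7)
The plan is to reduce the identity to a statement about interchanging a Riemann sum with a Riemann-Stieltjes integral, using the joint continuity of $f$ to get uniform control. First I would verify that both sides are well defined. For fixed $x \in [a,b]$, the map $y \mapsto f(x,y)$ is continuous, so the inner Riemann integral on the left produces a function $F(x) \coloneqq \int_c^d f(x,y) \mspace{2mu}\mathrm{d}y$; uniform continuity of $f$ on the compact rectangle implies that $F$ is continuous on $[a,b]$, so the outer Riemann-Stieltjes integral against $\alpha$ exists (see Appendix~\ref{sec: RS integrals wrt matrix-valued functions}). Symmetrically, for each $y$, the map $x \mapsto f(x,y)$ is continuous, so $G(y) \coloneqq \int_a^b \mathrm{d}\alpha(x) \mspace{2mu} f(x,y)$ is a well-defined matrix, and the uniform continuity of $f$ combined with the estimate $\abs{\int_a^b \mathrm{d}\alpha(x) \mspace{2mu} h(x)} \le \Var(\alpha) \cdot \sup_{x} \abs{h(x)}$ (applied to $h = f(\argdot,y) - f(\argdot,y')$) yields continuity of $G$ on $[c,d]$, so the right-hand side is meaningful as a Riemann integral.

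Next I would approximate the inner Riemann integral on the left uniformly in $x$. For each partition $\pi_m \colon c = y_0 < y_1 < \cdots < y_m = d$ with mesh tending to $0$, choose tags $\xi_j \in [y_{j-1}, y_j]$ and set
\begin{equation*}
    S_m(x) \coloneqq \sum_{j=1}^{m} f(x, \xi_j)(y_j - y_{j-1}).
\end{equation*}
By uniform continuity of $f$ on $[a,b] \times [c,d]$, one has $\sup_{x \in [a,b]} \abs{S_m(x) - F(x)} \to 0$ as $m \to \infty$. Applying the Riemann-Stieltjes integral against $\alpha$ and using its linearity gives
\begin{equation*}
    \int_a^b \mathrm{d}\alpha(x) \mspace{2mu} S_m(x) = \sum_{j=1}^{m} \biggl( \int_a^b \mathrm{d}\alpha(x) \mspace{2mu} f(x, \xi_j) \biggr) (y_j - y_{j-1}) = \sum_{j=1}^{m} G(\xi_j)(y_j - y_{j-1}),
\end{equation*}
which is precisely a Riemann sum for $\int_c^d G(y) \mspace{2mu}\mathrm{d}y$, i.e.\ the right-hand side of \eqref{eq: iterated RS integrals}.

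Finally I would pass to the limit $m \to \infty$ on both sides of the above equality. On the left, the estimate $\abs{\int_a^b \mathrm{d}\alpha(x) \mspace{2mu} [S_m(x) - F(x)]} \le \Var(\alpha) \cdot \sup_x \abs{S_m(x) - F(x)}$ shows $\int_a^b \mathrm{d}\alpha \mspace{2mu} S_m \to \int_a^b \mathrm{d}\alpha \mspace{2mu} F$, which is the left-hand side of \eqref{eq: iterated RS integrals}. On the right, the Riemann sums for $G$ converge to $\int_c^d G(y) \mspace{2mu}\mathrm{d}y$ since $G$ is continuous. Equating the two limits yields \eqref{eq: iterated RS integrals}.

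The main obstacle I anticipate is bookkeeping with the non-commutativity of matrix multiplication: in the Riemann-Stieltjes sums, the factors $\mathrm{d}\alpha(x)$ appear on the left of the integrand, while the scalars $y_j - y_{j-1}$ commute freely, so linearity in the integrand on the correct side must be used carefully. This should be a routine consequence of the linearity and norm estimates collected in Appendix~\ref{sec: RS integrals wrt matrix-valued functions}, provided the key inequality $\abs{\int_a^b \mathrm{d}\alpha(x) \mspace{2mu} h(x)} \le \Var(\alpha) \cdot \sup_x \abs{h(x)}$ (with the matrix operator norm) is available there; if only a scalar version is stated, a one-line argument componentwise suffices.
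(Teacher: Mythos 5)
Your proof is correct, but it follows a genuinely different route from the paper's. The paper argues abstractly: it defines the bounded linear operator $T \colon C([a, b], M_n(\mathbb{K})) \to M_n(\mathbb{K})$, $Tg \coloneqq \int_a^b \mathrm{d}\alpha(x) \mspace{2mu} g(x)$, identifies the left-hand side of \eqref{eq: iterated RS integrals} as $T$ applied to the Banach-space-valued Riemann integral $\int_c^d f(\argdot, y) \mspace{2mu} \mathrm{d}y$ via Lemma~\ref{lem: Riemann integral of two variables function}, and then uses the fact that bounded linear operators commute with such vector-valued Riemann integrals. You instead inline that commuting property: you approximate the inner $y$-integral uniformly in $x$ by Riemann sums $S_m$, push the Riemann-Stieltjes integral through the finite sum by linearity (the scalar increments $y_j - y_{j-1}$ commute with the matrices, so the non-commutativity worry you raise is indeed harmless), recognize the result as a Riemann sum for the continuous function $G$, and pass to the limit using the estimate of Lemma~\ref{lem: estimate of norm of RS integral} together with Theorem~\ref{thm: RS integrability} for existence. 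What your approach buys is self-containedness: it avoids the formalism of Riemann integration for functions with values in $C([a, b], M_n(\mathbb{K}))$ (Graves) and the auxiliary Lemma~\ref{lem: Riemann integral of two variables function}, at the cost of an explicit double-limit bookkeeping argument; the paper's version is shorter because it reuses machinery already set up for other results, and the ``operator commutes with the integral'' fact it invokes is proved by essentially the same Riemann-sum computation you carry out by hand.
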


See also \cite[Theorem~15a in Section~15 of Chapter~I]{Widder_1941}.
We will give the proof in Appendix~\ref{subsec: proof of thm on iterated integrals}.

\begin{proof}[Proof of Theorem~\ref{thm: RS convolution under Volterra operator}]
We extend the domain of definition of $f$ to $\mathbb{R}$ by defining $f(t) \coloneqq f(0)$ for $t \le 0$.
By the proof of Lemma~\ref{lem: continuity of RS convolution}, we have
\begin{equation*}
	V(\mathrm{d}\alpha * f)(t)
	= \int_0^t
		\biggl( \int_0^t \mathrm{d}\alpha(u) \mspace{2mu} f(s - u) \biggr)
	\mspace{2mu} \mathrm{d}s
	- \int_0^t [\alpha(t) - \alpha(s)]f(0) \mspace{2mu} \mathrm{d}s,
\end{equation*}
where
\begin{align*}
	\int_0^t
		\biggl( \int_0^t \mathrm{d}\alpha(u) \mspace{2mu} f(s - u) \biggr)
	\mspace{2mu} \mathrm{d}s
	= \int_0^t \mathrm{d}\alpha(u)
		\biggl( \int_0^t f(s - u) \mspace{2mu} \mathrm{d}s \biggr)
\end{align*}
holds from Theorem~\ref{thm: iterated integrals for RS integral}.
The last term is expressed by
\begin{equation*}
	(\mathrm{d}\alpha * Vf)(t) + \int_0^t \mathrm{d}\alpha(u) \int_{-u}^0 f(s) \mspace{2mu} \mathrm{d}s
\end{equation*}
by using the Volterra operator and the Riemann-Stieltjes convolution.
Since $\int_{-u}^0 f(s) \mspace{2mu} \mathrm{d}s = uf(0)$ for $u \in [0, t]$, the proof is complete by showing
\begin{equation*}
	\int_0^t [\alpha(t) - \alpha(s)] \mspace{2mu} \mathrm{d}s
	= \int_0^t u \mspace{2mu} \mathrm{d}\alpha(u).
\end{equation*}
This is indeed true because
\begin{align*}
	\int_0^t u \mspace{2mu} \mathrm{d}\alpha(u)
	&= [u\alpha(u)]_{u = 0}^t - \int_0^t \alpha(u) \mspace{2mu} \mathrm{d}u \\
	&= t\alpha(t) - \int_0^t \alpha(u) \mspace{2mu} \mathrm{d}u
\end{align*}
holds by the integration by parts formula for Riemann-Stieltjes integrals.
\end{proof}

The following is a corollary of Theorem~\ref{thm: RS convolution under Volterra operator}.
It will not be used in the sequel.

\begin{corollary}\label{cor: loc BV of RS convolution}
Furthermore, if $f$ is continuously differentiable, then $\mathrm{d}\alpha * f$ is expressed by
\begin{equation*}
	\mathrm{d}\alpha * f
	= (\alpha - \alpha(0))f(0) + V \bigl( \mathrm{d}\alpha * f' \bigr).
\end{equation*}
Consequently, $\mathrm{d}\alpha * f$ is of locally bounded variation, differentiable almost everywhere, and satisfies
\begin{equation*}
	(\mathrm{d}\alpha * f)'(t)
	= \alpha'(t)f(0) + (\mathrm{d}\alpha * f')(t)
\end{equation*}
for almost all $t \ge 0$.
\end{corollary}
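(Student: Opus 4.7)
The plan is to bootstrap off Theorem~\ref{thm: RS convolution under Volterra operator} by writing $f(t-u)$ as a constant plus something in the image of the Volterra operator. Concretely, since $f$ is continuously differentiable, the fundamental theorem of calculus gives
\begin{equation*}
	f(t - u) = f(0) + \int_0^{t - u} f'(s) \mspace{2mu} \mathrm{d}s = f(0) + (Vf')(t - u)
\end{equation*}
for all $u \in [0, t]$. Substituting into the definition of the Riemann-Stieltjes convolution and using linearity of the Riemann-Stieltjes integral in the integrand,
\begin{equation*}
	(\mathrm{d}\alpha * f)(t)
	= \int_0^t \mathrm{d}\alpha(u) \mspace{2mu} f(0) + \int_0^t \mathrm{d}\alpha(u) \mspace{2mu} (Vf')(t - u).
\end{equation*}
The first term evaluates to $[\alpha(t) - \alpha(0)]f(0)$, and the second is exactly $(\mathrm{d}\alpha * Vf')(t)$. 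Applying Theorem~\ref{thm: RS convolution under Volterra operator} to the continuous function $f'$ yields $\mathrm{d}\alpha * Vf' = V(\mathrm{d}\alpha * f')$, which produces the stated decomposition.

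For the consequences, I would argue as follows. The term $(\alpha - \alpha(0))f(0)$ is of locally bounded variation because $\alpha$ is, and by Lebesgue's differentiation theorem for BV functions it is differentiable almost everywhere with a.e.\ derivative $\alpha'(t)f(0)$. The term $V(\mathrm{d}\alpha * f')$ is locally absolutely continuous by Theorem~\ref{thm: RS convolution under Volterra operator} (in particular of locally bounded variation) and differentiable almost everywhere with derivative $(\mathrm{d}\alpha * f')(t)$. Adding the two, $\mathrm{d}\alpha * f$ is of locally bounded variation, differentiable almost everywhere, and its derivative is $\alpha'(t)f(0) + (\mathrm{d}\alpha * f')(t)$ for almost all $t \ge 0$.

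I do not expect a real obstacle here; the only points that require a moment's care are: (i) verifying that splitting $f(t - u) = f(0) + (Vf')(t - u)$ inside a matrix-valued Riemann-Stieltjes integral is legitimate, which follows from linearity of the integral in its (matrix-valued) integrand; and (ii) that for a locally BV matrix-valued function $\alpha$, the pointwise a.e.\ derivative of $\alpha f(0)$ is $\alpha' f(0)$, which is immediate from componentwise differentiation since $f(0)$ is a constant matrix. Once these checks are in place, the identity and its two consequences follow directly from Theorem~\ref{thm: RS convolution under Volterra operator}.
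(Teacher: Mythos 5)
Your proposal is correct and follows essentially the same route as the paper: both decompose $f = f(0) + Vf'$ via the fundamental theorem of calculus, handle $\mathrm{d}\alpha * f(0) = (\alpha - \alpha(0))f(0)$ directly, and convert $\mathrm{d}\alpha * Vf'$ into $V(\mathrm{d}\alpha * f')$ by Theorem~\ref{thm: RS convolution under Volterra operator}, concluding with a.e.\ differentiability of the BV part (componentwise) plus the AC part. The extra checks you flag (linearity of the matrix-valued Riemann-Stieltjes integral and componentwise differentiation of $\alpha f(0)$) are exactly the points the paper handles implicitly, so nothing is missing.
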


\begin{proof}
By the fundamental theorem of calculus, $f = f(0) + Vf'$ holds.
By combining this and \eqref{eq: RS convolution under Volterra operator}, the expression of $\mathrm{d}\alpha * f$ is obtained.
Since $V(\mathrm{d}\alpha * f')$ is locally absolutely continuous, it is also of locally bounded variation.
Therefore, the expression of $\mathrm{d}\alpha * f$ yields that $\mathrm{d}\alpha * f$ is of locally bounded variation.
The remaining properties are consequences of the fact that matrix-valued functions of bounded variation are differentiable almost everywhere.
This is obtained by applying the corresponding result\footnote{See \cite[Theorem 3.4 in Subsection 3.1 of Chapter 3]{Stein--Shakarchi_2005}, for example.} for real-valued functions component-wise.
\end{proof}

\subsection{Differential equation and principal fundamental matrix solution}

As an application of Theorem~\ref{thm: RS convolution under Volterra operator}, one can derive a differential equation that is satisfied by $x \bigl( \argdot; \Hat{\xi} \bigr)$ for each $\xi \in \mathbb{K}^n$.

\begin{theorem}\label{thm: mild sol with instantaneous input}
Let $x \coloneqq x^L \bigl( \argdot; \Hat{\xi} \bigr)$ for some $\xi \in \mathbb{K}^n$.
Then
\begin{equation}\label{eq: mild sol for instantaneous input, RS convolution}
	x(t)
	= \xi + \int_{-t}^0 \mathrm{d}\eta(\theta) \mspace{2mu}
		\biggl( \int_0^{t + \theta} x(s) \mspace{2mu} \mathrm{d}s \biggr)
	= \xi + \bigl( \mathrm{d}\Check{\eta} * Vx|_{[0, \infty)} \bigr)(t)
\end{equation}
holds for all $t \ge 0$.
Furthermore, $x|_{[0, \infty)}$ is locally absolutely continuous, differentiable almost everywhere, and satisfies
\begin{equation}\label{eq: DE of mild sol for instantaneous input}
	\Dot{x}(t)
	= \int_{-t}^0 \mathrm{d}\eta(\theta) \mspace{2mu} x(t + \theta)
	= \bigl( \mathrm{d}\Check{\eta} * x|_{[0, \infty)} \bigr)(t)
\end{equation}
for almost all $t \in [0, \infty)$.
\end{theorem}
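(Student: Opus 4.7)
The proof is essentially an assembly of results already in hand: the definition of a mild solution, Lemma~\ref{lem: L*int_0^t x_s ds, instantaneous input}, Theorem~\ref{thm: RS convolution under Volterra operator}, and the reversal formula for Riemann-Stieltjes integrals. Nothing genuinely new is needed; the main task is bookkeeping.

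First I would establish the integral equation \eqref{eq: mild sol for instantaneous input, RS convolution}. By the definition of a mild solution (Definition~\ref{dfn: mild sol}), $x(t) = \hat{\xi}(0) + L\int_0^t x_s \,\mathrm{d}s = \xi + L\int_0^t x_s \,\mathrm{d}s$ for all $t \ge 0$. Since $x_0 = \hat{\xi}$, Lemma~\ref{lem: L*int_0^t x_s ds, instantaneous input} applies and rewrites this term in two equivalent forms, namely $\int_{-t}^0 \mathrm{d}\eta(\theta)\bigl(\int_0^{t+\theta} x(s)\,\mathrm{d}s\bigr)$ and $\int_0^t \mathrm{d}\check{\eta}(u)\bigl(\int_0^{t-u} x(s)\,\mathrm{d}s\bigr)$. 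The latter is literally $\bigl(\mathrm{d}\check{\eta} * Vx|_{[0,\infty)}\bigr)(t)$ by Definitions~\ref{dfn: Volterra operator} and \ref{dfn: RS convolution}, noting that $x|_{[0,\infty)}$ is continuous because $x$ is a continuous prolongation of $\hat{\xi}$.

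Next, the regularity and differential equation come from applying Theorem~\ref{thm: RS convolution under Volterra operator} with $\alpha = \check{\eta}$ (locally of bounded variation, indeed constant on $[r,\infty)$) and $f = x|_{[0,\infty)}$. That theorem asserts $V\bigl(\mathrm{d}\check{\eta} * x|_{[0,\infty)}\bigr) = \mathrm{d}\check{\eta} * Vx|_{[0,\infty)}$, and in particular that $\mathrm{d}\check{\eta} * Vx|_{[0,\infty)}$ is locally absolutely continuous and differentiable a.e.\ with derivative $\bigl(\mathrm{d}\check{\eta} * x|_{[0,\infty)}\bigr)(t)$. From \eqref{eq: mild sol for instantaneous input, RS convolution} just proved, $x|_{[0,\infty)} = \xi + \mathrm{d}\check{\eta} * Vx|_{[0,\infty)}$, so $x|_{[0,\infty)}$ inherits these properties and satisfies $\dot{x}(t) = \bigl(\mathrm{d}\check{\eta} * x|_{[0,\infty)}\bigr)(t)$ for almost all $t \ge 0$.

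Finally, the equality $\bigl(\mathrm{d}\check{\eta} * x|_{[0,\infty)}\bigr)(t) = \int_0^t \mathrm{d}\check{\eta}(u)\,x(t-u) = \int_{-t}^0 \mathrm{d}\eta(\theta)\,x(t+\theta)$ follows from the reversal formula for Riemann-Stieltjes integrals (Theorem~\ref{thm: reversal formula for RS integrals}) together with the definition $\check{\eta}(u) = -\eta(-u)$, exactly as in the proof of Lemma~\ref{lem: L*int_0^t x_s ds, instantaneous input}. This yields both forms in \eqref{eq: DE of mild sol for instantaneous input} and completes the argument. The only point requiring a moment of care is checking that $x|_{[0,\infty)}$ is continuous (so that Theorem~\ref{thm: RS convolution under Volterra operator} genuinely applies), but this is built into the definition of a continuous prolongation; there is no substantive obstacle.
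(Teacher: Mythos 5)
Your proposal is correct and follows essentially the same route as the paper's proof: the definition of a mild solution together with Lemma~\ref{lem: L*int_0^t x_s ds, instantaneous input} gives \eqref{eq: mild sol for instantaneous input, RS convolution}, Theorem~\ref{thm: RS convolution under Volterra operator} (whose ``consequently'' clause already encapsulates the appeal to Theorem~\ref{thm: local integrability of RS convolution} made explicitly in the paper) yields the local absolute continuity and the differential equation, and the reversal formula recovers the expression in terms of $\eta$. No gaps.
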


\begin{proof}
Eq.~\eqref{eq: mild sol for instantaneous input, RS convolution} is a consequence of the definition of mild solutions and Lemma~\ref{lem: L*int_0^t x_s ds, instantaneous input}.
Since $x|_{[0, \infty)}$ is continuous, Theorem~\ref{thm: RS convolution under Volterra operator}  and Eq.~\eqref{eq: mild sol for instantaneous input, RS convolution} yield that
\begin{equation*}
	x(t) = \xi + V \bigl( \mathrm{d}\Check{\eta} * x|_{[0, \infty)} \bigr)(t)
\end{equation*}
holds for all $t \ge 0$.
In combination with Theorem~\ref{thm: local integrability of RS convolution}, it holds that $x|_{[0, \infty)}$ is locally absolutely continuous, differentiable almost everywhere, and satisfies
\begin{equation*}
	\Dot{x}(t) = \bigl( \mathrm{d}\Check{\eta} * x|_{[0, \infty)} \bigr)(t)
\end{equation*}
for almost all $t \ge 0$.
The remaining expression in Eq.~\eqref{eq: DE of mild sol for instantaneous input} is a consequence of the reversal formula for Riemann-Stieltjes integrals.
\end{proof}

We obtain the following result as a direct consequence of Theorem~\ref{thm: mild sol with instantaneous input} and \eqref{eq: X^L}.
We omit the proof.

\begin{theorem}[cf.\ \cite{VerduynLunel_1989}]\label{thm: DE of X^L}
The principal fundamental matrix solution $X^L \colon [-r, \infty) \to M_n(\mathbb{K})$ of the linear RFDE~\eqref{eq: linear RFDE} satisfies
\begin{equation}\label{eq: Eq for X^L, RS convolution}
	X^L(t)
	= I + \int_{-t}^0 \mathrm{d}\eta(\theta) \mspace{2mu}
		\biggl( \int_0^{t + \theta} X^L(s) \mspace{2mu} \mathrm{d}s \biggr)
	= I + \bigl( \mathrm{d}\Check{\eta} * VX^L|_{[0, \infty)} \bigr)(t)
\end{equation}
for all $t \ge 0$.
Furthermore, $X^L|_{[0, \infty)}$ is locally absolutely continuous, differentiable almost everywhere, and satisfies
\begin{equation}\label{eq: DE for X^L, RS convolution}
	\Dot{X}^L(t)
	= \int_{-t}^0 \mathrm{d}\eta(\theta) \mspace{2mu} X^L(t + \theta)
	= \bigl( \mathrm{d}\Check{\eta} * X^L|_{[0, \infty)} \bigr)(t)
\end{equation}
for almost all $t \in [0, \infty)$.
\end{theorem}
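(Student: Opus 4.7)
The plan is to reduce the matrix statement to the vector statement of Theorem~\ref{thm: mild sol with instantaneous input} by reading off the $j$-th column of each claimed identity. Recall the defining relation~\eqref{eq: X^L},
\begin{equation*}
	X^L(\argdot) = \left( x^L(\argdot; \Hat{\bm{e}}_1) \mspace{5mu} \cdots \mspace{5mu} x^L(\argdot; \Hat{\bm{e}}_n) \right),
\end{equation*}
so that for each $j \in \{1, \dots, n\}$ the $j$-th column of $X^L$ is the mild solution $x_j \coloneqq x^L(\argdot; \Hat{\bm{e}}_j)$. By Theorem~\ref{thm: mild sol with instantaneous input} applied with $\xi = \bm{e}_j$, the function $x_j$ satisfies
\begin{equation*}
	x_j(t) = \bm{e}_j + \int_{-t}^0 \mathrm{d}\eta(\theta) \mspace{2mu} \biggl( \int_0^{t + \theta} x_j(s) \mspace{2mu} \mathrm{d}s \biggr) = \bm{e}_j + \bigl( \mathrm{d}\Check{\eta} * Vx_j \bigr)(t),
\end{equation*}
and $x_j|_{[0, \infty)}$ is locally absolutely continuous, differentiable almost everywhere, with the corresponding a.e.\ differential equation.

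Second, I would assemble these $n$ identities column-by-column into a single matrix identity. Because the Lebesgue integral, the Riemann-Stieltjes integral against the matrix-valued $\eta$, the Volterra operator, and the Riemann-Stieltjes convolution are all defined entry-wise on matrix-valued functions (see Appendix~\ref{sec: RS integrals wrt matrix-valued functions}), and because the $j$-th column of $X^L(s)$ is $x_j(s)$, the matrix whose $j$-th column equals $\int_0^{t + \theta} x_j(s) \mspace{2mu} \mathrm{d}s$ is exactly $\int_0^{t + \theta} X^L(s) \mspace{2mu} \mathrm{d}s$, and the matrix whose $j$-th column is $(\mathrm{d}\Check{\eta} * Vx_j)(t)$ is $(\mathrm{d}\Check{\eta} * VX^L|_{[0, \infty)})(t)$. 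Since the matrix whose $j$-th column is $\bm{e}_j$ is the identity matrix $I$, concatenating gives~\eqref{eq: Eq for X^L, RS convolution}.

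Third, I would handle the regularity and differential equation~\eqref{eq: DE for X^L, RS convolution} the same way: local absolute continuity and a.e.\ differentiability of $X^L|_{[0, \infty)}$ are inherited from the same properties of each column $x_j|_{[0, \infty)}$, and the a.e.\ identity $\Dot{x}_j(t) = (\mathrm{d}\Check{\eta} * x_j)(t) = \int_{-t}^0 \mathrm{d}\eta(\theta) \mspace{2mu} x_j(t + \theta)$ assembles column-wise into~\eqref{eq: DE for X^L, RS convolution}. The only step requiring any care is the bookkeeping that matrix-valued Riemann-Stieltjes integrals commute with the column-extraction map $M \mapsto M\bm{e}_j$; this is immediate from the entry-wise definition and the linearity of Riemann-Stieltjes integration. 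There is no genuine obstacle: the substantive analytic content is already contained in Theorem~\ref{thm: mild sol with instantaneous input} and Theorem~\ref{thm: RS convolution under Volterra operator}, and the proof is pure assembly, which is presumably why the author omits it.
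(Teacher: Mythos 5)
Your proposal is correct and follows exactly the route the paper intends: the paper states that Theorem~\ref{thm: DE of X^L} is a direct consequence of Theorem~\ref{thm: mild sol with instantaneous input} and \eqref{eq: X^L} and omits the proof, and your column-by-column application of Theorem~\ref{thm: mild sol with instantaneous input} with $\xi = \bm{e}_j$, assembled via the column-compatibility of the Riemann-Stieltjes integral and convolution (Lemma~\ref{lem: reduction to RS integrals of vector-valued functions}), is precisely that argument.
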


Roughly speaking, Eq.~\eqref{eq: DE for X^L, RS convolution} is used as the defining equation of the fundamental matrix solution in \cite[(9.1) in Chapter~9]{VerduynLunel_1989}.

\section{Non-homogeneous linear RFDEs}\label{sec: non-homogeneous liner RFDEs}

In this section, we study a non-homogeneous linear RFDE~\eqref{eq: non-homogeneous linear RFDE}
\begin{equation*}
	\Dot{x}(t) = Lx_t + g(t)
	\mspace{25mu}
	(\text{a.e.\ $t \ge 0$})
\end{equation*}
for a continuous linear map $L \colon C([-r, 0], \mathbb{K}^n) \to \mathbb{K}^n$ and some $g \in \mathcal{L}^1_\mathrm{loc}([0, \infty), \mathbb{K}^n)$.

\subsection{Non-homogeneous linear RFDE and mild solutions}

It is natural to define the notion of mild solutions to Eq.~\eqref{eq: non-homogeneous linear RFDE} in the following way.

\begin{definition}\label{dfn: mild sol for non-homogeneous eq}
Let $t_0 \ge 0$ and $\phi \in \mathcal{M}^1([-r, 0], \mathbb{K}^n)$ be given.
We say that a function $x \colon [t_0 - r, \infty) \supset \dom(x) \to \mathbb{K}^n$ is a \textit{mild solution} of Eq.~\eqref{eq: non-homogeneous linear RFDE} under the initial condition $x_{t_0} = \phi$ if the following conditions are satisfied:
(i) $x_{t_0} = \phi$, (ii) $[t_0, \infty) \subset \dom(x)$, (iii) $x|_{[t_0, \infty)}$ is continuous, and (iv) for all $t \ge t_0$,
\begin{equation*}
	x(t) = \phi(0) + L\int_{t_0}^t x_s \mspace{2mu} \mathrm{d}s + \int_{t_0}^t g(s) \mspace{2mu} \mathrm{d}s
\end{equation*}
holds.
\end{definition}

We note that $\int_{t_0}^t x_s \mspace{2mu} \mathrm{d}s \in C([-r, 0], \mathbb{K}^n)$ is defined by
\begin{equation*}
	\biggl( \int_{t_0}^t x_s \mspace{2mu} \mathrm{d}s \biggr)(\theta)
	\coloneqq \int_{t_0}^t x(s + \theta) \mspace{2mu} \mathrm{d}s
	= \int_{t_0 + \theta}^{t + \theta} x(s) \mspace{2mu} \mathrm{d}s
\end{equation*}
for $\theta \in [-r, 0]$, and
\begin{equation*}
	\dom(x)
	= (t_0 + \dom(\phi)) \cup [t_0, \infty)
	= t_0 + (\dom(\phi) \cup [0, \infty))
\end{equation*}
holds for a mild solution of Eq.~\eqref{eq: non-homogeneous linear RFDE} under the initial condition $x_{t_0} = \phi$.

\begin{lemma}\label{lem: Caratheodory sol and non-homogeneous eq}
Let $t_0 \ge 0$ and $\phi \in C([-r, 0], \mathbb{K}^n)$ be given.
If $x \colon [t_0 - r, \infty) \to \mathbb{K}^n$ is a mild solution of Eq.~\eqref{eq: non-homogeneous linear RFDE} under the initial condition $x_{t_0} = \phi$, then $x$ satisfies
\begin{equation*}
	\Dot{x}(t) = Lx_t + g(t)
\end{equation*}
for almost all $t \ge t_0$.
\end{lemma}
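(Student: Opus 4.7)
The plan is to show that both terms of the defining integral equation for a mild solution are differentiable almost everywhere, with the correct derivatives, and then add.

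First I would observe that the hypothesis $\phi \in C([-r,0], \mathbb{K}^n)$ is essential: together with $x_{t_0} = \phi$ (in particular $x(t_0) = \phi(0)$) and the given continuity of $x|_{[t_0,\infty)}$, it promotes $x$ from a mere mild solution into a continuous function on all of $[t_0 - r, \infty)$. This continuity is what unlocks the earlier results about interchanging $L$ with integration of history segments.

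Next, to the identity
\begin{equation*}
	x(t) = \phi(0) + L\int_{t_0}^t x_s \mspace{2mu} \mathrm{d}s + \int_{t_0}^t g(s) \mspace{2mu} \mathrm{d}s
	\mspace{25mu} (t \ge t_0),
\end{equation*}
I would apply Corollary~\ref{cor: T*int_0^t x_s ds for continuous x} (translated from $0$ to $t_0$, which is harmless) with $T = L$ and with the now-continuous function $x$ restricted to $[t_0 - r, \infty)$. This gives
\begin{equation*}
	L\int_{t_0}^t x_s \mspace{2mu} \mathrm{d}s = \int_{t_0}^t Lx_s \mspace{2mu} \mathrm{d}s,
\end{equation*}
where the right-hand side is a Riemann integral of the continuous $\mathbb{K}^n$-valued function $s \mapsto Lx_s$ (continuous because $s \mapsto x_s \in C([-r,0], \mathbb{K}^n)$ is continuous by uniform continuity of $x$ on compact sets and $L$ is bounded).

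Then I would differentiate term by term. By the fundamental theorem of calculus for continuous integrands, $\int_{t_0}^{\argdot} Lx_s \mspace{2mu} \mathrm{d}s$ is differentiable at every $t \ge t_0$ with derivative $Lx_t$. By the Lebesgue differentiation theorem applied component-wise (recall $g \in \mathcal{L}^1_\mathrm{loc}$), $\int_{t_0}^{\argdot} g(s) \mspace{2mu} \mathrm{d}s$ is locally absolutely continuous and differentiable almost everywhere on $[t_0, \infty)$ with derivative $g(t)$. Adding the two yields $\Dot{x}(t) = Lx_t + g(t)$ for almost all $t \ge t_0$.

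There is no real obstacle here; the only subtlety is checking the hypothesis of Corollary~\ref{cor: T*int_0^t x_s ds for continuous x}, namely continuity of $x$ up to and across the initial time $t_0$, which is precisely where the continuity assumption on $\phi$ is used. Without $\phi$ continuous the first integral $L\int_{t_0}^t x_s \mspace{2mu} \mathrm{d}s$ would still be defined as a Lebesgue integral inside $L$, but the exchange of $L$ with the integral would not be directly available from the results proved so far, and the conclusion need not hold pointwise almost everywhere in the classical sense.
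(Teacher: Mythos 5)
Your proposal is correct and follows essentially the same route as the paper: reduce to the continuity of $x$ on all of $[t_0-r,\infty)$ (the paper simply translates to $t_0=0$), use Corollary~\ref{cor: T*int_0^t x_s ds for continuous x} to pull $L$ inside the integral, and then differentiate via the fundamental theorem of calculus for the continuous integrand $s\mapsto Lx_s$ together with the Lebesgue differentiation theorem for the $g$-term. No gaps.
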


\begin{proof}
By the translation, we may assume $t_0 = 0$.
Since $L \colon C([-r, 0], \mathbb{K}^n) \to \mathbb{K}^n$ is a bounded linear operator,
\begin{equation*}
	x(t) = \phi(0) + \int_0^t Lx_s \mspace{2mu} \mathrm{d}s + \int_0^t g(s) \mspace{2mu} \mathrm{d}s
\end{equation*}
holds for all $t \ge 0$ from Corollary~\ref{cor: T*int_0^t x_s ds for continuous x}.
Then the fundamental theorem of calculus and the Lebesgue differentiation theorem yield that $x|_{[0, \infty)}$ is differentiable almost everywhere and
\begin{equation*}
	\Dot{x}(t) = Lx_t + g(t)
\end{equation*}
holds for almost all $t \ge 0$.
\end{proof}

\begin{remark}\label{rmk: Caratheodory sol and mild sol}
Let $\mathbb{K} = \mathbb{R}$.
We assume that $\dom(g) = [0, \infty)$ and consider the function $F \colon [0, \infty) \times C([-r, 0], \mathbb{R}^n) \to \mathbb{R}^n$ defined by
\begin{equation*}
	F(t, \phi) \coloneqq L\phi + g(t).
\end{equation*}
Then $F$ satisfies the Carath\'{e}odory condition.
See \cite[Section~2.6 of Chapter~2]{Hale_1977} and \cite[Section~2.6 of Chapter~2]{Hale--VerduynLunel_1993} for the detail of the Carath\'{e}odory condition for RFDEs.
Lemma~\ref{lem: Caratheodory sol and non-homogeneous eq} shows that a mild solution $x \colon [t_0 - r, \infty) \to \mathbb{R}^n$ of Eq.~\eqref{eq: non-homogeneous linear RFDE} under the initial condition $x_{t_0} = \phi \in C([-r, 0], \mathbb{R}^n)$ is a solution (in the Carath\'{e}odory sense).
\end{remark}

\subsection{Integral equation with a general forcing term}

More generally, for a given $t_0 \ge 0$ and a given continuous function $G \colon [t_0, \infty) \to \mathbb{K}^n$ with $G(t_0) = 0$, we can discuss a solution of the following integral equation
\begin{equation}\label{eq: integral eq with G}
	x(t) = \phi(0) + L \int_{t_0}^t x_s \mspace{2mu} \mathrm{d}s + G(t)
	\mspace{25mu}
	(t \ge t_0)
\end{equation}
under an initial condition $x_{t_0} = \phi \in \mathcal{M}^1([-r, 0], \mathbb{K}^n)$.
Here the assumption $G(t_0) = 0$ is natural because the right-hand side of \eqref{eq: integral eq with G} is equal to
\begin{equation*}
	\phi(0) + G(t_0)
\end{equation*}
at $t = t_0$.
The notion of a solution of \eqref{eq: integral eq with G} can be defined in the similar way as in Definition~\ref{dfn: mild sol for non-homogeneous eq}.
The following theorem holds.

\begin{theorem}\label{thm: unique existence, integral eq with G}
Let $t_0 \ge 0$ be given.
Suppose that $G \colon [t_0, \infty) \to \mathbb{K}^n$ is a continuous function with $G(t_0) = 0$.
Then for any $\phi \in \mathcal{M}^1([-r, 0], \mathbb{K}^n)$, Eq.~\eqref{eq: integral eq with G} has a unique solution under the initial condition $x_{t_0} = \phi$.
\end{theorem}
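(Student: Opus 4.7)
The plan is to prove Theorem~\ref{thm: unique existence, integral eq with G} by following the same four-step strategy as the proof of Theorem~\ref{thm: unique existence of mild sol}, with one essential modification: because the forcing term $G$ is only assumed to be continuous and carries no a priori growth control at infinity, the global Bielecki-norm space $Y_\gamma$ used in Step 2 of Theorem~\ref{thm: unique existence of mild sol} is no longer automatically stable under the natural fixed-point operator. I will therefore localize the argument to bounded intervals $[0, T]$ and then extend to $[0, \infty)$ by uniqueness.

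First I would reduce to the case $t_0 = 0$ by a simple translation in time, so that the integral equation becomes $x(t) = \phi(0) + L\int_0^t x_s \,\mathrm{d}s + G(t)$ with $G \in C([0, \infty), \mathbb{K}^n)$ and $G(0) = 0$. Next, following Step~1 of the proof of Theorem~\ref{thm: unique existence of mild sol}, I would set $y(t) \coloneqq x(t) - \Bar{\phi}(t)$ on $\dom(\phi) \cup [0, \infty)$ and $y(t) \coloneqq 0$ elsewhere on $[-r, \infty)$. The problem is then equivalent to finding a continuous function $y \colon [-r, \infty) \to \mathbb{K}^n$ with $y_0 = 0$ and
\begin{equation*}
	y(t) = \int_0^t Ly_s \,\mathrm{d}s + L\int_0^t \Bar{\phi}_s \,\mathrm{d}s + G(t) \mspace{25mu} (t \ge 0),
\end{equation*}
where Corollary~\ref{cor: T*int_0^t x_s ds for continuous x} has been used to rewrite $L\int_0^t y_s \,\mathrm{d}s$ as $\int_0^t Ly_s \,\mathrm{d}s$.

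Fix $T > 0$. I would then work on the Banach space
\begin{equation*}
	Y_T \coloneqq \Set{y \in C([-r, T], \mathbb{K}^n)}{y_0 = 0}
\end{equation*}
endowed with a Bielecki-type norm $\norm{y}_{T, \gamma} \coloneqq \sup_{t \in [0, T]} \mspace{2mu} (\mathrm{e}^{-\gamma t}\abs{y(t)})$ for some $\gamma > \norm{L}$ to be chosen. The operator
\begin{equation*}
	(T_T y)(t) \coloneqq \int_0^t Ly_s \,\mathrm{d}s + L\int_0^t \Bar{\phi}_s \,\mathrm{d}s + G(t) \mspace{25mu} (t \in [0, T]),
\end{equation*}
extended by $0$ on $[-r, 0]$, maps $Y_T$ into itself: continuity follows from Lemma~\ref{lem: continuity of int_0^t x_s ds} together with the continuity of $G$, and boundedness in $\norm{\argdot}_{T, \gamma}$ is automatic since $[0, T]$ is bounded and $G$ is continuous there. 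The contraction estimate is identical to Step~4 of the proof of Theorem~\ref{thm: unique existence of mild sol}: for any $y^1, y^2 \in Y_T$,
\begin{equation*}
	\mathrm{e}^{-\gamma t}\abs{T_Ty^1(t) - T_Ty^2(t)}
	\le \norm{L}\mathrm{e}^{-\gamma t}\int_0^t \mathrm{e}^{\gamma s}\norm{y^1 - y^2}_{T, \gamma} \,\mathrm{d}s
	\le \frac{\norm{L}}{\gamma}\norm{y^1 - y^2}_{T, \gamma},
\end{equation*}
so choosing $\gamma > \norm{L}$ gives a contraction. The contraction mapping principle then yields a unique fixed point $y_T \in Y_T$.

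Finally, I would pass from finite intervals to $[0, \infty)$. For $T < T'$, the restriction of $y_{T'}$ to $[-r, T]$ lies in $Y_T$ and is a fixed point of $T_T$, so by uniqueness it coincides with $y_T$. Hence the functions $\{y_T\}_{T > 0}$ are consistent and define a unique continuous function $y_* \colon [-r, \infty) \to \mathbb{K}^n$ solving the reduced integral equation globally; correspondingly $x_*(t) \coloneqq y_*(t) + \Bar{\phi}(t)$ on $\dom(\phi) \cup [0, \infty)$ is the required unique solution of \eqref{eq: integral eq with G}. The main obstacle, as noted above, is precisely the absence of any growth bound on $G$; it is circumvented by the localization-then-glue step, which replaces the global a priori estimate of Theorem~\ref{thm: unique existence of mild sol} by a family of local estimates combined with uniqueness on each $[0, T]$.
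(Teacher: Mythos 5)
Your proposal is correct, but it organizes the argument differently from the paper. The paper's proof, after the same translation to $t_0 = 0$, abandons the weighted-norm idea altogether: it proves local existence and uniqueness on intervals of a fixed length $a$ with $\norm{L}a < 1$ by a plain sup-norm contraction (for an arbitrary initial time $\sigma$ and initial history $\psi \in \mathcal{M}^1([-r,0],\mathbb{K}^n)$, with forcing term $G(t,\sigma) = G(t) - G(\sigma)$), and then constructs the global solution by explicitly concatenating these local solutions, which requires verifying that the glued function again satisfies the integral equation across the seam via the identity $G(t, \sigma + a) + G(\sigma + a, \sigma) = G(t, \sigma)$. You instead keep the Bielecki-type weight $\mathrm{e}^{-\gamma t}$ but localize it to $[-r, T]$, where it is equivalent to the sup norm, so for every $T$ you get a contraction on $Y_T$ and hence a unique solution on $[0,T]$; the passage to $[0,\infty)$ is then immediate because restrictions of fixed points are fixed points, so the family $\{y_T\}$ is consistent and uniqueness is inherited on every finite interval. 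What your route buys is that you never have to restart the equation at a later time $\sigma$ with a new (possibly discontinuous) initial history, nor check the seam computation; what the paper's route buys is that it works with the unweighted sup norm and a single fixed step size, and the restart-and-concatenate mechanism it sets up is the standard continuation argument reused elsewhere. Your contraction estimate is sound (the bound $\norm{y^1_s - y^2_s} \le \mathrm{e}^{\gamma s}\norm{y^1 - y^2}_{T,\gamma}$ uses $y^1_0 = y^2_0 = 0$ exactly as in Lemma~\ref{lem: norm{argdot}_gamma}), and the self-mapping property of your operator on $Y_T$ follows from Lemma~\ref{lem: continuity of int_0^t x_s ds}, the continuity of $G$, and $G(0) = 0$; so no gap remains.
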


The following proof should be compared with the proof of Theorem~\ref{thm: unique existence of mild sol}.

\begin{proof}[Proof of Theorem~\ref{thm: unique existence, integral eq with G}]
By the translation, it is sufficient to consider the case $t_0 = 0$.
We will solve the integral equation locally and will connect the obtained local solutions.
For this purpose, we need to consider an integral equation under the initial condition $x_\sigma = \psi$ for each $\sigma \ge 0$ and each $\psi \in \mathcal{M}^1([-r, 0], \mathbb{K}^n)$.
Here an appropriate forcing term is given by
\begin{equation*}
	G(t, \sigma) \coloneqq G(t) - G(\sigma)
\end{equation*}
for $t \ge \sigma$.
Then we are going to consider an integral equation
\begin{equation}\label{eq: integral eq with G, initial time sigma}
	x(t) = \psi(0) + L\int_\sigma^t x_s \mspace{2mu} \mathrm{d}s + G(t, \sigma)
	\mspace{25mu}
	(t \ge \sigma)
\end{equation}
under the initial condition $x_\sigma = \psi$.
The remainder of the proof is divided into the following steps.

\vspace{0.5\baselineskip}
\noindent
\textbf{Step 1: Existence and uniqueness of a local solution.}
We fix the above $\sigma$ and $\psi$.
By defining a continuous function $y \colon [-r, \infty) \to \mathbb{K}^n$ by
\begin{equation*}
	y(s) \coloneqq
	\begin{cases}
		x(\sigma + s) - \Bar{\psi}(s) & (\sigma + s \in \dom(x)), \\
		0 & (\sigma + s \not\in \dom(x)),
	\end{cases}
\end{equation*}
Eq.~\eqref{eq: integral eq with G, initial time sigma} is transformed into
\begin{equation*}
	y(s) = \int_0^s Ly_u \mspace{2mu} \mathrm{d}u + L\int_0^s \bar{\psi}_u \mspace{2mu} \mathrm{d}u + G(\sigma + s, \sigma)
	\mspace{25mu}
	(s \ge 0),
\end{equation*}
which is an integral equation under the initial condition $y_0 = 0$.
We choose a constant $a > 0$ so that
\begin{equation*}
	\norm{L}a < 1
\end{equation*}
and consider a closed subset $Y$ of the Banach space $C([-r, a], \mathbb{K}^n)$ given by
\begin{equation*}
	Y \coloneqq \Set{y \in C([-r, a], \mathbb{K}^n)}{y_0 = 0}.
\end{equation*}
Furthermore, we define a transformation $T \colon Y \to Y$ by $(Ty)_0 = 0$ and
\begin{equation*}
	(Ty)(s)
	\coloneqq \int_0^s Ly_u \mspace{2mu} \mathrm{d}u + L\int_0^s \bar{\psi}_u \mspace{2mu} \mathrm{d}u + G(\sigma + s, \sigma)
	\mspace{25mu}
	(s \ge 0).
\end{equation*}
Then it holds that $T$ is contractive, and the application of the contraction mapping principle yields the unique existence of a fixed point $y_*$ of $T$.
By defining a function $x_* \colon [\sigma - r, \sigma + a] \to \mathbb{K}^n$ by
\begin{equation*}
	x_*(\sigma + s) \coloneqq y_*(s) + \Bar{\psi}(s)
	\mspace{25mu}
	(s \in \dom(\psi) \cup [0, \infty)),
\end{equation*}
it is concluded that $x_*$ is a solution of Eq.~\eqref{eq: integral eq with G, initial time sigma}.
We note that such a local solution is unique by the choice of the above $a$.

\vspace{0.5\baselineskip}
\noindent
\textbf{Step 2: Existence and uniqueness of a (global) solution.}
We note that the time $a > 0$ of existence of a local solution to Eq.~\eqref{eq: integral eq with G, initial time sigma} in Step 1 does not depend on the considered integral equation~\eqref{eq: integral eq with G, initial time sigma} and the specified initial condition $x_\sigma = \psi$.
In this step, we will show that by connecting these local solutions, we obtain a global solution.
For this purpose, for each $\sigma \ge 0$ and each $\psi \in \mathcal{M}^1([-r, 0], \mathbb{K}^n)$, let
\begin{equation*}
	x(\argdot; \sigma, \psi) \colon [\sigma - r, \sigma + a] \to \mathbb{K}^n
\end{equation*}
be the obtained unique solution of Eq.~\eqref{eq: integral eq with G, initial time sigma} under an initial condition $x_\sigma = \psi$.
We fix $\sigma$ and $\psi$.
Let
\begin{equation*}
	x \coloneqq x(\argdot; \sigma, \psi)
	\amd
	y \coloneqq x(\argdot; \sigma + a, x_{\sigma + a}).
\end{equation*}

We now claim that the function $z \colon [\sigma - r, \sigma + 2a] \to \mathbb{K}^n$ defined by
\begin{equation*}
	z(t) \coloneqq
	\begin{cases}
		x(t) & (t \in [\sigma - r, \sigma + a]) \\
		y(t) & (t \in [\sigma + a - r, \sigma + 2a])
	\end{cases}
\end{equation*}
is a solution to Eq.~\eqref{eq: integral eq with G, initial time sigma}.
We note that this definition makes sense because $y_{\sigma + a} = x_{\sigma + a}$.
To show the claim, it is sufficient to consider the case $t \in [\sigma + a, \sigma + 2a]$.
In this case, we have
\begin{equation*}
	z(t) = y(t)
	= x_{\sigma + a}(0) + L\int_{\sigma + a}^t y_s \mspace{2mu} \mathrm{d}s + G(t, \sigma + a),
\end{equation*}
where
\begin{equation*}
	x_{\sigma + a}(0)
	= x(\sigma + a)
	= \phi(0) + L\int_\sigma^{\sigma + a} x_s \mspace{2mu} \mathrm{d}s + G(\sigma + a, \sigma).
\end{equation*}
In the above equations, one can replace $y_s$ and $x_s$ with $z_s$.
Therefore, in view of
\begin{equation*}
	G(t, \sigma + a) + G(\sigma + a, \sigma) = G(t, \sigma),
\end{equation*}
it holds that $z$ is a solution of Eq.~\eqref{eq: integral eq with G, initial time sigma} under the initial condition $x_{t_0} = \phi$.

By repeating the above procedure, a global solution of the original integral equation~\eqref{eq: integral eq with G} is obtained.
By the uniqueness of each local solution, such a global solution is unique.
\end{proof}

\begin{remark}
Let $\mathbb{K} = \mathbb{R}$.
In \cite{Hale--Meyer_1967}, Hale and Meyer studied the following equation
\begin{equation*}
	x(t) = \phi(0) + g(t, x_t) - g(t_0, \phi) + \int_{t_0}^t f(s, x_s) \mspace{2mu} \mathrm{d}s + \int_{t_0}^t h(s) \mspace{2mu} \mathrm{d}s
\end{equation*}
under an initial condition $x_{t_0} = \phi \in C([-r, 0], \mathbb{R}^n)$ for each $t_0 \in \mathbb{R}$.
Here $f, g \colon \mathbb{R} \times C([-r, 0], \mathbb{R}^n) \to \mathbb{R}^n$ are continuous maps with the properties that
\begin{equation*}
	C([-r, 0], \mathbb{R}^n) \ni \phi \mapsto f(t, \phi) \in \mathbb{R}^n
	\amd
	C([-r, 0], \mathbb{R}^n) \ni \phi \mapsto g(t, \phi) \in \mathbb{R}^n
\end{equation*}
are linear for each $t \in \mathbb{R}$, and $h \colon \mathbb{R} \to \mathbb{R}^n$ is a locally  Lebesgue integrable function.
In \cite[Theorem 1 in Chapter II]{Hale--Meyer_1967}, it is shown that the above problem has a unique solution under an additional assumption of the non-atomicity of $g$ at $0$.
See \cite[Chapter I]{Hale--Meyer_1967} for the detail of this condition.
The proof of Theorem~\ref{thm: unique existence, integral eq with G} should be compared with \cite[Proof of Theorem 1 in Chapter II]{Hale--Meyer_1967}.
\end{remark}

We hereafter use the following notation.

\begin{notation}
Let $G \colon [0, \infty) \to \mathbb{K}^n$ be a continuous function with $G(0) = 0$ and $\phi \in \mathcal{M}^1([-r, 0], \mathbb{K}^n)$ be given.
The unique solution of Eq.~\eqref{eq: integral eq with G, t_0 = 0}
\begin{equation*}
	x(t) = \phi(0) + L \int_0^t x_s \mspace{2mu} \mathrm{d}s + G(t)
	\mspace{25mu}
	(t \ge 0)
\end{equation*}
is denoted by $x^L(\argdot; \phi, G) \colon [-r, \infty) \to \mathbb{K}^n$.
Then $x^L(\argdot; \phi, 0) = x^L(\argdot; \phi)$.
\end{notation}

We obtain the following corollary.
It will be a basics to consider a variation of constants formula for Eq.~\eqref{eq: integral eq with G, t_0 = 0}.

\begin{corollary}\label{cor: decomposition of sol to integral eq with G}
For any $\phi \in \mathcal{M}^1([-r, 0], \mathbb{K}^n)$ and any continuous function $G \colon [0, \infty) \to \mathbb{K}^n$ with $G(0) = 0$,
\begin{equation*}
	x^L(\argdot; \phi, G)
	= x^L(\argdot; \phi, 0) + x^L(\argdot; 0, G)
\end{equation*}
holds.
\end{corollary}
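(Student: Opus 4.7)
The plan is to prove the decomposition by showing that the right-hand side satisfies the same integral equation and initial condition as $x^L(\argdot;\phi,G)$, and then invoking the uniqueness part of Theorem~\ref{thm: unique existence, integral eq with G}. This mirrors the strategy used in Corollary~\ref{cor: linearity wrt initial history} for linearity of the mild solution in the initial history.

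First I would define
\[
	y(t) \coloneqq x^L(t;\phi,0) + x^L(t;0,G)
\]
on the set $\dom(\phi)\cup[0,\infty)$, noting that $x^L(\argdot;\phi,0)$ is defined on $\dom(\phi)\cup[0,\infty)$ while $x^L(\argdot;0,G)$ is defined on $[-r,\infty)$, so that the addition makes sense on the common domain. The initial condition $y_0 = \phi + 0 = \phi$ is immediate from the definition of the history segment and the fact that each summand attains the corresponding initial history. Continuity of $y|_{[0,\infty)}$ follows from the continuity of each summand on $[0,\infty)$.

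Next I would verify the integral equation. Using the definition of each summand (with $G$ replaced by $0$ and $\phi$ replaced by $0$, respectively) together with the linearity of $L$ and of the Lebesgue integral defining $\int_0^t x_s\,\mathrm{d}s$ (Definition~\ref{dfn: int_0^t x_s ds}), for each $t\ge 0$ one computes
\begin{align*}
	y(t) &= \phi(0) + L\int_0^t x^L(\argdot;\phi,0)_s\,\mathrm{d}s + L\int_0^t x^L(\argdot;0,G)_s\,\mathrm{d}s + G(t) \\
	&= \phi(0) + L\int_0^t y_s\,\mathrm{d}s + G(t),
\end{align*}
where the last equality uses that history segmentation commutes with addition pointwise and that $L$ is linear. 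Thus $y$ satisfies the integral equation~\eqref{eq: integral eq with G, t_0 = 0} under the initial condition $x_0 = \phi$.

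Finally, by the uniqueness assertion of Theorem~\ref{thm: unique existence, integral eq with G}, we conclude $y = x^L(\argdot;\phi,G)$, which is the desired identity. I do not anticipate any genuine obstacle here; the only point that requires mild care is bookkeeping of the domains of definition, ensuring that adding the two summands (one defined on $\dom(\phi)\cup[0,\infty)$ and one on $[-r,\infty)$) produces exactly a function on $\dom(\phi)\cup[0,\infty)$, which coincides with the domain of $x^L(\argdot;\phi,G)$.
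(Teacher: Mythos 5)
Your proposal is correct and follows essentially the same route as the paper: define the sum, check that it satisfies the initial condition and the integral equation~\eqref{eq: integral eq with G, t_0 = 0} by linearity of $L$ and of the integral, and conclude by the uniqueness in Theorem~\ref{thm: unique existence, integral eq with G}. The domain bookkeeping you mention is harmless and consistent with the paper's treatment.
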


\begin{proof}
Let $x \coloneqq x^L(\argdot; \phi, 0) + x^L(\argdot; 0, G)$.
Then $x$ satisfies $x_0 = \phi$.
Furthermore, we have
\begin{align*}
	x(t)
	&= x^L(t; \phi, 0) + x^L(t; 0, G) \\
	&= \phi(0) + L\int_0^t x^L(\argdot; \phi, 0)_s \mspace{2mu} \mathrm{d}s + L\int_0^t x^L(\argdot; 0, G)_s \mspace{2mu} \mathrm{d}s + G(t)
\end{align*}
for all $t \ge 0$.
Since the last term is equal to
\begin{equation*}
	\phi(0) + L\int_0^t x_s \mspace{2mu} \mathrm{d}s + G(t)
\end{equation*}
by the linearity of $L$, Theorem~\ref{thm: unique existence, integral eq with G} yields $x = x^L(\argdot; \phi, G)$.
\end{proof}

In the same way as in Theorem~\ref{thm: mild sol with instantaneous input}, we obtain the following theorem.
The proof can be omitted.

\begin{theorem}\label{thm: sol to integral eq with G and instantaneous input}
Let $G \colon [0, \infty) \to \mathbb{K}^n$ be a continuous function with $G(0) = 0$ and $x \coloneqq x^L \bigl( \argdot; \Hat{\xi}, G \bigr)$ for some $\xi \in \mathbb{K}^n$.
Then $x$ satisfies
\begin{equation*}
	x(t)
	= \xi
	+ \int_{-t}^0 \mathrm{d}\eta(\theta) \mspace{2mu}
		\biggl( \int_0^{t + \theta} x(s) \mspace{2mu} \mathrm{d}s \biggr)
	+ G(t)
	= \xi + \bigl( \mathrm{d}\Check{\eta} * Vx|_{[0, \infty)} \bigr)(t) + G(t)
\end{equation*}
for all $t \ge 0$.
\end{theorem}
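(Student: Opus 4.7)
The plan is to mimic the proof of Theorem~\ref{thm: mild sol with instantaneous input} essentially verbatim, since the only difference is the presence of the forcing term $G(t)$, which appears additively and is carried along unchanged. First I would invoke the defining integral equation for $x = x^L(\argdot; \Hat{\xi}, G)$, namely
\begin{equation*}
    x(t) = \Hat{\xi}(0) + L\int_0^t x_s \mspace{2mu} \mathrm{d}s + G(t) = \xi + L\int_0^t x_s \mspace{2mu} \mathrm{d}s + G(t)
\end{equation*}
for all $t \ge 0$, which follows from the definition of the notation $x^L(\argdot; \phi, G)$ introduced after Theorem~\ref{thm: unique existence, integral eq with G}.

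Next I would rewrite the middle term. Because $x$ is a continuous prolongation of $\Hat{\xi}$, it is in particular locally Lebesgue integrable on $[-r, \infty)$, and it satisfies $x_0 = \Hat{\xi}$ as required by the hypothesis of Lemma~\ref{lem: L*int_0^t x_s ds, instantaneous input}. Applying that lemma directly gives
\begin{equation*}
    L\int_0^t x_s \mspace{2mu} \mathrm{d}s
    = \int_{-t}^0 \mathrm{d}\eta(\theta) \mspace{2mu} \biggl( \int_0^{t+\theta} x(s) \mspace{2mu} \mathrm{d}s \biggr)
    = \int_0^t \mathrm{d}\Check{\eta}(u) \mspace{2mu} \biggl( \int_0^{t-u} x(s) \mspace{2mu} \mathrm{d}s \biggr).
\end{equation*}
The first of these is the first desired expression. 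For the second, one observes that $\int_0^{t-u} x(s) \mspace{2mu} \mathrm{d}s = (V x|_{[0,\infty)})(t-u)$ by Definition~\ref{dfn: Volterra operator}, so that the right-hand integral is exactly $(\mathrm{d}\Check{\eta} * Vx|_{[0,\infty)})(t)$ by Definition~\ref{dfn: RS convolution}. Substituting back into the defining equation yields both claimed expressions.

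There is no genuine obstacle here: the work has already been done in establishing Lemma~\ref{lem: L*int_0^t x_s ds, instantaneous input}, which cleanly separates the argument needed to convert an integral over the fixed window $[-r, 0]$ against $\mathrm{d}\eta$ into either an integral over the growing window $[-t, 0]$ or a forward-time Riemann-Stieltjes convolution against $\mathrm{d}\Check{\eta}$. The only minor point worth verifying explicitly is that the hypothesis $x_0 = \Hat{\xi}$ in Lemma~\ref{lem: L*int_0^t x_s ds, instantaneous input} is satisfied, which is immediate from the construction of $x^L(\argdot; \Hat{\xi}, G)$ as a continuous prolongation of $\Hat{\xi}$. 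Consequently, the theorem follows directly, and as the text notes, the proof can indeed be omitted.
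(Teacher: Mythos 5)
Your proposal is correct and follows essentially the same route the paper intends: the paper omits the proof precisely because it is the argument of Theorem~\ref{thm: mild sol with instantaneous input}, namely applying Lemma~\ref{lem: L*int_0^t x_s ds, instantaneous input} to the defining integral equation of $x^L\bigl(\argdot; \Hat{\xi}, G\bigr)$ and identifying the resulting integral with the Riemann-Stieltjes convolution $\bigl(\mathrm{d}\Check{\eta} * Vx|_{[0,\infty)}\bigr)(t)$. Your verification that $x_0 = \Hat{\xi}$ and that $x$ is locally integrable, so the lemma applies, is the only hypothesis check needed, and you have it.
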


\section{Convolution and Volterra operator}\label{sec: convolution and Volterra op}

\subsection{A motivation to introduce convolution}\label{subsec: motivation of convolution}

\subsubsection{Variation of constants formula for non-homogeneous linear ODEs}

As a motivation to introduce convolution for locally Riemann integrable functions on $[0, \infty)$, we first recall the variation of constants formula for a non-homogeneous linear ODE
\begin{equation}\label{eq: non-homogeneous linear ODE}
	\Dot{x} = Ax + f(t)
\end{equation}
for an $n \times n$ matrix $A \in M_n(\mathbb{K})$ and a continuous function $f \colon \mathbb{R} \to \mathbb{K}^n$.
The unique global solution $x^A(\argdot; t_0, \xi, f) \colon \mathbb{R} \to \mathbb{K}^n$ of Eq.~\eqref{eq: non-homogeneous linear ODE} satisfying an initial condition $x(t_0) = \xi \in \mathbb{K}^n$ is expressed by
\begin{equation}\label{eq: VOC formula for non-homogeneous linear ODE}
	x^A(t; t_0, \xi, f)
	= \mathrm{e}^{tA}
		{\biggl[ \mathrm{e}^{-t_0 A}\xi + \int_{t_0}^t \mathrm{e}^{-uA}f(u) \mspace{2mu} \mathrm{d}u \biggr]}
	\mspace{25mu}
	(t \in \mathbb{R})
\end{equation}
with the matrix exponential.
This is the \textit{variation of constants formula} for \eqref{eq: non-homogeneous linear ODE}, which is obtained by finding an equation of $y = y(t)$ under the change of variable $x(t) = \mathrm{e}^{tA}y(t)$.
Indeed, the function $y$ must satisfy an initial condition $y(t_0) = \mathrm{e}^{-t_0 A}\xi$ and
\begin{equation*}
	\Dot{y}(t) = \mathrm{e}^{-tA}f(t)
	\mspace{25mu}
	(t \in \mathbb{R}).
\end{equation*}
This procedure to derive the formula~\eqref{eq: VOC formula for non-homogeneous linear ODE} corresponds to replacing a constant vector $v \in \mathbb{K}^n$ in the general solution
\begin{equation*}
	x(t) = \mathrm{e}^{tA}v
\end{equation*}
for the linear ODE~\eqref{eq: linear ODE} with a vector-valued function $y = y(t)$.
This is the reason for the terminology of the variation of constants formula.

The above method to derive \eqref{eq: VOC formula for non-homogeneous linear ODE} should be called the \textit{method of variation of constants}.
Unfortunately, this method does not exist for a non-homogeneous linear RFDE~\eqref{eq: non-homogeneous linear RFDE} because the solution space of the linear RFDE~\eqref{eq: linear RFDE} is infinite-dimensional and \eqref{eq: linear RFDE} does not have a general solution.
Even if the method itself does not exist for \eqref{eq: non-homogeneous linear RFDE}, a formula similar to \eqref{eq: VOC formula for non-homogeneous linear ODE} if it exists will be useful to analyze the dynamics of RFDEs near equilibria.
For this purpose, a form
\begin{equation}\label{eq: VOC formula' for non-homogeneous linear ODE}
	x^A(t; \xi, f) = \mathrm{e}^{tA}\xi + \int_0^t \mathrm{e}^{(t - u)A}f(u) \mspace{2mu} \mathrm{d}u,
\end{equation}
which is equivalent to \eqref{eq: VOC formula for non-homogeneous linear ODE} is helpful.
Here the initial time $t_0$ is set to $0$, and it has been omitted in $x^A(t; \xi, f)$.
The first term of the right-hand side of \eqref{eq: VOC formula' for non-homogeneous linear ODE} is the solution of the linear ODE~\eqref{eq: linear ODE} under the initial condition $x(0) = \xi$.
Therefore, the second term of the right-hand side of \eqref{eq: VOC formula' for non-homogeneous linear ODE} is the solution of \eqref{eq: non-homogeneous linear ODE} under the initial condition $x(0) = 0$.
This can be checked directly by differentiating the second term as
\begin{align*}
	\dv{t} \int_0^t \mathrm{e}^{(t - u)A}f(u) \mspace{2mu} \mathrm{d}u
	&= \dv{t} \mspace{2mu}
		{\biggl[ \mathrm{e}^{tA}\int_0^t \mathrm{e}^{-uA}f(u) \mspace{2mu} \mathrm{d}u \biggr]} \\
	&= f(t) + A\mathrm{e}^{tA}\int_0^t \mathrm{e}^{-uA}f(u) \mspace{2mu} \mathrm{d}u.
\end{align*}
We note that this gives another proof of \eqref{eq: VOC formula' for non-homogeneous linear ODE}.

\subsubsection{Convolution and non-homogeneous linear RFDEs}

For a continuous linear map $L \colon C([-r, 0], \mathbb{K}^n) \to \mathbb{K}^n$ and a continuous function $f \colon [0, \infty) \to \mathbb{K}^n$, we consider the non-homogeneous linear RFDE~\eqref{eq: non-homogeneous linear RFDE, continuous forcing term}
\begin{equation*}
	\Dot{x}(t) = Lx_t + f(t)
	\mspace{25mu}
	(t \ge 0).
\end{equation*}
Since $\mathbb{R} \ni t \mapsto \mathrm{e}^{tA} \in M_n(\mathbb{K})$ is the \textit{principal fundamental matrix solution} of the linear ODE~\eqref{eq: linear ODE} in the sense that it is a matrix solution to \eqref{eq: linear ODE} and $\mathrm{e}^{tA}|_{t = 0}$ is the identity matrix, it is natural to ask whether the function $x(\argdot; f) \colon [-r, \infty) \to \mathbb{K}^n$ defined by $x(\argdot; f)_0 = 0$ and \eqref{eq: X^L * f}
\begin{equation*}
	x(t; f) \coloneqq \int_0^t X^L(t - u)f(u) \mspace{2mu} \mathrm{d}u
\end{equation*}
for $t \ge 0$ is a solution to Eq.~\eqref{eq: non-homogeneous linear RFDE, continuous forcing term}.
Here $X^L \colon [-r, \infty) \to M_n(\mathbb{K})$ is the principal fundamental matrix solution of the linear RFDE~\eqref{eq: linear RFDE}
\begin{equation*}
	\Dot{x}(t) = Lx_t
	\mspace{25mu}
	(t \ge 0).
\end{equation*}

In Theorem~\ref{thm: DE of X^L}, we obtained the differential equation that is satisfied by $X^L$.
However, it is not direct to prove that the function $x(\argdot; f)$ is a solution to \eqref{eq: non-homogeneous linear RFDE, continuous forcing term} by differentiating the right-hand side of \eqref{eq: X^L * f} as in the case of the non-homogeneous linear ODE~\eqref{eq: non-homogeneous linear ODE} because one cannot take the term $X^L(t)$ out of the integral.
This comes from the property that initial value problems of RFDEs cannot be solved backward in general.
Therefore, one needs to treat the integral of the right-hand side of \eqref{eq: X^L * f} as it is.

Such an integral is a convolution for locally (Riemann) integrable functions, which should be distinguished from the convolution for integrable functions.
The convolution for locally integrable functions has been used in the literature of DDEs.
For example, see \cite[Chapter 1]{Bellman--Cooke_1963} with the context of the Laplace transform.
The convolution is also used in \cite{VerduynLunel_1989} and \cite{Diekmann--vanGils--Lunel--Walther_1995}, however, the detail has been omitted there.

\subsection{Convolution and Riemann-Stieltjes convolution}\label{subsec: convolution and RS convolution}

In this subsection, we study a convolution of the following type.

\begin{definition}\label{dfn: convolution for locally Riemann integrable functions}
For each pair of locally Riemann integrable functions $f, g \colon [0, \infty) \to M_n(\mathbb{K})$, we define a function $g * f \colon [0, \infty) \to M_n(\mathbb{K})$ by
\begin{equation*}
	(g * f)(t)
	\coloneqq \int_0^t g(t - u)f(u) \mspace{2mu} \mathrm{d}u
	= \int_0^t g(u)f(t - u) \mspace{2mu} \mathrm{d}u
\end{equation*}
for $t \ge 0$.
Here the above integrals are Riemann integrals.
We call the function $g * f$ the \textit{convolution} of $g$ and $f$.
\end{definition}

See \cite[Section 5.3]{Shapiro_2018} for the convolution of continuous functions.
We note that when $f$ is a constant function, then
\begin{equation}\label{eq: g * f(0)}
	(g * f)(t)
	= \int_0^t g(u)f(0) \mspace{2mu} \mathrm{d}u
	= (Vg)(t)f(0)
\end{equation}
holds for all $t \ge 0$.
In the same way, $g * f = g(0)Vf$ holds when $g$ is constant.

\begin{lemma}[cf.\ \cite{Shapiro_2018}]\label{lem: continuity of convolution, loc Riemann integrable function and continuous function}
Let $f, g \colon [0, \infty) \to M_n(\mathbb{K})$ be locally Riemann integrable functions.
If $f$ is continuous, then $g * f$ is a sum of a continuous function and a locally absolutely continuous function.
\end{lemma}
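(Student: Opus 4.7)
The plan is to exploit the identity~\eqref{eq: g * f(0)} together with the decomposition $f = (f - f(0)) + f(0)$, in analogy with the proof of Lemma~\ref{lem: continuity of RS convolution} via~\eqref{eq: decomposition of RS convolution}. This gives
\begin{equation*}
	g * f = g * (f - f(0)) + g * f(0) = g * (f - f(0)) + (Vg) \cdot f(0).
\end{equation*}
The second summand is locally absolutely continuous: since $g$ is locally Riemann integrable, it is bounded on every closed and bounded subinterval of $[0, \infty)$ and hence locally Lebesgue integrable, so by the discussion following Definition~\ref{dfn: Volterra operator} the function $Vg$ is locally absolutely continuous; right multiplication by the constant matrix $f(0)$ preserves this property. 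It therefore suffices to show that $g * h$ is continuous, where $h \coloneqq f - f(0)$ is continuous with $h(0) = O$.

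To prove continuity of $g * h$, I would extend $h$ to all of $\mathbb{R}$ by setting $h(u) \coloneqq O$ for $u < 0$; since $h(0) = O$, the extended function is still continuous. Fix an arbitrary $T > 0$ and take $0 \le s \le t \le T$. Using the representation $(g*h)(t) = \int_0^t g(u) h(t - u) \mspace{2mu} \mathrm{d}u$, note that $h(s - u) = O$ for $u \in [s, t]$, so
\begin{equation*}
	(g*h)(s) = \int_0^s g(u) h(s - u) \mspace{2mu} \mathrm{d}u = \int_0^t g(u) h(s - u) \mspace{2mu} \mathrm{d}u.
\end{equation*}
Subtracting yields
\begin{equation*}
	(g*h)(t) - (g*h)(s) = \int_0^t g(u) \bigl[ h(t - u) - h(s - u) \bigr] \mspace{2mu} \mathrm{d}u.
\end{equation*}
Now $h$ is uniformly continuous on $[-T, T]$, and $\int_0^T \abs{g(u)} \mspace{2mu} \mathrm{d}u < \infty$ because $\abs{g}$ is Riemann integrable on $[0, T]$ whenever $g$ is (as a consequence of Lebesgue's criterion for Riemann integrability). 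Given $\ep > 0$, choose $\delta > 0$ so that $\abs{h(\xi) - h(\eta)} < \ep$ whenever $\xi, \eta \in [-T, T]$ and $\abs{\xi - \eta} < \delta$; then for $\abs{t - s} < \delta$,
\begin{equation*}
	\abs{(g*h)(t) - (g*h)(s)} \le \ep \int_0^T \abs{g(u)} \mspace{2mu} \mathrm{d}u,
\end{equation*}
which yields continuity of $g * h$ on $[0, T]$, and hence on $[0, \infty)$ since $T$ was arbitrary.

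The argument is essentially routine once the correct decomposition is fixed; the only mild subtlety is justifying that the ``missing'' part of the integral defining $(g*h)(s)$ may be added back because $h$ vanishes on $(-\infty, 0]$ under the extension, which is where the normalization $h(0) = O$ is used decisively. This mirrors the technique employed in the proof of Lemma~\ref{lem: continuity of RS convolution}.
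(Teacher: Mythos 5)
Your proposal is correct and follows essentially the same route as the paper: the same decomposition $g * f = g * (f - f(0)) + (Vg)f(0)$ via~\eqref{eq: g * f(0)}, with the continuity of the first summand proved by extending $f - f(0)$ by $O$ to the left and invoking uniform continuity on compact intervals, exactly as in the proof of Lemma~\ref{lem: continuity of RS convolution}. The only difference is that you spell out the $\ep$--$\delta$ details and the local absolute continuity of $(Vg)f(0)$ explicitly, which the paper leaves implicit.
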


\begin{proof}
By using \eqref{eq: g * f(0)},
\begin{equation*}
	g * f = g * (f - f(0)) + (Vg)f(0)
\end{equation*}
holds.
Therefore, the conclusion is obtained by showing that $g * f$ is continuous when $f(0) = O$.
We extend the domain of definition of $f$ to $\mathbb{R}$ by defining $f(t) \coloneqq f(0) = O$ for $t \le 0$.
Let $s, t \in [0, \infty)$ be given so that $s < t$.
By the same reasoning as in the proof of Lemma~\ref{lem: continuity of RS convolution}, we have
\begin{equation*}
	(g * f)(t) - (g * f)(s)
	= \int_0^t g(u)[f(t - u) - f(s - u)] \mspace{2mu} \mathrm{d}u.
\end{equation*}
By combining this and the uniform continuity of $f$ on closed and bounded intervals, the continuity of $g * f$ is obtained.
\end{proof}

\subsubsection{Convolution of locally BV functions and continuous functions}

By using Theorem~\ref{thm: RS convolution under Volterra operator}, one can obtain the following result on the regularity of convolution.

\begin{theorem}[cf.\ \cite{Thieme_2008}]\label{thm: convolution of loc BV function and continuous function}
If $f \colon [0, \infty) \to M_n(\mathbb{K})$ is continuous and $g \colon [0, \infty) \to M_n(\mathbb{K})$ is of locally bounded variation, then
\begin{equation}\label{eq: convolution under Volterra operator, RS convolution}
	g * f
	= g(0)Vf + \mathrm{d}g * (Vf)
	= V(g(0)f + \mathrm{d}g * f)
\end{equation}
holds.
Consequently, the convolution $g * f$ is locally absolutely continuous, differentiable almost everywhere, and satisfies
\begin{equation*}
	(g * f)'(t) = g(0)f(t) + (\mathrm{d}g * f)(t)
\end{equation*}
for almost all $t \ge 0$.
\end{theorem}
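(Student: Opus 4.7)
The plan is to derive the first equality in \eqref{eq: convolution under Volterra operator, RS convolution} by integration by parts for Riemann-Stieltjes integrals, and then obtain the second equality essentially for free from Theorem~\ref{thm: RS convolution under Volterra operator}, which has already been established.

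First I would fix $t > 0$ and introduce the auxiliary function $F \colon [0, t] \to M_n(\mathbb{K})$ defined by $F(u) \coloneqq \int_{t - u}^{t} f(s)\,\mathrm{d}s = (Vf)(t) - (Vf)(t - u)$. Since $f$ is continuous, $F$ is $C^1$ with $F'(u) = f(t - u)$, so the Riemann integral $(g * f)(t) = \int_0^t g(u) f(t - u)\,\mathrm{d}u$ coincides with the Riemann-Stieltjes integral $\int_0^t g(u)\,\mathrm{d}F(u)$. Applying the integration by parts formula for Riemann-Stieltjes integrals (in the matrix-valued form from Appendix~\ref{sec: RS integrals wrt matrix-valued functions}) and using $F(0) = 0$, $F(t) = (Vf)(t)$, I obtain
\begin{equation*}
    (g * f)(t) = g(t)(Vf)(t) - \int_0^t \bigl[ (Vf)(t) - (Vf)(t - u) \bigr]\,\mathrm{d}g(u).
\end{equation*}
Splitting the remaining Riemann-Stieltjes integral and using $\int_0^t \mathrm{d}g(u) = g(t) - g(0)$, the $g(t)(Vf)(t)$ terms cancel, leaving $(g * f)(t) = g(0)(Vf)(t) + (\mathrm{d}g * Vf)(t)$, which is the first equality.

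For the second equality, I would apply Theorem~\ref{thm: RS convolution under Volterra operator} with $\alpha = g$ to get $V(\mathrm{d}g * f) = \mathrm{d}g * Vf$, and combine with $V(g(0) f) = g(0) Vf$ (valid because $g(0)$ is a constant matrix). Note that $g(0) f + \mathrm{d}g * f$ is locally Riemann integrable by Theorem~\ref{thm: local integrability of RS convolution}, so $V$ applied to it makes sense. The linearity of $V$ then yields $g(0) Vf + \mathrm{d}g * Vf = V(g(0) f + \mathrm{d}g * f)$, completing \eqref{eq: convolution under Volterra operator, RS convolution}.

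The consequences on regularity then follow immediately: since $V$ maps locally Riemann (and more generally locally Lebesgue) integrable functions into locally absolutely continuous functions that are differentiable almost everywhere, the expression $g * f = V(g(0) f + \mathrm{d}g * f)$ shows the same holds for $g * f$, and the Lebesgue differentiation theorem (applied component-wise) gives $(g * f)'(t) = g(0) f(t) + (\mathrm{d}g * f)(t)$ for almost all $t \ge 0$. The main obstacle I anticipate is only bookkeeping in the integration by parts step, in particular verifying the matrix-valued version of the integration by parts identity and making sure the cancellation $g(t)(Vf)(t) - (Vf)(t)[g(t) - g(0)] = g(0)(Vf)(t)$ is legitimate despite the non-commutativity of matrix multiplication, which it is because the scalar factor is on the correct side of each matrix term throughout the argument.
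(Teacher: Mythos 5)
Your overall route is the same as the paper's: both proofs rest on the integration by parts formula for Riemann--Stieltjes integrals (Theorem~\ref{thm: integration by parts for RS integral}) together with the $C^1$-integrator result (Theorem~\ref{thm: RS integral wrt C^1-function} and its right-differential analogue), and both obtain the second equality in \eqref{eq: convolution under Volterra operator, RS convolution} from Theorem~\ref{thm: RS convolution under Volterra operator} plus linearity of $V$; the paper merely runs the integration by parts in the opposite direction, starting from $\mathrm{d}g * (Vf)$ and arriving at $g * f$. Your treatment of the regularity consequences via the Volterra operator is also the paper's.

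There is, however, one step that fails as written, precisely at the non-commutativity point you flagged. The integration by parts formula of Theorem~\ref{thm: integration by parts for RS integral} keeps the $g$-factors on the left throughout: from $(g*f)(t) = \int_0^t g(u)\,\mathrm{d}F(u)$ with $F(u) = (Vf)(t) - (Vf)(t-u)$ it yields
\begin{equation*}
	(g*f)(t) = g(t)(Vf)(t) - \int_0^t \mathrm{d}g(u)\,\bigl[(Vf)(t) - (Vf)(t-u)\bigr],
\end{equation*}
not $g(t)(Vf)(t) - \int_0^t \bigl[(Vf)(t) - (Vf)(t-u)\bigr]\,\mathrm{d}g(u)$ as you wrote. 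With the correct orientation the cancellation reads $g(t)(Vf)(t) - [g(t)-g(0)](Vf)(t) = g(0)(Vf)(t)$, and the remaining term is literally $(\mathrm{d}g * Vf)(t)$ in the sense of Definition~\ref{dfn: RS convolution}, where the differential sits on the left. With your orientation the claimed cancellation $g(t)(Vf)(t) - (Vf)(t)[g(t)-g(0)] = g(0)(Vf)(t)$ is false for non-commuting matrices, and the leftover term $\int_0^t (Vf)(t-u)\,\mathrm{d}g(u)$ is not the Riemann--Stieltjes convolution appearing in \eqref{eq: convolution under Volterra operator, RS convolution}. The fix is purely one of bookkeeping—redo the integration by parts with all $g$- and $\mathrm{d}g$-factors on the left, as in the statement of Theorem~\ref{thm: integration by parts for RS integral}—after which your argument is complete and coincides with the paper's.
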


The above result is considered as the finite-dimensional version of \cite[Theorem 3.2]{Thieme_2008} (i.e., the case that the Banach space $X$ in \cite[Theorem 3.2]{Thieme_2008} is finite-dimensional) except the equality
\begin{equation*}
	g * f = g(0)Vf + \mathrm{d}g * (Vf).
\end{equation*}
In the following, we give a simpler proof of Theorem~\ref{thm: convolution of loc BV function and continuous function} based on Theorem~\ref{thm: RS convolution under Volterra operator}.

\begin{proof}[Proof of Theorem~\ref{thm: convolution of loc BV function and continuous function}]
Since $Vf$ is continuously differentiable and $(Vf)(0) = O$,
\begin{align*}
	[\mathrm{d}g * (Vf)](t)
	&= [g(u)(Vf)(t - u)]_{u = 0}^t + \int_0^t g(u)f(t - u) \mspace{2mu} \mathrm{d}u \\
	&= -g(0)(Vf)(t) + (g * f)(t)
\end{align*}
holds for all $t \ge 0$ by the integration by parts formula for Riemann-Stieltjes integrals and from Theorem~\ref{thm: RS integral wrt C^1-function}.
By combining the obtained equality
\begin{equation*}
	g * f = g(0)(Vf) + \mathrm{d}g * (Vf)
\end{equation*}
and Theorem~\ref{thm: RS convolution under Volterra operator}, the equality~\eqref{eq: convolution under Volterra operator, RS convolution} is obtained.
\end{proof}

\begin{remark}
Let $f \colon [0, \infty) \to M_n(\mathbb{K})$ be a continuous function and $g \colon [0, \infty) \to M_n(\mathbb{K})$ be a function of locally bounded variation.
By defining a function $V(\mathrm{d}g) \colon [0, \infty) \to M_n(\mathbb{K})$ by
\begin{equation*}
	V(\mathrm{d}g)(t) \coloneqq g(t) - g(0)
\end{equation*}
for $t \ge 0$, we have
\begin{equation*}
	V(\mathrm{d}g * f) = \mathrm{d}g * (Vf) = V(\mathrm{d}g) * f
\end{equation*}
from Theorems~\ref{thm: RS convolution under Volterra operator} and \ref{thm: convolution of loc BV function and continuous function}.
This formula is easy to remember.
We note that the above definition of $V(\mathrm{d}g)$ is reasonable because
\begin{equation*}
	\int_0^t \mathrm{d}g(u) = g(t) - g(0)
\end{equation*}
holds for all $t \ge 0$.
\end{remark}

We have the following corollaries.

\begin{corollary}\label{cor: convolution of loc BV function and continuous function under Volterra operator}
If $f \colon [0, \infty) \to M_n(\mathbb{K})$ is continuous and $g \colon [0, \infty) \to M_n(\mathbb{K})$ is of locally bounded variation, then
\begin{equation*}
	V(g * f) = g * (Vf)
\end{equation*}
holds.
\end{corollary}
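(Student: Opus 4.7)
The plan is to reduce the identity $V(g \ast f) = g \ast (Vf)$ to a routine combination of Theorem~\ref{thm: RS convolution under Volterra operator} and Theorem~\ref{thm: convolution of loc BV function and continuous function}, using the fact that $Vf$ is itself a continuous function (indeed locally absolutely continuous), so that both theorems remain applicable with $Vf$ in place of $f$.

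Concretely, I would start from the first equality of Theorem~\ref{thm: convolution of loc BV function and continuous function}, which gives
\begin{equation*}
    g \ast f = g(0) \mspace{2mu} Vf + \mathrm{d}g \ast (Vf).
\end{equation*}
Applying the (linear) Volterra operator $V$ to both sides yields
\begin{equation*}
    V(g \ast f) = g(0) \mspace{2mu} V(Vf) + V \bigl( \mathrm{d}g \ast (Vf) \bigr).
\end{equation*}
Now Theorem~\ref{thm: RS convolution under Volterra operator}, applied with the continuous function $Vf$ in place of $f$, converts the second term: $V(\mathrm{d}g \ast (Vf)) = \mathrm{d}g \ast V(Vf)$. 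Hence
\begin{equation*}
    V(g \ast f) = g(0) \mspace{2mu} V(Vf) + \mathrm{d}g \ast V(Vf).
\end{equation*}
Finally, re-using Theorem~\ref{thm: convolution of loc BV function and continuous function} with $Vf$ in place of $f$ (legitimate since $Vf$ is continuous), the right-hand side is exactly $g \ast (Vf)$, which closes the argument.

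There is no genuine obstacle here, since the two input theorems are tailored for exactly this kind of manipulation; the proof really is the observation that applying $V$ to the factored form $g(0) Vf + \mathrm{d}g \ast (Vf)$ simply replaces every occurrence of $f$ by $Vf$ on the right, which is again the same factored form, now of $g \ast (Vf)$. The only small care needed is to note that $Vf$ inherits the continuity required to invoke both theorems a second time.
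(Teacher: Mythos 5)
Your proof is correct and follows essentially the same route as the paper: both express $V(g*f)$ as $g(0)V^2f + \mathrm{d}g * (V^2f)$ by combining Theorem~\ref{thm: convolution of loc BV function and continuous function} with Theorem~\ref{thm: RS convolution under Volterra operator}, and then recognize this as the factored form of $g*(Vf)$ via a second application of Theorem~\ref{thm: convolution of loc BV function and continuous function}. The extra remark that $Vf$ is continuous (so both theorems apply with $Vf$ in place of $f$) is exactly the justification implicit in the paper's argument.
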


\begin{proof}
From Theorems~\ref{thm: convolution of loc BV function and continuous function} and \ref{thm: RS convolution under Volterra operator}, we have
\begin{equation*}
	V(g * f)
	= g(0) \bigl( V^2f \bigr) + \mathrm{d}g * \bigl( V^2f \bigr),
\end{equation*}
where $V^2f \coloneqq V(Vf)$.
Since the right-hand side is equal to $g * (Vf)$ from Theorem~\ref{thm: convolution of loc BV function and continuous function}, the equality is obtained.
\end{proof}

\begin{corollary}\label{cor: convolution of loc AC function and continuous function}
Let $f \colon [0, \infty) \to M_n(\mathbb{K})$ be a continuous function and $g \colon [0, \infty) \to M_n(\mathbb{K})$ be a function of locally bounded variation.
Then the following statements hold:
\begin{enumerate}
\item If $g$ is continuous or $f(0) = O$, then $g * f$ is continuously differentiable and 
\begin{equation*}
	(g * f)' = g(0)f + \mathrm{d}g * f
\end{equation*}
holds.
\item If $g$ is locally absolutely continuous, then $g * f$ is continuously differentiable and 
\begin{equation*}
	(g * f)' = g(0)f + g' * f
\end{equation*}
holds.
Here $g' * f \colon [0, \infty) \to M_n(\mathbb{K})$ be the function defined by
\begin{equation*}
	(g' * f)(t)
	\coloneqq \int_0^t g'(t - u)f(u) \mspace{2mu} \mathrm{d}u
	= \int_0^t g'(u)f(t - u) \mspace{2mu} \mathrm{d}u
\end{equation*}
for $t \ge 0$, where the integrals are Lebesgue integrals.
\end{enumerate}
\end{corollary}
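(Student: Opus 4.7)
The plan is to deduce both parts from the regularity already provided by Theorem~\ref{thm: convolution of loc BV function and continuous function}, which gives $g*f = V(g(0)f + \mathrm{d}g * f)$, hence $g*f$ is locally absolutely continuous with almost-everywhere derivative $g(0)f + \mathrm{d}g * f$. Upgrading to continuous differentiability is then a matter of verifying that this derivative is (equal almost everywhere to) a continuous function, since a locally absolutely continuous function whose almost-everywhere derivative coincides a.e.\ with a continuous function is continuously differentiable and the classical derivative agrees with that continuous function everywhere.

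For part~(1), the summand $g(0)f$ is continuous because $f$ is. For $\mathrm{d}g * f$, in the case $f(0) = O$ I would invoke Lemma~\ref{lem: continuity of RS convolution} directly; in the case $g$ is continuous, I would invoke Remark~\ref{rmk: continuity of RS convolution}. In either case, $g(0)f + \mathrm{d}g * f$ is continuous on $[0,\infty)$, so the identity $(g*f)' = g(0)f + \mathrm{d}g * f$ holds everywhere and the derivative is continuous, giving $g * f \in C^1$.

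For part~(2), local absolute continuity of $g$ implies in particular that $g$ is continuous, so part~(1) applies and yields $(g*f)' = g(0)f + \mathrm{d}g * f$ in the $C^1$ sense. It remains to identify $\mathrm{d}g * f$ with $g' * f$ pointwise. The key identity is
\begin{equation*}
    \int_0^t \mathrm{d}g(u) \, \psi(u) = \int_0^t g'(u) \psi(u) \mspace{2mu} \mathrm{d}u
\end{equation*}
for any continuous $\psi \colon [0,t] \to M_n(\mathbb{K})$, which holds when $g$ is locally absolutely continuous; this is a standard matching of the Riemann--Stieltjes integral against a locally AC integrator with the corresponding Lebesgue integral (the scalar case is a staple of RS-integration, and for matrices one argues component-wise, in the spirit of Appendix~\ref{sec: RS integrals wrt matrix-valued functions}). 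Applying this with $\psi(u) = f(t - u)$ for fixed $t \ge 0$ gives $(\mathrm{d}g * f)(t) = (g' * f)(t)$. In passing, this shows the Lebesgue integral defining $g' * f$ makes sense for each $t \ge 0$ since $g' \in \mathcal{L}^1_\mathrm{loc}([0,\infty), M_n(\mathbb{K}))$ and $u \mapsto f(t-u)$ is bounded on $[0,t]$.

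The only nontrivial point is the passage from Riemann--Stieltjes to Lebesgue in part~(2); everything else is a direct assembly of the previous results. If Appendix~\ref{sec: RS integrals wrt matrix-valued functions} does not already record the identity above explicitly, I would cite the analogous scalar-valued statement (e.g.\ from the reference to Widder used elsewhere in the paper) and reduce to it entrywise.
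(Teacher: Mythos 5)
Your proposal is correct and follows essentially the same route as the paper: part (1) by combining the formula $g*f = V(g(0)f + \mathrm{d}g*f)$ from Theorem~\ref{thm: convolution of loc BV function and continuous function} with the continuity of $\mathrm{d}g*f$ from Lemma~\ref{lem: continuity of RS convolution} and Remark~\ref{rmk: continuity of RS convolution}, and part (2) by reducing to part (1) and identifying $\mathrm{d}g*f$ with $g'*f$. The identity you hedge about at the end is exactly Theorem~\ref{thm: RS integral of continuous function wrt AC function} in Appendix~\ref{sec: RS integrals wrt matrix-valued functions}, which is what the paper cites.
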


\begin{proof}
1. Under the assumption, $\mathrm{d}g * f$ is continuous from Lemma~\ref{lem: continuity of RS convolution} and Remark~\ref{rmk: continuity of RS convolution}.
Therefore, the conclusion follows by the formula~\eqref{eq: convolution under Volterra operator, RS convolution}.

\vspace{0.3\baselineskip}
2. The continuous differentiability of $g * f$ follows by the statement 1.
When $g$ is locally absolutely continuous,
\begin{equation*}
	\mathrm{d}g * f = g' * f
\end{equation*}
holds from Theorem~\ref{thm: RS integral of continuous function wrt AC function}.
\end{proof}

\subsubsection{Associativity of Riemann-Stieltjes convolution}

For the proof of Theorem~\ref{thm: VOC formula, phi = 0, G = Vf} below, we need the following result.

\begin{theorem}[refs.\ \cite{Gripenberg--Londen--Staffans_1990}, \cite{Shapiro_2018}]\label{thm: associativity for RS convolution}
Let $\alpha \colon [0, \infty) \to M_n(\mathbb{K})$ be a function of locally bounded variation.
Then for any continuous functions $f, g \colon [0, \infty) \to M_n(\mathbb{K})$,
\begin{equation}\label{eq: associativity for RS convolution}
	\mathrm{d}\alpha * (g * f) = (\mathrm{d}\alpha * g) * f
\end{equation}
holds.
\end{theorem}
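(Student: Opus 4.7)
The plan is to deduce the identity at each fixed $t > 0$ from the Fubini-type Theorem~\ref{thm: iterated integrals for RS integral} for Riemann-Stieltjes integrals. After one elementary substitution in the inner integral of each side, the desired identity at $t$ reads
\begin{equation*}
	\int_0^t \mathrm{d}\alpha(u) \int_0^{t - u} g(t - u - s) f(s) \mspace{2mu} \mathrm{d}s
	= \int_0^t \biggl[ \int_0^{t - s} \mathrm{d}\alpha(u) \mspace{2mu} g(t - u - s) \biggr] f(s) \mspace{2mu} \mathrm{d}s,
\end{equation*}
which is an exchange of order of integration over the triangle $\{(u, s) : u, s \ge 0, \mspace{2mu} u + s \le t\}$. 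Theorem~\ref{thm: iterated integrals for RS integral} directly handles only rectangles with continuous integrands, so the essential task is to reduce to that setting.

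The obstruction to extending the integrand by zero from the triangle to the square $[0, t]^2$ is the value $g(0)$, which does not in general vanish on the hypotenuse $u + s = t$. I would therefore decompose $g = c + g_0$, where $c \coloneqq g(0)$ is viewed as a constant $M_n(\mathbb{K})$-valued function and $g_0 \coloneqq g - c$ satisfies $g_0(0) = O$. Both sides of the identity are linear in $g$, so it suffices to prove the two cases separately.

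For the piece with $g_0(0) = O$, extend $g_0$ to $\mathbb{R}$ by setting $g_0(v) \coloneqq O$ for $v < 0$; this extension is continuous thanks to $g_0(0) = O$. The function $F(u, s) \coloneqq g_0(t - u - s) f(s)$ is then continuous on $[0, t] \times [0, t]$ and vanishes whenever $u + s > t$. Applying Theorem~\ref{thm: iterated integrals for RS integral} to $F$ and $\alpha|_{[0, t]}$ gives the exchange of the two iterated integrals over the square; the vanishing condition outside the triangle lets one recognize the two sides as $[\mathrm{d}\alpha * (g_0 * f)](t)$ and $[(\mathrm{d}\alpha * g_0) * f](t)$, respectively. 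For the constant piece $g \equiv c$, direct computation yields $g * f = c \cdot Vf$ and $(\mathrm{d}\alpha * g)(\tau) = [\alpha(\tau) - \alpha(0)] c$, so the identity reduces to
\begin{equation*}
	\int_0^t \mathrm{d}\alpha(u) \mspace{2mu} c \mspace{2mu} (Vf)(t - u)
	= \int_0^t [\alpha(t - s) - \alpha(0)] c \mspace{2mu} f(s) \mspace{2mu} \mathrm{d}s,
\end{equation*}
which I would verify by Riemann-Stieltjes integration by parts applied to $h(u) \coloneqq c (Vf)(t - u)$ (a $C^1$ function with $h(t) = O$ and $h'(u) = -c f(t - u)$), followed by the change of variable $s = t - u$.

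The main obstacle is precisely the triangular shape of the domain combined with the non-vanishing of $g$ at $0$: without the splitting $g = c + g_0$, no natural continuous extension of the integrand to the square exists, and Theorem~\ref{thm: iterated integrals for RS integral} cannot be invoked. Once the decomposition is in place, each piece reduces to a routine application of the Fubini theorem or of Riemann-Stieltjes integration by parts; care is needed to preserve the order of matrix multiplication throughout, since $\alpha$, $g$, and $f$ are all matrix-valued.
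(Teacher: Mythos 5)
Your argument is correct, and every tool you invoke is available in the paper: linearity in $g$, Theorem~\ref{thm: iterated integrals for RS integral} applied to $F(u, s) = g_0(t - u - s)f(s)$ on the square $[0, t]^2$ (the zero extension of $g_0 = g - g(0)$ is continuous precisely because $g_0(0) = O$, and Theorem~\ref{thm: additivity for RS integrals on sub-intervals} lets you shrink the inner integrals back to the triangle), and the constant-piece identity via Riemann-Stieltjes integration by parts. The paper omits its proof and says to repeat the reasoning of Theorem~\ref{thm: RS convolution under Volterra operator}; there the continuous factor is extended to $\mathbb{R}$ by its value at $0$ rather than by $O$, the triangular integral is rewritten as the rectangular one minus an explicit correction term of the form $\int_0^t [\alpha(t) - \alpha(s)]f(0) \mspace{2mu} \mathrm{d}s$, Theorem~\ref{thm: iterated integrals for RS integral} is applied on the rectangle, and the correction is absorbed using $\int_0^t u \mspace{2mu} \mathrm{d}\alpha(u) = t\alpha(t) - \int_0^t \alpha(u) \mspace{2mu} \mathrm{d}u$. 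So both routes rest on the same two ingredients (the iterated-integral theorem and integration by parts) and both hinge on isolating $g(0)$; the difference is bookkeeping: your decomposition $g = g(0) + g_0$ with zero extension makes the Fubini step clean at the cost of a separate, easy verification for the constant piece, whereas the paper's template constant-extends and pays with the correction term. If you write your version up, make two small steps explicit: pulling the constant matrix $f(s)$ out of the Riemann-Stieltjes integral on the right in $\int_0^{t - s} \mathrm{d}\alpha(u) \mspace{2mu} g_0(t - s - u)f(s) = \bigl[ (\mathrm{d}\alpha * g_0)(t - s) \bigr] f(s)$ (justified at the level of the Riemann-Stieltjes sums), and the identity $\int_0^t \alpha(u) \mspace{2mu} \mathrm{d}h(u) = \int_0^t \alpha(u)h'(u) \mspace{2mu} \mathrm{d}u$, which is the ``integrand on the left'' variant of Theorem~\ref{thm: RS integral wrt C^1-function} obtained from the transpose remark in Appendix~\ref{sec: RS integrals wrt matrix-valued functions} and used in exactly the same way in the proof of Theorem~\ref{thm: convolution of loc BV function and continuous function}.
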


\begin{remark}
Both sides of Eq.~\eqref{eq: associativity for RS convolution} are meaningful from Lemma~\ref{lem: continuity of convolution, loc Riemann integrable function and continuous function} and Theorem~\ref{thm: local integrability of RS convolution}.
Eq.~\eqref{eq: RS convolution under Volterra operator} is a special case of \eqref{eq: associativity for RS convolution} since we have
\begin{equation*}
	Vf = f * \mathcal{I} = \mathcal{I} * f
\end{equation*}
for any $f \in \mathcal{L}^1_\mathrm{loc}([0, \infty), M_n(\mathbb{K}))$.
\end{remark}

The above is a result on the associativity for Riemann-Stieltjes convolutions.
The corresponding statements in a more general setting are given in \cite[Section 6 in Chapter 3]{Gripenberg--Londen--Staffans_1990}.
See also \cite[Proposition D.9 in Appendix D]{Shapiro_2018} for a similar result to Theorem~\ref{thm: associativity for RS convolution}.
One can prove Theorem~\ref{thm: associativity for RS convolution} by the same reasoning in the proof of Theorem~\ref{thm: RS convolution under Volterra operator}, and therefore, we omit the proof.

\subsection{A formula for non-homogeneous equation with trivial initial history}

Let $L \colon C([-r, 0], \mathbb{K}^n) \to \mathbb{K}^n$ be a continuous linear map.
We recall that for a continuous map $G \colon [0, \infty) \to \mathbb{K}^n$ with $G(0) = 0$, the function $x^L(\argdot; 0, G) \colon [-r, \infty) \to \mathbb{K}^n$ denotes the unique solution of an integral equation
\begin{equation}\label{eq: integral eq with G, t_0 = 0, phi = 0}
	x(t) = L \int_0^t x_s \mspace{2mu} \mathrm{d}s + G(t)
	\mspace{25mu}
	(t \ge 0)
\end{equation}
under the initial condition $x_0 = 0$.

In this subsection, as an application of the results in Subsection~\ref{subsec: convolution and RS convolution}, we show that the function $x(\argdot; f) \colon [-r, \infty) \to \mathbb{K}^n$ defined by $x(\argdot; f)_0 = 0$ and \eqref{eq: X^L * f} is a solution to the non-homogeneous linear RFDE~\eqref{eq: non-homogeneous linear RFDE, continuous forcing term}.

\begin{theorem}[cf.\ \cite{Walther_1975}]\label{thm: VOC formula, phi = 0, G = Vf}
Let $f \colon [0, \infty) \to \mathbb{K}^n$ be a continuous function.
Then 
\begin{equation}\label{eq: VOC formula, phi = 0, G = Vf}
	x^L(t; 0, Vf) = \int_0^t X^L(t - u)f(u) \mspace{2mu} \mathrm{d}u
\end{equation}
holds for all $t \ge 0$.
\end{theorem}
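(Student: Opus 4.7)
The plan is to let $y \colon [-r, \infty) \to \mathbb{K}^n$ be the function defined by $y(t) = 0$ for $t \in [-r, 0]$ and
\begin{equation*}
	y(t) \coloneqq \int_0^t X^L(t - u) f(u) \mspace{2mu} \mathrm{d}u = \bigl( X^L|_{[0, \infty)} * f \bigr)(t)
	\mspace{25mu}
	(t \ge 0),
\end{equation*}
and to verify that $y$ satisfies Eq.~\eqref{eq: integral eq with G, t_0 = 0, phi = 0} with $G = Vf$. Then uniqueness (Theorem~\ref{thm: unique existence, integral eq with G}) yields $y = x^L(\argdot; 0, Vf)$.

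First I would check that $y$ is a continuous prolongation with $y_0 = 0$. Since $X^L|_{[0, \infty)}$ is continuous and $f$ is continuous, Corollary~\ref{cor: convolution of loc AC function and continuous function}(1) gives that $X^L|_{[0, \infty)} * f$ is continuously differentiable on $[0, \infty)$ with value $0$ at $t = 0$, so the extension by $0$ is indeed continuous on $[-r, \infty)$ and $y_0 = 0 = \Hat{0}$.

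The main step is to identify $L \int_0^t y_s \mspace{2mu} \mathrm{d}s + (Vf)(t)$ with $y(t)$. Since $y_0 = \Hat{0}$, Lemma~\ref{lem: L*int_0^t x_s ds, instantaneous input} gives
\begin{equation*}
	L\int_0^t y_s \mspace{2mu} \mathrm{d}s = \bigl( \mathrm{d}\Check{\eta} * Vy \bigr)(t).
\end{equation*}
Corollary~\ref{cor: convolution of loc BV function and continuous function under Volterra operator} (applied with $X^L|_{[0, \infty)}$ as the locally BV factor and $f$ continuous) gives $Vy = V(X^L * f) = X^L * Vf$, so
\begin{equation*}
	L\int_0^t y_s \mspace{2mu} \mathrm{d}s + (Vf)(t)
	= \bigl[ \mathrm{d}\Check{\eta} * (X^L * Vf) \bigr](t) + (Vf)(t).
\end{equation*}
On the other hand, the defining integral equation \eqref{eq: Eq for X^L, RS convolution} for $X^L$ reads $X^L(s) = I + \bigl( \mathrm{d}\Check{\eta} * VX^L \bigr)(s)$ for $s \ge 0$, so plugging this into the definition of $y$ and separating the constant part yields
\begin{equation*}
	y(t) = (Vf)(t) + \bigl[ \bigl( \mathrm{d}\Check{\eta} * VX^L \bigr) * f \bigr](t).
\end{equation*}
Now $VX^L|_{[0, \infty)}$ and $f$ are continuous, hence the associativity of the Riemann-Stieltjes convolution (Theorem~\ref{thm: associativity for RS convolution}) gives
\begin{equation*}
	\bigl( \mathrm{d}\Check{\eta} * VX^L \bigr) * f
	= \mathrm{d}\Check{\eta} * \bigl( VX^L * f \bigr),
\end{equation*}
and a second application of Corollary~\ref{cor: convolution of loc BV function and continuous function under Volterra operator} (plus the commutativity of the ordinary convolution in Definition~\ref{dfn: convolution for locally Riemann integrable functions}) shows $VX^L * f = X^L * Vf$. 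Combining these identities gives $y(t) = (Vf)(t) + \bigl[ \mathrm{d}\Check{\eta} * (X^L * Vf) \bigr](t)$, which matches the expression obtained for $L \int_0^t y_s \mspace{2mu} \mathrm{d}s + (Vf)(t)$.

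The subtle bookkeeping of this argument is keeping track of which convolution is Riemann-Stieltjes and which is ordinary, and confirming that every factor meets the continuity / locally BV hypothesis required by the invoked lemmas; I expect this is the only real obstacle, since once the two identities for $L \int_0^t y_s \mspace{2mu} \mathrm{d}s$ and for $y$ are lined up, the proof is concluded by the uniqueness clause of Theorem~\ref{thm: unique existence, integral eq with G}.
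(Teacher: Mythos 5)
Your proposal is correct and takes essentially the same route as the paper: both verify that the candidate $X^L * f$ satisfies the integral equation characterizing $x^L(\argdot; 0, Vf)$ by combining the equation for $X^L$ from Theorem~\ref{thm: DE of X^L} with the convolution calculus (Theorem~\ref{thm: associativity for RS convolution}, Corollary~\ref{cor: convolution of loc BV function and continuous function under Volterra operator}), and then conclude by the uniqueness in Theorem~\ref{thm: unique existence, integral eq with G}. The only cosmetic difference is that you substitute the integrated relation \eqref{eq: Eq for X^L, RS convolution} rather than the paper's use of $X^L * f = Vf + \Dot{X}^L * (Vf)$ via \eqref{eq: DE for X^L, RS convolution}; note that your intermediate identity $VX^L|_{[0, \infty)} * f = X^L|_{[0, \infty)} * (Vf)$ is justified not by ``commutativity'' but, e.g., by applying Theorem~\ref{thm: convolution of loc BV function and continuous function} with $g = VX^L|_{[0, \infty)}$ together with Theorem~\ref{thm: RS integral of continuous function wrt AC function}.
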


We note that $x^L(\argdot; 0, Vf)$ is a solution to Eq.~\eqref{eq: non-homogeneous linear RFDE, continuous forcing term} (see Lemma~\ref{lem: Caratheodory sol and non-homogeneous eq}).

\begin{proof}[Proof of Theorem~\ref{thm: VOC formula, phi = 0, G = Vf}]
Let $x \coloneqq x(\argdot; f)|_{[0, \infty)}$ and $X \coloneqq X^L|_{[0, \infty)}$.
Since $X$ is locally absolutely continuous (see Theorem~\ref{thm: DE of X^L}) and $X(0) = I$,
\begin{equation*}
	x
	= X * f
	= Vf + \Dot{X} * (Vf)
\end{equation*}
holds from Theorems~\ref{thm: convolution of loc BV function and continuous function} and \ref{thm: RS integral of continuous function wrt AC function}.
For the term $\Dot{X} * (Vf)$, we have
\begin{align*}
	\Dot{X} * (Vf)
	&= (\mathrm{d}\Check{\eta} * X) * (Vf) \\
	&= \mathrm{d}\Check{\eta} * [X * (Vf)] \\
	&= \mathrm{d}\Check{\eta} * V(X * f)
\end{align*}
from Theorems~\ref{thm: DE of X^L}, \ref{thm: associativity for RS convolution}, and Corollary~\ref{cor: convolution of loc BV function and continuous function under Volterra operator}.
Therefore, the equality~\eqref{eq: VOC formula, phi = 0, G = Vf} is obtained from Theorems~\ref{thm: sol to integral eq with G and instantaneous input} and \ref{thm: unique existence, integral eq with G}.
\end{proof}

The above proof of Theorem~\ref{thm: VOC formula, phi = 0, G = Vf} is different from the proofs in the literature (e.g., see \cite[Section~4]{Walther_1975}).

\section{Variation of constants formula}\label{sec: VOC formula}

Let $L \colon C([-r, 0], \mathbb{K}^n) \to \mathbb{K}^n$ be a continuous linear map and $X^L \colon [-r, \infty) \to M_n(\mathbb{K})$ be the principal fundamental matrix solution of the linear RFDE~\eqref{eq: linear RFDE}
\begin{equation*}
	\Dot{x}(t) = Lx_t
	\mspace{25mu}
	(t \ge 0).
\end{equation*}
In this section, we obtain a ``variation of constants formula'' for the non-homogeneous linear RFDE~\eqref{eq: non-homogeneous linear RFDE}
\begin{equation*}
	\dot{x}(t) = Lx_t + g(t)
	\mspace{25mu}
	(\text{a.e.\ $t \ge 0$})
\end{equation*}
for some $g \in \mathcal{L}^1_\mathrm{loc}([0, \infty), \mathbb{K}^n)$ expressed by $X^L$.
In view of Corollary~\ref{cor: decomposition of sol to integral eq with G}, we will divide our consideration into the following steps:
\begin{itemize}
\item Step 1: To find a formula for the mild solution $x^L(\argdot; 0, Vg)$ of Eq.~\eqref{eq: non-homogeneous linear RFDE} under the initial condition $x_0 = 0$.
\item Step 2: To find a formula for the mild solution $x^L(\argdot; \phi, 0)$ of Eq.~\eqref{eq: linear RFDE} under the initial condition $x_0 = \phi \in \mathcal{M}^1([-r, 0], \mathbb{K}^n)$.
\end{itemize}
Then the full formula for the mild solution of \eqref{eq: non-homogeneous linear RFDE} under the initial condition $x_0 = \phi \in \mathcal{M}^1([-r, 0], \mathbb{K}^n)$ is obtained by combining the above formulas.
In Step 1, for a given continuous function $G \colon [0, \infty) \to \mathbb{K}^n$ with $G(0) = 0$, we indeed consider the integral equation~\eqref{eq: integral eq with G, t_0 = 0, phi = 0}
\begin{equation*}
	x(t) = L \int_0^t x_s \mspace{2mu} \mathrm{d}s + G(t)
	\mspace{25mu}
	(t \ge 0)
\end{equation*}
under the initial condition $x_0 = 0$ and try to find a formula for the solution $x^L(\argdot; 0, G)$ expressed by $X^L$.

\begin{remark}\label{rmk: integral eq with G, t_0 = 0, phi = 0}
Since $x_0 = 0 = \Hat{0}$, Eq.~\eqref{eq: integral eq with G, t_0 = 0, phi = 0} is equivalent to
\begin{equation*}
	x(t)
	= \int_{-t}^0 \mathrm{d}\eta(\theta) \mspace{2mu}
		\biggl( \int_0^{t + \theta} x(s) \mspace{2mu} \mathrm{d}s \biggr)
	+ G(t)
	\mspace{25mu}
	(t \ge 0)
\end{equation*}
from Lemma~\ref{lem: L*int_0^t x_s ds, instantaneous input}.
\end{remark}

The following is the main result of this section.

\begin{theorem}\label{thm: VOC formula, integral eq}
Let $G \colon [0, \infty) \to \mathbb{K}^n$ be a continuous function with $G(0) = 0$ and $\phi \in \mathcal{M}^1([-r, 0], \mathbb{K}^n)$ be given.
Then the mild solution $x^L(\argdot; \phi, G)$ of the integral equation~\eqref{eq: integral eq with G, t_0 = 0}
\begin{equation*}
	x(t) = \phi(0) + L \int_0^t x_s \mspace{2mu} \mathrm{d}s + G(t)
	\mspace{25mu}
	(t \ge 0)
\end{equation*}
under the initial condition $x_0 = \phi$ satisfies \eqref{eq: VOC formula, integral eq}
\begin{align*}
	&x^L(t; \phi, G) \notag \\
	&= X^L(t)\phi(0) + \bigl[ G^L(t; \phi) + G(t) \bigr] + \int_0^t \Dot{X}^L(t - u) \bigl[ G^L(u; \phi) + G(u) \bigr] \mspace{2mu} \mathrm{d}u
\end{align*}
for all $t \ge 0$.
\end{theorem}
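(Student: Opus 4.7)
The overall plan is to exploit the decomposition already provided by Corollary~\ref{cor: decomposition of sol to integral eq with G}, namely
\begin{equation*}
	x^L(\argdot; \phi, G) = x^L(\argdot; \phi, 0) + x^L(\argdot; 0, G),
\end{equation*}
and to handle the two summands separately. For the second summand, I would invoke the already-established special case \eqref{eq: VOC formula, phi = 0}, which yields
\begin{equation*}
	x^L(t; 0, G) = G(t) + \int_0^t \Dot{X}^L(t - u) G(u) \mspace{2mu} \mathrm{d}u.
\end{equation*}
All the remaining work therefore lies in producing an analogous formula for $x^L(\argdot; \phi, 0)$, after which one just adds the two expressions.

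For the first summand, I would follow the blueprint sketched in the introduction. Define $z \coloneqq z^L(\argdot; \phi)$ by $z_0 = 0$ and $z(t) \coloneqq x^L(t; \phi, 0) - X^L(t)\phi(0)$ for $t \ge 0$. The key intermediate claim is that $z$ satisfies the integral equation~\eqref{eq: integral eq with G^L(argdot; phi)}, i.e.
\begin{equation*}
	z(t) = L\int_0^t z_s \mspace{2mu} \mathrm{d}s + G^L(t; \phi)
	\mspace{25mu} (t \ge 0)
\end{equation*}
for the forcing term $G^L(\argdot; \phi)$ that is constructed in Subsection~\ref{subsec: derivation of a general forcing term}. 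Granting the requisite regularity of $G^L(\argdot; \phi)$ (continuous, vanishing at $0$) from Subsection~\ref{subsec: regularity of general forcing term}, the uniqueness part of Theorem~\ref{thm: unique existence, integral eq with G} identifies $z$ as $x^L(\argdot; 0, G^L(\argdot; \phi))$, and another application of \eqref{eq: VOC formula, phi = 0} gives
\begin{equation*}
	z(t) = G^L(t; \phi) + \int_0^t \Dot{X}^L(t - u) G^L(u; \phi) \mspace{2mu} \mathrm{d}u.
\end{equation*}
Adding $X^L(t)\phi(0)$ to both sides recovers $x^L(t; \phi, 0)$, and adding the formula for $x^L(t; 0, G)$ obtained in the first paragraph produces exactly the claimed formula~\eqref{eq: VOC formula, integral eq}.

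The main obstacle is the verification that $z$ satisfies \eqref{eq: integral eq with G^L(argdot; phi)}, i.e.\ the extraction of the right $G^L(\argdot; \phi)$. The subtlety is that the naive identity $z_s = x_s - [X^L(\argdot)\phi(0)]_s$ fails on the history interval: by construction $z_0 = 0$, whereas $x^L(\argdot; \phi, 0)_0 = \phi$ and $[X^L(\argdot)\phi(0)]_0 = \Hat{\phi(0)}$, so the difference carries the nontrivial history $\phi - \Hat{\phi(0)}$. The plan is to start from the defining integral equation $x(t) = \phi(0) + L\int_0^t x_s \mspace{2mu} \mathrm{d}s$ for $x \coloneqq x^L(\argdot; \phi, 0)$, subtract the integral identity \eqref{eq: Eq for X^L, RS convolution} applied to $X^L(t)\phi(0)$, and split $L\int_0^t x_s \mspace{2mu} \mathrm{d}s$ into three pieces according to whether $s + \theta$ is nonnegative (where $x = z + X^L\phi(0)$) or negative (where $x = \phi$); the mismatch between the integral of $x$ and the integral of $z + X^L\phi(0)$ on the history part is precisely what is collected into $G^L(t; \phi)$. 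This calculation is the content of Subsection~\ref{subsec: derivation of a general forcing term}, and at the level of the present theorem I would simply quote it and invoke Theorem~\ref{thm: unique existence, integral eq with G} to finish.
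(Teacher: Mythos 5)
Your proposal is correct and follows essentially the same route as the paper: decompose via Corollary~\ref{cor: decomposition of sol to integral eq with G}, treat the $G$-part with Theorem~\ref{thm: VOC formula, phi = 0}, and treat the $\phi$-part by combining Lemma~\ref{lem: integral eq for z^L(argdot; phi)}, Theorem~\ref{thm: regularity of G^L(argdot; phi)}, the uniqueness in Theorem~\ref{thm: unique existence, integral eq with G}, and a second application of \eqref{eq: VOC formula, phi = 0}, which is exactly the paper's Theorem~\ref{thm: VOC formula, G = 0}. Your sketch of how the forcing term $G^L(\argdot; \phi)$ arises (subtracting the identity \eqref{eq: Eq for X^L, RS convolution} for $X^L(t)\phi(0)$) is a harmless variant of the paper's derivation via $z^L(t;\phi) = x^L\bigl(t; \phi - \widehat{\phi(0)}, 0\bigr)$ and \eqref{eq: mild sol'}, and since you ultimately quote that subsection the argument is complete.
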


We will call the formula~\eqref{eq: VOC formula, integral eq} the \textit{variation of constants formula} for Eq.~\eqref{eq: integral eq with G, t_0 = 0}.
The definition of the function $G^L(\argdot; \phi) \colon [0, \infty) \to \mathbb{K}^n$ for $\phi \in \mathcal{M}^1([-r, 0], \mathbb{K}^n)$ will be given later.
For this definition, the expression of $L$ by the Riemann-Stieltjes integral~\eqref{eq: L as RS integral}
\begin{equation*}
	L\psi = \int_{-r}^0 \mathrm{d}\eta(\theta) \mspace{2mu} \psi(\theta)
\end{equation*}
for $\psi \in C([-r, 0], \mathbb{K}^n)$ is a key tool.

\subsection{Motivation: Naito's consideration}\label{subsec: Naito's consideration}

We first concentrate our consideration to the case that $g \in C([0, \infty), \mathbb{K}^n)$ and $\phi \in C([-r, 0], \mathbb{K}^n)$.
From Theorem~\ref{thm: VOC formula, phi = 0, G = Vf}, we only need to find a formula for $x^L(\argdot; \phi, 0)$ in this case.

Naito~\cite[Theorem 6.5]{Naito_1979} has discussed an expression of the form
\begin{equation*}
	x(t)
	= \phi(0) + \int_0^t X(t - u)L\Bar{\phi}_u \mspace{2mu} \mathrm{d}u
	\mspace{25mu}
	(t \ge 0).
\end{equation*}
In the above formula, $x \colon [-r, \infty) \to \mathbb{K}^n$ is the solution of the linear RFDE~\eqref{eq: linear RFDE} under the initial condition $x_0 = \phi \in C([-r, 0], \mathbb{K}^n)$, and $\Bar{\phi} \colon [-r, \infty) \to \mathbb{K}^n$ is the function defined by
\begin{equation*}
	\Bar{\phi}(t) \coloneqq
	\begin{cases}
		\phi(t) & (t \in [-r, 0]), \\
		\phi(0) & (t \ge 0).
	\end{cases}
\end{equation*}
See also Notation~\ref{notation: constant prolongation}.
Although the study of \cite{Naito_1979} is in the setting of infinite retardation, we are now interpreting this in the setting of finite retardation (i.e., the history function space is $C([-r, 0], \mathbb{K}^n)$).
We note that the matrix-valued function $X \colon [0, \infty) \to M_n(\mathbb{K})$ is defined by using the inverse Laplace transform.
See \cite{Naito_1979} for the detail.
See also \cite{Naito_1980}, where an interpretation of the matrix-valued function $X$ is given.

In our setting, a formula expressed by the principal fundamental matrix solution $X^L$
\begin{equation}\label{eq: formula by Naito}
	x^L(t; \phi, 0)
	= \phi(0) + \int_0^t X^L(t - u)L\Bar{\phi}_u \mspace{2mu} \mathrm{d}u
	\mspace{25mu}
	(t \ge 0)
\end{equation}
is true.
To see this, let $y(t) \coloneqq x^L(t; \phi, 0) - \Bar{\phi}(t)$ for $t \in [-r, \infty)$.
Then the function $y \colon [-r, \infty) \to \mathbb{K}^n$ satisfies $y_0 = 0$ and
\begin{equation*}
	\Dot{y}(t) = Ly_t + L\Bar{\phi}_t
	\mspace{25mu}
	(t \ge 0).
\end{equation*}
See also the proof of Theorem~\ref{thm: unique existence of mild sol}.
Since the function $[0, \infty) \ni t \mapsto L\Bar{\phi}_t \in \mathbb{K}^n$ is continuous, we obtain
\begin{equation*}
	y(t) = \int_0^t X^L(t - u)L\Bar{\phi}_u \mspace{2mu} \mathrm{d}u
	\mspace{25mu}
	(t \ge 0)
\end{equation*}
by applying Theorem~\ref{thm: VOC formula, phi = 0, G = Vf}.

\subsection{Derivation of a general forcing term}\label{subsec: derivation of a general forcing term}

The formula~\eqref{eq: formula by Naito} is not sufficient for the application to the linearized stability.
See Section~\ref{sec: linearized stability} for the detail of the application of the variation of constants formula to the linearized stability.
We now introduce the following function.

\begin{notation}
For each $\phi \in \mathcal{M}^1([-r, 0], \mathbb{K}^n)$, we define a function $z^L(\argdot; \phi) \colon [-r, \infty) \to \mathbb{K}^n$ by $z^L(\argdot; \phi)_0 = 0$ and \eqref{eq: z^L(argdot; phi)}
\begin{equation*}
	z^L(t; \phi)
	\coloneqq x^L(t; \phi, 0) - X^L(t)\phi(0)
\end{equation*}
for $t \ge 0$.
\end{notation}

\begin{remark}
Since
\begin{equation*}
	z^L(0; \phi) = \phi(0) - X^L(0)\phi(0) = 0,
\end{equation*}
the function $z^L(\argdot; \phi)$ is continuous.
In view of $X^L(\argdot)\phi(0) = x^L \Bigl( \argdot; \widehat{\phi(0)}, 0 \Bigr)$, we also have
\begin{equation*}
	z^L(t; \phi) = x^L \Bigl(t; \phi - \widehat{\phi(0)}, 0 \Bigr)
	\mspace{25mu}
	(t \ge 0)
\end{equation*}
from Corollary~\ref{cor: linearity wrt initial history}.
We note that this equality is not valid for $t \in [-r, 0)$ because $z^L(\argdot; \phi)_0 = 0$.
\end{remark}

From the expression~\eqref{eq: mild sol'} for a mild solution, the function $z^L(\argdot; \phi)$ satisfies
\begin{equation*}
	z^L(t; \phi)
	= \int_{-r}^0 \mathrm{d}\eta(\theta) \mspace{2mu}
		\biggl( \int_\theta^0 \phi(s) \mspace{2mu} \mathrm{d}s \biggr)
	+ \int_{-r}^0 \mathrm{d}\eta(\theta) \mspace{2mu}
		\biggl( \int_0^{t + \theta} x^L \Bigl(s; \phi - \widehat{\phi(0)}, 0 \Bigr) \mspace{2mu} \mathrm{d}s \biggr)
\end{equation*}
for all $t \ge 0$.
The second term of the right-hand side is further calculated as follows:
\begin{itemize}
\item When $t \in [0, r)$, $\theta \in [-r, 0]$ satisfies $t + \theta \ge 0$ if and only if $\theta \in [-t, 0]$.
Since
\begin{equation*}
	x^L \Bigl(s; \phi - \widehat{\phi(0)}, 0 \Bigr) = \phi(s)
\end{equation*}
for $s \in \dom(\phi) \setminus \{0\}$, the second term is decomposed by
\begin{equation*}
	\int_{-r}^{-t} \mathrm{d}\eta(\theta) \mspace{2mu}
		\biggl( \int_0^{t + \theta} \phi(s) \mspace{2mu} \mathrm{d}s \biggr)
	+ \int_{-t}^0 \mathrm{d}\eta(\theta) \mspace{2mu}
		\biggl( \int_0^{t + \theta} z^L(s; \phi) \mspace{2mu} \mathrm{d}s \biggr)
\end{equation*}
by the additivity of Riemann-Stieltjes integrals on sub-intervals.
\item When $t \ge r$, the second term is equal to $\int_{-r}^0 \mathrm{d}\eta(\theta) \mspace{2mu} \bigl( \int_0^{t + \theta} z^L(s; \phi) \mspace{2mu} \mathrm{d}s \bigr)$.
\end{itemize}

This leads to the following definition.

\begin{definition}\label{dfn: G^L(argdot; phi)}
For each $\phi \in \mathcal{M}^1([-r, 0], \mathbb{K}^n)$, we define a function $G^L(\argdot; \phi) \colon [0, \infty) \to \mathbb{K}^n$ by
\begin{equation*}
	G^L(t; \phi)
	\coloneqq \int_{-r}^0 \mathrm{d}\eta(\theta) \mspace{2mu}
		\biggl( \int_\theta^0 \phi(s) \mspace{2mu} \mathrm{d}s \biggr)
	+ \int_{-r}^{-t} \mathrm{d}\eta(\theta) \mspace{2mu}
		\biggl(\int_0^{t + \theta} \phi(s) \mspace{2mu} \mathrm{d}s \biggr)
\end{equation*}
for $t \in [0, r)$ and
\begin{equation*}
	G^L(t; \phi)
	\coloneqq \int_{-r}^0 \mathrm{d}\eta(\theta) \mspace{2mu}
		\biggl( \int_\theta^0 \phi(s) \mspace{2mu} \mathrm{d}s \biggr)
\end{equation*}
for $t \in [r, \infty)$.
\end{definition}

By definition, $G^L(0; \phi) = 0$ holds.
Summarizing the above discussion, we obtain the following lemma.

\begin{lemma}\label{lem: integral eq for z^L(argdot; phi)}
For each $\phi \in \mathcal{M}^1([-r, 0], \mathbb{K}^n)$, the function $z \coloneqq z^L(\argdot; \phi)$ is a solution of an integral equation~\eqref{eq: integral eq with G^L(argdot; phi)}
\begin{equation*}
	z(t)
	= L\int_0^t z_s \mspace{2mu} \mathrm{d}s + G^L(t; \phi)
	\mspace{25mu}
	(t \ge 0)
\end{equation*}
under the initial condition $z_0 = 0$.
\end{lemma}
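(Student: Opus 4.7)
The plan is to derive the integral equation for $z := z^L(\argdot;\phi)$ by subtracting the mild-solution equation satisfied by $X^L(\argdot)\phi(0)$ from the one satisfied by $x := x^L(\argdot;\phi,0)$, and then to repackage the resulting expression using the additivity of Riemann-Stieltjes integrals on subintervals. This follows the informal computation that motivated Definition~\ref{dfn: G^L(argdot; phi)}, so the proof is essentially a careful bookkeeping.

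First, I would write down, for $t \ge 0$, the expression~\eqref{eq: mild sol'} applied to $x = x^L(\argdot;\phi,0)$:
\begin{equation*}
	x(t)
	= \phi(0)
	+ \int_{-r}^0 \mathrm{d}\eta(\theta) \mspace{2mu} \biggl( \int_\theta^0 \phi(s) \mspace{2mu} \mathrm{d}s \biggr)
	+ \int_{-r}^0 \mathrm{d}\eta(\theta) \mspace{2mu} \biggl( \int_0^{t + \theta} x(s) \mspace{2mu} \mathrm{d}s \biggr).
\end{equation*}
In parallel, since $X^L(\argdot)\phi(0) = x^L \bigl( \argdot; \widehat{\phi(0)}, 0 \bigr)$ and $\widehat{\phi(0)}$ vanishes on $[-r, 0)$, Lemma~\ref{lem: L*int_0^t x_s ds, instantaneous input} (applied to this mild solution) yields
\begin{equation*}
	X^L(t)\phi(0)
	= \phi(0) + \int_{-t}^0 \mathrm{d}\eta(\theta) \mspace{2mu} \biggl( \int_0^{t + \theta} X^L(s)\phi(0) \mspace{2mu} \mathrm{d}s \biggr).
\end{equation*}

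Subtracting these two identities kills the $\phi(0)$ terms and leaves the first integral in the $x$-equation intact. The main task is then to split the remaining $x$-integral over $\theta \in [-r, 0]$ against the $X^L\phi(0)$-integral over $\theta \in [-t, 0]$, which requires separating cases.

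For $t \in [0, r)$, I would split
\begin{equation*}
	\int_{-r}^0 \mathrm{d}\eta(\theta) \mspace{2mu} \int_0^{t + \theta} x(s) \mspace{2mu} \mathrm{d}s
	= \int_{-r}^{-t} \mathrm{d}\eta(\theta) \mspace{2mu} \int_0^{t + \theta} x(s) \mspace{2mu} \mathrm{d}s
	+ \int_{-t}^0 \mathrm{d}\eta(\theta) \mspace{2mu} \int_0^{t + \theta} x(s) \mspace{2mu} \mathrm{d}s.
\end{equation*}
On $\theta \in [-r, -t]$ the inner integration range lies entirely in $[-r, 0]$ where $x$ agrees with $\phi$ a.e.\ (since $x_0 = \phi$), so that piece contributes exactly the second summand in Definition~\ref{dfn: G^L(argdot; phi)}. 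The piece over $\theta \in [-t, 0]$ can then be combined with the $X^L\phi(0)$-integral to give $\int_{-t}^0 \mathrm{d}\eta(\theta) \int_0^{t+\theta} z(s) \mspace{2mu} \mathrm{d}s$. For $t \ge r$, the set $[-r, -t]$ (once the domain extension of $\eta$ is used) contributes nothing because $\eta$ is constant on $(-\infty, -r]$, and similarly $\eta$ is constant on $[-t, -r]$; both integrals can be taken over $[-r, 0]$ and immediately combine into $\int_{-r}^0 \mathrm{d}\eta(\theta) \int_0^{t+\theta} z(s) \mspace{2mu} \mathrm{d}s$, while the $\phi$-part reduces to the single summand in Definition~\ref{dfn: G^L(argdot; phi)}.

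In both cases the residual Riemann-Stieltjes integral in $z$ is of the form to which Lemma~\ref{lem: L*int_0^t x_s ds, instantaneous input} applies (with initial history $\hat 0$, since $z_0 = 0$), so it coincides with $L \int_0^t z_s \mspace{2mu} \mathrm{d}s$. Combining everything gives \eqref{eq: integral eq with G^L(argdot; phi)}. The principal technical point to get right is the splitting of the outer integration domain at $\theta = -t$ and the observation that on $[-r, -t]$ we may replace $x(s)$ by $\phi(s)$; beyond that, only the case distinction $t < r$ versus $t \ge r$ needs care, and no new estimates are required.
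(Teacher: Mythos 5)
Your proposal is correct and takes essentially the same route as the paper: the paper merely performs your subtraction at the level of initial data, using Corollary~\ref{cor: linearity wrt initial history} to write $z^L(t; \phi) = x^L \bigl( t; \phi - \widehat{\phi(0)}, 0 \bigr)$ for $t \ge 0$ and then applying \eqref{eq: mild sol'} once, before carrying out the same splitting of the Riemann-Stieltjes integral at $\theta = -t$, the same case distinction $t < r$ versus $t \ge r$, and the same identification of the residual term with $L\int_0^t z_s \mspace{2mu} \mathrm{d}s$ via Lemma~\ref{lem: L*int_0^t x_s ds, instantaneous input}. No gap to report.
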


\subsection{Regularity of the general forcing term}\label{subsec: regularity of general forcing term}

To study Eq.~\eqref{eq: integral eq with G^L(argdot; phi)}, it is important to reveal the regularity of the function $G^L(\argdot; \phi)$ for each $\phi \in \mathcal{M}^1([-r, 0], \mathbb{K}^n)$.

\subsubsection{Forcing terms for continuous initial histories}

Before we tackle this problem, we find a differential equation satisfied by $z \coloneqq z^L(\argdot; \phi)$ for $\phi \in C([-r, 0], \mathbb{K}^n)$.
It should be noted that this is not straightforward because \eqref{eq: z^L(argdot; phi)} is only valid for $t \ge 0$.

Let $x \coloneqq x^L(\argdot; \phi, 0)$ and $\Tilde{x} \coloneqq x^L \Bigl( \argdot; \widehat{\phi(0)}, 0 \Bigr)$.
In view of
\begin{equation*}
	Lz_t
	= \int_{-r}^0 \mathrm{d}\eta(\theta) \mspace{2mu} z(t + \theta)
	= \int_{-t}^0 \mathrm{d}\eta(\theta) \mspace{2mu} z(t + \theta)
\end{equation*}
for each $t \ge 0$, we express the linear RFDE~\eqref{eq: linear RFDE} as
\begin{equation*}
	\Dot{x}(t)
	= \int_{-t}^0 \mathrm{d}\eta(\theta) \mspace{2mu} x(t + \theta)
	+ \int_{-r}^{-t} \mathrm{d}\eta(\theta) \mspace{2mu} \phi(t + \theta)
\end{equation*}
by using the additivity of Riemann-Stieltjes integrals on sub-intervals.
Here we are interpreting that the second term of the right-hand side is equal to $0$ when $t \ge r$.
More precisely, we introduce the following.

\begin{definition}[cf.\ \cite{Bernier--Manitius_1978}, \cite{Delfour--Manitius_1980}, \cite{Kappel_2006}]\label{dfn: g^L(argdot; phi)}
For each $\phi \in C([-r, 0], \mathbb{K}^n)$, we define a function $g^L(\argdot; \phi) \colon [0, \infty) \to \mathbb{K}^n$ by
\begin{equation*}
	g^L(t; \phi)
	\coloneqq \int_{-r}^{-t} \mathrm{d}\eta(\theta) \mspace{2mu} \phi(t + \theta)
\end{equation*}
for $t \in [0, r)$ and $g^L(t; \phi) = 0$ for $t \ge r$.
Here the right-hand side is a Riemann-Stieltjes integral.
\end{definition}

We note that similar concepts have appeared in the literature.
See \cite[(3.1) and (3.2)]{Bernier--Manitius_1978}, \cite[(2.7) and (2.13)]{Delfour--Manitius_1980}, and \cite[Lemma~1.10]{Kappel_2006}, for example.

From Theorem~\ref{thm: mild sol with instantaneous input}, $\Tilde{x}$ satisfies
\begin{equation*}
	\Dot{\Tilde{x}}(t)
	= \int_{-t}^0 \mathrm{d}\eta(\theta) \mspace{2mu} \Tilde{x}(t + \theta)
\end{equation*}
for almost all $t \ge 0$.
In combination with the above consideration, $z$ satisfies
\begin{align*}
	\Dot{z}(t)
	&= \Dot{x}(t) - \Dot{\Tilde{x}}(t) \\
	&= \int_{-t}^0 \mathrm{d}\eta(\theta) \mspace{2mu} z(t + \theta) + g^L(t; \phi)
\end{align*}
for almost all $t \ge 0$.
Here the property that $t + \theta \ge 0$ for all $\theta \in [-t, 0]$ is used.

In summary, we have the following statement.

\begin{lemma}\label{lem: z^L(argdot; phi)}
For each $\phi \in C([-r, 0], \mathbb{K}^n)$, $z \coloneqq z^L(\argdot; \phi)$ is locally absolutely continuous, differentiable almost everywhere, and
\begin{equation}\label{eq: non-homogeneous linear RFDE with g^L(argdot; phi)}
	\Dot{z}(t) = Lz_t + g^L(t; \phi)
\end{equation}
holds for almost all $t \ge 0$.
\end{lemma}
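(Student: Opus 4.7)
The plan is to derive the equation by decomposing $z(t) = x(t) - \Tilde{x}(t)$ on $[0, \infty)$, where $x \coloneqq x^L(\argdot; \phi, 0)$ and $\Tilde{x} \coloneqq x^L \bigl( \argdot; \widehat{\phi(0)}, 0 \bigr)$, and differentiating each summand separately. Regularity comes for free from this decomposition: Theorem~\ref{thm: mild sol with instantaneous input} tells us that $\Tilde{x}|_{[0, \infty)}$ is locally absolutely continuous, while the remark following Corollary~\ref{cor: T*int_0^t x_s ds for continuous x} (using that $\phi$ is continuous) gives $x(t) = \phi(0) + \int_0^t Lx_s \mspace{2mu} \mathrm{d}s$, so that $x|_{[0, \infty)}$ is also locally absolutely continuous with $\Dot{x}(t) = Lx_t$ for almost all $t \ge 0$. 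Consequently $z|_{[0, \infty)}$ is locally absolutely continuous and differentiable almost everywhere.

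Next, I would rewrite $Lx_t$ by the additivity of Riemann-Stieltjes integrals on sub-intervals. For $t \in [0, r)$,
\begin{equation*}
	Lx_t
	= \int_{-t}^0 \mathrm{d}\eta(\theta) \mspace{2mu} x(t + \theta)
	+ \int_{-r}^{-t} \mathrm{d}\eta(\theta) \mspace{2mu} x(t + \theta),
\end{equation*}
and for $\theta \in [-r, -t]$ we have $t + \theta \in [-r, 0]$, so $x(t + \theta) = \phi(t + \theta)$ and the second term is exactly $g^L(t; \phi)$ by Definition~\ref{dfn: g^L(argdot; phi)}. For $t \ge r$ the interval $[-r, -t]$ is degenerate; since $\eta$ is extended as constant on $(-\infty, -r]$, the first integral may be replaced by $\int_{-r}^0$ without change, and $g^L(t; \phi) = 0$. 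In both cases,
\begin{equation*}
	\Dot{x}(t)
	= \int_{-t}^0 \mathrm{d}\eta(\theta) \mspace{2mu} x(t + \theta) + g^L(t; \phi)
\end{equation*}
for almost all $t \ge 0$, and subtracting the formula for $\Dot{\Tilde{x}}$ furnished by Theorem~\ref{thm: mild sol with instantaneous input} yields
\begin{equation*}
	\Dot{z}(t)
	= \int_{-t}^0 \mathrm{d}\eta(\theta) \mspace{2mu} z(t + \theta) + g^L(t; \phi)
	\mspace{25mu}
	(\text{a.e.\ } t \ge 0).
\end{equation*}

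It remains to recognize the integral on the right as $Lz_t$. Since $z_0 = 0$ by construction, $z(t + \theta) = 0$ whenever $t + \theta \in [-r, 0]$, in particular for $\theta \in [-r, -t] \cap [-r, 0]$. Thus, using additivity once more and the constancy of $\eta$ on $(-\infty, -r]$,
\begin{equation*}
	\int_{-t}^0 \mathrm{d}\eta(\theta) \mspace{2mu} z(t + \theta)
	= \int_{-r}^0 \mathrm{d}\eta(\theta) \mspace{2mu} z(t + \theta)
	= Lz_t,
\end{equation*}
which gives \eqref{eq: non-homogeneous linear RFDE with g^L(argdot; phi)}. The argument is essentially the one sketched in the paragraphs preceding the lemma; I anticipate no serious obstacle, the only mild care needed being the case split $t \in [0, r)$ versus $t \ge r$ and the bookkeeping at the boundary $\theta = -t$, where the convention of extending $\eta$ constantly on $(-\infty, -r]$ keeps everything consistent.
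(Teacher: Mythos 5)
Your proposal is correct and follows essentially the same route as the paper: decompose $z = x^L(\argdot; \phi, 0) - x^L \bigl( \argdot; \widehat{\phi(0)}, 0 \bigr)$ on $[0, \infty)$, split $Lx_t$ by additivity of the Riemann--Stieltjes integral into the $[-t, 0]$ part plus the $[-r, -t]$ part (which is $g^L(t; \phi)$), subtract the equation for $x^L \bigl( \argdot; \widehat{\phi(0)}, 0 \bigr)$ from Theorem~\ref{thm: mild sol with instantaneous input}, and identify $\int_{-t}^0 \mathrm{d}\eta(\theta) \mspace{2mu} z(t + \theta)$ with $Lz_t$ using $z_0 = 0$. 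The regularity and bookkeeping points you note (continuity of $\phi$ giving $\Dot{x}(t) = Lx_t$, the case split at $t = r$, constancy of $\eta$ on $(-\infty, -r]$) match the paper's argument.
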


We note that since $g^L(\argdot; \phi)$ is not necessarily continuous, Theorem~\ref{thm: VOC formula, phi = 0, G = Vf} is not sufficient to obtain an expression of $z = z^L(\argdot; \phi)$ by $X^L$.

\subsubsection{Relationship with the forcing terms}

Comparing \eqref{eq: integral eq with G^L(argdot; phi)} and \eqref{eq: non-homogeneous linear RFDE with g^L(argdot; phi)}, it is natural to expect that
\begin{equation}\label{eq: G^L(argdot; phi) and g^L(argdot; phi)}
	G^L(t; \phi) = \int_0^t g^L(s; \phi) \mspace{2mu} \mathrm{d}s
\end{equation}
holds for all $t \ge 0$ when $\phi \in C([-r, 0], \mathbb{K}^n)$.
We now justify this relationship.

\begin{lemma}\label{lem: expression of g^L(argdot; phi)}
Suppose $\phi \in C([-r, 0], \mathbb{K}^n)$.
Then
\begin{equation}\label{eq: expression of g^L(argdot; phi)}
	g^L(t; \phi)
	= L\bar{\phi}_t - [\eta(0) - \eta(-t)]\phi(0)
\end{equation}
holds for all $t \ge 0$.
Consequently, $g^L(\argdot; \phi)$ is a locally Riemann integrable function vanishing at $[r, \infty)$.
\end{lemma}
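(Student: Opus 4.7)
The proof plan is a direct computation based on the definitions of $g^L(\argdot; \phi)$, $\bar{\phi}$, and $L$, together with the additivity of Riemann-Stieltjes integrals on sub-intervals.

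First I would treat the case $t \in [0, r)$. By the Riemann-Stieltjes representation of $L$ and Definition~\ref{dfn: history segment},
\begin{equation*}
	L\bar{\phi}_t
	= \int_{-r}^0 \mathrm{d}\eta(\theta) \mspace{2mu} \bar{\phi}(t + \theta).
\end{equation*}
I would split this integral at $\theta = -t$ using additivity. On $[-r, -t]$ we have $t + \theta \le 0$, so $\bar{\phi}(t + \theta) = \phi(t + \theta)$, and this piece is exactly $g^L(t; \phi)$ by Definition~\ref{dfn: g^L(argdot; phi)}. On $[-t, 0]$ we have $t + \theta \ge 0$, so $\bar{\phi}(t + \theta) = \phi(0)$, and this piece evaluates to $[\eta(0) - \eta(-t)]\phi(0)$ directly from the definition of the Riemann-Stieltjes integral of a constant integrand. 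Rearranging yields \eqref{eq: expression of g^L(argdot; phi)}. The splitting is legitimate because the integrand $\bar{\phi}(t + \argdot)$ is continuous on $[-r, 0]$ (both expressions agree at $\theta = -t$).

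Next I would handle $t \ge r$. Here $g^L(t; \phi) = 0$ by definition, so I need to show $L\bar{\phi}_t = [\eta(0) - \eta(-t)]\phi(0)$. Since $t + \theta \ge 0$ for all $\theta \in [-r, 0]$, we have $\bar{\phi}_t \equiv \phi(0)$, and hence $L\bar{\phi}_t = [\eta(0) - \eta(-r)]\phi(0)$; the convention $\eta(-t) = \eta(-r)$ for $t \ge r$ then gives the claim.

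Finally, for the consequence, the function $[0, \infty) \ni t \mapsto \bar{\phi}_t \in C([-r, 0], \mathbb{K}^n)$ is continuous (by the uniform continuity of $\bar{\phi}$ on closed and bounded intervals), and hence $t \mapsto L\bar{\phi}_t$ is continuous. The function $t \mapsto \eta(-t)$ is of locally bounded variation (as a reversal of $\eta$), hence locally Riemann integrable. The formula \eqref{eq: expression of g^L(argdot; phi)} then exhibits $g^L(\argdot; \phi)$ as a difference of such functions, which is locally Riemann integrable; vanishing on $[r, \infty)$ is just the definition. There is no real obstacle here — the only point demanding a moment's care is verifying that the splitting of the Riemann-Stieltjes integral at $\theta = -t$ is valid, which is ensured by the continuity of the integrand at the splitting point.
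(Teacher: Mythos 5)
Your proof is correct and follows essentially the same route as the paper: split $L\bar{\phi}_t = \int_{-r}^0 \mathrm{d}\eta(\theta)\,\bar{\phi}(t+\theta)$ at $\theta = -t$ via additivity of Riemann-Stieltjes integrals (justified by continuity of $\bar{\phi}$ and bounded variation of $\eta$), identify the piece on $[-r,-t]$ with $g^L(t;\phi)$ and the piece on $[-t,0]$ with $[\eta(0)-\eta(-t)]\phi(0)$, handle $t \ge r$ by the convention $\eta(-t)=\eta(-r)$, and deduce local Riemann integrability from the continuity of $t \mapsto L\bar{\phi}_t$ and the locally bounded variation of $t \mapsto \eta(-t)\phi(0)$. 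The paper merely runs the same decomposition in the opposite direction (starting from $g^L$ rather than from $L\bar{\phi}_t$), so there is no substantive difference.
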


\begin{proof}
When $t \ge r$,
\begin{equation*}
	L\Bar{\phi}_t
	= \int_{-r}^0 \mathrm{d}\eta(\theta) \mspace{2mu} \phi(0)
	= [\eta(0) - \eta(-r)]\phi(0)
\end{equation*}
holds.
Therefore, the right-hand side of \eqref{eq: expression of g^L(argdot; phi)} is equal to $0$ for all $t \ge r$.
We next consider the case $t \in [0, r)$.
In this case, we have
\begin{align*}
	g^L(t; \phi)
	&= \int_{-r}^{-t} \mathrm{d}\eta(\theta) \mspace{2mu} \Bar{\phi}(t + \theta) \\
	&= \int_{-r}^0 \mathrm{d}\eta(\theta) \mspace{2mu} \Bar{\phi}(t + \theta)
	- \int_{-t}^0 \mathrm{d}\eta(\theta) \mspace{2mu} \Bar{\phi}(t + \theta)
\end{align*}
by the additivity of Riemann-Stieltjes integrals on sub-intervals.
Since
\begin{equation*}
	\int_{-t}^0 \mathrm{d}\eta(\theta) \mspace{2mu} \Bar{\phi}(t + \theta)
	= \int_{-t}^0 \mathrm{d}\eta(\theta) \mspace{2mu} \phi(0)
	= [\eta(0) - \eta(-t)]\phi(0),
\end{equation*}
the expression~\eqref{eq: expression of g^L(argdot; phi)} is obtained.
Since $[0, \infty) \ni t \mapsto L\Bar{\phi}_t \in \mathbb{K}^n$ is continuous and $[0, \infty) \ni t \mapsto \eta(-t)\phi(0)$ is of locally bounded variation, the local Riemann integrability of $g^L(\argdot; \phi)$ follows by the expression~\eqref{eq: expression of g^L(argdot; phi)}.
\end{proof}

\begin{remark}
The expression~\eqref{eq: expression of g^L(argdot; phi)} also shows that $g^L(\argdot; \phi)$ is continuous if $\phi(0) = 0$.
This should be compared with \cite[Theorem~2.1(ii) and Remark~2.1]{Delfour--Manitius_1980}.
\end{remark}

The following theorem reveals a connection between $G^L(\argdot; \phi)$ and $g^L(\argdot; \phi)$.

\begin{theorem}\label{thm: expression of G^L(argdot; phi)}
Let $\phi \in \mathcal{M}^1([-r, 0], \mathbb{K}^n)$ be given.
Then for all $t \ge 0$,
\begin{equation}\label{eq: expression of G^L(argdot; phi)}
	G^L(t; \phi)
	= L\int_0^t \Bar{\phi}_s \mspace{2mu} \mathrm{d}s - \int_0^t [\eta(0) - \eta(-s)]\phi(0) \mspace{2mu} \mathrm{d}s
\end{equation}
holds.
\end{theorem}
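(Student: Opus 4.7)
The plan is to evaluate $L\int_0^t \Bar{\phi}_s \, \mathrm{d}s$ directly from the Riemann-Stieltjes representation of $L$ and identify $G^L(t;\phi)$ from the resulting expression. Since $\int_0^t \Bar{\phi}_s \, \mathrm{d}s \in C([-r, 0], \mathbb{K}^n)$ by Lemma~\ref{lem: continuity of int_0^t x_s ds}, the representation~\eqref{eq: L as RS integral} gives
\begin{equation*}
L\int_0^t \Bar{\phi}_s \, \mathrm{d}s = \int_{-r}^0 \mathrm{d}\eta(\theta) \int_\theta^{t+\theta} \Bar{\phi}(u) \, \mathrm{d}u,
\end{equation*}
after the change of variable $u = s + \theta$ in the inner Lebesgue integral.

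Next, I would split $\int_\theta^{t+\theta} \Bar{\phi}(u) \, \mathrm{d}u = \int_\theta^0 \phi(u) \, \mathrm{d}u + \int_0^{t+\theta} \Bar{\phi}(u) \, \mathrm{d}u$ and evaluate the second piece according to the sign of $t + \theta$. For $\theta \in [-r, -t]$ (nonempty only when $t < r$), we have $t + \theta \le 0$ and $\Bar{\phi} = \phi$ on $[t+\theta, 0]$. For $\theta$ with $t + \theta \ge 0$, $\Bar{\phi}(u) = \phi(0)$ for $u \in [0, t+\theta]$, so $\int_0^{t+\theta} \Bar{\phi}(u) \, \mathrm{d}u = (t+\theta)\phi(0)$. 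Collecting the $\phi$-dependent terms yields exactly $G^L(t;\phi)$ as defined in Definition~\ref{dfn: G^L(argdot; phi)} (both in the case $t \in [0, r)$, where one sees the two integrals of the definition appear, and in the case $t \ge r$, where only the first integral remains), whereas the $\phi(0)$-term becomes
\begin{equation*}
\biggl( \int_{-t}^0 (t+\theta) \, \mathrm{d}\eta(\theta) \biggr) \phi(0).
\end{equation*}
This expression is uniform in $t$ by the convention $\eta(\theta) = \eta(-r)$ on $(-\infty, -r]$, which makes the contribution from $[-t, -r]$ vanish when $t > r$.

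Finally, I would evaluate the $\phi(0)$-coefficient using the integration by parts formula for Riemann-Stieltjes integrals (from Appendix~\ref{sec: RS integrals wrt matrix-valued functions}) and the substitution $s = -\theta$:
\begin{equation*}
\int_{-t}^0 (t+\theta) \, \mathrm{d}\eta(\theta) = t\eta(0) - \int_{-t}^0 \eta(\theta) \, \mathrm{d}\theta = \int_0^t [\eta(0) - \eta(-s)] \, \mathrm{d}s.
\end{equation*}
Rearranging yields \eqref{eq: expression of G^L(argdot; phi)}. The main subtlety I anticipate is keeping the two case splits $t < r$ versus $t \ge r$ aligned; the extension of $\eta$ to $(-\infty, 0]$ makes this bookkeeping essentially transparent, since the boundary term that would otherwise appear at $-r$ in the integration by parts is automatically absorbed.
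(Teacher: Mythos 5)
Your proposal is correct and follows essentially the same route as the paper: expand $L\int_0^t \Bar{\phi}_s \mspace{2mu} \mathrm{d}s$ through the Riemann--Stieltjes representation of $L$, split the integral at $\theta = -t$, identify the $\phi$-dependent pieces with Definition~\ref{dfn: G^L(argdot; phi)}, and reduce the $\phi(0)$-coefficient by the integration by parts formula, exactly as in the paper's Case~1. The only (harmless) difference is that you treat $t \ge r$ uniformly via the convention $\eta \equiv \eta(-r)$ on $(-\infty, -r]$, which kills the contribution from $[-t, -r]$, whereas the paper handles $t > r$ by a separate argument showing both sides of \eqref{eq: expression of G^L(argdot; phi)} are constant on $[r, \infty)$; your bookkeeping is a slight streamlining of the paper's Case~2.
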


\begin{proof}
For the first term of the definition of $G^L(t; \phi)$, we have
\begin{align*}
	\int_{-r}^0 \mathrm{d}\eta(\theta) \mspace{2mu}
		\biggl( \int_\theta^0 \phi(s) \mspace{2mu} \mathrm{d}s \biggr)
	&= \int_{-r}^0 \mathrm{d}\eta(\theta) \mspace{2mu}
		\biggl( \int_\theta^0 \Bar{\phi}(s) \mspace{2mu} \mathrm{d}s \biggr) \\
	&= \int_{-r}^0 \mathrm{d}\eta(\theta) \mspace{2mu}
		\biggl( \int_\theta^{t + \theta} \Bar{\phi}(s) \mspace{2mu} \mathrm{d}s \biggr)
	- \int_{-r}^0 \mathrm{d}\eta(\theta) \mspace{2mu}
		\biggl( \int_0^{t + \theta} \Bar{\phi}(s) \mspace{2mu} \mathrm{d}s \biggr),
\end{align*}
where the first term of the last equation is equal to $L\int_0^t \Bar{\phi}_s \mspace{2mu} \mathrm{d}s$.
The remainder of the proof is divided into the cases $t \in [0, r]$ and $t \in (r, \infty)$ in order to study the term $\int_{-r}^0 \mathrm{d}\eta(\theta) \mspace{2mu} \bigl( \int_0^{t + \theta} \Bar{\phi}(s) \mspace{2mu} \mathrm{d}s \bigr)$.

\vspace{0.5\baselineskip}
\noindent
\textbf{Case 1: $t \in [0, r]$.}
When $t = r$, we have
\begin{equation*}
	\int_{-r}^0 \mathrm{d}\eta(\theta) \mspace{2mu}
		\biggl( \int_0^{t + \theta} \Bar{\phi}(s) \mspace{2mu} \mathrm{d}s \biggr)
	= \int_{-r}^0 \mathrm{d}\eta(\theta) \mspace{2mu} (r + \theta)\phi(0)
\end{equation*}
because $r + \theta \ge 0$ for all $\theta \in [-r, 0]$.
We next consider the case $t \in [0, r)$.
In this case,
\begin{align*}
	&\int_{-r}^0 \mathrm{d}\eta(\theta) \mspace{2mu}
		\biggl( \int_0^{t + \theta} \Bar{\phi}(s) \mspace{2mu} \mathrm{d}s \biggr) \\
	&= \int_{-r}^{-t} \mathrm{d}\eta(\theta) \mspace{2mu}
		\biggl(\int_0^{t + \theta} \phi(s) \mspace{2mu} \mathrm{d}s \biggr)
	+ \int_{-t}^0 \mathrm{d}\eta(\theta) \mspace{2mu}
		\biggl( \int_0^{t + \theta} \Bar{\phi}(s) \mspace{2mu} \mathrm{d}s \biggr)
\end{align*}
holds by the additivity of Riemann-Stieltjes integrals on sub-intervals and by the property that $t + \theta \le 0$ for all $\theta \in [-r, -t]$.
Here the second term of the right-hand side is equal to
\begin{equation*}
	\int_{-t}^0 \mathrm{d}\eta(\theta) \mspace{2mu} (t + \theta)\phi(0).
\end{equation*}
Therefore, the definition of $G^L(t; \phi)$ yields
\begin{align*}
	G^L(t; \phi)
	&= L\int_0^t \Bar{\phi}_s \mspace{2mu} \mathrm{d}s
	- \int_{-t}^0 \mathrm{d}\eta(\theta) \mspace{2mu}
		\biggl( \int_0^{t + \theta} \Bar{\phi}(s) \mspace{2mu} \mathrm{d}s \biggr) \\
	&= L\int_0^t \Bar{\phi}_s \mspace{2mu} \mathrm{d}s
	- \int_{-t}^0 \mathrm{d}\eta(\theta) \mspace{2mu} (t + \theta) \phi(0)
\end{align*}
including the case $t = r$.
The proof is complete in view of
\begin{align*}
	\int_{-t}^0 (t + \theta) \mspace{2mu} \mathrm{d}\eta(\theta)
	&= [(t + \theta)\eta(\theta)]_{\theta = -t}^0 - \int_{-t}^0 \eta(\theta) \mspace{2mu} \mathrm{d}\theta \\
	&= t\eta(0) - \int_0^t \eta(-s) \mspace{2mu} \mathrm{d}s,
\end{align*}
where the integration by parts formula for Riemann-Stieltjes integrals is used.

\vspace{0.5\baselineskip}
\noindent
\textbf{Case 2: $t \in (r, \infty)$.}
Since we have shown that \eqref{eq: expression of G^L(argdot; phi)} holds for $t = r$,
\begin{equation*}
	G^L(t; \phi)
	= L\int_0^r \Bar{\phi}_s \mspace{2mu} \mathrm{d}s - \int_0^r [\eta(0) - \eta(-s)]\phi(0) \mspace{2mu} \mathrm{d}s
\end{equation*}
holds for all $t \ge r$.
Here the property that $G^L(\argdot; \phi)$ is constant on $[r, \infty)$ is used.
Then the proof is complete by showing that the right-hand side of \eqref{eq: expression of G^L(argdot; phi)} is constant on $[r, \infty)$.
For this purpose, we calculate
\begin{equation*}
	L\int_0^t \Bar{\phi}_s \mspace{2mu} \mathrm{d}s - L\int_0^r \Bar{\phi}_s \mspace{2mu} \mathrm{d}s.
\end{equation*}
By the linearity of $L$, it is calculated as
\begin{align*}
	\int_{-r}^0 \mathrm{d}\eta(\theta) \mspace{2mu}
		\biggl( \int_{r + \theta}^{t + \theta} \Bar{\phi}(s) \mspace{2mu} \mathrm{d}s \biggr)
	&= \int_{-r}^0 \mathrm{d}\eta(\theta) \mspace{2mu} (t - r)\phi(0) \\
	&= (t - r)[\eta(0) - \eta(-r)]\phi(0).
\end{align*}
Since $\eta$ is constant on $(-\infty, -r]$, the last value is expressed as
\begin{equation*}
	\int_r^t [\eta(0) - \eta(-s)]\phi(0) \mspace{2mu} \mathrm{d}s.
\end{equation*}
This shows that
\begin{equation*}
	L\int_0^t \Bar{\phi}_s \mspace{2mu} \mathrm{d}s
	= L\int_0^r \Bar{\phi}_s \mspace{2mu} \mathrm{d}s + \int_r^t [\eta(0) - \eta(-s)]\phi(0) \mspace{2mu} \mathrm{d}s,
\end{equation*}
which also implies that the right-hand side of \eqref{eq: expression of G^L(argdot; phi)} is equal to
\begin{equation*}
	L\int_0^r \Bar{\phi}_s \mspace{2mu} \mathrm{d}s - \int_0^r [\eta(0) - \eta(-s)]\phi(0) \mspace{2mu} \mathrm{d}s
\end{equation*}
for all $t \ge r$.
\end{proof}

\begin{remark}
$G^L(t; \phi)$ is also expressed as
\begin{equation*}
	G^L(t; \phi)
	= \int_{-r}^0 [\eta(\theta) - \eta(\theta - t)]\phi(\theta) \mspace{2mu} \mathrm{d}\theta.
\end{equation*}
See \cite[Section I.2 of Chapter I]{Diekmann--vanGils--Lunel--Walther_1995} for the detail.
See also \cite[Remark~2.10(iii) in Chapter~2]{VerduynLunel_1989}.
In this paper, we do not need the above expression.
\end{remark}

By combining the obtained results, we obtain the following result on the regularity of $G^L(\argdot; \phi)$.
See also \cite[Remark~2.10(ii) in Chapter~2]{VerduynLunel_1989}.

\begin{theorem}\label{thm: regularity of G^L(argdot; phi)}
For any $\phi \in \mathcal{M}^1([-r, 0], \mathbb{K}^n)$, the function $G^L(\argdot; \phi)$ is continuous with $G^L(0; \phi) = 0$.
Furthermore, if $\phi \in C([-r, 0], \mathbb{K}^n)$, then it is locally absolutely continuous, differentiable almost everywhere, and
\begin{equation*}
	\Dot{G}^L(t; \phi) = g^L(t; \phi)
\end{equation*}
holds for almost all $t \in [0, \infty)$.
Here $\dot{G}^L(t; \phi)$ denotes the derivative of $G^L(\argdot; \phi)$ at $t$.
\end{theorem}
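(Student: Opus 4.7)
My plan is to reduce everything to the explicit representation
\begin{equation*}
G^L(t;\phi) = L \int_0^t \Bar{\phi}_s \mspace{2mu} \mathrm{d}s - \int_0^t [\eta(0) - \eta(-s)]\phi(0) \mspace{2mu} \mathrm{d}s
\end{equation*}
established in Theorem~\ref{thm: expression of G^L(argdot; phi)}, and then analyze each of the two terms separately. Matching with Lemma~\ref{lem: expression of g^L(argdot; phi)} will identify the derivative in the continuous case with $g^L(\argdot;\phi)$.

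For the general continuity claim with $\phi \in \mathcal{M}^1([-r, 0], \mathbb{K}^n)$, the first step is to note that $\Bar{\phi}$ lies in $\mathcal{L}^1_{\mathrm{loc}}([-r, \infty), \mathbb{K}^n)$ by construction, so Lemma~\ref{lem: continuity of int_0^t x_s ds} gives the continuity of $[0, \infty) \ni t \mapsto \int_0^t \Bar{\phi}_s \mspace{2mu} \mathrm{d}s \in C([-r,0], \mathbb{K}^n)$. Composing with the bounded linear map $L$ yields the continuity of the first term. The second term is the Lebesgue indefinite integral of the locally bounded variation (hence locally Lebesgue integrable) integrand $s \mapsto [\eta(0) - \eta(-s)]\phi(0)$, so it is continuous (in fact locally absolutely continuous). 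Evaluating at $t = 0$ in either form gives $G^L(0;\phi) = 0$.

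For the refined regularity when $\phi \in C([-r, 0], \mathbb{K}^n)$, now $\Bar{\phi}$ is a continuous function on the whole of $[-r, \infty)$ (since $\Bar{\phi}(0^-) = \phi(0) = \Bar{\phi}(0^+)$). The remark following Theorem~\ref{thm: int_0^t x_s ds for continuous x} then shows that $t \mapsto \int_0^t \Bar{\phi}_s \mspace{2mu} \mathrm{d}s$ is continuously differentiable into $C([-r,0], \mathbb{K}^n)$ with derivative $\Bar{\phi}_t$. Since $L$ is bounded linear, this transfers to continuous differentiability of $t \mapsto L\int_0^t \Bar{\phi}_s \mspace{2mu} \mathrm{d}s$ with derivative $L\Bar{\phi}_t$. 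The second term is locally absolutely continuous with almost-everywhere derivative $[\eta(0) - \eta(-t)]\phi(0)$. Subtracting, $G^L(\argdot;\phi)$ is locally absolutely continuous and
\begin{equation*}
\Dot{G}^L(t;\phi) = L\Bar{\phi}_t - [\eta(0) - \eta(-t)]\phi(0)
\end{equation*}
for almost every $t \ge 0$, which coincides with $g^L(t;\phi)$ by the expression~\eqref{eq: expression of g^L(argdot; phi)} of Lemma~\ref{lem: expression of g^L(argdot; phi)}.

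I do not anticipate a serious obstacle, since the substantive work has already been absorbed into Theorem~\ref{thm: expression of G^L(argdot; phi)} and Lemma~\ref{lem: expression of g^L(argdot; phi)}. The only point demanding a moment's care is the step where the derivative is commuted past the bounded linear operator $L$ in the continuous case; this, however, follows cleanly from the fact that $t \mapsto \int_0^t \Bar{\phi}_s \mspace{2mu} \mathrm{d}s$ is differentiable in the norm of $C([-r,0], \mathbb{K}^n)$, together with the continuity of $L$.
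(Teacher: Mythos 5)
Your proposal is correct and takes essentially the same route as the paper: both reduce the statement to the representation of $G^L(\argdot;\phi)$ in Theorem~\ref{thm: expression of G^L(argdot; phi)} and identify the derivative through Lemma~\ref{lem: expression of g^L(argdot; phi)}. The only cosmetic difference is that in the continuous case the paper first commutes $L$ with the integral (Corollary~\ref{cor: T*int_0^t x_s ds for continuous x}) to write $G^L(\argdot;\phi) = V\bigl(g^L(\argdot;\phi)\bigr)$ and then uses the a.e.\ differentiability of the Volterra operator, whereas you differentiate the two terms of the representation directly; both arguments are valid.
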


\begin{proof}
Let $\phi \in \mathcal{M}^1([-r, 0], \mathbb{K}^n)$ be given.
Then \eqref{eq: expression of G^L(argdot; phi)} yields the continuity of $G^L(\argdot; \phi)$.
The property $G^L(0; \phi) = 0$ follows by definition.
We next assume $\phi \in C([-r, 0], \mathbb{K}^n)$.
Since $\Bar{\phi} \colon [-r, \infty) \to \mathbb{K}^n$ is continuous, Theorem~\ref{thm: expression of G^L(argdot; phi)}, Corollary~\ref{cor: T*int_0^t x_s ds for continuous x}, and Lemma~\ref{lem: expression of g^L(argdot; phi)} show that \eqref{eq: G^L(argdot; phi) and g^L(argdot; phi)}
\begin{equation*}
	G^L(t; \phi) = \bigl( Vg^L(\argdot; \phi) \bigr)(t)
\end{equation*}
holds for all $t \ge 0$.
This yields the properties of $G^L(\argdot; \phi)$.
\end{proof}

\subsection{Derivation of the variation of constants formula}\label{subsec: derivation of VOC formula}

\subsubsection{Formulas for trivial initial histories}

Since $X^L|_{[0, \infty)}$ is locally absolutely continuous (see Theorem~\ref{thm: DE of X^L}), by the integration by parts formula for matrix-valued absolutely continuous functions\footnote{It can be obtained by the corresponding result for scalar-valued functions in combination with the linearity of Lebesgue integration. We note that the result for scalar-valued functions is mentioned in \cite[Exercise 14 of Chapter 7]{Rudin_1987}. One can also give a direct proof based on the matrix product.},
\begin{align*}
	\int_0^t X^L(t - u)g(u) \mspace{2mu} \mathrm{d}u
	&= [X^L(t - u)(Vg)(u)]_{u = 0}^t + \int_0^t \Dot{X}^L(t - u)(Vg)(u) \mspace{2mu} \mathrm{d}u \\
	&= (Vg)(t) + \int_0^t \Dot{X}^L(t - u)(Vg)(u) \mspace{2mu} \mathrm{d}u
\end{align*}
holds for any $g \in \mathcal{L}^1_\mathrm{loc}([0, \infty), \mathbb{K}^n)$.
Here $X^L(0) = I$ and $(Vg)(0) = 0$ are also used.
The following theorem is motivated by this.

\begin{theorem}\label{thm: VOC formula, phi = 0}
Let $G \colon [0, \infty) \to \mathbb{K}^n$ be a continuous function with $G(0) = 0$.
Then \eqref{eq: VOC formula, phi = 0}
\begin{equation*}
	x^L(t; 0, G) = G(t) + \int_0^t \dot{X}^L(t - u)G(u) \mspace{2mu} \mathrm{d}u
\end{equation*}
holds for all $t \ge 0$.
\end{theorem}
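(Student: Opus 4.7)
My plan is to establish the identity first in the regular case $G \in C^1([0, \infty), \mathbb{K}^n)$ with $G(0) = 0$, and then extend it to general continuous $G$ with $G(0) = 0$ by a uniform approximation argument. In the regular case, $G = Vg$ for the continuous function $g \coloneqq G'$, so Theorem~\ref{thm: VOC formula, phi = 0, G = Vf} gives $x^L(t; 0, G) = \int_0^t X^L(t - u)g(u) \mspace{2mu} \mathrm{d}u$. Because $X^L|_{[0, \infty)}$ is locally absolutely continuous with $X^L(0) = I$ (Theorem~\ref{thm: DE of X^L}) and $G(0) = 0$, integration by parts for matrix-valued absolutely continuous functions---carried out explicitly in the paragraph preceding the theorem statement---converts this into $G(t) + \int_0^t \Dot{X}^L(t - u)G(u) \mspace{2mu} \mathrm{d}u$, which is the desired formula~\eqref{eq: VOC formula, phi = 0}.

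To treat a general continuous $G$ with $G(0) = 0$, I fix $T > 0$ and invoke the Weierstrass approximation theorem to find polynomials $p_n$ with $p_n \to G$ uniformly on $[0, T]$. Setting $G_n(t) \coloneqq p_n(t) - p_n(0)$ yields $C^1$ approximants satisfying $G_n(0) = 0$ and $G_n \to G$ uniformly on $[0, T]$. By the linearity of the integral equation~\eqref{eq: integral eq with G, t_0 = 0} in the forcing term (a consequence of the linearity of $L$ together with the uniqueness statement in Theorem~\ref{thm: unique existence, integral eq with G}), one has $x^L(\argdot; 0, G) - x^L(\argdot; 0, G_n) = x^L(\argdot; 0, G - G_n)$ on $[0, T]$. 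A Gronwall-type a priori estimate, parallel to the one derived in Step~1 of the proof of Theorem~\ref{thm: unique existence of mild sol} but specialized to $\phi = 0$ with a continuous forcing term, then produces
\begin{equation*}
	\sup_{0 \le t \le T} \abs{x^L(t; 0, G - G_n)} \le \sup_{0 \le s \le T} \abs{G(s) - G_n(s)} \cdot \mathrm{e}^{\norm{L} T} \longrightarrow 0,
\end{equation*}
so that $x^L(\argdot; 0, G_n) \to x^L(\argdot; 0, G)$ uniformly on $[0, T]$. On the other side of the formula, uniform convergence $G_n \to G$ together with $\Dot{X}^L \in \mathcal{L}^1_\mathrm{loc}([0, \infty), M_n(\mathbb{K}))$ yields
\begin{equation*}
	\abs{\int_0^t \Dot{X}^L(t - u)[G_n(u) - G(u)] \mspace{2mu} \mathrm{d}u}
	\le \sup_{0 \le s \le T} \abs{G_n(s) - G(s)} \cdot \int_0^T \abs{\Dot{X}^L(s)} \mspace{2mu} \mathrm{d}s \longrightarrow 0,
\end{equation*}
so the right-hand side of the formula for $G_n$ converges to the right-hand side of \eqref{eq: VOC formula, phi = 0}, and the identity passes to the limit.

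The main obstacle is the continuous dependence estimate $x^L(\argdot; 0, G_n) \to x^L(\argdot; 0, G)$: although it is essentially a rerun of the Gronwall argument already used for Theorem~\ref{thm: unique existence of mild sol}, it is the crux that makes the approximation scheme rigorous. Once this is in hand, the rest---the integration by parts for $C^1$ data and the two uniform-convergence arguments---is routine.
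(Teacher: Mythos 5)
Your argument is correct, but it follows a genuinely different route from the paper. You first settle the case $G \in C^1$ via Theorem~\ref{thm: VOC formula, phi = 0, G = Vf} (writing $G = Vg$ with $g = G'$ continuous) and the integration by parts for matrix-valued locally absolutely continuous functions, and then extend to arbitrary continuous $G$ with $G(0) = 0$ by Weierstrass approximation, a Gronwall continuous-dependence bound $\norm{x^L(\argdot; 0, H)_t} \le \sup_{[0,T]}\abs{H}\,\mathrm{e}^{\norm{L}t}$ (which indeed follows from Lemma~\ref{lem: generalized Gronwall's inequality and RFDEs} with trivial initial history, together with the linearity in the forcing term and uniqueness from Theorem~\ref{thm: unique existence, integral eq with G}), and the bound $\int_0^T \abs{\Dot{X}^L} < \infty$ coming from the local absolute continuity of $X^L|_{[0,\infty)}$; all of these steps are sound. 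The paper instead argues without any approximation: it defines the candidate $x(t) \coloneqq G(t) + (\Dot{X}^L * G)(t)$, uses Corollary~\ref{cor: convolution of loc AC function and continuous function} to get $Vx|_{[0,\infty)} = X^L * G$, the identity $\Dot{X}^L = \mathrm{d}\Check{\eta} * X^L$ from Theorem~\ref{thm: DE of X^L} and the associativity Theorem~\ref{thm: associativity for RS convolution} to show $x(t) = G(t) + \bigl(\mathrm{d}\Check{\eta} * Vx|_{[0,\infty)}\bigr)(t)$, and then concludes by Theorems~\ref{thm: sol to integral eq with G and instantaneous input} and \ref{thm: unique existence, integral eq with G}. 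The paper's verification is exact and self-contained within the Riemann-Stieltjes convolution calculus, whereas your proof reuses the already established special case $G = Vf$ and pays for the generalization with a density-plus-continuous-dependence layer; the continuous-dependence estimate is not isolated as a lemma in the paper, so you would need to spell it out, but it is routine and your overall scheme is a valid alternative.
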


\begin{proof}
Let $X \coloneqq X^L|_{[0, \infty)}$.
We define a function $x \colon [-r, \infty) \to \mathbb{K}^n$ by $x_0 = 0$ and
\begin{equation*}
	x(t) \coloneqq G(t) + \bigl( \Dot{X} * G \bigr)(t)
\end{equation*}
for $t \ge 0$.
By applying Corollary~\ref{cor: convolution of loc AC function and continuous function}, we have $Vx|_{[0, \infty)} = X * G$ in combination with the fundamental theorem of calculus.
Furthermore, we have
\begin{equation*}
	x(t) = G(t) + [\mathrm{d}\Check{\eta} * (X * G)](t)
	\mspace{25mu}
	(t \ge 0)
\end{equation*}
from Theorems~\ref{thm: DE of X^L} and \ref{thm: associativity for RS convolution}.
Therefore, $x$ satisfies
\begin{equation*}
	x(t) = G(t) + \bigl( \mathrm{d}\Check{\eta} * Vx|_{[0, \infty)} \bigr)(t)
\end{equation*}
for all $t \ge 0$.
This implies that \eqref{eq: VOC formula, phi = 0} holds by applying Theorems~\ref{thm: sol to integral eq with G and instantaneous input} and \ref{thm: unique existence, integral eq with G}.
\end{proof}

The following is a corollary of Theorem~\ref{thm: VOC formula, phi = 0}.
The proof can be omitted.
It is an extension of Theorem~\ref{thm: VOC formula, phi = 0, G = Vf}.

\begin{corollary}[cf.\ \cite{Hale_1971_Springer}, \cite{Hale_1977}]\label{cor: VOC formula, phi = 0, G = Vg}
Let $g \in \mathcal{L}^1_\mathrm{loc}([0, \infty), \mathbb{K}^n)$.
Then 
\begin{equation}\label{eq: VOC formula, phi = 0, G = Vg}
	x^L(t; 0, Vg) = \int_0^t X^L(t - u)g(u) \mspace{2mu} \mathrm{d}u
\end{equation}
holds for all $t \ge 0$.
\end{corollary}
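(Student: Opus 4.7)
The plan is to apply Theorem~\ref{thm: VOC formula, phi = 0} with the choice $G := Vg$, and then match the resulting expression against the integration-by-parts identity recorded just before the statement of Theorem~\ref{thm: VOC formula, phi = 0}.

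First I would verify that $G = Vg$ is an admissible forcing term for Theorem~\ref{thm: VOC formula, phi = 0}. Since $g \in \mathcal{L}^1_\mathrm{loc}([0,\infty),\mathbb{K}^n)$, the Lebesgue differentiation theorem (applied component-wise, as noted right after Definition~\ref{dfn: Volterra operator}) implies that $Vg$ is locally absolutely continuous, hence continuous, and $(Vg)(0) = 0$ by construction. Therefore Theorem~\ref{thm: VOC formula, phi = 0} applies and yields
\begin{equation*}
    x^L(t; 0, Vg) = (Vg)(t) + \int_0^t \dot{X}^L(t-u)(Vg)(u) \, \mathrm{d}u
\end{equation*}
for all $t \ge 0$.

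Next I would recall the integration-by-parts computation for matrix-valued locally absolutely continuous functions displayed at the opening of Subsection~\ref{subsec: derivation of VOC formula}. Since $X^L|_{[0,\infty)}$ is locally absolutely continuous by Theorem~\ref{thm: DE of X^L}, $X^L(0) = I$, and $(Vg)(0) = 0$, that identity reads
\begin{equation*}
    \int_0^t X^L(t-u) g(u) \, \mathrm{d}u = (Vg)(t) + \int_0^t \dot{X}^L(t-u) (Vg)(u) \, \mathrm{d}u
\end{equation*}
for all $t \ge 0$. Combining this with the displayed expression for $x^L(t; 0, Vg)$ yields \eqref{eq: VOC formula, phi = 0, G = Vg}.

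All the substantive work has been absorbed into Theorem~\ref{thm: VOC formula, phi = 0} and into the preceding integration-by-parts calculation, so there is no real obstacle; the corollary is simply a matter of reading off the common right-hand side. The only thing one should be slightly careful about is that the integration-by-parts identity be applied for locally integrable $g$ (not merely continuous $g$), which is exactly the generality in which it was stated.
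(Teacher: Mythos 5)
Your proposal is correct and is precisely the argument the paper intends: it declares the corollary a consequence of Theorem~\ref{thm: VOC formula, phi = 0} with $G = Vg$ and omits the proof, the missing link being exactly the integration-by-parts identity displayed (for $g \in \mathcal{L}^1_\mathrm{loc}$) at the start of Subsection~\ref{subsec: derivation of VOC formula}. Your verification that $Vg$ is continuous with $(Vg)(0) = 0$ and your remark on the generality of the integration-by-parts step fill in the omitted details faithfully.
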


\subsubsection{Formulas for homogeneous equations}

We next find an expression of $x^L(\argdot; \phi, 0)$ by $X^L$ as an application of Theorem~\ref{thm: VOC formula, phi = 0}.

\begin{theorem}\label{thm: VOC formula, G = 0}
Let $\phi \in \mathcal{M}^1([-r, 0], \mathbb{K}^n)$.
Then
\begin{equation}\label{eq: VOC formula, G = 0}
	x^L(t; \phi, 0)
	= X^L(t)\phi(0) + G^L(t; \phi) + \int_0^t \Dot{X}^L(t - u)G^L(u; \phi) \mspace{2mu} \mathrm{d}u
\end{equation}
holds for all $t \ge 0$.
\end{theorem}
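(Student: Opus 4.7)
The plan is to apply Theorem~\ref{thm: VOC formula, phi = 0} to the decomposed function $z^L(\argdot; \phi)$ rather than directly to $x^L(\argdot; \phi, 0)$. This is natural because $z^L(\argdot; \phi)$ is precisely the object that was engineered in Subsection~\ref{subsec: derivation of a general forcing term} to satisfy an integral equation with zero initial history and with forcing term $G^L(\argdot; \phi)$.

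First I would invoke Lemma~\ref{lem: integral eq for z^L(argdot; phi)}, which tells us that $z \coloneqq z^L(\argdot; \phi)$ is a solution of the integral equation
\begin{equation*}
	z(t) = L \int_0^t z_s \mspace{2mu} \mathrm{d}s + G^L(t; \phi)
	\mspace{25mu}
	(t \ge 0)
\end{equation*}
under the initial condition $z_0 = 0$. To apply the uniqueness result Theorem~\ref{thm: unique existence, integral eq with G}, I need to know that the forcing term $G^L(\argdot; \phi)$ is continuous with $G^L(0; \phi) = 0$, and both properties are precisely what Theorem~\ref{thm: regularity of G^L(argdot; phi)} provides for arbitrary $\phi \in \mathcal{M}^1([-r, 0], \mathbb{K}^n)$. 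Hence by uniqueness,
\begin{equation*}
	z^L(\argdot; \phi) = x^L(\argdot; 0, G^L(\argdot; \phi)).
\end{equation*}

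Next I would apply Theorem~\ref{thm: VOC formula, phi = 0} with $G \coloneqq G^L(\argdot; \phi)$, which is admissible by the same regularity just invoked. This yields
\begin{equation*}
	z^L(t; \phi)
	= G^L(t; \phi) + \int_0^t \Dot{X}^L(t - u)G^L(u; \phi) \mspace{2mu} \mathrm{d}u
	\mspace{25mu}
	(t \ge 0).
\end{equation*}
Finally, for $t \ge 0$ the definition~\eqref{eq: z^L(argdot; phi)} gives $x^L(t; \phi, 0) = X^L(t)\phi(0) + z^L(t; \phi)$, so substituting the above expression for $z^L(t; \phi)$ yields~\eqref{eq: VOC formula, G = 0}.

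There is essentially no obstacle left once the preparatory work of Subsections~\ref{subsec: derivation of a general forcing term} and~\ref{subsec: regularity of general forcing term} is in place; the proof is just a three-line chaining of Lemma~\ref{lem: integral eq for z^L(argdot; phi)}, Theorem~\ref{thm: regularity of G^L(argdot; phi)}, Theorem~\ref{thm: unique existence, integral eq with G}, and Theorem~\ref{thm: VOC formula, phi = 0}. The conceptually non-trivial point, already handled earlier, was identifying the correct ``forcing term'' $G^L(\argdot; \phi)$ associated with a (possibly discontinuous) memory element $\phi$ and verifying that it is continuous and vanishes at $0$, so that the auxiliary integral equation falls inside the scope of the machinery developed for continuous forcing terms.
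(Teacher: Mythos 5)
Your proposal is correct and follows essentially the same route as the paper: the paper's proof likewise combines Lemma~\ref{lem: integral eq for z^L(argdot; phi)}, Theorem~\ref{thm: regularity of G^L(argdot; phi)}, and Theorem~\ref{thm: VOC formula, phi = 0} applied to $z^L(\argdot; \phi)$, then recovers \eqref{eq: VOC formula, G = 0} from the definition $z^L(t; \phi) = x^L(t; \phi, 0) - X^L(t)\phi(0)$. Your explicit appeal to the uniqueness result Theorem~\ref{thm: unique existence, integral eq with G} to identify $z^L(\argdot; \phi)$ with $x^L(\argdot; 0, G^L(\argdot; \phi))$ only makes precise a step the paper leaves implicit.
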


\begin{proof}
From Lemma~\ref{lem: integral eq for z^L(argdot; phi)} and Theorem~\ref{thm: VOC formula, phi = 0} together with Theorem~\ref{thm: regularity of G^L(argdot; phi)},
\begin{equation*}
	z^L(t; \phi)
	= G^L(t; \phi) + \int_0^t \Dot{X}^L(t - u)G^L(u; \phi) \mspace{2mu} \mathrm{d}u
\end{equation*}
holds for all $t \ge 0$.
Then the formula~\eqref{eq: VOC formula, G = 0} is obtained in view of
\begin{equation*}
	z^L(t; \phi) = x^L(t; \phi, 0) - X^L(t)\phi(0)
\end{equation*}
for $t \ge 0$.
\end{proof}

\begin{remark}
The above proof of Theorem~\ref{thm: VOC formula, G = 0} is considered to be a reorganization of \cite[Section~I.2 of Chapter~I]{Diekmann--vanGils--Lunel--Walther_1995}.
It leads us to the understanding of the variation of constants formula for non-homogeneous linear RFDEs that does not rely on the theory of Volterra convolution integral equations.
\end{remark}

We have the following corollary.

\begin{corollary}[cf.\ \cite{Kappel_2006}]\label{cor: VOC formula, continuous phi, G = 0}
Let $\phi \in C([-r, 0], \mathbb{K}^n)$.
Then
\begin{equation}\label{eq: VOC formula, continuous phi, G = 0}
	x^L(t; \phi, 0)
	= X^L(t)\phi(0) + \int_0^t X^L(t - u)g^L(u; \phi) \mspace{2mu} \mathrm{d}u
\end{equation}
holds for all $t \ge 0$.
\end{corollary}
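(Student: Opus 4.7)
The plan is to reduce this corollary to Corollary~\ref{cor: VOC formula, phi = 0, G = Vg} via the function $z^L(\argdot;\phi)$, exploiting the extra regularity of $G^L(\argdot;\phi)$ granted by the continuity of $\phi$. The identity to be proved differs from Theorem~\ref{thm: VOC formula, G = 0} in that the differentiation falls on $G^L(\argdot;\phi)$ rather than on $X^L$, which is precisely what Theorem~\ref{thm: regularity of G^L(argdot; phi)} enables once $\phi$ is continuous.

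First I would recall that by the definition~\eqref{eq: z^L(argdot; phi)} and Lemma~\ref{lem: integral eq for z^L(argdot; phi)}, the function $z \coloneqq z^L(\argdot;\phi)$ is a solution of the integral equation~\eqref{eq: integral eq with G^L(argdot; phi)} with initial condition $z_0 = 0$ and forcing term $G^L(\argdot;\phi)$. Since $G^L(\argdot;\phi)$ is continuous with $G^L(0;\phi) = 0$ (Theorem~\ref{thm: regularity of G^L(argdot; phi)}), Theorem~\ref{thm: unique existence, integral eq with G} yields
\begin{equation*}
	z^L(\argdot;\phi) = x^L(\argdot; 0, G^L(\argdot;\phi)).
\end{equation*}

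Next, under the assumption $\phi \in C([-r,0], \mathbb{K}^n)$, Theorem~\ref{thm: regularity of G^L(argdot; phi)} asserts that $G^L(\argdot;\phi)$ is locally absolutely continuous with $\Dot{G}^L(t;\phi) = g^L(t;\phi)$ for almost every $t \ge 0$; combined with $G^L(0;\phi) = 0$ and Lemma~\ref{lem: expression of g^L(argdot; phi)} (which ensures $g^L(\argdot;\phi) \in \mathcal{L}^1_\mathrm{loc}([0,\infty),\mathbb{K}^n)$), this gives
\begin{equation*}
	G^L(\argdot;\phi) = V g^L(\argdot;\phi).
\end{equation*}
Substituting into the previous display and applying Corollary~\ref{cor: VOC formula, phi = 0, G = Vg} with $g \coloneqq g^L(\argdot;\phi)$, I obtain
\begin{equation*}
	z^L(t;\phi) = x^L \bigl( t; 0, V g^L(\argdot;\phi) \bigr) = \int_0^t X^L(t-u) g^L(u;\phi) \mspace{2mu} \mathrm{d}u
\end{equation*}
for all $t \ge 0$. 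Adding $X^L(t)\phi(0)$ to both sides and using the defining relation~\eqref{eq: z^L(argdot; phi)} then yields~\eqref{eq: VOC formula, continuous phi, G = 0}.

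There is no serious obstacle; the only point requiring care is that every invoked result has its hypotheses verified. In particular, one must check that $g^L(\argdot;\phi)$ is locally (Lebesgue) integrable so that Corollary~\ref{cor: VOC formula, phi = 0, G = Vg} applies, and this is exactly what Lemma~\ref{lem: expression of g^L(argdot; phi)} provides through the decomposition $g^L(t;\phi) = L\Bar{\phi}_t - [\eta(0) - \eta(-t)]\phi(0)$. Apart from this verification, the proof is a direct chaining of uniqueness for the integral equation~\eqref{eq: integral eq with G^L(argdot; phi)}, the regularity statement of Theorem~\ref{thm: regularity of G^L(argdot; phi)}, and Corollary~\ref{cor: VOC formula, phi = 0, G = Vg}, which is why the author remarks that the proof can be omitted.
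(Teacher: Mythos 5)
Your proof is correct: every hypothesis you invoke is indeed available ($G^L(\argdot;\phi)$ continuous with $G^L(0;\phi)=0$ and, for continuous $\phi$, locally absolutely continuous with $\Dot{G}^L(\argdot;\phi) = g^L(\argdot;\phi)$ a.e.\ by Theorem~\ref{thm: regularity of G^L(argdot; phi)}; $g^L(\argdot;\phi)$ locally integrable by Lemma~\ref{lem: expression of g^L(argdot; phi)}), so the chain $z^L(\argdot;\phi) = x^L(\argdot;0,G^L(\argdot;\phi)) = x^L(\argdot;0,Vg^L(\argdot;\phi))$ followed by Corollary~\ref{cor: VOC formula, phi = 0, G = Vg} is sound. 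The route differs slightly from the paper's: the paper starts from the already-established formula~\eqref{eq: VOC formula, G = 0} of Theorem~\ref{thm: VOC formula, G = 0}, substitutes $G^L(\argdot;\phi) = V\bigl(g^L(\argdot;\phi)\bigr)$, and collapses the two $G^L$-terms into $\int_0^t X^L(t-u)g^L(u;\phi)\,\mathrm{d}u$ by the integration by parts formula for matrix-valued absolutely continuous functions, whereas you bypass Theorem~\ref{thm: VOC formula, G = 0} altogether and re-run its first half (Lemma~\ref{lem: integral eq for z^L(argdot; phi)} plus uniqueness from Theorem~\ref{thm: unique existence, integral eq with G}) before invoking Corollary~\ref{cor: VOC formula, phi = 0, G = Vg}, inside which the integration by parts is already packaged. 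What your version buys is that no explicit integration by parts is needed at this stage and only the trivial-initial-history formula is used; what it costs is a duplication of the $z^L$-argument that Theorem~\ref{thm: VOC formula, G = 0} has already carried out, which is why the paper's proof is a one-liner. Both arguments rest on the same key identity $G^L(\argdot;\phi) = Vg^L(\argdot;\phi)$ from Theorem~\ref{thm: regularity of G^L(argdot; phi)}.
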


\begin{proof}
From Theorem~\ref{thm: regularity of G^L(argdot; phi)},
\begin{equation*}
	G^L(\argdot; \phi) = V \bigl( g^L(\argdot; \phi) \bigr)
\end{equation*}
holds.
Therefore, the formula~\eqref{eq: VOC formula, continuous phi, G = 0} is obtained from \eqref{eq: VOC formula, G = 0} by using the integration by parts formula for matrix-valued absolutely continuous functions.
\end{proof}

Corollary~\ref{cor: VOC formula, continuous phi, G = 0} should be compared with \cite[Theorem~1.11]{Kappel_2006}, where the inverse Laplace transform is used to obtain a formula.

\subsubsection{Derivation of the main result of this section}

Theorem~\ref{thm: VOC formula, integral eq} is a combination of Theorems~\ref{thm: VOC formula, phi = 0} and \ref{thm: VOC formula, G = 0} in view of Corollary~\ref{cor: decomposition of sol to integral eq with G}.
Therefore, the proof can be omitted.

The following is a corollary of Theorem~\ref{thm: VOC formula, integral eq}, which is a combination of Corollaries~\ref{cor: VOC formula, phi = 0, G = Vg} and \ref{cor: VOC formula, continuous phi, G = 0} in view of Corollary~\ref{cor: decomposition of sol to integral eq with G}.
The proof can be omitted.

\begin{corollary}\label{cor: VOC formula, continuous phi, G = Vg}
If $\phi \in C([-r, 0], \mathbb{K}^n)$ and $G = Vg$ for some $g \in \mathcal{L}^1_\mathrm{loc}([0, \infty), \mathbb{K}^n)$, then
\begin{equation*}
	x^L(t; \phi, G)
	= X^L(t)\phi(0) + \int_0^t X^L(t - u) [g^L(u; \phi) + g(u)] \mspace{2mu} \mathrm{d}u
\end{equation*}
holds for all $t \ge 0$.
\end{corollary}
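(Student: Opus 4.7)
The plan is to assemble the claimed formula from pieces already established, using the linearity decomposition of Corollary~\ref{cor: decomposition of sol to integral eq with G}. Specifically, under the hypotheses $\phi \in C([-r, 0], \mathbb{K}^n)$ and $G = Vg$ with $g \in \mathcal{L}^1_\mathrm{loc}([0, \infty), \mathbb{K}^n)$, I would first invoke Corollary~\ref{cor: decomposition of sol to integral eq with G} to write
\begin{equation*}
	x^L(t; \phi, G) = x^L(t; \phi, 0) + x^L(t; 0, Vg)
	\mspace{25mu}
	(t \ge 0).
\end{equation*}

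Next, I would treat each summand separately using the formulas already derived for the two ``extreme'' cases. For the homogeneous part with continuous initial history, Corollary~\ref{cor: VOC formula, continuous phi, G = 0} gives
\begin{equation*}
	x^L(t; \phi, 0)
	= X^L(t)\phi(0) + \int_0^t X^L(t - u) g^L(u; \phi) \mspace{2mu} \mathrm{d}u
\end{equation*}
for all $t \ge 0$. For the trivial-initial-history part with forcing $G = Vg$, Corollary~\ref{cor: VOC formula, phi = 0, G = Vg} gives
\begin{equation*}
	x^L(t; 0, Vg) = \int_0^t X^L(t - u) g(u) \mspace{2mu} \mathrm{d}u
\end{equation*}
for all $t \ge 0$.

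Finally, I would add the two expressions and combine the two integrals into a single one using the linearity of the Lebesgue integral, noting that both $g^L(\argdot; \phi)$ (which is locally Riemann integrable on $[0, \infty)$ and vanishes on $[r, \infty)$ by Lemma~\ref{lem: expression of g^L(argdot; phi)}) and $g$ belong to $\mathcal{L}^1_\mathrm{loc}([0, \infty), \mathbb{K}^n)$, so $u \mapsto X^L(t - u)[g^L(u; \phi) + g(u)]$ is integrable on $[0, t]$ for each $t \ge 0$. This yields exactly the stated identity. No real obstacle arises here: the corollary is a bookkeeping consequence of the previous results, and the only point worth checking carefully is the integrability justification for merging the two integrands, which is immediate.
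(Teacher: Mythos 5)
Your proposal is correct and follows exactly the route the paper intends: the paper omits the proof, noting that the corollary is ``a combination of Corollaries~\ref{cor: VOC formula, phi = 0, G = Vg} and \ref{cor: VOC formula, continuous phi, G = 0} in view of Corollary~\ref{cor: decomposition of sol to integral eq with G}'', which is precisely your decomposition-and-add argument. Your extra remark on the integrability of $g^L(\argdot; \phi) + g$ via Lemma~\ref{lem: expression of g^L(argdot; phi)} is a harmless (and correct) bookkeeping detail.
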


\subsection{Variation of constants formula for linear differential difference equations}

We apply Theorem~\ref{thm: VOC formula, G = 0} to an autonomous linear differential difference equation~\eqref{eq: linear differential difference eq, multiple delays}
\begin{equation*}
	\Dot{x}(t) = Ax(t) + \sum_{k = 1}^m B_kx(t - \tau_k)
	\mspace{25mu}
	(t \ge 0)
\end{equation*}
for $n \times n$ matrices $A, B_1, \dots, B_m \in M_n(\mathbb{K})$ and $\tau_1, \dots, \tau_m \in (0, r]$.
We recall that the linear DDE~\eqref{eq: linear differential difference eq, multiple delays} can be expressed in the form of the linear RFDE~\eqref{eq: linear RFDE} by defining a continuous linear map $L \colon C([-r, 0], \mathbb{K}^n) \to \mathbb{K}^n$ by \eqref{eq: L, linear differential difference eq, multiple delays}
\begin{equation*}
	L\psi = A\psi(0) + \sum_{k = 1}^m B_k\psi(-\tau_k)
\end{equation*}
for $\psi \in C([-r, 0], \mathbb{K}^n)$.

For the above mentioned application, we need to calculate the function $G^L(\argdot; \phi)$ for each $\phi \in \mathcal{M}^1([-r, 0], \mathbb{K}^n)$ based on Definition~\ref{dfn: G^L(argdot; phi)}.
By the linearity of $L \mapsto G^L(\argdot; \phi)$, this can be reduced to the calculation of $G^{L_k}(\argdot; \phi)$ for each $k \in \{0, \dots, m\}$, where $L_k \colon C([-r, 0], \mathbb{K}^n) \to \mathbb{K}^n$ is the continuous linear map given by
\begin{align*}
	L_0\psi &\coloneqq A\psi(0), \\
\intertext{and}
	L_k\psi &\coloneqq B_k\psi(-\tau_k)
\end{align*}
for $k \in \{1, \dots, m\}$.
We have the following lemma.

\begin{lemma}\label{lem: G^L(argdot; phi) for multiple delays}
Let $\phi \in \mathcal{M}^1([-r, 0], \mathbb{K}^n)$ be given.
Then the following statements hold:
\begin{enumerate}
\item $G^{L_0}(\argdot; \phi) = 0$.
\item For each $k \in \{1, \dots, m\}$, $G^{L_k}(0; \phi) = 0$ and
\begin{equation*}
	G^{L_k}(t; \phi) =
	\begin{cases}
		B_k\int_{-\tau_k}^{t - \tau_k} \phi(s) \mspace{2mu} \mathrm{d}s & (t \in (0, \tau_k]), \\
		B_k\int_{-\tau_k}^0 \phi(s) \mspace{2mu} \mathrm{d}s & (t \in (\tau_k, \infty)).
	\end{cases}
\end{equation*}
holds.
\end{enumerate}
\end{lemma}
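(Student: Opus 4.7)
The plan is to identify the generating matrix-valued BV function $\eta_k$ of each $L_k$ and then apply Theorem~\ref{thm: expression of G^L(argdot; phi)}, which expresses $G^L(t;\phi)$ in terms of $L$ and $\eta$. The map $L_0\psi = A\psi(0)$ is represented by $\eta_0(\theta) = 0$ on $[-r,0)$ and $\eta_0(0) = A$, whose Lebesgue--Stieltjes measure is concentrated at $\theta = 0$ with weight $A$. For $k \in \{1,\dots,m\}$, $L_k\psi = B_k\psi(-\tau_k)$ is represented by the step function $\eta_k(\theta) = 0$ on $[-r,-\tau_k)$ and $\eta_k(\theta) = B_k$ on $[-\tau_k,0]$, with associated measure concentrated at $-\tau_k$ of weight $B_k$. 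Both identifications follow from the behaviour of the Riemann--Stieltjes integral at an isolated jump of the integrator against a continuous integrand.

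For statement 1, substituting $\eta_0$ into
\begin{equation*}
G^{L_0}(t;\phi) = L_0\int_0^t\bar\phi_s\,\mathrm{d}s - \int_0^t[\eta_0(0) - \eta_0(-s)]\phi(0)\,\mathrm{d}s
\end{equation*}
one computes $L_0\int_0^t\bar\phi_s\,\mathrm{d}s = A\int_0^t\bar\phi(s)\,\mathrm{d}s = At\phi(0)$, using $\bar\phi(s) = \phi(0)$ for $s \ge 0$, while $\eta_0(0) - \eta_0(-s) = A$ for every $s > 0$ makes the correction integral equal to $At\phi(0)$ as well. The two terms cancel and $G^{L_0}(\argdot;\phi) \equiv 0$.

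For statement 2, the same formula gives
\begin{equation*}
G^{L_k}(t;\phi) = B_k\int_{-\tau_k}^{t-\tau_k}\bar\phi(s)\,\mathrm{d}s - \int_0^t[\eta_k(0) - \eta_k(-s)]\phi(0)\,\mathrm{d}s,
\end{equation*}
where $\eta_k(0) - \eta_k(-s)$ equals $0$ on $[0,\tau_k]$ and $B_k$ on $(\tau_k,\infty)$. For $t \in (0,\tau_k]$ the correction integral vanishes and $\bar\phi = \phi$ on $[-\tau_k,t-\tau_k] \subset [-\tau_k,0]$, giving the first branch at once. For $t > \tau_k$, splitting the first integral at $0$ yields $B_k\int_{-\tau_k}^0\phi(s)\,\mathrm{d}s + B_k(t-\tau_k)\phi(0)$, and this is reduced by the correction $B_k(t-\tau_k)\phi(0)$ to the claimed expression. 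The identity $G^{L_k}(0;\phi) = 0$ is built into Definition~\ref{dfn: G^L(argdot; phi)}.

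The only bookkeeping step that requires attention is the case split $t \le \tau_k$ versus $t > \tau_k$ in statement 2; this is routine rather than a genuine obstacle, and as a sanity check the two branches agree at $t = \tau_k$.
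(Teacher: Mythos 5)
Your proof is correct, but it takes a different route from the paper. The paper works straight from Definition~\ref{dfn: G^L(argdot; phi)}: it writes down the step functions $\eta_0$ and $\eta_k$ representing $L_0$ and $L_k$ (exactly as you do) and then evaluates the two Riemann--Stieltjes integrals $\int_{-r}^0 \mathrm{d}\eta_k(\theta)\bigl(\int_\theta^0\phi\bigr)$ and $\int_{-r}^{-t}\mathrm{d}\eta_k(\theta)\bigl(\int_0^{t+\theta}\phi\bigr)$ directly, using the jump of $\eta_k$ at $-\tau_k$ (respectively of $\eta_0$ at $0$, where the integrand $\int_0^0\phi$ vanishes, giving statement 1 immediately). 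You instead substitute $\eta_0,\eta_k$ into the alternative expression of Theorem~\ref{thm: expression of G^L(argdot; phi)}, $G^L(t;\phi)=L\int_0^t\Bar{\phi}_s\,\mathrm{d}s-\int_0^t[\eta(0)-\eta(-s)]\phi(0)\,\mathrm{d}s$, and reduce everything to elementary Lebesgue integrals; the cancellation $At\phi(0)-At\phi(0)$ and the correction $B_k(t-\tau_k)\phi(0)$ for $t>\tau_k$ are exactly right, and your case split at $t=\tau_k$ is consistent (the value of $\eta_k(0)-\eta_k(-s)$ at the single point $s=\tau_k$ is irrelevant for the integral). What your route buys is that you never have to evaluate a Riemann--Stieltjes integral against a step integrator with a jump at or near an endpoint of the integration interval, only the representation $L_k\psi=B_k\psi(-\tau_k)$ applied to the continuous function $\int_0^t\Bar{\phi}_s\,\mathrm{d}s$; what the paper's route buys is that it stays at the level of the definition of $G^L(\argdot;\phi)$ and does not invoke Theorem~\ref{thm: expression of G^L(argdot; phi)} (which, of course, is available to you, so there is no logical gap). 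Both arguments yield the same case analysis and the same formulas.
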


\begin{proof}
1. Let $\eta_0 \colon [-r, 0] \to M_n(\mathbb{K})$ be the matrix-valued function given by
\begin{equation*}
	\eta_0(\theta) \coloneqq
	\begin{cases}
		O & (-r \le \theta < 0), \\
		A & (\theta = 0).
	\end{cases}
\end{equation*}
Then $L_0$ is expressed as
\begin{equation*}
	L_0\psi = \int_{-r}^0 \mathrm{d}\eta_0(\theta) \mspace{2mu} \psi(\theta)
\end{equation*}
for $\psi \in C([-r, 0], \mathbb{K}^n)$.
Therefore, the definition of $G^L(\argdot; \phi)$ yields the conclusion.

2. Let $k \in \{1, \dots, m\}$ be fixed and $\eta_k \colon [-r, 0] \to M_n(\mathbb{K})$ be the matrix-valued function given by
\begin{equation*}
	\eta_k(\theta) \coloneqq
	\begin{cases}
		O & (-r \le \theta \le -\tau_k), \\
		B_k & (-\tau_k < \theta \le 0).
	\end{cases}
\end{equation*}
Then $L_k$ is expressed as
\begin{equation*}
	L_k\psi = \int_{-r}^0 \mathrm{d}\eta_k(\theta) \mspace{2mu} \psi(\theta)
\end{equation*}
for $\psi \in C([-r, 0], \mathbb{K}^n)$.
By the definition of $L_k$, we have
\begin{equation*}
	\int_{-r}^0 \mathrm{d}\eta_k(\theta) \mspace{2mu}
		\biggl( \int_\theta^0 \phi(s) \mspace{2mu} \mathrm{d}s \biggr)
	= B_k\int_{-\tau_k}^0 \phi(s) \mspace{2mu} \mathrm{d}s.
\end{equation*}
Furthermore, the integral $\int_{-r}^{-t} \mathrm{d}\eta_k(\theta) \mspace{2mu} \bigl( \int_0^{t + \theta} \phi(s) \mspace{2mu} \mathrm{d}s \bigr)$ is calculated as
\begin{equation*}
	\int_{-r}^{-t} \mathrm{d}\eta_k(\theta) \mspace{2mu}
		\biggl(\int_0^{t + \theta} \phi(s) \mspace{2mu} \mathrm{d}s \biggr)
	=
	\begin{cases}
		B_k\int_0^{t - \tau_k} \phi(s) \mspace{2mu} \mathrm{d}s & (t \in [0, \tau_k]), \\
		0 & (t \in (\tau_k, \infty)).
	\end{cases}
\end{equation*}
By combining the above expressions, the conclusion is obtained.
\end{proof}

\begin{theorem}[cf.\ \cite{Hale_1977}, \cite{Hale--VerduynLunel_1993}]
Let $L \colon C([-r, 0], \mathbb{K}^n) \to \mathbb{K}^n$ be the continuous linear map given by \eqref{eq: L, linear differential difference eq, multiple delays}.
Then for any $\phi \in \mathcal{M}^1([-r, 0], \mathbb{K}^n)$,
\begin{equation}\label{eq: VOC formula II, differential difference eq}
	x^L(t; \phi, 0)
	= X^L(t)\phi(0) + \sum_{k = 1}^m \int_{-\tau_k}^0 X^L(t - \tau_k - \theta)B_k\phi(\theta) \mspace{2mu} \mathrm{d}\theta
\end{equation}
holds for all $t \ge 0$.
\end{theorem}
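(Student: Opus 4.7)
The plan is to apply Theorem~\ref{thm: VOC formula, G = 0} with $G=0$, evaluate the forcing term $G^L(\argdot;\phi)$ explicitly via Lemma~\ref{lem: G^L(argdot; phi) for multiple delays}, and convert the $\Dot{X}^L$-convolution into an $X^L$-convolution by integration by parts.

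The matrix-valued function $\eta$ representing the map $L$ in \eqref{eq: L, linear differential difference eq, multiple delays} via \eqref{eq: L as RS integral} decomposes as the sum $\eta_0+\eta_1+\dots+\eta_m$ of the representing functions of $L_0,L_1,\dots,L_m$ constructed in the proof of Lemma~\ref{lem: G^L(argdot; phi) for multiple delays}. Since Definition~\ref{dfn: G^L(argdot; phi)} is $\mathbb{K}$-linear in the representing function and $G^{L_0}(\argdot;\phi)=0$ by part 1 of the lemma, we have $G^L(\argdot;\phi) = \sum_{k=1}^m G^{L_k}(\argdot;\phi)$. Substituting into \eqref{eq: VOC formula, G = 0} reduces the proof to establishing, for each fixed $k\in\{1,\dots,m\}$, the identity
\begin{equation*}
G^{L_k}(t;\phi) + \int_0^t \Dot{X}^L(t-u) \mspace{2mu} G^{L_k}(u;\phi) \mspace{2mu} \mathrm{d}u = \int_{-\tau_k}^0 X^L(t-\tau_k-\theta) B_k\phi(\theta) \mspace{2mu} \mathrm{d}\theta.
\end{equation*}

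By part 2 of Lemma~\ref{lem: G^L(argdot; phi) for multiple delays} (after the substitution $u = s+\tau_k$ in the integral defining $G^{L_k}$), $G^{L_k}(\argdot;\phi)$ equals the Volterra integral on $[0,\infty)$ of the locally $\mathcal{L}^1$ function $h_k$ taking value $B_k\phi(u-\tau_k)$ on $(0,\tau_k)$ and $0$ on $(\tau_k,\infty)$. Hence $G^{L_k}(\argdot;\phi)$ is locally absolutely continuous with $G^{L_k}(0;\phi)=0$ and almost-everywhere derivative $h_k$. Since $X^L|_{[0,\infty)}$ is also locally absolutely continuous with $X^L(0)=I$ by Theorem~\ref{thm: DE of X^L}, the integration by parts formula for absolutely continuous functions applied to $u\mapsto X^L(t-u) \mspace{2mu} G^{L_k}(u;\phi)$ on $[0,t]$, using $G^{L_k}(0;\phi)=0$ to kill the boundary term at $u=0$, yields
\begin{equation*}
G^{L_k}(t;\phi) + \int_0^t \Dot{X}^L(t-u) \mspace{2mu} G^{L_k}(u;\phi) \mspace{2mu} \mathrm{d}u = \int_0^{\min\{t,\tau_k\}} X^L(t-u) B_k\phi(u-\tau_k) \mspace{2mu} \mathrm{d}u,
\end{equation*}
and the substitution $\theta=u-\tau_k$ converts the right-hand side into $\int_{-\tau_k}^{\min\{t-\tau_k,0\}} X^L(t-\tau_k-\theta) B_k\phi(\theta) \mspace{2mu} \mathrm{d}\theta$.

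The main obstacle is the last step: extending the upper limit of integration from $\min\{t-\tau_k,0\}$ up to $0$ when $t<\tau_k$. For $\theta\in(t-\tau_k,0]$ in this regime one has $t-\tau_k-\theta\in[-\tau_k,0)\subset[-r,0)$, on which $X^L$ coincides with $\Hat{I}$ and hence vanishes by the defining initial condition $X^L_0=\Hat{I}$ of the principal fundamental matrix solution; therefore the integrand is $0$ on the omitted range, and the displayed identity follows. Summing over $k\in\{1,\dots,m\}$ and substituting into Theorem~\ref{thm: VOC formula, G = 0} yields \eqref{eq: VOC formula II, differential difference eq}.
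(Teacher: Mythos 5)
Your proposal is correct and follows essentially the same route as the paper: apply Theorem~\ref{thm: VOC formula, G = 0}, decompose $G^L(\argdot;\phi)=\sum_{k=1}^m G^{L_k}(\argdot;\phi)$ via Lemma~\ref{lem: G^L(argdot; phi) for multiple delays}, integrate by parts against the locally absolutely continuous $X^L|_{[0,\infty)}$ to replace $\Dot{X}^L * G^{L_k}$ by $X^L * \Dot{G}^{L_k}$, and change variables. Your explicit justification for extending the upper limit to $0$ when $t<\tau_k$ (namely $X^L$ vanishing on $[-r,0)$ by $X^L_0=\Hat{I}$) is exactly the point the paper handles, only more tersely, in its case $t\in[0,\tau_k]$.
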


\begin{proof}
Let $\phi \in \mathcal{M}^1([-r, 0], \mathbb{K}^n)$ be given.
From Lemma~\ref{lem: G^L(argdot; phi) for multiple delays},
\begin{equation*}
	G^L(\argdot; \phi)
	= \sum_{k = 1}^m G^{L_k}(\argdot; \phi)
\end{equation*}
is locally absolutely continuous.
Therefore, Theorem~\ref{thm: VOC formula, G = 0} and the integration by parts formula for absolutely continuous functions yield that
\begin{equation*}
	x^L(t; \phi, 0)
	= X^L(t)\phi(0) + \sum_{k = 1}^m \int_0^t X^L(t - u)\Dot{G}^{L_k}(u; \phi) \mspace{2mu} \mathrm{d}u
\end{equation*}
holds for all $t \ge 0$.
We now fix $k \in \{1, \dots, m\}$ and find an expression of the integral
\begin{equation*}
	\int_0^t X^L(t - u)\Dot{G}^{L_k}(u; \phi) \mspace{2mu} \mathrm{d}u.
\end{equation*}
Lemma~\ref{lem: G^L(argdot; phi) for multiple delays} shows that $\Dot{G}^{L_k}(t; \phi) = B_k\phi(t - \tau_k)$ holds for almost all $t \in [0, \tau_k]$, and $\Dot{G}^{L_k}(t; \phi) = 0$ holds for all $t \in (\tau_k, \infty)$.
Then the integral is expressed as follows:
\begin{itemize}
\item When $t \in [0, \tau_k]$, the integral becomes
\begin{align*}
	\int_0^t X^L(t - u)B_k\phi(u - \tau_k) \mspace{2mu} \mathrm{d}u
	&= \int_{-\tau_k}^{t - \tau_k} X^L(t - \tau_k - \theta)B_k\phi(\theta) \mspace{2mu} \mathrm{d}\theta \\
	&= \int_{-\tau_k}^0 X^L(t - \tau_k - \theta)B_k\phi(\theta) \mspace{2mu} \mathrm{d}\theta
\end{align*}
because $t - \tau_k \le 0$.
\item When $t \in (\tau_k, \infty)$, the integral becomes
\begin{equation*}
	\int_0^{\tau_k} X^L(t - u)B_k\phi(u - \tau_k) \mspace{2mu} \mathrm{d}u
	= \int_{-\tau_k}^0 X^L(t - \tau_k - \theta)B_k\phi(\theta) \mspace{2mu} \mathrm{d}\theta.
\end{equation*}
\end{itemize}
This completes the proof.
\end{proof}

\begin{remark}
Suppose $\phi \in C([-r, 0], \mathbb{R})$ and $m = 1$.
In \cite[Theorem~6.1 in Section~1.6]{Hale_1977} and \cite[Theorem~6.1 in Section~1.6]{Hale--VerduynLunel_1993}, \eqref{eq: VOC formula II, differential difference eq} is obtained by using the Laplace transform.
See also \cite[Theorem~4.2]{Nakagiri_1981}, where \eqref{eq: VOC formula II, differential difference eq} is obtained under different assumptions for a linear evolution equation with commensurate delays.
\end{remark}

\subsection{Remarks on definitions of ``fundamental matrix''}

\subsubsection{Definition by Hale}

Let $\mathbb{K} = \mathbb{R}$.
In \cite[Theorem~16.3 and Corollary~16.1]{Hale_1971_Springer} and \cite[Theorem~2.1 and Corollary~2.1 in Chapter~6]{Hale_1977}, a matrix-valued function $X \colon [0, \infty) \to M_n(\mathbb{R})$ is defined by using the property that for every $t \ge 0$,
\begin{equation*}
	\mathcal{L}^1_\mathrm{loc}([0, \infty), \mathbb{R}^n) \ni g \mapsto x^L(t; 0, Vg) \in \mathbb{R}^n
\end{equation*}
is a bounded linear operator to show
\begin{equation*}
	x^L(t; 0, Vg) = \int_0^t X(t - u)g(u) \mspace{2mu} \mathrm{d}u
	\mspace{25mu}
	(t \ge 0).
\end{equation*}
Furthermore, by the formal exchange of order of integration, the function $X$ is interpreted as a ``matrix-valued solution'' to the linear RFDE~\eqref{eq: linear RFDE}.
Indeed, Hale argued that $X$ satisfies (i) $X_0 = \Hat{I}$, (ii) $X|_{[0, \infty)}$ is locally absolutely continuous, and (iii) $X$ satisfies
\begin{equation*}
	\Dot{X}(t)
	= \int_{-r}^0 \mathrm{d}\eta(\theta) \mspace{2mu} X(t + \theta)
\end{equation*}
for almost all $t \in [0, \infty)$.
Here $\Hat{I} \colon [-r, 0] \to M_n(\mathbb{R})$ is defined by \eqref{eq: Hat{I}}
\begin{equation*}
	\Hat{I}(\theta) \coloneqq
	\begin{cases}
		O & (\theta \in [-r, 0)), \\
		I & (\theta = 0).
	\end{cases}
\end{equation*}
However, the above integral does not make sense in general because $X$ is not continuous.

\subsubsection{Volterra convolution integral equations and fundamental matrix solutions}

Let $x \coloneqq x^L \bigl( \argdot; \Hat{\xi} \bigr)|_{[0, \infty)}$ for some $\xi \in \mathbb{K}^n$ and suppose $\eta(0) = O$.
By using the integration by parts formula for Riemann-Stieltjes integrals and Theorem~\ref{thm: RS integral wrt C^1-function} in \eqref{eq: mild sol for instantaneous input, RS convolution}
\begin{equation*}
	x = \xi + \mathrm{d}\Check{\eta} * Vx,
\end{equation*}
we have
\begin{align*}
	x(t)
	&= \xi + [\Check{\eta}(u)(Vx)(t - u)]_{u = 0}^t + \int_0^t \Check{\eta}(u)x(t - u) \mspace{2mu} \mathrm{d}u \\
	&= \xi + (\Check{\eta} * x)(t)
\end{align*}
for all $t \ge 0$.
Here $(Vx)(0) = 0$ is also used.
The above calculation shows that the function $x \colon [0, \infty) \to \mathbb{K}^n$ satisfies
\begin{equation*}
	x = \Check{\eta} * x + \xi,
\end{equation*}
which is a Volterra convolution integral equation with the kernel function $\Check{\eta}$ and with the constant forcing term $\xi$.
Therefore, $X \coloneqq X^L|_{[0, \infty)}$ satisfies
\begin{equation*}
	X = \Check{\eta} * X + I.
\end{equation*}
This means that the restriction $X = X^L|_{[0, \infty)}$ is the \textit{fundamental matrix solution} for the Volterra convolution integral equation with the kernel function $\Check{\eta}$ under the assumption that $\eta(0) = O$.
For an approach by the Volterra convolution integral equation, see \cite[Section I.2 of Chapter I]{Diekmann--vanGils--Lunel--Walther_1995}.

\section{Exponential stability of principal fundamental matrix solution}\label{sec: exponential stability}

For a continuous linear map $L \colon C([-r, 0], \mathbb{K}^n) \to \mathbb{K}^n$, we consider a linear RFDE~\eqref{eq: linear RFDE}
\begin{equation*}
	\Dot{x}(t) = Lx_t
	\mspace{25mu}
	(t \ge 0).
\end{equation*}
Let $X^L \colon [-r, \infty) \to M_n(\mathbb{K})$ be the principal fundamental matrix solution.
We use the following terminology.

\begin{definition}\label{dfn: exponential stability of X^L}
We say that the principal fundamental matrix solution $X^L$ is \textit{exponentially stable} if there exist constants $M \ge 1$ and $\alpha > 0$ such that
\begin{equation}\label{eq: exponential stability of X^L}
	\abs{X^L(t)} \le M\mathrm{e}^{-\alpha t}
\end{equation}
holds for all $t \ge 0$.
We also say that $X^L$ is \textit{$\alpha$-exponentially stable}.
\end{definition}

In the following calculations, it is useful to extend the domain of definition of $X^L$ to $\mathbb{R}$ by letting $X^L(t) \coloneqq O$ for $t \in (-\infty, -r)$.

\begin{lemma}\label{lem: estimate of X^L(t + theta)}
If $X^L$ is $\alpha$-exponentially stable for some $\alpha > 0$, then there exists a constant $M \ge 1$ such that
\begin{equation*}
	\sup_{\theta \in [-r, 0]} \abs{X^L(t + \theta)}
	\le M\mathrm{e}^{-\alpha t}
\end{equation*}
holds for all $t \in \mathbb{R}$.
\end{lemma}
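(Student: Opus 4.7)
The plan is a direct case analysis, leveraging the hypothesis together with the two ways in which $X^L$ is known to vanish off $[0,\infty)$. First, by the stated convention $X^L(s) = O$ for $s < -r$, and by the initial condition $X^L_0 = \Hat{I}$ we have $X^L(s) = O$ also for $s \in [-r, 0)$. Consequently, the bound $\abs{X^L(s)} \le M_0 \mathrm{e}^{-\alpha s}$ furnished by the hypothesis (with some $M_0 \ge 1$) for $s \ge 0$ extends trivially to all $s \in \mathbb{R}$, since the left-hand side is $0$ and the right-hand side is positive whenever $s < 0$.

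Next, I would fix an arbitrary $t \in \mathbb{R}$ and $\theta \in [-r, 0]$ and split on the sign of $t + \theta$. If $t + \theta < 0$, then $X^L(t + \theta) = O$, so there is nothing to estimate. If $t + \theta \ge 0$, then the extended bound yields
\begin{equation*}
	\abs{X^L(t + \theta)}
	\le M_0 \mathrm{e}^{-\alpha (t + \theta)}
	= M_0 \mathrm{e}^{-\alpha t} \mathrm{e}^{-\alpha \theta}
	\le M_0 \mathrm{e}^{\alpha r} \mathrm{e}^{-\alpha t},
\end{equation*}
where the last inequality uses $-\theta \in [0, r]$ and $\alpha > 0$. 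In either case, $\abs{X^L(t + \theta)} \le M_0 \mathrm{e}^{\alpha r} \mathrm{e}^{-\alpha t}$, and taking the supremum over $\theta \in [-r, 0]$ gives the desired inequality with $M \coloneqq M_0 \mathrm{e}^{\alpha r}$, which satisfies $M \ge 1$ since $M_0 \ge 1$ and $\alpha r \ge 0$.

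There is essentially no obstacle here; the only point that could look delicate is the jump discontinuity of $X^L$ at $0$, but this is harmless because $\abs{X^L(0)} = \abs{I} = 1 \le M$, and the estimate is insensitive to redefinitions on the null set $\{-r, \dots, 0^-\}$ where $X^L$ vanishes. I would therefore write this up as a single short paragraph following the case split above.
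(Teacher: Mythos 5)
Your proof is correct and follows essentially the same route as the paper: the same case split on the sign of $t + \theta$, using $X^L(t+\theta) = O$ when $t + \theta < 0$ and the bound $M_0\mathrm{e}^{-\alpha(t+\theta)} \le M_0\mathrm{e}^{\alpha r}\mathrm{e}^{-\alpha t}$ when $t + \theta \ge 0$, yielding the same constant $M = M_0\mathrm{e}^{\alpha r}$. The only cosmetic difference is that the paper dismisses $t \le 0$ as trivial up front rather than folding it into the case analysis.
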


\begin{proof}
By the assumption, one can choose a constant $M_0 \ge 1$ so that
\begin{equation*}
	\abs{X^L(t)} \le M_0\mathrm{e}^{-\alpha t}
\end{equation*}
holds for all $t \ge 0$.
Since the statement is trivial when $t \le 0$, we only have to consider the case $t > 0$.
Let $\theta \in [-r, 0]$.
When $t + \theta \ge 0$, we have
\begin{equation*}
	\abs{X^L(t + \theta)}
	\le M_0\mathrm{e}^{-\alpha(t + \theta)}
	\le M_0\mathrm{e}^{\alpha r}\mathrm{e}^{-\alpha t}.
\end{equation*}
The above estimate also holds when $t + \theta < 0$ because $X^L(t + \theta) = O$ in this case.
Therefore, the conclusion is obtained.
\end{proof}

\begin{theorem}[cf.\ \cite{Hale_1977}, \cite{Hale--VerduynLunel_1993}]\label{thm: exponential stability of T^L}
If $X^L$ is $\alpha$-exponentially stable for some $\alpha > 0$, then the $C_0$-semigroup $\left( T^L(t) \right)_{t \ge 0}$ on $C([-r, 0], \mathbb{K}^n)$ defined by \eqref{eq: C_0-semigroup}
\begin{equation*}
	T^L(t)\phi \coloneqq x^L(\argdot; \phi, 0)_t
\end{equation*}
for $(t, \phi) \in [0, \infty) \times C([-r, 0], \mathbb{K}^n)$ is uniformly $\alpha$-exponentially stable, i.e., there exists a constant $M \ge 1$ such that for all $t \ge 0$,
\begin{equation*}
	\norm{T^L(t)} \le M\mathrm{e}^{-\alpha t}
\end{equation*}
holds.
\end{theorem}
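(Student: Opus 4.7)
The plan is to use the variation of constants formula for homogeneous equations with continuous initial data from Corollary~\ref{cor: VOC formula, continuous phi, G = 0}, namely
$$x^L(t; \phi, 0) = X^L(t)\phi(0) + \int_0^t X^L(t - u)\, g^L(u; \phi)\, du,$$
and to exploit the crucial fact recorded in Lemma~\ref{lem: expression of g^L(argdot; phi)} that $g^L(\argdot; \phi)$ is supported in $[0, r]$ and satisfies $\abs{g^L(u; \phi)} \le C_1 \norm{\phi}$ on that interval, for some constant $C_1$ depending only on $L$ and $\eta$.

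First I would apply Lemma~\ref{lem: estimate of X^L(t + theta)} to the hypothesis to obtain a constant $M_1 \ge 1$ with $\sup_{\theta \in [-r, 0]} \abs{X^L(t + \theta)} \le M_1 \mathrm{e}^{-\alpha t}$ for all $t \in \mathbb{R}$, using the extension $X^L \equiv O$ on $(-\infty, -r)$. For $\phi \in C([-r, 0], \mathbb{K}^n)$, $t \ge r$, and $\theta \in [-r, 0]$, the variation of constants formula expresses $x^L(t + \theta; \phi, 0)$ as $X^L(t + \theta)\phi(0)$ plus an integral that reduces to $\int_0^r X^L(t + \theta - u)\, g^L(u; \phi)\, \mathrm{d}u$ by the support property. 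Reapplying Lemma~\ref{lem: estimate of X^L(t + theta)} to $X^L(t + \theta - u) = X^L((t - u) + \theta)$ yields $\abs{X^L(t + \theta - u)} \le M_1 \mathrm{e}^{-\alpha(t - u)} \le M_1 \mathrm{e}^{\alpha r} \mathrm{e}^{-\alpha t}$ for $u \in [0, r]$. Combining these estimates produces $\norm{T^L(t)\phi} \le M_2 \mathrm{e}^{-\alpha t} \norm{\phi}$ for $t \ge r$ and some constant $M_2$ independent of $\phi$.

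For the remaining interval $t \in [0, r]$, the \emph{a priori} bound established in Step~1 of the proof of Theorem~\ref{thm: unique existence of mild sol} (or, equivalently, the local boundedness of the $C_0$-semigroup $\left( T^L(t) \right)_{t \ge 0}$) provides a uniform estimate $\norm{T^L(t)\phi} \le M_3 \norm{\phi}$ on $[0, r]$. Rewriting this as $\norm{T^L(t)\phi} \le M_3 \mathrm{e}^{\alpha r} \mathrm{e}^{-\alpha t} \norm{\phi}$ and taking $M \coloneqq \max(M_2, M_3 \mathrm{e}^{\alpha r})$ completes the proof. The main technical step is the convolution estimate in the previous paragraph; it goes through cleanly only because the compact support of $g^L(\argdot; \phi)$ in $[0, r]$ permits the decay factor $\mathrm{e}^{-\alpha t}$ to be extracted while the shift by $\theta - u$ contributes only a harmless factor $\mathrm{e}^{\alpha r}$.
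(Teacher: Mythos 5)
Your proof is correct and follows essentially the same route as the paper: the formula of Corollary~\ref{cor: VOC formula, continuous phi, G = 0} combined with Lemma~\ref{lem: estimate of X^L(t + theta)}, the bound $\abs{g^L(u; \phi)} \le C_1\norm{\phi}$, and the vanishing of $g^L(\argdot; \phi)$ on $[r, \infty)$. The only (harmless) difference is in how the initial segment is absorbed: the paper splits according to the sign of $t + \theta$ and uses the direct estimate $\abs{\phi(t + \theta)} \le \mathrm{e}^{\alpha r}\mathrm{e}^{-\alpha t}\norm{\phi}$ when $t + \theta < 0$, whereas you run the convolution estimate only for $t \ge r$ and cover $t \in [0, r]$ by local boundedness of the semigroup, which yields the same conclusion.
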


\begin{proof}
By applying Lemma~\ref{lem: estimate of X^L(t + theta)}, we choose a constant $M_0 \ge 1$ so that
\begin{equation*}
	\sup_{\theta \in [-r, 0]} \abs{X^L(t + \theta)} \le M_0\mathrm{e}^{-\alpha t}
\end{equation*}
holds for all $t \in \mathbb{R}$.
Since the statement is trivial when $t = 0$, we only have to consider the case $t > 0$.
Let $\theta \in [-r, 0]$ and $\phi \in C([-r, 0], \mathbb{K}^n)$ be given.
Then
\begin{equation*}
	{\left[ T^L(t)\phi \right]}(\theta) =
	\begin{cases}
		X^L(t + \theta)\phi(0)
		+ \int_0^{t + \theta} X^L(t + \theta - u)g^L(u; \phi) \mspace{2mu} \mathrm{d}u & (t + \theta \ge 0), \\
		\phi(t + \theta) & (t + \theta \in [-r, 0])
	\end{cases}
\end{equation*}
holds from Corollary~\ref{cor: VOC formula, continuous phi, G = 0} (see Definition~\ref{dfn: g^L(argdot; phi)} for the definition of $g^L(t; \phi)$).
We divide the consideration into the following cases.

\vspace{0.5\baselineskip}
\noindent
\textbf{Case 1: $t + \theta \ge 0$.}
For the first term of the right-hand side,
\begin{equation*}
	\abs{X^L(t + \theta)\phi(0)}
	\le M_0\mathrm{e}^{-\alpha t}\abs{\phi(0)}
	\le M_0\mathrm{e}^{-\alpha t}\norm{\phi}
\end{equation*}
holds.
For the second term,
\begin{align*}
	\abs{\int_0^{t + \theta} X^L(t + \theta - u)g^L(u; \phi) \mspace{2mu} \mathrm{d}u}
	&\le \int_0^{t + \theta} \abs{X^L(t - u + \theta)}\abs{g^L(u; \phi)} \mspace{2mu} \mathrm{d}u \\
	&\le \int_0^{t + \theta} M_0\mathrm{e}^{-\alpha(t - u)} \abs{g^L(u; \phi)} \mspace{2mu} \mathrm{d}u
\end{align*}
holds from Lemma~\ref{lem: estimate of X^L(t + theta)}.
Since
\begin{equation*}
	\abs{g^L(t; \phi)}
	= \abs{\int_{-r}^{-t} \mathrm{d}\eta(\theta) \mspace{2mu} \phi(t + \theta)}
	\le \Var(\eta)\norm{\phi}
\end{equation*}
holds for all $t \in [0, r)$ (see Lemma~\ref{eq: estimate of norm of RS integral}) and $g^L(t; \phi) = 0$ for all $t \ge r$, we have
\begin{align*}
	\int_0^{t + \theta} M_0\mathrm{e}^{-\alpha(t - u)} \abs{g^L(u; \phi)} \mspace{2mu} \mathrm{d}u
	&\le \int_0^r M_0\mathrm{e}^{-\alpha(t - u)} \Var(\eta)\norm{\phi} \mspace{2mu} \mathrm{d}u \\
	&= M_0 {\biggl( \int_0^r \mathrm{e}^{\alpha u} \mspace{2mu} \mathrm{d}u \biggr)} \Var(\eta) \mathrm{e}^{-\alpha t}\norm{\phi}.
\end{align*}
We note that
\begin{equation*}
	\int_0^r \mathrm{e}^{\alpha u} \mspace{2mu} \mathrm{d}u
	= \frac{1}{\alpha}(\mathrm{e}^{\alpha r} - 1)
\end{equation*}
holds.

\vspace{0.5\baselineskip}
\noindent
\textbf{Case 2: $t + \theta < 0$.}
In this case, we have
\begin{equation*}
	\abs{\phi(t + \theta)}
	\le \mathrm{e}^{-\alpha(t + \theta)}\abs{\phi(t + \theta)}
	\le \mathrm{e}^{\alpha r}\mathrm{e}^{-\alpha t}\norm{\phi}.
\end{equation*}

\vspace{0.5\baselineskip}
By combining the estimates obtained in Cases 1 and 2,
\begin{equation*}
	\sup_{\theta \in [-r, 0]} \abs{{\left[ T^L(t)\phi \right]}(\theta)}
	\le M\mathrm{e}^{-\alpha t}\norm{\phi}
\end{equation*}
holds for some $M \ge 1$.
Therefore, the conclusion is obtained.
\end{proof}

The converse of Theorem~\ref{thm: exponential stability of T^L} also holds.

\begin{theorem}[cf.\ \cite{Hale_1977}, \cite{Hale--VerduynLunel_1993}]\label{thm: exponential stability of X^L}
If $\left( T^L(t) \right)_{t \ge 0}$ is uniformly $\alpha$-exponentially stable for some $\alpha > 0$, then $X^L$ is $\alpha$-exponentially stable.
\end{theorem}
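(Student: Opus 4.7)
The plan is to exploit the semigroup $T^L$ on the \emph{continuous} history space by waiting for time $r$, so that the initially discontinuous history $\Hat{\xi}$ has been absorbed into the continuous prolongation $x^L(\argdot; \Hat{\xi}, 0)|_{[0, \infty)}$. Fix $\xi \in \mathbb{K}^n$ and set $x \coloneqq x^L(\argdot; \Hat{\xi}, 0)$, so that $X^L(t)\xi = x(t)$ for all $t \ge -r$. The key observation is that for $t \ge r$, the history segment $x_t \colon [-r, 0] \to \mathbb{K}^n$ is entirely determined by the values of $x$ on $[t - r, t] \subset [0, \infty)$, and since $x|_{[0, \infty)}$ is continuous, we have $x_t \in C([-r, 0], \mathbb{K}^n)$. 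In particular $x_r \in C([-r, 0], \mathbb{K}^n)$.

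First I would establish a crude \emph{a priori} bound $\norm{x_r} \le C\abs{\xi}$ for some constant $C = C(L, r)$ independent of $\xi$. Combining $\abs{x(t)} \le \abs{\xi} + \norm{L}\int_0^t \norm{x_s} \mspace{2mu} \dd s$ with the elementary estimate $\norm{x_s} \le \abs{\xi} + \sup_{\tau \in [0, s]} \abs{x(\tau)}$ (valid since $\Hat{\xi}$ vanishes on $[-r, 0)$ and has supremum norm $\abs{\xi}$) and invoking Gronwall's inequality yields $\sup_{\tau \in [0, r]} \abs{x(\tau)} \le (1 + \norm{L}r)\mathrm{e}^{\norm{L}r}\abs{\xi}$, from which the bound on $\norm{x_r}$ follows.

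Next I would verify the time-translation identity
\begin{equation*}
	x_t = T^L(t - r) \mspace{2mu} x_r \mspace{25mu} (t \ge r).
\end{equation*}
Define $y(\tau) \coloneqq x(r + \tau)$ for $\tau \in [-r, \infty)$. Then $y_0 = x_r$ is continuous, and splitting the integral at $s = r$ together with the translation $s = r + \sigma$ shows that $y$ is a mild solution of the linear RFDE with initial history $x_r$. Uniqueness (Theorem~\ref{thm: unique existence of mild sol}) gives $y = x^L(\argdot; x_r, 0)$, and evaluating history segments at $\tau = t - r$ produces the identity. This step, together with the definition of $T^L$ in \eqref{eq: C_0-semigroup}, is the conceptual core; the main obstacle is simply being careful enough with the bookkeeping that the translated equation is seen to match the defining equation of $x^L(\argdot; x_r, 0)$.

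With these pieces in place, the conclusion is immediate. For $t \ge r$,
\begin{equation*}
	\abs{X^L(t)\xi}
	= \abs{x(t)}
	\le \norm{x_t}
	= \norm{T^L(t - r) x_r}
	\le M\mathrm{e}^{-\alpha(t - r)}\norm{x_r}
	\le MC\mathrm{e}^{\alpha r}\mathrm{e}^{-\alpha t}\abs{\xi},
\end{equation*}
so $\abs{X^L(t)} \le MC\mathrm{e}^{\alpha r}\mathrm{e}^{-\alpha t}$ for $t \ge r$. For $t \in [0, r]$, the local boundedness of $X^L|_{[0, r]}$ (which follows from the same Gronwall argument applied column-wise) supplies a constant $K$ with $\abs{X^L(t)} \le K \le K\mathrm{e}^{\alpha r}\mathrm{e}^{-\alpha t}$. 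Taking $M' \coloneqq \max(MC\mathrm{e}^{\alpha r}, K\mathrm{e}^{\alpha r})$ gives \eqref{eq: exponential stability of X^L} for all $t \ge 0$, and the theorem is proved.
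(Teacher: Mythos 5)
Your proposal is correct and follows essentially the same route as the paper's proof: wait time $r$ so that the history segment $x^L(\argdot; \Hat{\xi})_r$ is continuous, bound the linear map $\xi \mapsto x^L(\argdot; \Hat{\xi})_r$, apply the uniform exponential decay of $T^L$ via the time-translation/uniqueness argument for $t \ge r$, and absorb $[0, r]$ into the constant. The only differences are cosmetic (you make the Gronwall bound and the translation identity explicit, where the paper cites continuity of $X^L$ on $[0,r]$ and Theorem~\ref{thm: unique existence of mild sol}).
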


\begin{proof}
By the assumption, we choose a constant $M_0 \ge 1$ so that
\begin{equation*}
	\norm{T^L(t)} \le M_0\mathrm{e}^{-\alpha t}
\end{equation*}
holds for all $t \ge 0$.
We fix $\xi \in \mathbb{K}^n$ and let
\begin{equation*}
	\phi_\xi
	\coloneqq x^L \bigl( \argdot; \Hat{\xi} \bigr)_r
	\in C([-r, 0], \mathbb{K}^n).
\end{equation*}
Then the map $\mathbb{K}^n \ni \xi \mapsto \phi_\xi \in C([-r, 0], \mathbb{K}^n)$ is linear from Corollary~\ref{eq: linearity of mild sol wrt initial history}.
Since $X^L(\argdot)\xi = x^L \bigl( \argdot; \Hat{\xi} \bigr)$, we have
\begin{equation*}
	\norm{\phi_\xi}
	= \sup_{t \in [0, r]} \abs{x^L \bigl( t; \Hat{\xi} \bigr)}
	\le \Bigl( \textstyle\sup_{t \in [0, r]} \abs{X^L(t)} \Bigr) \cdot \abs{\xi}.
\end{equation*}
This yields that the linear operator $\mathbb{K}^n \ni \xi \mapsto \phi_\xi \in C([-r, 0], \mathbb{K}^n)$ is bounded.

We now show that $X^L$ is $\alpha$-exponentially stable by dividing the following cases.

\vspace{0.5\baselineskip}
\noindent
\textbf{Case 1: $t \ge r$.}
From Theorem~\ref{thm: unique existence of mild sol}, we have
\begin{equation*}
	x^L \bigl( t; \Hat{\xi} \bigr) = x^L(t - r; \phi_\xi),
\end{equation*}
where the right-hand side is equal to $[T^L(t - r)\phi_\xi](0)$.
Therefore,
\begin{equation*}
	\abs{X^L(t)\xi}
	\le \norm{T^L(t - r)}\norm{\phi_\xi}
\end{equation*}
holds.
Since $\norm{T^L(t - r)} \le M_0\mathrm{e}^{\alpha r}\mathrm{e}^{-\alpha t}$, we obtain
\begin{equation*}
	\abs{X^L(t)}
	\le \Bigl( M_0\mathrm{e}^{\alpha r} \textstyle\sup_{t \in [0, r]} \abs{X^L(t)} \Bigr) \cdot \mathrm{e}^{-\alpha t}
\end{equation*}
by combining the above estimate on $\norm{\phi_\xi}$.

\vspace{0.5\baselineskip}
\noindent
\textbf{Case 2: $t \in [0, r]$.}
In this case, $\abs{X^L(t)}$ is estimated by
\begin{equation*}
	\abs{X^L(t)}
	\le \Bigl( \mathrm{e}^{\alpha r} \textstyle\sup_{t \in [0, r]} \abs{X^L(t)} \Bigr) \cdot \mathrm{e}^{-\alpha t}.
\end{equation*}
Here $1 = \mathrm{e}^{-\alpha t} \mathrm{e}^{\alpha t}$ is used.

\vspace{0.5\baselineskip}
By combining the above estimates, the conclusion is obtained.
\end{proof}

See \cite[Lemmas~6.1, 6.2, and 6.3 in Chapter~6]{Hale_1977} and \cite[Lemmas~5.1, 5.2, and 5.3 in Chapter~6]{Hale--VerduynLunel_1993} for related results.
We note that the statements of Theorems~\ref{thm: exponential stability of T^L} and \ref{thm: exponential stability of X^L} are included in these results, where the detailed proofs are not given.

\section{Principle of linearized stability and Poincar\'{e}-Lyapunov theorem}\label{sec: linearized stability}

Throughout this section, let $\mathbb{K} = \mathbb{R}$ and $h \colon \mathbb{R} \times C([-r, 0], \mathbb{R}^n) \supset \mathbb{R} \times U_0 \to \mathbb{R}^n$ be a continuous map for some open neighborhood $U_0$ of $0$ in $C([-r, 0], \mathbb{R}^n)$.

In this section, we consider a non-autonomous RFDE
\begin{equation}\label{eq: RFDE, L + h}
	\Dot{x}(t) = Lx_t + h(t, x_t).
\end{equation}
For the continuous map $h$, we assume that $h(t, \phi) = o(\norm{\phi})$ holds as $\norm{\phi} \to 0$ uniformly in $t$.
This means that for every $\ep > 0$, there exists a $\delta > 0$ such that for all $(t, \phi) \in \mathbb{R} \times U_0$, $\norm{\phi} < \delta$ implies
\begin{equation*}
	\abs{h(t, \phi)} \le \ep\norm{\phi}.
\end{equation*}
It follows that $h(t, 0) = 0$ for all $t \in \mathbb{R}$, and the RFDE~\eqref{eq: RFDE, L + h} has the zero solution.
\eqref{eq: RFDE, L + h} is considered as a perturbation of the linear RFDE~\eqref{eq: linear RFDE}
\begin{equation*}
	\Dot{x}(t) = Lx_t
	\mspace{25mu}
	(t \ge 0).
\end{equation*}
Let $X^L \colon [-r, \infty) \to M_n(\mathbb{R})$ be the principal fundamental matrix solution and $\bigl( T^L(t) \bigr)_{t \ge 0}$ be the $C_0$-semigroup on $C([-r, 0], \mathbb{R}^n)$ generated by \eqref{eq: linear RFDE}.

We also consider a non-autonomous RFDE
\begin{equation}\label{eq: RFDE, L + N + h}
	\Dot{x}(t) = Lx_t + N(t, x_t) + h(t, x_t)
\end{equation}
for a map $N \colon \mathbb{R} \times C([-r, 0], \mathbb{R}^n) \to \mathbb{R}^n$ with the following properties:
\begin{itemize}
\item For each $t \in \mathbb{R}$, the map $N(t) \colon C([-r, 0], \mathbb{R}^n) \to \mathbb{R}^n$ defined by
\begin{equation*}
	N(t)\phi \coloneqq N(t, \phi)
\end{equation*}
for $\phi \in C([-r, 0], \mathbb{R}^n)$ is a bounded linear operator.
\item $\mathbb{R} \ni t \mapsto N(t) \in \mathcal{B}(C([-r, 0], \mathbb{R}^n), \mathbb{R}^n)$ is continuous.
\item $\lim_{t \to \infty} \norm{N(t)} = 0$ holds.
\end{itemize}
See also \cite[Subsection~6.6.3]{Hale--VerduynLunel_1993} for a related discussion.

\begin{lemma}
The map $N$ is continuous.
\end{lemma}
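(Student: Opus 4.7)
The plan is to fix an arbitrary point $(t_0, \phi_0) \in \mathbb{R} \times C([-r, 0], \mathbb{R}^n)$ and show joint continuity of $N$ at $(t_0, \phi_0)$ by the standard triangle-inequality split used for evaluation maps on spaces of bounded operators. Namely, for any $(t, \phi)$ I will write
\begin{equation*}
	\abs{N(t, \phi) - N(t_0, \phi_0)}
	\le \abs{N(t)\phi - N(t)\phi_0} + \abs{N(t)\phi_0 - N(t_0)\phi_0}
\end{equation*}
and use the linearity of $N(t)$ to bound the first term by $\norm{N(t)} \cdot \norm{\phi - \phi_0}$ and the second term by $\norm{N(t) - N(t_0)} \cdot \norm{\phi_0}$.

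Next I would control $\norm{N(t)}$ near $t_0$. Since $\mathbb{R} \ni t \mapsto N(t) \in \mathcal{B}(C([-r, 0], \mathbb{R}^n), \mathbb{R}^n)$ is continuous, it is in particular bounded on any closed and bounded interval containing $t_0$; hence there exists $\delta_0 > 0$ and a constant $M > 0$ such that $\norm{N(t)} \le M$ for all $t \in [t_0 - \delta_0, t_0 + \delta_0]$. (The hypothesis $\lim_{t \to \infty} \norm{N(t)} = 0$ is not needed for this local argument.)

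Given $\ep > 0$, I would then choose $\delta \in (0, \delta_0]$ small enough that $\abs{t - t_0} < \delta$ implies $\norm{N(t) - N(t_0)} \cdot \norm{\phi_0} < \ep/2$, and also require $\norm{\phi - \phi_0} < \ep/(2M + 1)$. Combining these bounds gives $\abs{N(t, \phi) - N(t_0, \phi_0)} < \ep$ whenever $(t, \phi)$ lies in the product neighborhood, which is the desired continuity. There is no substantive obstacle here; the only point needing a moment's care is extracting the local boundedness of $\norm{N(\argdot)}$ from its continuity, which is an immediate consequence of the continuity of the norm $\norm{\argdot}$ on $\mathcal{B}(C([-r, 0], \mathbb{R}^n), \mathbb{R}^n)$.
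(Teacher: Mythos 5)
Your proposal is correct and follows essentially the same route as the paper: the identical triangle-inequality split into $\norm{N(t)}\norm{\phi - \phi_0} + \norm{N(t) - N(t_0)}\norm{\phi_0}$, with the local boundedness of $\norm{N(\argdot)}$ (which the paper leaves implicit in ``this yields the conclusion'') supplied by the continuity of $t \mapsto N(t)$. You also correctly note that the hypothesis $\lim_{t \to \infty}\norm{N(t)} = 0$ plays no role here.
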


\begin{proof}
For each fixed $(t^0, \phi^0) \in \mathbb{R} \times C([-r, 0], \mathbb{R}^n)$, we have
\begin{align*}
	\abs{N(t, \phi) - N(t^0, \phi^0)}
	&\le \abs{N(t, \phi) - N(t, \phi^0)} + \abs{N(t, \phi^0) - N(t^0, \phi^0)} \\
	&\le \norm{N(t)}\norm{\phi - \phi^0} + \norm{N(t) - N(t^0)}\norm{\phi^0}
\end{align*}
for all $(t, \phi) \in \mathbb{R} \times C([-r, 0], \mathbb{R}^n)$.
This yields the conclusion.
\end{proof}

\subsection{Variation of constants formula and non-linear equations}

In this subsection, we consider a non-autonomous RFDE
\begin{equation}\label{eq: RFDE, L + f}
	\Dot{x}(t) = Lx_t + f(t, x_t)
\end{equation}
for some continuous map
\begin{equation*}
	f \colon \mathbb{R} \times C([-r, 0], \mathbb{R}^n) \supset \dom(f) \to \mathbb{R}^n.
\end{equation*}
For each $\phi \in C([-r, 0], \mathbb{R}^n)$ and each $T > 0$, a continuous function $x \colon [t_0 - r, t_0 + T] \to \mathbb{R}^n$ is called a \textit{solution} of the RFDE~\eqref{eq: RFDE, L + f} under an initial condition $x_{t_0} = \phi$ if the following conditions are satisfied:
(i) $x_{t_0} = \phi$, (ii) $(t, x_t) \in \dom(f)$ for all $t \in [t_0, t_0 + T]$, and (iii) $x|_{[t_0, t_0 + T]}$ is differentiable and satisfies the RFDE~\eqref{eq: RFDE, L + f} for all $t \in [t_0, t_0 + T]$.
Here the derivative of $x$ at $t_0$ and $t_0 + T$ are understood as the right-hand derivative at $t_0$ and the left-hand derivative at $t_0 + T$, respectively.

\begin{theorem}\label{thm: VOC formula for nonlinear eq}
Let $\phi \in C([-r, 0], \mathbb{R}^n)$, $t_0 \in \mathbb{R}$, and $T > 0$ be given.
Then for a continuous function $x \colon [t_0 - r, t_0 + T] \to \mathbb{R}^n$ satisfying (i) $x_{t_0} = \phi$ and (ii) $(t, x_t) \in \dom(f)$ for all $t \in [t_0, t_0 + T]$, $x$ is a solution of the RFDE~\eqref{eq: RFDE, L + f} under the initial condition $x_{t_0} = \phi$ if and only if $x$ satisfies
\begin{equation*}
	x(t)
	= x^L(t - t_0; \phi, 0)
	+ \int_{t_0}^t X^L(t - u)f(u, x_u) \mspace{2mu} \mathrm{d}u
\end{equation*}
for all $t \in [t_0, t_0 + T]$.
\end{theorem}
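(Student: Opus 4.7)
The plan is to reduce the nonlinear formula to the linear variation of constants formulas of Section~\ref{sec: VOC formula} by freezing the history-dependence of $f$ along $x$. First I would translate time by setting $y(s) \coloneqq x(t_0 + s)$ for $s \in [-r, T]$ and $g(s) \coloneqq f(t_0 + s, y_s) = f(t_0 + s, x_{t_0 + s})$. Since $x$ is continuous and $(t, x_t) \in \dom(f)$ with $f$ continuous, the composition $g$ lies in $C([0, T], \mathbb{R}^n)$, so $G \coloneqq Vg$ is continuously differentiable on $[0, T]$ with $G(0) = 0$.

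The first step is to verify that $x$ solves \eqref{eq: RFDE, L + f} on $[t_0, t_0 + T]$ if and only if $y$ satisfies the integral identity
\begin{equation*}
    y(s) = \phi(0) + L\int_0^s y_u \, \mathrm{d}u + G(s) \qquad (s \in [0, T]).
\end{equation*}
The forward direction follows by integrating $\dot{y}(s) = Ly_s + g(s)$ from $0$ to $s$ and using Corollary~\ref{cor: T*int_0^t x_s ds for continuous x} to move $L$ through the integral. Conversely, given the integral identity, the right-hand side is continuously differentiable with derivative $Ly_s + g(s)$, because $s \mapsto L\int_0^s y_u \, \mathrm{d}u$ is continuously differentiable with derivative $Ly_s$ (via Theorem~\ref{thm: int_0^t x_s ds for continuous x}, the remark following it, and the continuity of $L$), while $G$ has derivative $g$ by the fundamental theorem of calculus.

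The second step is to identify the integral identity with the local version of Eq.~\eqref{eq: integral eq with G, t_0 = 0}. Extending $g$ to a locally integrable $\Tilde{g} \colon [0, \infty) \to \mathbb{R}^n$ and writing $\Tilde{G} \coloneqq V\Tilde{g}$, which satisfies $\Tilde{G}|_{[0, T]} = G$, the uniqueness in Theorem~\ref{thm: unique existence, integral eq with G} yields that $y$ coincides with $x^L(\argdot; \phi, \Tilde{G})|_{[-r, T]}$. Corollary~\ref{cor: decomposition of sol to integral eq with G} decomposes this as
\begin{equation*}
    y(s) = x^L(s; \phi, 0) + x^L(s; 0, \Tilde{G}) \qquad (s \in [0, T]),
\end{equation*}
and Corollary~\ref{cor: VOC formula, phi = 0, G = Vg} applied to $\Tilde{G} = V\Tilde{g}$ gives $x^L(s; 0, \Tilde{G}) = \int_0^s X^L(s - u)g(u) \, \mathrm{d}u$ for $s \in [0, T]$. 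Substituting $s = t - t_0$ and changing variables $u \mapsto v - t_0$ in the integral — noting that $g(v - t_0) = f(v, x_v)$ — recovers the claimed formula.

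The main technical obstacle is the careful handling of the integral equation on the bounded interval $[-r, T]$, since Theorem~\ref{thm: unique existence, integral eq with G} is stated for global solutions: one must either extract local uniqueness from its proof (which is transparent, as its Step 1 establishes unique local solvability on an interval of length $a$ depending only on $\norm{L}$) or, as above, extend the forcing term to $[0, \infty)$ and apply the global result, then restrict. The equivalence of the classical differential formulation with the integrated form is routine given the continuity of $x$, $L$, and $g$, but the use of one-sided derivatives at the endpoints $t_0$ and $t_0 + T$ deserves brief mention.
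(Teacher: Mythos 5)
Your proposal is correct and follows essentially the same route as the paper: translate time to $[0,T]$, observe that the forcing $s \mapsto f(t_0+s, x_{t_0+s})$ is continuous, reduce the RFDE to the linear integral equation~\eqref{eq: integral eq with G, t_0 = 0}, and invoke the trivial-initial-history formula (Theorem~\ref{thm: VOC formula, phi = 0, G = Vf}/Corollary~\ref{cor: VOC formula, phi = 0, G = Vg}) together with uniqueness. The only (immaterial) difference is that the paper subtracts $x^L(\argdot;\phi)$ directly to get a function with zero initial history, whereas you reach the same decomposition via Corollary~\ref{cor: decomposition of sol to integral eq with G}, and your explicit handling of the extension of the forcing term to $[0,\infty)$ just makes precise a step the paper leaves implicit.
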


We note that the above statement is not a simple application of Corollaries~\ref{cor: decomposition of sol to integral eq with G} and \ref{cor: VOC formula, phi = 0, G = Vg} because there is no method of variation of constants for RFDEs (see Subsection~\ref{subsec: motivation of convolution}).

\begin{proof}[Proof of Theorem~\ref{thm: VOC formula for nonlinear eq}]
Let $x \colon [t_0 - r, t_0 + T] \to \mathbb{R}^n$ be a continuous function satisfying the conditions (i) and (ii) in Theorem~\ref{thm: VOC formula for nonlinear eq}.
Then it is a solution of the RFDE~\eqref{eq: RFDE, L + f} under the initial condition $x_{t_0} = \phi$ if and only if
\begin{equation*}
	x(t) = \phi(0) + \int_{t_0}^t [Lx_s + f(s, x_s)] \mspace{2mu} \mathrm{d}s
\end{equation*}
holds for all $t \in [t_0, t_0 + T]$.
Let $z \colon [-r, T] \to \mathbb{R}^n$ be the function defined by $z(s) \coloneqq x(t_0 + s) - x^L(s; \phi)$ for $s \in [-r, T]$.
Then $z$ satisfies $z_0 = 0$ and an integral equation
\begin{equation*}
	z(s)
	= \int_0^s Lz_u \mspace{2mu} \mathrm{d}u
	+ \int_0^s f(t_0 + u, x_{t_0 + u}) \mspace{2mu} \mathrm{d}u
\end{equation*}
for $s \in [0, T]$.
Since $[0, T] \ni u \mapsto f(t_0 + u, x_{t_0 + u}) \in \mathbb{R}^n$ is continuous, $z|_{[0, T]}$ is expressed by
\begin{equation*}
	z(s) = \int_0^s X^L(s - u)f(t_0 + u, x_{t_0 + u}) \mspace{2mu} \mathrm{d}u
	\mspace{20mu}
	(s \in [0, T])
\end{equation*}
from Theorem~\ref{thm: VOC formula, phi = 0, G = Vf} or Corollary~\ref{cor: VOC formula, phi = 0, G = Vg}.
Therefore, the expression of $x$ is obtained by coming back to the condition on $x$.
\end{proof}

\subsection{Stability part of principle of linearized stability}

The statement in the following theorem is the stability part of the \textit{principle of linearized stability} for RFDEs.

\begin{theorem}[cf.~\cite{Diekmann--vanGils--Lunel--Walther_1995}]\label{thm: principle of linearized stability}
If $X^L$ is exponentially stable, then there exist $M \ge 1$, $\beta > 0$, and a neighborhood $U$ of $0$ in $C([-r, 0], \mathbb{R}^n)$ such that for every $t_0 \in \mathbb{R}$, every $\phi \in U$, and every non-continuable solution $x$ of the RFDE~\eqref{eq: RFDE, L + h} under the initial condition $x_{t_0} = \phi$, $x$ is defined for all $t \ge t_0$ and satisfies
\begin{equation*}
	\norm{x_t} \le M\mathrm{e}^{-\beta(t - t_0)}\norm{\phi}
\end{equation*}
for all $t \ge t_0$.
\end{theorem}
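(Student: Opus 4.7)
The plan is to rewrite any local solution of $\eqref{eq: RFDE, L + h}$ via the variation of constants formula from Theorem $\ref{thm: VOC formula for nonlinear eq}$, pass to history segments, combine the exponential decay of $X^L$ and the $C_0$-semigroup $\bigl( T^L(t) \bigr)_{t \ge 0}$ with the smallness hypothesis $h(t, \phi) = o(\norm{\phi})$ to derive a Gronwall-type integral inequality for $\norm{x_t}$, and then close a continuity bootstrap on the a priori small-ball condition. Note first that Theorem $\ref{thm: exponential stability of T^L}$ upgrades the hypothesis to $\norm{T^L(t)} \le M_1 \mathrm{e}^{-\alpha t}$ for some $M_1 \ge 1$ and the same $\alpha > 0$.

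Concretely, for a non-continuable solution $x \colon [t_0 - r, t_0 + T) \to \mathbb{R}^n$ of $\eqref{eq: RFDE, L + h}$ with $x_{t_0} = \phi$, Theorem $\ref{thm: VOC formula for nonlinear eq}$ would yield
\begin{equation*}
x(t) = x^L(t - t_0; \phi, 0) + \int_{t_0}^t X^L(t - u) h(u, x_u) \mspace{2mu} \mathrm{d}u
\end{equation*}
on $[t_0, t_0 + T)$. Reading this identity on history segments gives $x_t = T^L(t - t_0) \phi + Y_t$, where $Y_t(\theta) = \int_{t_0}^{t + \theta} X^L(t + \theta - u) h(u, x_u) \mspace{2mu} \mathrm{d}u$ for $t + \theta \ge t_0$ and $Y_t(\theta) = 0$ otherwise. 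Estimating the first summand via Theorem $\ref{thm: exponential stability of T^L}$ and applying Lemma $\ref{lem: estimate of X^L(t + theta)}$ with $\tau \coloneqq t - u$ to bound $\sup_{\theta \in [-r, 0]} \abs{X^L(t + \theta - u)} \le M_2 \mathrm{e}^{-\alpha(t - u)}$, I expect the a priori inequality
\begin{equation*}
\norm{x_t} \le M_1 \mathrm{e}^{-\alpha(t - t_0)} \norm{\phi} + M_2 \int_{t_0}^t \mathrm{e}^{-\alpha(t - u)} \abs{h(u, x_u)} \mspace{2mu} \mathrm{d}u.
\end{equation*}

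Next I would fix $\ep \in (0, \alpha / M_2)$ and choose $\delta_0 > 0$ small enough that $\Set{\psi \in C([-r, 0], \mathbb{R}^n)}{\norm{\psi} < \delta_0} \subset U_0$ and $\norm{\psi} < \delta_0$ forces $\abs{h(t, \psi)} \le \ep \norm{\psi}$ uniformly in $t \in \mathbb{R}$. While $\norm{x_u} < \delta_0$ holds on $[t_0, t]$, substituting the nonlinearity bound into the previous inequality and applying Gronwall's inequality (Appendix $\ref{sec: variants of Gronwall's ineq}$) to $s \mapsto \mathrm{e}^{\alpha(s - t_0)} \norm{x_s}$ should produce
\begin{equation*}
\norm{x_t} \le M_1 \norm{\phi} \mathrm{e}^{-\beta(t - t_0)}
\mspace{25mu}
\text{with}
\mspace{25mu}
\beta \coloneqq \alpha - M_2 \ep > 0.
\end{equation*}
Setting $M \coloneqq M_1$ and $U \coloneqq \Set{\phi \in C([-r, 0], \mathbb{R}^n)}{M \norm{\phi} < \delta_0}$, a continuity bootstrap on the supremum $\tau$ of those $t \in [t_0, t_0 + T)$ for which the small-ball condition $\norm{x_s} < \delta_0$ persists shows $\norm{x_t} \le M \norm{\phi} < \delta_0$ throughout, whence $\tau = t_0 + T$; the resulting uniform bound combined with the standard RFDE continuation criterion then rules out $T < \infty$.

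I expect the main obstacle to be precisely this final bootstrap, because uniqueness of solutions is not assumed, so one cannot appeal to the continuous dependence machinery that is standard in the Lipschitz setting; the argument must rely only on the continuity of the scalar map $t \mapsto \norm{x_t}$ and on the strict inequality $M \norm{\phi} < \delta_0$ to prevent the threshold $\delta_0$ from being attained. A subsidiary technical nuisance is the $\theta$-dependent upper limit in $Y_t(\theta)$, which Lemma $\ref{lem: estimate of X^L(t + theta)}$ handles cleanly at the price of an innocuous $\mathrm{e}^{\alpha r}$ factor absorbed into $M_2$.
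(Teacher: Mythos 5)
Your proposal is correct and follows essentially the same route as the paper: rewrite the solution via Theorem~\ref{thm: VOC formula for nonlinear eq}, estimate the two terms with Theorem~\ref{thm: exponential stability of T^L} and Lemma~\ref{lem: estimate of X^L(t + theta)}, absorb the nonlinearity through $\abs{h(t, \psi)} \le \ep\norm{\psi}$, and apply Gronwall's inequality to $t \mapsto \mathrm{e}^{\alpha(t - t_0)}\norm{x_t}$ to get the decay rate $\beta = \alpha - M\ep$, with uniqueness never needed. The only cosmetic differences are that the paper sidesteps your continuity bootstrap by working with the restriction of the equation to the small ball $\widetilde{U}$ (so the smallness of $\norm{x_t}$ is automatic and the Gronwall estimate is unconditional), and that instead of citing a general continuation criterion it rules out $T < \infty$ by hand: the bound on $\Dot{x}$ gives Lipschitz continuity, hence a limit segment $\psi \in \widetilde{U}$ as $t \uparrow t_0 + T$, and the Peano existence theorem then produces a continuation, contradicting non-continuability.
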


\begin{remark}
See \cite[Chapter~11]{Bellman--Cooke_1963} for the corresponding result for differential difference equations.
See \cite{Desch--Schappacher_1986} for the general result of the principle of linearized stability in the context of nonlinear semigroups.
See also \cite[Chapter~VII]{Diekmann--vanGils--Lunel--Walther_1995} for a general treatment of the principle of linearized stability and its application to RFDEs under the local Lipschitz continuity of $h$.
\end{remark}

In the proof of Theorem~\ref{thm: principle of linearized stability} given below, the Peano existence theorem and the continuation of solutions for RFDEs play key roles.
See \cite[Chapter 2]{Hale_1977} and \cite[Chapter 2]{Hale--VerduynLunel_1993} for the fundamental theory of RFDEs.

\begin{proof}[Proof of Theorem~\ref{thm: principle of linearized stability}]
We divide the proof into the following steps.

\vspace{0.5\baselineskip}
\noindent
\textbf{Step 1: Choice of a neighborhood of $0$ and a non-continuable solution.}
From Lemma~\ref{lem: estimate of X^L(t + theta)} and Theorem~\ref{thm: exponential stability of T^L}, we choose constants $M \ge 1$ and $\alpha > 0$ so that
\begin{equation*}
	\sup_{\theta \in [-r, 0]} \abs{X^L(t + \theta)}
	\le M\mathrm{e}^{-\alpha t}
	\mspace{25mu}
	(t \in \mathbb{R})
\end{equation*}
and
\begin{equation*}
	\norm{T^L(t)} \le M\mathrm{e}^{-\alpha t}
	\mspace{25mu}
	(t \ge 0)
\end{equation*}
hold.
We also choose an $\ep > 0$ so that
\begin{equation*}
	-\beta
	\coloneqq M\ep - \alpha
	< 0.
\end{equation*}
Since $h(t, \phi) = o(\norm{\phi})$ as $\norm{\phi} \to 0$ uniformly in $t$, there exists a $\widetilde{\delta} > 0$ for this $\ep > 0$ with the following properties:
\begin{enumerate}[label=(\roman*)]
\item For all $\phi \in C([-r, 0], \mathbb{R}^n)$, $\norm{\phi} < \widetilde{\delta}$ implies $\phi \in U_0$.
\item $\norm{\phi} < \widetilde{\delta}$ implies $\abs{h(t, \phi)} \le \ep\norm{\phi}$ for all $t \in \mathbb{R}$.
\end{enumerate}
Let $\delta \coloneqq \widetilde{\delta}/M$.
We define open sets $U$ and $\widetilde{U}$ by
\begin{align*}
	U
	&\coloneqq \Set{\phi \in C([-r, 0], \mathbb{R}^n)}{\norm{\phi} < \delta}, \\
	\widetilde{U}
	&\coloneqq \Set{\phi \in C([-r, 0], \mathbb{R}^n)}{\norm{\phi} < \widetilde{\delta}}.
\end{align*}
Then
\begin{equation*}
	U \subset \widetilde{U} \subset U_0
\end{equation*}
holds.
From now on, we fix $t_0 \in \mathbb{R}$ and $\phi \in U$ and proceed with the discussion.
By applying the Peano existence theorem for RFDEs, the RFDE
\begin{equation}\label{eq: x'(t) = (L + h)|_V(x_t)}
	\Dot{x}(t)
	= L|_{\widetilde{U}}(x_t) + h|_{\mathbb{R} \times \widetilde{U}}(t, x_t)
\end{equation}
has a solution under the initial condition $x_{t_0} = \phi$.
Let $x$ be a non-continuable solution of the RFDE~\eqref{eq: x'(t) = (L + h)|_V(x_t)} under this initial condition.
Then its domain of definition is written as $[t_0 - r, t_0 + T)$ for some $T \in (0, \infty]$.

\vspace{0.5\baselineskip}
\noindent
\textbf{Step 2: Estimate by Gronwall's inequality.}
Let $t \in [t_0, t_0 + T)$ and $\theta \in [-r, 0]$.
By applying Theorem~\ref{thm: VOC formula for nonlinear eq},
\begin{equation*}
	x(t) = x^L(t - t_0; \phi, 0) + \int_{t_0}^t X^L(t - u)h(u, x_u) \mspace{2mu} \mathrm{d}u
\end{equation*}
holds for this non-continuable solution $x \colon [t_0 - r, t_0 + T) \to \mathbb{R}^n$.
When $t + \theta \ge t_0$, we have
\begin{align*}
	\abs{x(t + \theta)}
	&\le \abs{x^L(t + \theta - t_0; \phi, 0)}
	+ \int_{t_0}^{t + \theta} \abs{X^L(t - u + \theta)}\abs{h(u, x_u)} \mspace{2mu} \mathrm{d}u \\
	&\le \norm{T^L(t - t_0)\phi}
	+ \int_{t_0}^t M\mathrm{e}^{-\alpha (t - u)}\abs{h(u, x_u)} \mspace{2mu} \mathrm{d}u \\
	&\le M\mathrm{e}^{-\alpha(t - t_0)}\norm{\phi}
	+ \int_{t_0}^t M\mathrm{e}^{-\alpha t}\mathrm{e}^{\alpha u}\ep\norm{x_u} \mspace{2mu} \mathrm{d}u.
\end{align*}
When $t + \theta < t_0$, the estimate
\begin{equation*}
	\abs{x(t + \theta)}
	\le M\mathrm{e}^{-\alpha(t - t_0)}\norm{\phi}
	+ \int_{t_0}^t M\mathrm{e}^{-\alpha t}\mathrm{e}^{\alpha u}\ep\norm{x_u} \mspace{2mu} \mathrm{d}u
\end{equation*}
also holds in view of
\begin{equation*}
	\abs{x(t + \theta)}
	= \abs{\phi(t - t_0 + \theta)}
	= \abs{[T^L(t - t_0)\phi](\theta)}
	\le M\mathrm{e}^{-\alpha(t - t_0)}\norm{\phi}.
\end{equation*}
These estimates yield
\begin{equation*}
	\mathrm{e}^{\alpha(t - t_0)}\norm{x_t}
	\le M\norm{\phi}
	+ \int_{t_0}^t M\ep\mathrm{e}^{\alpha(u - t_0)} \norm{x_u} \mspace{2mu} \mathrm{d}u,
\end{equation*}
and we obtain
\begin{equation*}
	\mathrm{e}^{\alpha(t - t_0)}\norm{x_t}
	\le M\norm{\phi}\mathrm{e}^{M\ep(t - t_0)}
\end{equation*}
by applying Gronwall's inequality (see Lemma~\ref{lem: Gronwall's ineq}).
This means that
\begin{equation}\label{eq: exponential estimate}
	\norm{x_t} \le M\norm{\phi}\mathrm{e}^{-\beta(t - t_0)}
\end{equation}
holds for all $t \in [t_0, t_0 + T)$.

\vspace{0.5\baselineskip}
\noindent
\textbf{Step 3: Proof by contradiction.}
We next show that $T$ is equal to $\infty$, i.e., the non-continuable solution $x$ is defined on $[t_0 - r, \infty)$.
We suppose $T < \infty$ and derive a contradiction.
Since $\norm{x_t} < \widetilde{\delta}$ holds for all $t \in [t_0 - r, t_0 + T)$, we have
\begin{align*}
	\abs{\Dot{x}(t)}
	&\le \norm{L}\norm{x_t} + \abs{h(t, x_t)} \\
	&\le (\norm{L} + \ep)\widetilde{\delta} \\
	&< \infty.
\end{align*}
This shows that $x|_{[t_0, t_0 + T)}$ is Lipschitz continuous.
In particular, $x|_{[t_0, t_0 + T)}$ is uniformly continuous, and therefore, the limit $\lim_{t \uparrow t_0 + T} x(t)$ exists.
Since this yields the existence of the limit
\begin{equation*}
	\lim_{t \uparrow t_0 + T} x_t \eqqcolon \psi \in C([-r, 0], \mathbb{R}^n),
\end{equation*}
we have
\begin{equation*}
	\norm{\psi}
	\le M\norm{\phi}\mathrm{e}^{-\beta T}
	< M\delta
	= \widetilde{\delta},
\end{equation*}
i.e., $\psi \in \widetilde{U}$, by taking the limit as $t \uparrow t_0 + T$ in the inequality~\eqref{eq: exponential estimate}.
Then the RFDE~\eqref{eq: x'(t) = (L + h)|_V(x_t)} has a solution under the initial condition $x_{t_0 + T} = \psi$ by the Peano existence theorem for RFDEs, and one can construct a continuation of $x$.
It contradicts the property that $x$ is non-continuable.
Therefore, $T$ should be infinity.

\vspace{0.5\baselineskip}
The above steps yield the conclusion.
\end{proof}

The above proof of Theorem~\ref{thm: principle of linearized stability} is an appropriate modification of the stability part of the principle of linearized stability for ODEs (e.g., see \cite[Section 2.3]{Chicone_2006}).
It also should be compared with \cite[Theorem~2 and its proof]{Walther_1975}.
We note that the continuity of the higher-order term $h$ in the RFDE~\eqref{eq: RFDE, L + h} is sufficient for the proof.

\subsection{Poincar\'{e}-Lyapunov theorem for RFDEs}

The \textit{Poincar\'{e}-Lyapunov theorem} is also extended to RFDEs as follows.
See \cite[Exercise~2.79]{Chicone_2006} for the theorem for ODEs.

\begin{theorem}\label{thm: Poincare-Lyapunov theorem for RFDEs}
Let $\sigma \in \mathbb{R}$ be given.
If $X^L$ is exponentially stable, then there exist $M \ge 1$, $\beta > 0$, and a neighborhood $U$ of $0$ in $C([-r, 0], \mathbb{R}^n)$ such that for every $t_0 \ge \sigma$, every $\phi \in U$, and every non-continuable solution $x$ of the RFDE~\eqref{eq: RFDE, L + N + h} under the initial condition $x_{t_0} = \phi$, $x$ is defined for all $t \ge t_0$ and satisfies
\begin{equation*}
	\norm{x_t} \le M\mathrm{e}^{-\beta(t - t_0)}\norm{\phi}
\end{equation*}
for all $t \ge t_0$.
\end{theorem}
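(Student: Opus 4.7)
The plan is to mimic the proof of Theorem~\ref{thm: principle of linearized stability}, treating $N(t, \argdot) + h(t, \argdot)$ as the nonlinear perturbation of the linear RFDE~\eqref{eq: linear RFDE} and applying the variation of constants formula from Theorem~\ref{thm: VOC formula for nonlinear eq}. The new difficulty compared with Theorem~\ref{thm: principle of linearized stability} is that $\norm{N(t)}$ is not assumed to be uniformly small on $[\sigma, \infty)$; we only have $\norm{N(t)} \to 0$ as $t \to \infty$. Accordingly, a Gronwall-type estimate on the perturbation integral must split the time axis at a suitable $\tau \ge \sigma$ beyond which $\norm{N(t)}$ is as small as desired, and the bounded contribution on $[\sigma, \tau]$ must be absorbed into the constant $M$.

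First, by combining Lemma~\ref{lem: estimate of X^L(t + theta)} and Theorem~\ref{thm: exponential stability of T^L}, I would choose $M_0 \ge 1$ and $\alpha > 0$ so that $\sup_{\theta \in [-r, 0]} \abs{X^L(t + \theta)} \le M_0 \mathrm{e}^{-\alpha t}$ for all $t \in \mathbb{R}$ and $\norm{T^L(t)} \le M_0\mathrm{e}^{-\alpha t}$ for all $t \ge 0$. Next, pick $\ep > 0$ with $-\beta \coloneqq 2M_0\ep - \alpha < 0$, choose $\tau \ge \sigma$ so that $\norm{N(t)} \le \ep$ for all $t \ge \tau$, and, using the smallness of $h$, choose $\widetilde{\delta} > 0$ so that $\norm{\phi} < \widetilde{\delta}$ (with $\phi \in U_0$) implies $\abs{h(t, \phi)} \le \ep\norm{\phi}$ for every $t \in \mathbb{R}$. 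By continuity of $\mathbb{R} \ni t \mapsto N(t) \in \mathcal{B}(C([-r, 0], \mathbb{R}^n), \mathbb{R}^n)$, the quantity $C \coloneqq \sup_{t \in [\sigma, \tau]} \norm{N(t)}$ is finite. Set $M \coloneqq M_0 \mathrm{e}^{M_0 C (\tau - \sigma)}$, $\delta \coloneqq \widetilde{\delta}/M$, and $U \coloneqq \Set{\phi \in C([-r, 0], \mathbb{R}^n)}{\norm{\phi} < \delta}$.

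Given $t_0 \ge \sigma$, $\phi \in U$, and a non-continuable solution $x \colon [t_0 - r, t_0 + T) \to \mathbb{R}^n$ of \eqref{eq: RFDE, L + N + h} with $x_{t_0} = \phi$, Theorem~\ref{thm: VOC formula for nonlinear eq} yields
\begin{equation*}
	x(t) = x^L(t - t_0; \phi, 0) + \int_{t_0}^t X^L(t - u)\bigl[ N(u, x_u) + h(u, x_u) \bigr] \mspace{2mu} \mathrm{d}u
\end{equation*}
on $[t_0, t_0 + T)$. As long as $\norm{x_u} < \widetilde{\delta}$ on $[t_0, t]$, the same pointwise estimate that produced \eqref{eq: exponential estimate} in the proof of Theorem~\ref{thm: principle of linearized stability} now gives
\begin{equation*}
	\mathrm{e}^{\alpha(t - t_0)} \norm{x_t}
	\le M_0 \norm{\phi} + \int_{t_0}^t M_0(\norm{N(u)} + \ep) \mspace{2mu} \mathrm{e}^{\alpha(u - t_0)}\norm{x_u} \mspace{2mu} \mathrm{d}u.
\end{equation*}
Gronwall's inequality (Lemma~\ref{lem: Gronwall's ineq}) combined with the estimate $\int_{t_0}^t \norm{N(u)} \mspace{2mu} \mathrm{d}u \le C(\tau - \sigma) + \ep(t - t_0)$ (verified by splitting at $\tau$ in the three cases $t \le \tau$, $t_0 \le \tau \le t$, $\tau \le t_0$) then yields
\begin{equation*}
	\norm{x_t} \le M \norm{\phi} \mathrm{e}^{-\beta(t - t_0)} < M \delta = \widetilde{\delta}
\end{equation*}
on $[t_0, t_0 + T)$.

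The main obstacle is completing the bootstrap and ruling out $T < \infty$; this is handled exactly as in Step~3 of the proof of Theorem~\ref{thm: principle of linearized stability}. The bound $\norm{x_t} < \widetilde{\delta}$ forces $\abs{\Dot{x}(t)} \le (\norm{L} + \sup_{u \ge \sigma}\norm{N(u)} + \ep) \widetilde{\delta} < \infty$ on $[t_0, t_0 + T)$, so if $T$ were finite then $x|_{[t_0, t_0 + T)}$ would be Lipschitz continuous and $\lim_{t \uparrow t_0 + T} x_t$ would exist in $\widetilde{U}$ by passing to the limit in the above inequality, producing a continuation and contradicting non-continuability. Hence $T = \infty$, and the displayed estimate holds on $[t_0, \infty)$, giving the desired exponential decay with the (uniform-in-$t_0$) constants $M$ and $\beta$.
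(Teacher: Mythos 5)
Your proposal is correct and follows essentially the same route as the paper: the variation of constants formula of Theorem~\ref{thm: VOC formula for nonlinear eq}, the exponential bounds from Lemma~\ref{lem: estimate of X^L(t + theta)} and Theorem~\ref{thm: exponential stability of T^L}, and a Gronwall estimate in which the integral of $\norm{N(u)}$ is split at the time beyond which $\norm{N(u)} \le \ep$, with the finite-time contribution absorbed into $M$; your single bound $\int_{t_0}^t \norm{N(u)} \, \mathrm{d}u \le C(\tau - \sigma) + \ep(t - t_0)$ merely streamlines the paper's case distinctions ($\sigma \ge a$ versus $\sigma < a$, and $t_0 + T < a$ versus $t_0 + T \ge a$). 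The only detail to make explicit is the bootstrap: since you work with the unrestricted equation (the paper instead restricts the RFDE to $\widetilde{U}$, so that $\norm{x_t} < \widetilde{\delta}$ holds automatically), you should note that the strict inequality $\norm{x_t} < \widetilde{\delta}$ obtained from your estimate, together with continuity of $t \mapsto \norm{x_t}$ and $\norm{\phi} < \delta \le \widetilde{\delta}$, keeps the solution in $\widetilde{U}$ on all of $[t_0, t_0 + T)$, after which the continuation argument proceeds as in Step~3 of Theorem~\ref{thm: principle of linearized stability}.
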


\begin{proof}
From Lemma~\ref{lem: estimate of X^L(t + theta)} and Theorem~\ref{thm: exponential stability of T^L}, we choose constants $M_0 \ge 1$ and $\alpha > 0$ so that
\begin{equation*}
	\sup_{\theta \in [-r, 0]} \abs{X^L(t + \theta)}
	\le M_0\mathrm{e}^{-\alpha t}
	\mspace{25mu}
	(t \in \mathbb{R})
\end{equation*}
and
\begin{equation*}
	\norm{T^L(t)} \le M_0\mathrm{e}^{-\alpha t}
	\mspace{25mu}
	(t \ge 0)
\end{equation*}
hold.
We also choose an $\ep > 0$ so that
\begin{equation*}
	-\beta
	\coloneqq 2M_0\ep - \alpha
	< 0.
\end{equation*}
Since $\norm{N(t)} \to 0$ as $t \to \infty$, there is an $a \in \mathbb{R}$ for this $\ep > 0$ such that $\norm{N(t)} < \ep$ holds for all $t \ge a$.
There also exists a $\widetilde{\delta} > 0$ with the following properties:
\begin{enumerate}[label=(\roman*)]
\item For all $\phi \in C([-r, 0], \mathbb{R}^n)$, $\norm{\phi} < \widetilde{\delta}$ implies $\phi \in U_0$.
\item $\norm{\phi} < \widetilde{\delta}$ implies $\abs{h(t, \phi)} \le \ep\norm{\phi}$ for all $t \in \mathbb{R}$.
\end{enumerate}

We first consider the case (I) $\sigma \ge a$.
Since
\begin{align*}
	\abs{N(t, \phi) + h(t, \phi)}
	&\le \norm{N(t)}\norm{\phi} + \abs{h(t, \phi)} \\
	&\le 2\ep\norm{\phi}
\end{align*}
holds for all $t \ge \sigma$ and all $\norm{\phi} < \widetilde{\delta}$, the conclusion is obtained in the same way as in the proof of Theorem~\ref{thm: principle of linearized stability}.
We next consider the case (II) $\sigma < a$.
We divide the proof into the following steps.

\vspace{0.5\baselineskip}
\noindent
\textbf{Step 1: Choice of a neighborhood of $0$ and a non-continuable solution.}
We choose an $R > \ep$ so that
\begin{equation*}
	R > \sup_{t \in [\sigma, a]} \norm{N(t)}.
\end{equation*}
Let
\begin{equation*}
	M \coloneqq M_0\mathrm{e}^{M_0(R - \ep)(a - \sigma)}
	\amd
	\delta \coloneqq \frac{\widetilde{\delta}}{M}.
\end{equation*}
We define open sets $U$ and $\widetilde{U}$ by
\begin{align*}
	U
	&\coloneqq \Set{\phi \in C([-r, 0], \mathbb{R}^n)}{\norm{\phi} < \delta}, \\
	\widetilde{U}
	&\coloneqq \Set{\phi \in C([-r, 0], \mathbb{R}^n)}{\norm{\phi} < \widetilde{\delta}}.
\end{align*}
Since $M > M_0 \ge 1$,
\begin{equation*}
	U \subset \widetilde{U} \subset U_0
\end{equation*}
holds.
We now fix $t_0 \ge \sigma$ and $\phi \in U$, and let $x \colon [t_0 - r, t_0 + T) \to \mathbb{R}^n$ be a non-continuable solution of an RFDE
\begin{equation*}
	\Dot{x}(t)
	= L|_{\widetilde{U}}(x_t) + N|_{\mathbb{R} \times \widetilde{U}}(t, x_t) + h|_{\mathbb{R} \times \widetilde{U}}(t, x_t)
\end{equation*}
under the initial condition $x_{t_0} = \phi$.

\vspace{0.5\baselineskip}
\noindent
\textbf{Step 2: Estimate by Gronwall's inequality.}
Let $t \in [t_0, t_0 + T)$ and $\theta \in [-r, 0]$.
By applying Theorem~\ref{thm: VOC formula for nonlinear eq},
\begin{equation*}
	x(t)
	= x^L(t - t_0; \phi, 0)
	+ \int_{t_0}^t X^L(t - u)[N(u, x_u) + h(u, x_u)] \mspace{2mu} \mathrm{d}u
\end{equation*}
holds for the non-continuable solution $x \colon [t_0 - r, t_0 + T) \to \mathbb{R}^n$.
When $t + \theta \ge t_0$, we have
\begin{align*}
	&\abs{x(t + \theta)} \\
	&\le \abs{x^L(t + \theta - t_0; \phi, 0)}
	+ \int_{t_0}^{t + \theta} \abs{X^L(t + \theta - u)}
		\bigl[ \norm{N(u)}\norm{x_u} + \abs{h(u, x_u)} \bigr]
	\mspace{2mu} \mathrm{d}u \\
	&\le \norm{T^L(t - t_0)\phi}
	+ \int_{t_0}^t M_0\mathrm{e}^{-\alpha(t - u)}
		\bigl[ \norm{N(u)}\norm{x_u} + \abs{h(u, x_u)} \bigr]
	\mspace{2mu} \mathrm{d}u \\
	&\le M_0\mathrm{e}^{-\alpha(t - t_0)}\norm{\phi}
	+ \int_{t_0}^t M_0\mathrm{e}^{-\alpha t}\mathrm{e}^{\alpha u}(\norm{N(u)} + \ep) \norm{x_u} \mspace{2mu} \mathrm{d}u.
\end{align*}
When $t + \theta \le 0$, the estimate
\begin{equation*}
	\abs{x(t + \theta)}
	\le M_0\mathrm{e}^{-\alpha(t - t_0)}\norm{\phi}
	+ \int_{t_0}^t M_0\mathrm{e}^{-\alpha t}\mathrm{e}^{\alpha u}(\norm{N(u)} + \ep)\norm{x_u} \mspace{2mu} \mathrm{d}u
\end{equation*}
also holds in view of
\begin{equation*}
	\abs{x(t + \theta)}
	= \abs{\phi(t - t_0 + \theta)}
	= \abs{[T_L(t - t_0)\phi](\theta)}
	\le M_0\mathrm{e}^{-\alpha(t - t_0)}\norm{\phi}.
\end{equation*}
These estimates yield
\begin{equation*}
	\mathrm{e}^{\alpha(t - t_0)}\norm{x_t}
	\le M_0\norm{\phi}
	+ \int_{t_0}^t
		M_0(\norm{N(u)} + \ep) \mathrm{e}^{\alpha (u - t_0)}\norm{x_u}
	\mspace{2mu} \mathrm{d}u,
\end{equation*}
and we obtain
\begin{equation*}
	\mathrm{e}^{\alpha(t - t_0)}\norm{x_t}
	\le M_0\norm{\phi}
		\exp \biggl(
			\int_{t_0}^t M_0(\norm{N(u)} + \ep)
		\mspace{2mu} \mathrm{d}u \biggr)
\end{equation*}
by Gronwall's inequality (see Lemma~\ref{lem: Gronwall's ineq}).
This means that
\begin{equation}\label{eq: exponential estimate'}
	\norm{x_t}
	\le M_0\norm{\phi}\exp \biggl(
		\int_{t_0}^t [M_0(\norm{N(u)} + \ep) - \alpha] \mspace{2mu} \mathrm{d}u
	\biggr)
\end{equation}
holds for all $t \in [t_0, t_0 + T)$.

The remaining consideration is further divided into the following cases:
\begin{itemize}
\item Case: $t_0 + T < a$.
Let $t \in [t_0, t_0 + T)$.
Since $t < a$, the exponential term in \eqref{eq: exponential estimate'} is estimated from above by $\mathrm{e}^{[M_0(R + \ep) - \alpha](t - t_0)}$, which is equal to
\begin{equation*}
	\mathrm{e}^{M_0(R - \ep)(t - t_0)} \mathrm{e}^{-\beta(t - t_0)}
\end{equation*}
by the choice of $\beta$.
In view of $\sigma \le t_0 \le t < a$, the above is also estimated from above by
\begin{equation*}
	\mathrm{e}^{M_0(R - \ep)(a - \sigma)} \mathrm{e}^{-\beta(t - t_0)},
\end{equation*}
and therefore, inequality~\eqref{eq: exponential estimate} holds with $M = M_0\mathrm{e}^{M_0(R - \ep)(a - \sigma)}$.
\item Case: $t_0 + T \ge a$.
Let $t \in [t_0, t_0 + T)$.
The integral in \eqref{eq: exponential estimate'} is estimated from above by
\begin{align*}
	&\int_{t_0}^a [M_0(R + \ep) - \alpha] \mspace{2mu} \mathrm{d}u
	+ \int_a^t (-\beta) \mspace{2mu} \mathrm{d}u \\
	&= [M_0(R + \ep) - \alpha](a - t_0) + (-\beta)(t - a).
\end{align*}
In view of $t - a = (t - t_0) + (t_0 - a)$, the above value becomes
\begin{equation*}
	M_0(R - \ep)(a - t_0) + (-\beta)(t - t_0),
\end{equation*}
which is also estimated from above by
\begin{equation*}
	M_0(R - \ep)(a - \sigma) + (-\beta)(t - t_0)
\end{equation*}
because of $t_0 \ge \sigma$ and $M_0(R - \ep) > 0$.
Therefore, inequality~\eqref{eq: exponential estimate} holds with $M = M_0\mathrm{e}^{M_0(R - \ep)(a - \sigma)}$.
\end{itemize}

\vspace{0.5\baselineskip}
The remainder of the proof is same as in the proof of Theorem~\ref{thm: principle of linearized stability}.
This completes the proof.
\end{proof}

\section*{Acknowledgment}

This work was supported by JSPS Grant-in-Aid for Young Scientists Grant Number JP19K14565.

\appendix

\section{Riemann-Stieltjes integrals with respect to matrix-valued functions}\label{sec: RS integrals wrt matrix-valued functions}

Throughout this appendix, let $\mathbb{K} = \mathbb{R}$ or $\mathbb{C}$, $n \ge 1$ be an integer, and $[a, b]$ be a closed and bounded interval of $\mathbb{R}$.
In this appendix, we study Riemann-Stieltjes integrals with respect to matrix-valued functions.
We refer the reader to \cite[Chapter~1]{Widder_1941} and \cite[Appendix~D]{Shapiro_2018} as references of Riemann-Stieltjes integrals for scalar-valued functions.
See also \cite[Section~3.1]{Hino--Murakami--Naito_1991} and \cite[Section~I.1 in Appendix~I]{Diekmann--vanGils--Lunel--Walther_1995}.

\subsection{Definitions}\label{subsec: definitions on RS integrals}

\begin{definition}\label{dfn: tagged partition}
Let $P : a = x_0 < x_1 < \dots < x_m = b$ be a partition of $[a, b]$ for some integer $m \ge 1$.
For a finite sequence $\xi \coloneqq (\xi_k)_{k = 1}^m$ satisfying
\begin{equation*}
	x_{k - 1} \le \xi_k \le x_k
	\mspace{25mu}
	(k \in \{1, \dots, m\}),
\end{equation*}
we call a pair $(P, \xi)$ a \textit{tagged partition} of $[a, b]$.
For the tagged partition $(P, \xi)$, let
\begin{equation*}
	\abs{(P, \xi)}
	\coloneqq
	\abs{P}
	\coloneqq \max_{1 \le k \le m} (x_k - x_{k - 1}),
\end{equation*}
which is called the \textit{norm} of $(P, \xi)$.
\end{definition}

The above terminology comes from \cite{Gordon_1991}.

\begin{definition}\label{dfn: RS sum}
Let $f, \alpha \colon [a, b] \to M_n(\mathbb{K})$ be functions.
For a tagged partition $(P, \xi)$ of $[a, b]$ given in Definition~\ref{dfn: tagged partition}, let
\begin{equation*}
	S(f; \alpha, (P, \xi))
	\coloneqq \sum_{k = 1}^m \mspace{2mu} [\alpha(x_k) - \alpha(x_{k - 1})] f(\xi_k).
\end{equation*}
We call $S(f; \alpha, (P, \xi))$ the \textit{Riemann-Stieltjes sum} of $f$ with respect to $\alpha$ under the tagged partition $(P, \xi)$.
\end{definition}

\begin{definition}\label{dfn: RS integral}
Let $f, \alpha \colon [a, b] \to M_n(\mathbb{K})$ be functions.
We say that $f$ is \textit{Riemann-Stieltjes integrable with respect to $\alpha$} if there exists a $J \in M_n(\mathbb{K})$ with the following property: For every $\ep > 0$, there exists a $\delta > 0$ such that for all tagged partition $(P, \xi)$ of $[a, b]$, $\abs{(P, \xi)} < \delta$ implies
\begin{equation*}
	\abs{S(f; \alpha, (P, \xi)) - J} < \ep.
\end{equation*}
We note that such a $J$ is unique if it exists.
It is called the \textit{Riemann-Stieltjes integral of $f$ with respect to $\alpha$} and is denoted by $\int_a^b \mathrm{d}\alpha(x) \mspace{2mu} f(x)$.
\end{definition}

\subsubsection{Remarks}

\begin{remark}
One can also consider a sum
\begin{equation*}
	\sum_{k = 1}^m f(\xi_k) [\alpha(x_k) - \alpha(x_{k - 1})],
\end{equation*}
which is different from $S(f; \alpha, (P, \xi))$ in general.
If a limit of the above sum as $\abs{(P, \xi)} \to 0$ exists in the sense of Definition~\ref{dfn: RS integral}, we will write the limit as $\int_a^b f(x) \mspace{2mu} \mathrm{d}\alpha(x)$.
By taking the transpose,
\begin{equation*}
	\left( \int_a^b \mathrm{d}\alpha(x) \mspace{2mu} f(x) \right)^\mathrm{T}
	= \int_a^b f(x)^\mathrm{T} \mspace{2mu} \mathrm{d}\alpha(x)^\mathrm{T}
\end{equation*}
holds.
Here $A^\mathrm{T}$ denotes the transpose of a matrix $A \in M_n(\mathbb{K})$.
When $n = 1$,
\begin{equation*}
	\int_a^b \mathrm{d}\alpha(x) \mspace{2mu} f(x)
	= \int_a^b f(x) \mspace{2mu} \mathrm{d}\alpha(x)
\end{equation*}
holds.
\end{remark}

\begin{remark}
The notions of the Riemann-Stieltjes sum $S(f; \alpha, (P, \xi))$ and the Riemann-Stieltjes integrability of $f$ with respect to $\alpha$ are also defined for functions
\begin{equation*}
	f \colon [a, b] \to \mathbb{K}^n
	\amd
	\alpha \colon [a, b] \to M_n(\mathbb{K}).
\end{equation*}
In this case, the sum $S(f; \alpha, (P, \xi))$ and the integral $\int_a^b \mathrm{d}\alpha(x) \mspace{2mu} f(x)$ belong to $\mathbb{K}^n$.
\end{remark}

\subsection{Reduction to scalar-valued case}\label{subsec: reduction to RS integrals for scalar-valued functions}

Since the linear space $M_n(\mathbb{K})$ is finite-dimensional, the operator norm $\abs{\argdot}$ on $M_n(\mathbb{K})$ is equivalent to the norm $\abs{\argdot}_2$ on $M_n(\mathbb{K})$ defined by
\begin{equation}\label{eq: matrix norm}
	\abs{A}_2
	\coloneqq \sqrt{
		\sum_{i, j \in \{1, \dots, n\}} \abs{a_{i, j}}^2
	},
\end{equation}
where $a_{i, j}$ is the $(i, j)$-component of the matrix $A \in M_n(\mathbb{K})$.
This means that the notion of convergence in $M_n(\mathbb{K})$ can be treated component-wise.

\begin{lemma}\label{lem: reduction to RS integrals of vector-valued functions}
Let $f, \alpha \colon [a, b] \to M_n(\mathbb{K})$ be functions.
Then the following properties are equivalent:
\begin{enumerate}[label=(\alph*)]
\item $f$ is Riemann-Stieltjes integrable with respect to $\alpha$.
\item For each column vector $f_j \colon [a, b] \to \mathbb{K}^n$ of $f = (f_1, \dots, f_n)$, it is Riemann-Stieltjes integrable with respect to $\alpha$.
\end{enumerate}
Furthermore,
\begin{equation*}
	\int_a^b \mathrm{d}\alpha(x) \mspace{2mu} f(x)
	= \left(
		\int_a^b \mathrm{d}\alpha(x) \mspace{2mu} f_j(x)
	\right)_{j = 1}^n
\end{equation*}
holds when one of the above properties are satisfied.
\end{lemma}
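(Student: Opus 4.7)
The key observation is that the Riemann-Stieltjes sum decomposes column-wise. If we write $f = (f_1, \dots, f_n)$ so that $f_j \colon [a, b] \to \mathbb{K}^n$ is the $j$-th column, then for any tagged partition $(P, \xi)$ of $[a, b]$, the $j$-th column of the matrix $S(f; \alpha, (P, \xi))$ is exactly the vector-valued Riemann-Stieltjes sum $S(f_j; \alpha, (P, \xi))$, because left multiplication by the matrix $\alpha(x_k) - \alpha(x_{k-1})$ acts on $f(\xi_k)$ column by column. Thus the proof reduces to an assertion about when a matrix-valued net converges if and only if each of its columns does.

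The plan is to exploit the fact that the linear space $M_n(\mathbb{K})$ is finite-dimensional, so any two norms on $M_n(\mathbb{K})$ are equivalent. In particular, the operator norm $\abs{\argdot}$ is equivalent to the norm $\abs{\argdot}_2$ in \eqref{eq: matrix norm}, and for this norm we have the identity
\begin{equation*}
	\abs{A}_2^2 = \sum_{j = 1}^n \abs{A_j}_2^2,
\end{equation*}
where $A_j \in \mathbb{K}^n$ is the $j$-th column of $A$ and $\abs{\argdot}_2$ on the right denotes the Euclidean norm on $\mathbb{K}^n$. From this identity, for any matrices $A, J \in M_n(\mathbb{K})$ one has $\abs{A_j - J_j}_2 \le \abs{A - J}_2$ for each $j$, and conversely $\abs{A - J}_2 \le \sqrt{n} \max_j \abs{A_j - J_j}_2$. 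Combining these with the equivalence of $\abs{\argdot}$ and $\abs{\argdot}_2$ on $M_n(\mathbb{K})$ (and the equivalence of the operator norm on $\mathbb{K}^n$ with the Euclidean norm), the convergence of $S(f; \alpha, (P, \xi))$ to some $J \in M_n(\mathbb{K})$ as $\abs{(P, \xi)} \to 0$ (in the sense of Definition~\ref{dfn: RS integral}) is equivalent to the simultaneous convergence of each column $S(f_j; \alpha, (P, \xi))$ to the corresponding column $J_j \in \mathbb{K}^n$.

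Carrying this out in order: first I will record the column-wise identity $S(f; \alpha, (P, \xi))_j = S(f_j; \alpha, (P, \xi))$; second I will invoke the equivalence of norms to deduce the two-sided implication between (a) and (b); third I will read off the equality
\begin{equation*}
	\int_a^b \mathrm{d}\alpha(x) \mspace{2mu} f(x)
	= \biggl( \int_a^b \mathrm{d}\alpha(x) \mspace{2mu} f_j(x) \biggr)_{j = 1}^n
\end{equation*}
by identifying the columns of the limit matrix with the limits of the column sums. There is no serious obstacle here, as the entire content of the lemma is really the linearity of matrix operations column by column together with the equivalence of norms in finite dimensions; the only place to be slightly careful is to use the same $\delta$ for all columns simultaneously when passing from (b) to (a), which is automatic from the inequality $\abs{A - J}_2 \le \sqrt{n} \max_j \abs{A_j - J_j}_2$.
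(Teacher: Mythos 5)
Your proposal is correct and follows exactly the route the paper indicates (and then omits): the column-wise identity $S(f; \alpha, (P, \xi))_j = S(f_j; \alpha, (P, \xi))$ coming from the definition of the matrix product, combined with the equivalence of the operator norm and the norm $\abs{\argdot}_2$ on the finite-dimensional space $M_n(\mathbb{K})$ to pass between convergence of the matrix sums and simultaneous convergence of their columns. No gap; the care you note about a single $\delta$ working for all columns at once is the only point of substance and you handle it correctly.
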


The proof is based on the definition of the matrix product and on the property that the operator norm $\abs{\argdot}$ is equivalent to the norm $\abs{\argdot}_2$ given in \eqref{eq: matrix norm}.
Therefore, we omit the proof.

\begin{lemma}\label{lem: reduction to RS integrals of scalar-valued functions}
Let $f \colon [a, b] \to \mathbb{K}^n$ and $\alpha \colon [a, b] \to M_n(\mathbb{K})$ be functions with $f = (f_1, \dots, f_n)$ and $\alpha = (\alpha_{i, j})_{i, j \in \{1, \dots, n\}}$.
If $f_j \colon [a, b] \to \mathbb{K}$ is Riemann-Stieltjes integrable with respect to $\alpha_{i, j} \colon [a, b] \to \mathbb{K}$ for every $i, j \in \{1, \dots, n\}$, then so is $f$ with respect to $\alpha$.
Furthermore,
\begin{equation*}
	\int_a^b \mathrm{d}\alpha(x) \mspace{2mu} f(x)
	= \left(
		\sum_{j = 1}^n \int_a^b f_j(x) \mspace{2mu} \mathrm{d}\alpha_{i, j}(x)
	\right)_{i = 1}^n
\end{equation*}
holds.
\end{lemma}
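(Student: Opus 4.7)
The plan is to reduce the matrix-vector Riemann--Stieltjes sum to its scalar components and apply the assumed integrability componentwise. The key observation is simply the definition of the matrix-vector product: for any tagged partition $(P, \xi)$ with $P : a = x_0 < x_1 < \dots < x_m = b$, the $i$-th component of the Riemann--Stieltjes sum $S(f; \alpha, (P, \xi)) \in \mathbb{K}^n$ is
\begin{equation*}
	\bigl[ S(f; \alpha, (P, \xi)) \bigr]_i
	= \sum_{k = 1}^m \sum_{j = 1}^n [\alpha_{i, j}(x_k) - \alpha_{i, j}(x_{k - 1})] f_j(\xi_k)
	= \sum_{j = 1}^n S(f_j; \alpha_{i, j}, (P, \xi)),
\end{equation*}
where the last $S$ denotes the scalar Riemann--Stieltjes sum (so the outer sum is a finite sum in $\mathbb{K}$ of scalar RS sums).

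Next, I would use the fact that convergence in $\mathbb{K}^n$ (with the given norm $\abs{\argdot}$) is equivalent to componentwise convergence, which is a consequence of the finite-dimensionality of $\mathbb{K}^n$ and the equivalence of norms discussed at the start of Subsection~\ref{subsec: reduction to RS integrals for scalar-valued functions}. Under the hypothesis that each $f_j$ is Riemann--Stieltjes integrable with respect to each $\alpha_{i, j}$, for every $\ep > 0$ and every pair $(i, j) \in \{1, \dots, n\}^2$ there exists $\delta_{i, j} > 0$ such that $\abs{(P, \xi)} < \delta_{i, j}$ implies that the scalar RS sum $S(f_j; \alpha_{i, j}, (P, \xi))$ approximates $\int_a^b f_j \mspace{2mu} \mathrm{d}\alpha_{i, j}$ within $\ep/n$. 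Taking $\delta \coloneqq \min_{i, j} \delta_{i, j}$ and summing over $j$, the $i$-th component of $S(f; \alpha, (P, \xi))$ approximates $\sum_{j = 1}^n \int_a^b f_j \mspace{2mu} \mathrm{d}\alpha_{i, j}$ within $\ep$, uniformly in $i$. By the componentwise characterization of convergence, this shows that $S(f; \alpha, (P, \xi))$ converges to the vector on the right-hand side of the claimed formula, which establishes both the Riemann--Stieltjes integrability of $f$ with respect to $\alpha$ and the formula.

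There is essentially no obstacle here: the proof is purely bookkeeping via the definition of matrix multiplication and the finite-dimensionality of the target. The only minor point to handle cleanly is to keep the norm comparison between $\abs{\argdot}$ on $\mathbb{K}^n$ and the componentwise maximum explicit enough that the $\ep$-$\delta$ argument produces the desired bound; this is handled by absorbing the constants from the equivalence of norms into the choice of $\ep/n$ (or any analogous factor). No results beyond Definitions~\ref{dfn: tagged partition}, \ref{dfn: RS sum}, \ref{dfn: RS integral} and the equivalence of norms on $\mathbb{K}^n$ are required.
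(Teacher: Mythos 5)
Your proposal is correct and follows essentially the same route as the paper: identify the $i$-th component of $S(f; \alpha, (P, \xi))$ with $\sum_{j = 1}^n S(f_j; \alpha_{i, j}, (P, \xi))$ via the definition of the matrix-vector product, and then pass to the limit componentwise (the paper compresses your $\ep/n$ and norm-equivalence bookkeeping into an appeal to the triangle inequality). No gap; your version merely spells out the details the paper leaves implicit.
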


\begin{proof}
By the definition of the product of a matrix and a vector, the $i$-th component of $S(f; \alpha, (P, \xi)) \in \mathbb{K}^n$ is equal to
\begin{equation*}
	\sum_{j = 1}^n S(f_j; \alpha_{i, j}, (P, \xi)).
\end{equation*}
Therefore, the conclusion is obtained by the triangle inequality.
\end{proof}

The converse of Lemma~\ref{lem: reduction to RS integrals of scalar-valued functions} does not necessarily hold as the following example shows.

\begin{example}
Let $n = 2$ and $g, \beta \colon [a, b] \to \mathbb{K}$ be given functions.
Let
\begin{equation*}
	f \coloneqq (g, -g) \colon [a, b] \to \mathbb{K}^2
	\amd
	\alpha \coloneqq (\beta)_{i, j \in \{1, 2\}} \colon [a, b] \to M_2(\mathbb{K}),
\end{equation*}
i.e., $f_1 = f$, $f_2 = -f$, and $\alpha_{i, j} = \beta$.
Then the Riemann-Stieltjes sum of $f$ with respect to $\alpha$ is equal to $0$ under any tagged partition of $[a, b]$.
This means that $f$ is Riemann-Stieltjes integrable with respect to $\alpha$ for any pair $(g, \beta)$ of functions.
\end{example}

In view of the above example, the Riemann-Stieltjes integration of vector-valued functions with respect to matrix-valued functions is not completely reduced to that for scalar-valued functions.
However, it is often useful to reduce the integration to scalar-valued case in view of Lemma~\ref{lem: reduction to RS integrals of scalar-valued functions}.

\subsection{Fundamental results}\label{subsec: fundamental results on RS integrals}

The following are fundamental results on Riemann-Stieltjes integrals for matrix-valued functions.

\subsubsection{Reversal formula}

\begin{theorem}\label{thm: reversal formula for RS integrals}
Let $f, \alpha \colon [a, b] \to M_n(\mathbb{K)}$ be functions.
We define functions $\Bar{f}, \Bar{\alpha} \colon [-b, -a] \to M_n(\mathbb{K})$ by
\begin{equation*}
	\Bar{f}(y) \coloneqq f(-y), \mspace{15mu} \Bar{\alpha}(y) \coloneqq \alpha(-y)
\end{equation*}
for $y \in [-b, -a]$.
If $f$ is Riemann-Stieltjes integrable with respect to $\alpha$, then so is $\Bar{f}$ with respect to $\Bar{\alpha}$.
Furthermore,
\begin{equation}\label{eq: reversal formula}
	\int_{-b}^{-a} \mathrm{d}\Bar{\alpha}(y) \mspace{2mu} \Bar{f}(y)
	= -\int_a^b \mathrm{d}\alpha(x) \mspace{2mu} f(x)
\end{equation}
holds.
\end{theorem}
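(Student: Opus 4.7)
The plan is to prove the reversal formula by setting up a bijection between tagged partitions of $[-b, -a]$ and tagged partitions of $[a, b]$ via negation, checking that the associated Riemann-Stieltjes sums are negatives of each other, and concluding by the definition of the Riemann-Stieltjes integral (Definition~\ref{dfn: RS integral}).

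Concretely, I would start by fixing a tagged partition $(Q, \eta)$ of $[-b, -a]$, say $Q: -b = y_0 < y_1 < \dots < y_m = -a$ with tags $\eta = (\eta_k)_{k=1}^m$, $\eta_k \in [y_{k-1}, y_k]$. I would then define a partition $P: a = x_0 < x_1 < \dots < x_m = b$ of $[a, b]$ by $x_k \coloneqq -y_{m-k}$ together with tags $\xi_k \coloneqq -\eta_{m-k+1} \in [x_{k-1}, x_k]$. A direct check shows that $(P, \xi)$ is a tagged partition of $[a, b]$ with $\abs{(P, \xi)} = \abs{(Q, \eta)}$, and that the correspondence $(Q, \eta) \leftrightarrow (P, \xi)$ is a bijection between tagged partitions of the two intervals that preserves the norm.

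The key algebraic step is to compute
\begin{align*}
S(\bar{f}; \bar{\alpha}, (Q, \eta))
&= \sum_{k = 1}^m [\alpha(-y_k) - \alpha(-y_{k-1})] f(-\eta_k) \\
&= \sum_{j = 1}^m [\alpha(x_{j-1}) - \alpha(x_j)] f(\xi_j) \\
&= -S(f; \alpha, (P, \xi)),
\end{align*}
where the second equality is the index substitution $j = m - k + 1$. This is the main technical (but routine) obstacle: care must be taken with indices because $\alpha$ is to the left of $f$ in the definition of the Riemann-Stieltjes sum, so the sign flip has to come from the reversal of the increments $\alpha(x_k) - \alpha(x_{k-1})$ rather than from any rearrangement involving $f$.

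Once the identity $S(\bar f; \bar\alpha, (Q, \eta)) = -S(f; \alpha, (P, \xi))$ is in hand, the conclusion is immediate. Let $J \coloneqq \int_a^b \mathrm{d}\alpha(x) \mspace{2mu} f(x)$. For $\ep > 0$, choose $\delta > 0$ from Definition~\ref{dfn: RS integral} applied to $f$ and $\alpha$, so that $\abs{(P, \xi)} < \delta$ implies $\abs{S(f; \alpha, (P, \xi)) - J} < \ep$. If $(Q, \eta)$ satisfies $\abs{(Q, \eta)} < \delta$, then its reverse $(P, \xi)$ satisfies $\abs{(P, \xi)} < \delta$, and hence
\begin{equation*}
\abs{S(\bar{f}; \bar{\alpha}, (Q, \eta)) - (-J)}
= \abs{-S(f; \alpha, (P, \xi)) + J}
= \abs{S(f; \alpha, (P, \xi)) - J}
< \ep.
\end{equation*}
This shows $\bar f$ is Riemann-Stieltjes integrable with respect to $\bar\alpha$ and that its integral equals $-J$, which is exactly \eqref{eq: reversal formula}.
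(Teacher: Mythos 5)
Your proof is correct, and it is exactly the argument the paper has in mind: the paper omits the proof with the remark that it follows by returning to the definition of the Riemann-Stieltjes integral, which is precisely your partition-reversal and sign-flip computation. The index bookkeeping ($x_k = -y_{m-k}$, $\xi_k = -\eta_{m-k+1}$), the norm preservation, and the $\ep$--$\delta$ conclusion are all handled correctly.
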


We call Eq.~\eqref{eq: reversal formula} the \textit{reversal formula} for Riemann-Stieltjes integrals.
The proof is obtained by returning to the definition of Riemann-Stieltjes integrals.
Therefore, it can be omitted.

\subsubsection{Integration by parts formula}

The following is the \textit{integration by parts formula} for Riemann-Stieltjes integrals with respect to matrix-valued functions.

\begin{theorem}\label{thm: integration by parts for RS integral}
Let $f, \alpha \colon [a, b] \to M_n(\mathbb{K})$ be functions.
If $f$ is Riemann-Stieltjes integrable with respect to $\alpha$, then so is $\alpha$ with respect to $f$.
Furthermore,
\begin{equation*}
	\int_a^b \mathrm{d}\alpha(x) \mspace{2mu} f(x)
	= [\alpha(x)f(x)]_{x = a}^b - \int_a^b \alpha(x) \mspace{2mu} \mathrm{d}f(x)
\end{equation*}
holds.
Here $[\alpha(x)f(x)]_{x = a}^b \coloneqq \alpha(b)f(b) - \alpha(a)f(a)$.
\end{theorem}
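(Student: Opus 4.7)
The plan is to reduce the proof to an Abel summation identity relating the Riemann--Stieltjes sums $S(\alpha; f, (P, \xi))$ (i.e.\ the sums approximating $\int_a^b \alpha(x) \mspace{2mu} \mathrm{d}f(x)$) to sums of the form $S(f; \alpha, (Q, \tau))$. First I would fix a tagged partition $(P, \xi)$ of $[a, b]$ with $P : a = x_0 < x_1 < \dots < x_m = b$ and $x_{k-1} \le \xi_k \le x_k$, and augment the tags by setting $\xi_0 \coloneqq a$ and $\xi_{m+1} \coloneqq b$. A direct computation (telescoping the products $\alpha(\xi_k) f(x_k)$ appropriately, being careful to keep $\alpha$ on the left and $f$ on the right because of matrix noncommutativity) then yields
\begin{equation*}
	\sum_{k = 1}^m \alpha(\xi_k) [f(x_k) - f(x_{k-1})]
	= \alpha(b) f(b) - \alpha(a) f(a) - \sum_{k = 0}^m [\alpha(\xi_{k+1}) - \alpha(\xi_k)] f(x_k).
\end{equation*}
This is the matrix analogue of Abel's summation by parts and is the only nontrivial algebraic step.

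Next I would recognize the last sum above as the Riemann--Stieltjes sum $S(f; \alpha, (Q, \tau))$ associated with the tagged partition $(Q, \tau)$ given by $Q : a = \xi_0 \le \xi_1 \le \dots \le \xi_{m+1} = b$ with tags $\tau_{k+1} \coloneqq x_k$. (If repeated $\xi_k$ occur, collapse them; this does not change the sum.) The tags are valid because $\xi_k \le x_k \le \xi_{k+1}$ holds by construction, and the mesh satisfies the key estimate
\begin{equation*}
	\abs{(Q, \tau)} = \max_{0 \le k \le m} (\xi_{k+1} - \xi_k) \le 2\abs{(P, \xi)},
\end{equation*}
since each interval $[\xi_k, \xi_{k+1}]$ is contained in $[x_{k-1}, x_{k+1}]$.

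Finally, let $\ep > 0$. By the hypothesis that $f$ is Riemann--Stieltjes integrable with respect to $\alpha$ with integral $J \coloneqq \int_a^b \mathrm{d}\alpha(x) \mspace{2mu} f(x)$, choose $\delta > 0$ so that $\abs{S(f; \alpha, (Q', \tau')) - J} < \ep$ whenever $\abs{(Q', \tau')} < \delta$. Then for any $(P, \xi)$ with $\abs{(P, \xi)} < \delta/2$, the augmented partition $(Q, \tau)$ satisfies $\abs{(Q, \tau)} < \delta$, and the Abel identity gives
\begin{equation*}
	\Bigl\lvert S(\alpha; f, (P, \xi)) - \bigl( [\alpha(x) f(x)]_{x = a}^b - J \bigr) \Bigr\rvert
	= \abs{S(f; \alpha, (Q, \tau)) - J} < \ep.
\end{equation*}
Thus $\alpha$ is Riemann--Stieltjes integrable with respect to $f$, and its integral equals $[\alpha(x) f(x)]_{x = a}^b - \int_a^b \mathrm{d}\alpha(x) \mspace{2mu} f(x)$, as claimed. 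The main subtlety—beyond routine bookkeeping—is ensuring the matrix Abel identity is written in the correct order, since reversing factors would change the result; everything else is a direct transcription of the scalar proof, and the reduction lemmas of Subsection~\ref{subsec: reduction to RS integrals for scalar-valued functions} are not even needed here.
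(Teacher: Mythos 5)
Your proof is correct: the Abel summation identity checks out (keeping $\alpha$ on the left and $f$ on the right as required by the paper's convention for $S(f;\alpha,(P,\xi))$), the augmented partition $(Q,\tau)$ has valid tags since $\xi_k \le x_k \le \xi_{k+1}$, and the mesh bound $\abs{(Q,\tau)} \le 2\abs{(P,\xi)}$ makes the limiting argument go through. This is essentially the same route the paper takes, since it simply defers to the classical scalar-case proof (Shapiro, Widder), which is exactly this summation-by-parts argument transcribed to matrix-valued functions.
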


The proof is basically same as the proof for the case $n = 1$ (i.e., the scalar-valued case).
See \cite[Proposition~D.3]{Shapiro_2018} for the proof of this case.
See also \cite[Theorems~4a and 4b in Chapter~1]{Widder_1941}.

\subsection{Integrability}\label{subsec: integrability for RS integrals}

\subsubsection{Matrix-valued functions of bounded variation}

We first recall the definition of matrix-valued functions of bounded variation.

\begin{definition}\label{dfn: bounded variation}
Let $\alpha \colon [a, b] \to M_n(\mathbb{K})$ be a function.
For each partition $P : a = x_0 < x_1 < \dots < x_m = b$ of $[a, b]$, let
\begin{equation*}
	\Var(\alpha; P)
	\coloneqq \sum_{k = 1}^m \abs{\alpha(x_k) - \alpha(x_{k - 1})},
\end{equation*}
which is called the \textit{variation} of $\alpha$ under the partition $P$.
The value
\begin{equation*}
	\Var(\alpha)
	\coloneqq \sup\Set{\Var(\alpha; P)}{\text{$P$ is a partition of $[a, b]$}}
\end{equation*}
is called the \textit{total variation} of $\alpha$.
$\alpha$ is said to be \textit{of bounded variation} if $\Var(\alpha) < \infty$.
\end{definition}

Since the operator norm $\abs{\argdot}$ on $M_n(\mathbb{K})$ and the norm $\abs{\argdot}_2$ on $M_n(\mathbb{K})$ given in \eqref{eq: matrix norm} are equivalent, a matrix-valued function $\alpha \colon [a, b] \to M_n(\mathbb{K})$ is of bounded variation if and only if each component function $\alpha_{i, j} \colon [a, b] \to \mathbb{K}$ is of bounded variation.

\begin{remark}
Let $\alpha \colon [a, b] \to M_n(\mathbb{K})$ be a function.
Then for any $c \in (a, b)$,
\begin{equation}\label{eq: total variations on sub-intervals}
	\Var \bigl(\alpha|_{[a, c]} \bigr) + \Var \bigl( \alpha|_{[c, b]} \bigr)
	= \Var(\alpha)
\end{equation}
holds.
This equality is obtained from
\begin{equation*}
	\Var \bigl( \alpha|_{[a, c]}; P_1 \bigr) + \Var \bigl( \alpha|_{[c, b]}; P_2 \bigr)
	= \Var(\alpha; P),
\end{equation*}
where $P_1$ is a partition of $[a, c]$, $P_2$ is a partition of $[c, b]$, and $P$ is the partition of $[a, b]$ obtained by joining $P_1$ and $P_2$.
\end{remark}

\begin{lemma}\label{lem: estimate of norm of RS integral}
Let $f, \alpha \colon [a, b] \to M_n(\mathbb{K})$ be functions.
If $f$ is Riemann-Stieltjes integrable with respect to $\alpha$, then
\begin{equation}\label{eq: estimate of norm of RS integral}
	\abs{\int_a^b \mathrm{d}\alpha(x) \mspace{2mu} f(x)}
	\le \Var(\alpha) \cdot \sup_{x \in [a, b]} \abs{f(x)}
\end{equation}
holds.
\end{lemma}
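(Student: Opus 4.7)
The plan is to pass to the limit in the obvious norm estimate for Riemann-Stieltjes sums. If $\Var(\alpha) = \infty$ or $\sup_{x \in [a, b]} \abs{f(x)} = \infty$, then the right-hand side of \eqref{eq: estimate of norm of RS integral} is $+\infty$ and the inequality is trivially true. Hence we may assume that both quantities are finite and set $M \coloneqq \sup_{x \in [a, b]} \abs{f(x)} < \infty$.

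For any tagged partition $(P, \xi)$ of $[a, b]$ with $P \colon a = x_0 < x_1 < \dots < x_m = b$, submultiplicativity of the operator norm on $M_n(\mathbb{K})$ together with the triangle inequality yields
\begin{equation*}
	\abs{S(f; \alpha, (P, \xi))}
	\le \sum_{k = 1}^m \abs{\alpha(x_k) - \alpha(x_{k - 1})} \abs{f(\xi_k)}
	\le M \cdot \Var(\alpha; P)
	\le M \cdot \Var(\alpha),
\end{equation*}
where the last inequality uses Definition~\ref{dfn: bounded variation}. Hence the Riemann-Stieltjes sums are uniformly bounded by $M \cdot \Var(\alpha)$ over all tagged partitions.

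To conclude, I invoke Riemann-Stieltjes integrability: given $\ep > 0$, choose $\delta > 0$ such that every tagged partition $(P, \xi)$ with $\abs{(P, \xi)} < \delta$ satisfies
\begin{equation*}
	\abs{S(f; \alpha, (P, \xi)) - \int_a^b \mathrm{d}\alpha(x) \mspace{2mu} f(x)} < \ep.
\end{equation*}
Picking any such $(P, \xi)$ and combining with the uniform bound above gives
\begin{equation*}
	\abs{\int_a^b \mathrm{d}\alpha(x) \mspace{2mu} f(x)}
	\le \abs{S(f; \alpha, (P, \xi))} + \ep
	\le M \cdot \Var(\alpha) + \ep,
\end{equation*}
and letting $\ep \downarrow 0$ yields \eqref{eq: estimate of norm of RS integral}. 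There is no real obstacle here; the proof is a direct passage to the limit from a termwise estimate of the Riemann-Stieltjes sum, and the only thing to watch is the edge case where one of the two factors on the right-hand side is infinite, which is handled by the trivial reduction at the start.
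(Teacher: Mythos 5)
Your proof is correct and follows essentially the same route as the paper: a termwise estimate of the Riemann-Stieltjes sum via submultiplicativity of the operator norm, followed by passage to the limit using the definition of the integral (the paper delegates this last step to ``the scalar-valued case,'' which is exactly the $\ep$-argument you spell out).
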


\begin{proof}
Let $(P, \xi)$ be a tagged partition of $[a, b]$ given in Definition~\ref{dfn: tagged partition}.
Since $\abs{AB} \le \abs{A}\abs{B}$ holds for any $A, B \in M_n(\mathbb{K})$, we have
\begin{equation*}
	\abs{S(f; \alpha, (P, \xi))}
	\le \sum_{k = 1}^m \mspace{2mu} \abs{\alpha(x_k) - \alpha(x_{k - 1})}\abs{f(\xi_k)}
	\le \Var(\alpha) \cdot \sup_{x \in [a, b]} \abs{f(x)}.
\end{equation*}
Then the remaining proof is essentially same as the scalar-valued case.
\end{proof}

\begin{remark}
In the completely similar way, \eqref{eq: estimate of norm of RS integral} also holds for any continuous function $f \colon [a, b] \to \mathbb{K}^n$.
This can also be seen from Lemma~\ref{lem: estimate of norm of RS integral} because for any $A \in M_n(\mathbb{K})$ of the form
\begin{equation*}
	A = (a \mspace{5mu} 0 \mspace{5mu} \cdots \mspace{5mu} 0)
	\mspace{25mu}
	(\text{$a \in \mathbb{K}^n$, $0 \in \mathbb{K}^n$}),
\end{equation*}
$\abs{A} = \abs{a}$ holds.
\end{remark}

\subsubsection{Integrability of matrix-valued functions}

The following is a fundamental theorem on the Riemann-Stieltjes integrability for scalar-valued functions.

\begin{theorem}\label{thm: RS integrability for scalar-valued functions}
Let $f, \alpha \colon [a, b] \to \mathbb{K}$ be functions.
If $f$ is continuous and $\alpha$ is of bounded variation, then $f$ is Riemann-Stieltjes integrable with respect to $\alpha$.
\end{theorem}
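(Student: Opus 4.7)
The plan is to reduce the statement to the case of a monotone non-decreasing $\alpha$ and then establish integrability via a Cauchy criterion driven by the uniform continuity of $f$.

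First, I would observe that the claim is linear and real/imaginary-part-wise in $\alpha$, so the case $\mathbb{K} = \mathbb{C}$ reduces to $\mathbb{K} = \mathbb{R}$ (write $\alpha = \operatorname{Re}\alpha + \iu \operatorname{Im}\alpha$ and $f = \operatorname{Re}f + \iu \operatorname{Im}f$; each part inherits continuity or bounded variation). Next, I would invoke the Jordan decomposition for real-valued functions of bounded variation to write $\alpha = \alpha_+ - \alpha_-$ with $\alpha_\pm \colon [a, b] \to \mathbb{R}$ non-decreasing. Since $S(f; \alpha, (P, \xi))$ is $\mathbb{R}$-linear in $\alpha$, it suffices to prove the theorem under the additional assumption that $\alpha$ is non-decreasing.

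Second, fix $\ep > 0$. Since $f$ is continuous on the compact interval $[a, b]$, it is uniformly continuous there, so I can choose $\delta > 0$ with $\abs{f(x) - f(y)} < \ep / (1 + \alpha(b) - \alpha(a))$ whenever $\abs{x - y} < \delta$. I would then verify the Cauchy criterion: for any two tagged partitions $(P_1, \xi_1)$ and $(P_2, \xi_2)$ of $[a, b]$ with $\abs{(P_i, \xi_i)} < \delta$, one has $\abs{S(f; \alpha, (P_1, \xi_1)) - S(f; \alpha, (P_2, \xi_2))} < 2\ep$. The mechanism is to form the common refinement $P^\ast$ of $P_1$ and $P_2$ and compare each $S(f; \alpha, (P_i, \xi_i))$ to a Riemann-Stieltjes sum on $P^\ast$. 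Rewriting $S(f; \alpha, (P_i, \xi_i))$ as a telescoping sum over the sub-intervals of $P^\ast$ nested in each interval of $P_i$ (assigning to each sub-interval the original $P_i$-tag of the enclosing interval), the difference with any true Riemann-Stieltjes sum on $P^\ast$ collapses into a sum of terms of the form $[f(\xi'_k) - f(\xi_{i,j})][\alpha(x^\ast_k) - \alpha(x^\ast_{k-1})]$ with $\abs{\xi'_k - \xi_{i,j}} \le \abs{P_i} < \delta$. Since $\alpha$ is non-decreasing, the increments $\alpha(x^\ast_k) - \alpha(x^\ast_{k-1})$ are non-negative and sum to $\alpha(b) - \alpha(a)$, so the telescoped bound is at most $\ep$.

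Third, once the Cauchy criterion is established, completeness of $\mathbb{R}$ (and hence of $\mathbb{K}$) gives the existence of the limit $J$ of $S(f; \alpha, (P, \xi))$ as $\abs{(P, \xi)} \to 0$ in the sense of Definition~\ref{dfn: RS integral}, which is exactly the Riemann-Stieltjes integrability of $f$ with respect to $\alpha$.

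The main obstacle will be executing the common-refinement comparison cleanly: one has to keep track of tags in two partitions simultaneously and re-express each original Riemann-Stieltjes sum as a sum over the refinement so that only increments of $\alpha$ of controllable total mass appear, while every $f$-difference is over distances bounded by the norm $\delta$. Apart from this bookkeeping, the argument is essentially the same as the classical scalar-valued proof (cf.\ \cite[Proposition~D.3]{Shapiro_2018} or \cite[Chapter~1]{Widder_1941}), and I would likely just refer to those sources for the details after indicating the reductions above.
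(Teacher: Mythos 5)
Your argument is correct: the reduction $\mathbb{K}=\mathbb{C}\to\mathbb{R}$ by real and imaginary parts, the Jordan decomposition $\alpha=\alpha_+-\alpha_-$, and the Cauchy-criterion comparison of two fine tagged partitions through their common refinement (with the telescoping re-expression of each original sum over the refinement) all go through, and the tag-bookkeeping you flag is exactly the only delicate point; since $f$ is continuous everywhere, the well-known failure of the mesh-limit definition at shared discontinuities never enters. The paper, however, does not prove the theorem at all: it simply cites \cite[Theorem~D.1]{Shapiro_2018} and remarks that the proof given there is valid for $\mathbb{K}=\mathbb{C}$ ``because it does not use the order structure.'' That remark points to a slightly leaner version of your own argument: the passage through the Jordan decomposition (and hence through monotonicity and through the real case) is unnecessary, because in your key estimate the increments of $\alpha$ are only needed through the bound
\begin{equation*}
	\sum_k \abs{f(\xi'_k) - f(\xi_{i,j})}\,\abs{\alpha(x^\ast_k) - \alpha(x^\ast_{k-1})}
	\le \frac{\ep}{1 + \Var(\alpha)} \cdot \Var(\alpha)
	< \ep,
\end{equation*}
which uses only $\sum_k \abs{\alpha(x^\ast_k) - \alpha(x^\ast_{k-1})} \le \Var(\alpha)$ and therefore works verbatim for complex-valued $\alpha$ of bounded variation. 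So what your route buys is self-containedness via familiar real-variable tools; what the cited (order-free) route buys is that the same common-refinement estimate handles $\mathbb{K}=\mathbb{C}$ in one stroke, with $\Var(\alpha)$ in place of $\alpha(b)-\alpha(a)$ and no decomposition of $f$ or $\alpha$ at all.
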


See \cite[Theorem~D.1]{Shapiro_2018} for a proof, which is valid for the case $\mathbb{K} = \mathbb{C}$ because it does not use the order structure.
By using Theorem~\ref{thm: RS integrability for scalar-valued functions}, one can obtain the following.

\begin{theorem}\label{thm: RS integrability}
Let $f, \alpha \colon [a, b] \to M_n(\mathbb{K})$ be functions.
If $f$ is continuous and $\alpha$ is of bounded variation, then $f$ is Riemann-Stieltjes integrable with respect to $\alpha$.
\end{theorem}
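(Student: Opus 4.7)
The plan is to reduce the matrix-valued statement to the scalar-valued case (Theorem~\ref{thm: RS integrability for scalar-valued functions}) by chaining together Lemmas~\ref{lem: reduction to RS integrals of vector-valued functions} and \ref{lem: reduction to RS integrals of scalar-valued functions}. The reductions proceed in two steps (matrix $\to$ vector column-by-column, then vector $\to$ scalar component-by-component), and the hypotheses on $f$ and $\alpha$ descend cleanly through both steps because the operator norm $\abs{\argdot}$ on $M_n(\mathbb{K})$ is equivalent to the Euclidean-type norm $\abs{\argdot}_2$ defined in \eqref{eq: matrix norm}.

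First I would invoke Lemma~\ref{lem: reduction to RS integrals of vector-valued functions} to reduce the Riemann-Stieltjes integrability of $f$ with respect to $\alpha$ to the Riemann-Stieltjes integrability of each column vector $f_j \colon [a, b] \to \mathbb{K}^n$ (for $j \in \{1, \dots, n\}$) with respect to $\alpha$. Each $f_j$ inherits continuity from $f$: the continuity of $f \colon [a, b] \to M_n(\mathbb{K})$ with respect to the operator norm is equivalent to the componentwise continuity of $f$, hence the column $f_j = ((f_j)_1, \dots, (f_j)_n) \colon [a, b] \to \mathbb{K}^n$ is continuous.

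Next I would apply Lemma~\ref{lem: reduction to RS integrals of scalar-valued functions} to each $f_j$: it suffices to show that every scalar component $(f_j)_i \colon [a, b] \to \mathbb{K}$ of $f_j$ is Riemann-Stieltjes integrable with respect to every scalar component $\alpha_{k, i} \colon [a, b] \to \mathbb{K}$ of $\alpha$. The continuity of $(f_j)_i$ is clear from the previous step. For $\alpha_{k, i}$, one uses the equivalence of the operator norm and the norm $\abs{\argdot}_2$ on $M_n(\mathbb{K})$: since $\Var(\alpha) < \infty$ with respect to the operator norm, one has $\Var(\alpha) < \infty$ with respect to $\abs{\argdot}_2$ as well (up to a constant factor), and the latter quantity dominates $\Var(\alpha_{k, i})$ componentwise. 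Hence each $\alpha_{k, i}$ is of bounded variation. Theorem~\ref{thm: RS integrability for scalar-valued functions} then yields the Riemann-Stieltjes integrability of $(f_j)_i$ with respect to $\alpha_{k, i}$ for all indices.

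Chaining the two reductions back in reverse gives the conclusion. There is essentially no obstacle: the work has already been done in the lemmas preceding the theorem, and the only minor point to verify is the descent of the bounded-variation hypothesis from the matrix level to the scalar components, which follows immediately from norm equivalence on the finite-dimensional space $M_n(\mathbb{K})$. Accordingly, the proof amounts to a short paragraph citing Lemmas~\ref{lem: reduction to RS integrals of vector-valued functions} and \ref{lem: reduction to RS integrals of scalar-valued functions} together with Theorem~\ref{thm: RS integrability for scalar-valued functions}.
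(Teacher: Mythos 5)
Your proposal is correct and follows essentially the same route as the paper: reduce via Lemma~\ref{lem: reduction to RS integrals of vector-valued functions} to column vectors, then via Lemma~\ref{lem: reduction to RS integrals of scalar-valued functions} to scalar components, and conclude with Theorem~\ref{thm: RS integrability for scalar-valued functions}, since every scalar component of $f$ is continuous and every scalar component of $\alpha$ is of bounded variation. The extra verification you give of the componentwise descent of the hypotheses (via the equivalence of $\abs{\argdot}$ and $\abs{\argdot}_2$) is exactly the observation the paper records just after Definition~\ref{dfn: bounded variation}.
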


\begin{proof}
From Lemma~\ref{lem: reduction to RS integrals of vector-valued functions}, the problem is reduced to the Riemann-Stieltjes integrability of each column vector of $f$ with respect to $\alpha$.
From Lemma~\ref{lem: reduction to RS integrals of scalar-valued functions}, it is sufficient to show that each component $f_{i, j} \colon [a, b] \to \mathbb{K}$ of $f$ is Riemann-Stieltjes integrable with respect to each component $\alpha_{i, j} \colon [a, b] \to \mathbb{K}$ of $\alpha$.
Since each $f_{i, j}$ is continuous and each $\alpha_{i, j}$ is of bounded variation, the conclusion is obtained from Theorem~\ref{thm: RS integrability for scalar-valued functions}.
\end{proof}

The following is the result on additivity of Riemann-Stieltjes integrals with respect to matrix-valued functions on sub-intervals.

\begin{theorem}\label{thm: additivity for RS integrals on sub-intervals}
Let $f \colon [a, b] \to M_n(\mathbb{K})$ be a continuous function and $\alpha \colon [a, b] \to M_n(\mathbb{K})$ be a function of bounded variation.
Then for any $c \in (a, b)$,
\begin{equation*}
	\int_a^b \mathrm{d}\alpha(x) \mspace{2mu} f(x)
	= \int_a^c \mathrm{d}\alpha(x) \mspace{2mu} f(x) + \int_c^b \mathrm{d}\alpha(x) \mspace{2mu} f(x)
\end{equation*}
holds.
\end{theorem}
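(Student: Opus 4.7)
The plan is to use the Cauchy-like characterization implicit in Definition~\ref{dfn: RS integral} and exploit the fact that any Riemann–Stieltjes sum whose underlying partition contains $c$ as a subdivision point splits additively into two sums, one over $[a,c]$ and one over $[c,b]$. First, I would observe that all three integrals exist: $f$ is continuous on $[a,b]$ (hence on $[a,c]$ and on $[c,b]$), and by \eqref{eq: total variations on sub-intervals} the restrictions $\alpha|_{[a,c]}$ and $\alpha|_{[c,b]}$ are of bounded variation. Theorem~\ref{thm: RS integrability} therefore guarantees that
\begin{equation*}
    I := \int_a^b \mathrm{d}\alpha(x)\,f(x), \quad I_1 := \int_a^c \mathrm{d}\alpha(x)\,f(x), \quad I_2 := \int_c^b \mathrm{d}\alpha(x)\,f(x)
\end{equation*}
all exist.

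The core observation is that for a tagged partition $(P, \xi)$ of $[a,b]$ such that $c$ is one of the division points of $P$, one can split $P$ as the union of a partition $P_1$ of $[a,c]$ and a partition $P_2$ of $[c,b]$, and similarly split the tags $\xi$ into $\xi_1,\xi_2$. By Definition~\ref{dfn: RS sum}, this yields the identity
\begin{equation*}
    S(f;\alpha,(P,\xi)) = S(f;\alpha,(P_1,\xi_1)) + S(f;\alpha,(P_2,\xi_2)).
\end{equation*}
Moreover, $\max(|P_1|,|P_2|) \le |P|$, and conversely, given any tagged partitions $(P_1,\xi_1)$ of $[a,c]$ and $(P_2,\xi_2)$ of $[c,b]$, joining them yields a tagged partition $(P,\xi)$ of $[a,b]$ with $|P| = \max(|P_1|,|P_2|)$ that contains $c$.

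Given $\varepsilon > 0$, I would apply Definition~\ref{dfn: RS integral} three times to choose a common $\delta > 0$ such that whenever a tagged partition of $[a,b]$, $[a,c]$, or $[c,b]$ has norm smaller than $\delta$, the corresponding Riemann–Stieltjes sum approximates the associated integral within $\varepsilon/3$. Picking any tagged partitions $(P_1,\xi_1)$ of $[a,c]$ and $(P_2,\xi_2)$ of $[c,b]$ with norms less than $\delta$ and joining them to form $(P,\xi)$, the triangle inequality combined with the splitting identity above gives
\begin{equation*}
    \abs{I - I_1 - I_2} \le \abs{I - S(f;\alpha,(P,\xi))} + \abs{S(f;\alpha,(P_1,\xi_1)) - I_1} + \abs{S(f;\alpha,(P_2,\xi_2)) - I_2} < \varepsilon.
\end{equation*}
Since $\varepsilon$ is arbitrary, the equality $I = I_1 + I_2$ follows.

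There is no serious obstacle here; the argument is routine once one exploits partitions that contain $c$. The only minor subtlety is the need to verify that the common choice of $\delta$ is legitimate, which is immediate since the three integrability assertions are independent and one simply takes the minimum of the three $\delta$'s supplied by Definition~\ref{dfn: RS integral}. Note that this proof only uses the operator-norm submultiplicativity implicit in the matrix setting through the existence statement of Theorem~\ref{thm: RS integrability}; the additivity itself is a purely combinatorial fact about Riemann–Stieltjes sums, so no component-wise reduction via Lemmas~\ref{lem: reduction to RS integrals of vector-valued functions} and~\ref{lem: reduction to RS integrals of scalar-valued functions} is needed.
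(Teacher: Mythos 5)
Your proposal is correct and follows essentially the same route as the paper, which proves the statement exactly by considering partitions of $[a, b]$ having $c$ as an intermediate point (citing the scalar-valued case in Shapiro, Proposition~D.2) and justifies the existence of all three integrals via \eqref{eq: total variations on sub-intervals} and Theorem~\ref{thm: RS integrability}. Your $\varepsilon/3$ splitting argument is just the standard scalar proof carried over verbatim to the matrix-valued setting, which is precisely what the paper intends.
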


The proof is same as that for the case $n = 1$.
See \cite[Proposition~D.2]{Shapiro_2018} for the proof.
We note that the statement can be proved by considering partitions of $[a, b]$ with $c \in (a, b)$ as an intermediate point.

\begin{remark}
In Theorem~\ref{thm: additivity for RS integrals on sub-intervals}, the assumptions that $f$ is continuous and $\alpha$ is of bounded variation are essential because these assumptions ensure the existence of three integrals (see \eqref{eq: total variations on sub-intervals} and Theorem~\ref{thm: RS integrability}).
Without these assumptions, the integral in the left-hand side does not necessarily exist even if the integrals in the right-hand side exist.
Such a situation will occur when the functions $f$ and $\alpha$ share a discontinuity at $c$.
See \cite[Section~5 in Chapter~I]{Widder_1941} for the detail.
\end{remark}

\subsection{Integration with respect to continuously differentiable functions}\label{subsec: integration wrt C^1-functions}

The following theorem shows a relationship between Riemann-Stieltjes integrals and Riemann integrals.

\begin{theorem}\label{thm: RS integral wrt C^1-function}
Let $f \colon [a, b] \to M_n(\mathbb{K})$ be a Riemann integrable function and $\alpha \colon [a, b] \to M_n(\mathbb{K})$ be a continuously differentiable function.
Then $f$ is Riemann-Stieltjes integrable with respect to $\alpha$, and
\begin{equation*}
	\int_a^b \mathrm{d}\alpha(x) \mspace{2mu} f(x)
	= \int_a^b \alpha'(x)f(x) \mspace{2mu} \mathrm{d}x
\end{equation*}
holds.
Here the right-hand side is a Riemann integral.
\end{theorem}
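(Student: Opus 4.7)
The plan is to reduce the Riemann--Stieltjes sum to an ordinary Riemann sum for $\alpha' f$ by using the fundamental theorem of calculus on each sub-interval. Concretely, for a tagged partition $(P, \xi)$ of $[a, b]$ with $P \colon a = x_0 < x_1 < \dots < x_m = b$, I would first apply the fundamental theorem of calculus component-wise to the continuously differentiable matrix-valued function $\alpha$ to write
\begin{equation*}
	\alpha(x_k) - \alpha(x_{k-1}) = \int_{x_{k-1}}^{x_k} \alpha'(x) \mspace{2mu} \mathrm{d}x,
\end{equation*}
and then rewrite the Riemann--Stieltjes sum as
\begin{equation*}
	S(f; \alpha, (P, \xi))
	= \sum_{k = 1}^m \int_{x_{k-1}}^{x_k} \alpha'(x) f(\xi_k) \mspace{2mu} \mathrm{d}x.
\end{equation*}

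Next, I would introduce the auxiliary Riemann sum
\begin{equation*}
	R(f; \alpha', (P, \xi)) \coloneqq \sum_{k=1}^m \alpha'(\xi_k) f(\xi_k) (x_k - x_{k-1}),
\end{equation*}
which is a Riemann sum for $\int_a^b \alpha'(x) f(x) \mspace{2mu} \mathrm{d}x$. For this to make sense one first has to know that the matrix-valued product $\alpha' f$ is Riemann integrable on $[a, b]$; I would establish this by reducing to scalar components (as in Subsection~\ref{subsec: reduction to RS integrals for scalar-valued functions}) and invoking the standard fact that the pointwise product of a continuous scalar function and a Riemann integrable scalar function is Riemann integrable. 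This also uses that Riemann integrable functions are bounded, which gives a uniform bound $\sup_{x \in [a,b]} \abs{f(x)} < \infty$.

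I would then estimate the difference
\begin{equation*}
	S(f; \alpha, (P, \xi)) - R(f; \alpha', (P, \xi))
	= \sum_{k=1}^m \int_{x_{k-1}}^{x_k} [\alpha'(x) - \alpha'(\xi_k)] f(\xi_k) \mspace{2mu} \mathrm{d}x,
\end{equation*}
and bound its norm from above by
\begin{equation*}
	{\biggl( \sup_{x \in [a,b]} \abs{f(x)} \biggr)} \cdot \sum_{k=1}^m \int_{x_{k-1}}^{x_k} \abs{\alpha'(x) - \alpha'(\xi_k)} \mspace{2mu} \mathrm{d}x.
\end{equation*}
Given $\ep > 0$, the uniform continuity of $\alpha'$ on $[a, b]$ yields $\delta_1 > 0$ such that $\abs{P} < \delta_1$ forces $\abs{\alpha'(x) - \alpha'(\xi_k)} < \ep$ on each sub-interval, so the above quantity is at most $\ep (b - a) \sup_x \abs{f(x)}$. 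Combining this with the Riemann integrability of $\alpha' f$, which gives $\delta_2 > 0$ such that $\abs{P} < \delta_2$ forces $\abs{R(f; \alpha', (P, \xi)) - \int_a^b \alpha'(x) f(x) \mspace{2mu} \mathrm{d}x} < \ep$, and choosing $\delta \coloneqq \min(\delta_1, \delta_2)$ yields the conclusion via the triangle inequality.

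The main obstacle is not the estimates themselves but the bookkeeping needed to justify the Riemann integrability of the matrix product $\alpha' f$ and to confirm that Graves-style Riemann integrability of matrix-valued $f$ implies boundedness; once that is settled, the combination of uniform continuity of $\alpha'$ and the Cauchy criterion for Riemann integration of $\alpha' f$ drives everything. If one wished to avoid developing Riemann integrability of $\alpha' f$ separately, an alternative would be to argue by the Cauchy criterion directly for $S(f; \alpha, (P, \xi))$ as $\abs{P} \to 0$ and identify the limit at the end using the approximation above, but I find the route through the explicit Riemann sum cleaner.
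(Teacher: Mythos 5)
Your proposal is correct and follows essentially the same route as the paper's own proof: apply the fundamental theorem of calculus on each sub-interval, compare the Riemann--Stieltjes sum with the Riemann sum $\sum_k (x_k - x_{k-1})\alpha'(\xi_k)f(\xi_k)$, and conclude from the uniform continuity of $\alpha'$ together with the boundedness of $f$. Your extra remark that the Riemann integrability of the product $\alpha' f$ must be justified (by componentwise reduction) is a point the paper leaves implicit, and it is a reasonable piece of bookkeeping rather than a deviation in method.
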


Since the above statement is not mentioned in \cite{Widder_1941} and \cite{Shapiro_2018} even for the case $n = 1$, we now give an outline of the proof.

\begin{proof}[Outline of the proof of Theorem~\ref{thm: RS integral wrt C^1-function}]
Let $(P, \xi)$ be a tagged partition of $[a, b]$ given in Definition~\ref{dfn: tagged partition}.
Let
\begin{equation*}
	S(\alpha'f; (P, \xi))
	\coloneqq \sum_{k = 1}^n \mspace{2mu} (x_k - x_{k - 1})\alpha'(\xi_k)f(\xi_k).
\end{equation*}
Since
\begin{equation*}
	\alpha(x_k) - \alpha(x_{k - 1})
	= \int_{x_{k - 1}}^{x_k} \alpha'(t) \mspace{2mu} \mathrm{d}t
\end{equation*}
holds for each $k \in \{1, \dots, m\}$ by the fundamental theorem of calculus, we have
\begin{equation*}
	S(f; \alpha, (P, \xi)) - S(\alpha'f; (P, \xi))
	= \sum_{k = 1}^m \int_{x_{k - 1}}^{x_k}
		[\alpha'(t) - \alpha'(\xi_k)]
	\mspace{2mu} \mathrm{d}t
	\cdot f(\xi_k).
\end{equation*}
From this, we also have
\begin{equation*}
	\abs{S(f; \alpha, (P, \xi)) - S(\alpha'f; (P, \xi))}
	\le \sum_{k = 1}^m \int_{x_{k - 1}}^{x_k}
		\abs{\alpha'(t) - \alpha'(\xi_k)}
	\mspace{2mu} \mathrm{d}t
	\cdot \abs{f(\xi_k)}.
\end{equation*}
By combining this and the uniform continuity of $\alpha'$, one can obtain the conclusion.
\end{proof}

When $n = 1$ and $\mathbb{K} = \mathbb{R}$, one can use the mean value theorem for the proof of Theorem~\ref{thm: RS integral wrt C^1-function}.

\subsection{Integration with respect to absolutely continuous functions}\label{subsec: integration wrt AC functions}

The following theorem should be compared with Theorem~\ref{thm: RS integral wrt C^1-function}.

\begin{theorem}\label{thm: RS integral of continuous function wrt AC function}
Let $f \colon [a, b] \to M_n(\mathbb{K})$ be a continuous function and $\alpha \colon [a, b] \to M_n(\mathbb{K})$ be an absolutely continuous function.
Then
\begin{equation*}
	\int_a^b \mathrm{d}\alpha(x) \mspace{2mu} f(x)
	= \int_a^b \alpha'(x)f(x) \mspace{2mu} \mathrm{d}x
\end{equation*}
holds.
Here the right-hand side is a Lebesgue integral.
\end{theorem}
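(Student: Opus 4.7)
My plan is to evaluate the Riemann-Stieltjes integral by a direct limiting argument on the Riemann-Stieltjes sums, rewriting each increment $\alpha(x_k) - \alpha(x_{k-1})$ as a Lebesgue integral of $\alpha'$. The existence of the Riemann-Stieltjes integral on the left-hand side is already guaranteed by Theorem~\ref{thm: RS integrability}, because any absolutely continuous function is of bounded variation; only the identification of its value is at stake. The existence of the Lebesgue integral on the right-hand side is immediate from the continuity (hence boundedness) of $f$ and the Lebesgue integrability of $\alpha'$.

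Given a tagged partition $(P, \xi)$ of $[a,b]$ with $P : a = x_0 < x_1 < \cdots < x_m = b$, the absolute continuity of $\alpha$, applied componentwise, yields
\begin{equation*}
	\alpha(x_k) - \alpha(x_{k-1}) = \int_{x_{k-1}}^{x_k} \alpha'(t) \, \mathrm{d}t
\end{equation*}
for each $k$. Pulling the constant matrix $f(\xi_k)$ inside the Lebesgue integral (a consequence of the linearity of the componentwise Lebesgue integral, or equivalently of the boundedness of right-multiplication by $f(\xi_k)$ on $M_n(\mathbb{K})$), and then summing, the Riemann-Stieltjes sum becomes
\begin{equation*}
	S(f; \alpha, (P,\xi))
	= \sum_{k=1}^{m} \int_{x_{k-1}}^{x_k} \alpha'(t) f(\xi_k) \, \mathrm{d}t
	= \int_a^b \alpha'(t) f_{(P,\xi)}(t) \, \mathrm{d}t,
\end{equation*}
where $f_{(P,\xi)} \colon [a,b] \to M_n(\mathbb{K})$ denotes the step function taking the value $f(\xi_k)$ on each sub-interval $[x_{k-1}, x_k)$ (and $f(\xi_m)$ at $b$).

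Let $\omega_f(\delta) \coloneqq \sup\Set{\abs{f(s) - f(t)}}{s, t \in [a,b],\; \abs{s - t} \le \delta}$ denote the modulus of continuity of $f$ on $[a,b]$. By construction $\abs{f_{(P,\xi)}(t) - f(t)} \le \omega_f(\abs{P})$ for every $t \in [a,b]$, so
\begin{equation*}
	\abs{S(f; \alpha, (P,\xi)) - \int_a^b \alpha'(t) f(t) \, \mathrm{d}t}
	\le \omega_f(\abs{P}) \int_a^b \abs{\alpha'(t)} \, \mathrm{d}t.
\end{equation*}
Uniform continuity of $f$ on $[a,b]$ forces $\omega_f(\abs{P}) \to 0$ as $\abs{P} \to 0$, and $\int_a^b \abs{\alpha'(t)} \, \mathrm{d}t$ is finite; hence the right-hand side vanishes in the limit. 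Since the Riemann-Stieltjes integral is the unique limit of such sums, the desired equality follows.

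The main obstacle is only a minor technicality: one must justify the matrix identity $\bigl(\int_I \alpha'(t) \, \mathrm{d}t\bigr) B = \int_I \alpha'(t) B \, \mathrm{d}t$ for any constant matrix $B$ and any sub-interval $I \subset [a,b]$, together with the componentwise fundamental theorem of calculus for absolutely continuous matrix-valued functions. Both reduce immediately to the scalar case and to the continuity of right-multiplication as a bounded linear map on $M_n(\mathbb{K})$. Beyond this, the argument parallels the proof of Theorem~\ref{thm: RS integral wrt C^1-function}, with $\alpha' \in \mathcal{L}^1$ replacing $\alpha' \in C$.
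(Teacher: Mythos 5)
Your proposal is correct and follows essentially the same route as the paper's own proof: both rewrite each increment $\alpha(x_k) - \alpha(x_{k-1})$ as $\int_{x_{k-1}}^{x_k} \alpha'(t)\,\mathrm{d}t$ via the fundamental theorem of calculus for absolutely continuous functions, express the Riemann-Stieltjes sum as a Lebesgue integral of $\alpha'$ against a step approximation of $f$, and conclude by the uniform continuity of $f$ together with the integrability of $\abs{\alpha'}$. Your explicit modulus-of-continuity estimate simply fills in the limit step the paper leaves as an outline.
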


See \cite[Theorem~6a in Chapter~I]{Widder_1941} for the proof of the scalar-valued case.
We note that the existence of the Riemann-Stieltjes integral in the left-hand side is ensured by Theorem~\ref{thm: RS integrability} because the absolutely continuous function $\alpha$ is of bounded variation.
We also note that the function $[a, b] \ni x \mapsto \alpha'(x)f(x) \in M_n(\mathbb{K})$ is Lebesgue integrable because it is measurable and
\begin{equation*}
	\int_a^b \abs{\alpha'(x)f(x)} \mspace{2mu} \mathrm{d}x
	\le \int_a^b \abs{\alpha'(x)}\abs{f(x)} \mspace{2mu} \mathrm{d}x
	\le \norm{\alpha'}_1\norm{f}
	< \infty
\end{equation*}
holds.

Since it is interesting to compare the proof of Theorem~\ref{thm: RS integral wrt C^1-function} and the proof of Theorem~\ref{thm: RS integral of continuous function wrt AC function}, we now give an outline of the proof.

\begin{proof}[Outline of the proof of Theorem~\ref{thm: RS integral of continuous function wrt AC function}]
Let $(P, \xi)$ be a tagged partition of $[a, b]$ given in Definition~\ref{dfn: tagged partition}.
Since $\alpha = \alpha(0) + V\alpha'$,
\begin{equation*}
	S(f; \alpha, (P, \xi))
	= \sum_{k = 1}^m \int_{x_{k - 1}}^{x_k} \alpha'(t) \mspace{2mu} \mathrm{d}t \cdot f(\xi_k)
\end{equation*}
holds.
Therefore, we have
\begin{equation*}
	\int_a^b \alpha'(x)f(x) \mspace{2mu} \mathrm{d}x - S(f; \alpha, (P, \xi))
	= \sum_{k = 1}^m \int_{x_{k - 1}}^{x_k} \alpha'(t)[f(t) - f(\xi_k)] \mspace{2mu} \mathrm{d}t.
\end{equation*}
In combination with the uniform continuity of $f$, the conclusion is obtained by taking the limit as $\abs{(P, \xi)} \to 0$.
\end{proof}

\subsection{Proof of the theorem on iterated integrals}\label{subsec: proof of thm on iterated integrals}

In this subsection, we give a proof of Theorem~\ref{thm: iterated integrals for RS integral}.

\begin{proof}[Proof of Theorem~\ref{thm: iterated integrals for RS integral}]
We define a bounded linear operator $T \colon C([a, b], M_n(\mathbb{K})) \to M_n(\mathbb{K})$ by
\begin{equation*}
	Tg \coloneqq \int_a^b \mathrm{d}\alpha(x) \mspace{2mu} g(x)
\end{equation*}
for $g \in C([a, b], M_n(\mathbb{K}))$.
From Lemma~\ref{lem: Riemann integral of two variables function}, the left-hand side of \eqref{eq: iterated RS integrals} is equal to
\begin{equation*}
	T \int_c^d f(\argdot, y) \mspace{2mu} \mathrm{d}y,
\end{equation*}
which is also equal to $\int_c^d Tf(\argdot, y) \mspace{2mu} \mathrm{d}y$ since $T$ is a bounded linear operator.
By the definition of $T$, this integral is equal to the right-hand side of \eqref{eq: iterated RS integrals}.
This completes the proof.
\end{proof}

\section{Riesz representation theorem}\label{sec: Riesz rep thm}

Throughout this appendix, let $\mathbb{K} = \mathbb{R}$ or $\mathbb{C}$ and let $[a, b]$ be a closed and bounded interval of $\mathbb{R}$.

The following is the cerebrated \textit{Riesz representation theorem}.

\begin{theorem}\label{thm: Riesz rep thm}
For any continuous linear functional $A \colon C([a, b], \mathbb{K}) \to \mathbb{K}$, there exists a function $\alpha \colon [a, b] \to \mathbb{K}$ with the following properties: (i) $\Var(\alpha) = \norm{A}$ and (ii) every $f \in C([a, b], \mathbb{K})$ is Riemann-Stieltjes integrable with respect to $\alpha$, and (iii)
\begin{equation*}
	A(f) = \int_a^b f(x) \mspace{2mu} \mathrm{d}\alpha(x)
\end{equation*}
holds for all $f \in C([a, b], \mathbb{K})$.
\end{theorem}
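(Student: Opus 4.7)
The plan is to follow the classical Hahn--Banach route: extend $A$ from $C([a,b], \mathbb{K})$ to the Banach space $B([a,b], \mathbb{K})$ of all bounded functions on $[a,b]$ (with the supremum norm), and then read off $\alpha$ by evaluating the extension on characteristic functions of intervals. By Hahn--Banach there exists a continuous linear functional $\Tilde{A} \colon B([a,b], \mathbb{K}) \to \mathbb{K}$ with $\Tilde{A}|_{C([a,b], \mathbb{K})} = A$ and $\norm{\Tilde{A}} = \norm{A}$. I would then define $\alpha \colon [a, b] \to \mathbb{K}$ by $\alpha(a) \coloneqq 0$ and $\alpha(x) \coloneqq \Tilde{A}(\chi_{(a, x]})$ for $x \in (a, b]$.

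The next step is to establish $\Var(\alpha) \le \norm{A}$. Given a partition $a = x_0 < x_1 < \dots < x_m = b$, one can choose scalars $\ep_k \in \mathbb{K}$ with $\abs{\ep_k} = 1$ such that $\abs{\alpha(x_k) - \alpha(x_{k-1})} = \ep_k[\alpha(x_k) - \alpha(x_{k-1})]$; summing and using linearity gives
\begin{equation*}
	\sum_{k=1}^m \abs{\alpha(x_k) - \alpha(x_{k-1})}
	= \Tilde{A}\biggl( \sum_{k=1}^m \ep_k \chi_{(x_{k-1}, x_k]} \biggr)
	\le \norm{\Tilde{A}} \cdot 1 = \norm{A},
\end{equation*}
because the step function inside has sup norm $1$. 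Thus $\alpha$ is of bounded variation, so Theorem~\ref{thm: RS integrability for scalar-valued functions} ensures that every $f \in C([a, b], \mathbb{K})$ is Riemann-Stieltjes integrable with respect to $\alpha$, and Lemma~\ref{lem: estimate of norm of RS integral} gives the reverse estimate $\norm{A} \le \Var(\alpha)$ once the representation formula is known.

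To prove the representation $A(f) = \int_a^b f(x) \mspace{2mu} \mathrm{d}\alpha(x)$, I would fix $f \in C([a, b], \mathbb{K})$ and, for each tagged partition $(P, \xi)$ with $P \colon a = x_0 < \dots < x_m = b$, introduce the step function
\begin{equation*}
	f_{(P, \xi)} \coloneqq f(a)\chi_{\{a\}} + \sum_{k = 1}^m f(\xi_k) \chi_{(x_{k-1}, x_k]} \in B([a, b], \mathbb{K}).
\end{equation*}
By construction $\Tilde{A}(f_{(P, \xi)})$ is exactly the Riemann--Stieltjes sum $\sum_k f(\xi_k)[\alpha(x_k) - \alpha(x_{k-1})]$ for $f$ with respect to $\alpha$. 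As $\abs{(P, \xi)} \to 0$, uniform continuity of $f$ gives $\norm{f_{(P, \xi)} - f}_\infty \to 0$, so continuity of $\Tilde{A}$ yields $\Tilde{A}(f_{(P, \xi)}) \to \Tilde{A}(f) = A(f)$, while the same sums converge to $\int_a^b f(x) \mspace{2mu} \mathrm{d}\alpha(x)$ by Theorem~\ref{thm: RS integrability for scalar-valued functions}. Matching the two limits gives the representation and then the norm equality $\Var(\alpha) = \norm{A}$.

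The main obstacle I anticipate is a purely technical one: $\Tilde{A}$ is defined on the non-separable space $B([a, b], \mathbb{K})$ via Hahn--Banach, so it is not evaluable through any concrete formula, and one must be careful that the step function $f_{(P, \xi)}$ really does approximate $f$ in the supremum norm on \emph{all} of $[a, b]$ (including the endpoint $a$, which is why the term $f(a)\chi_{\{a\}}$ is included). A minor subtlety is the choice of half-open intervals $(x_{k-1}, x_k]$ versus $[x_{k-1}, x_k)$; one must commit to a convention at the outset and verify that the resulting step-function sums match the Riemann--Stieltjes convention in Definition~\ref{dfn: RS sum} (written here as $\int_a^b f \mspace{2mu} \mathrm{d}\alpha$, with $f$ on the right of the increment). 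Once these bookkeeping matters are handled, the rest of the argument is routine.
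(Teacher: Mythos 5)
Your overall route --- extending $A$ by Hahn--Banach to a norm-preserving functional $\Tilde{A} \colon B([a, b], \mathbb{K}) \to \mathbb{K}$ and reading $\alpha$ off characteristic functions of intervals --- is exactly the construction the paper indicates (its Remark after the theorem). But as written there is a genuine gap at the left endpoint. With your definition $\alpha(a) \coloneqq 0$, $\alpha(x) \coloneqq \Tilde{A}(\chi_{(a, x]})$, the value $\Tilde{A}(f_{(P, \xi)})$ is \emph{not} the Riemann--Stieltjes sum: since $\chi_{(x_{k-1}, x_k]} = \chi_{(a, x_k]} - \chi_{(a, x_{k-1}]}$, linearity gives
\begin{equation*}
	\Tilde{A}(f_{(P, \xi)})
	= f(a)\Tilde{A}(\chi_{\{a\}}) + \sum_{k = 1}^m f(\xi_k)\bigl[ \alpha(x_k) - \alpha(x_{k - 1}) \bigr],
\end{equation*}
and the extra term $f(a)\Tilde{A}(\chi_{\{a\}})$ need not vanish. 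Passing to the limit then yields $A(f) = f(a)\Tilde{A}(\chi_{\{a\}}) + \int_a^b f(x) \mspace{2mu} \mathrm{d}\alpha(x)$ rather than the claimed representation. The failure is real, not notational: take $A(f) \coloneqq f(a)$ and the norm-preserving extension $\Tilde{A}(g) \coloneqq g(a)$; your formula gives $\alpha \equiv 0$, so $\int_a^b f(x) \mspace{2mu} \mathrm{d}\alpha(x) = 0 \ne A(f)$ in general. Since Hahn--Banach gives you no control over which extension you receive, the possible ``atom at $a$'' must be absorbed into $\alpha$ itself.

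The repair is the standard one and costs a line: set $\alpha(a) \coloneqq 0$ and $\alpha(x) \coloneqq \Tilde{A}(\chi_{[a, x]})$ for $x \in (a, b]$ (closed at $a$), and use the step function $f(\xi_1)\chi_{[a, x_1]} + \sum_{k = 2}^m f(\xi_k)\chi_{(x_{k-1}, x_k]}$. This still converges uniformly to $f$ by uniform continuity, and its image under $\Tilde{A}$ is now exactly $\sum_{k = 1}^m f(\xi_k)[\alpha(x_k) - \alpha(x_{k-1})]$ because $\alpha(a) = 0$. Your variation estimate is unaffected (the test function $\ep_1\chi_{[a, x_1]} + \sum_{k \ge 2} \ep_k\chi_{(x_{k-1}, x_k]}$ still has supremum norm $1$), Theorem~\ref{thm: RS integrability for scalar-valued functions} still gives integrability of every continuous $f$ with respect to the BV function $\alpha$, matching the two limits gives the representation, and Lemma~\ref{lem: estimate of norm of RS integral} then gives $\norm{A} \le \Var(\alpha)$, hence $\Var(\alpha) = \norm{A}$. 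With that correction the argument is complete and coincides with the construction the paper sketches.
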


In a proof of Theorem~\ref{thm: Riesz rep thm} (e.g., see discussions on \cite[Chapter~9]{Shapiro_2018}), we construct such a function $\alpha$ by using a continuous linear extension
\begin{equation*}
	\Bar{A} \colon B([a, b], \mathbb{K}) \to \mathbb{K}
\end{equation*}
of $A$ with $\norm{\Bar{A}} = \norm{A}$.
Here $B([a, b], \mathbb{K})$ denotes the linear space of all bounded functions from $[a, b]$ to $\mathbb{K}$ endowed with the supremum norm.
Its existence is ensured by the Hahn-Banach extension theorem in normed spaces (see \cite[Theorem~1 in Section~5 of Chapter~IV]{Yosida_1980}).
See also \cite[Section~4 of Chapter~IV]{Banach}.

\begin{remark}
The Riemann-Stieltjes integrability of any $f \in C([a, b], \mathbb{K})$ with respect to the constructed function $\alpha$ is also obtained in the proof.
This should be compared with Theorem~\ref{thm: RS integrability for scalar-valued functions}.
\end{remark}

The following is a corollary of Theorem~\ref{thm: Riesz rep thm}.

\begin{corollary}\label{cor: a cor of Riesz rep thm}
For any integer $n \ge 1$ and any continuous linear map $A \colon C([a, b], \mathbb{K}^n) \to \mathbb{K}^n$, there exists a function $\alpha \colon [a, b] \to M_n(\mathbb{K})$ of bounded variation such that
\begin{equation*}
	A(f) = \int_a^b \mathrm{d}\alpha(x) \mspace{2mu} f(x)
\end{equation*}
holds for all $f \in C([a, b], \mathbb{K}^n)$.
\end{corollary}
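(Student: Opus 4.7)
The plan is to reduce the statement to $n^2$ applications of the scalar Riesz representation theorem (Theorem~\ref{thm: Riesz rep thm}) and then reassemble the pieces into a matrix-valued function via Lemma~\ref{lem: reduction to RS integrals of scalar-valued functions}. Specifically, for each $j \in \{1, \dots, n\}$ let $\iota_j \colon C([a, b], \mathbb{K}) \to C([a, b], \mathbb{K}^n)$ be the bounded linear inclusion defined by placing the scalar function in the $j$-th slot and zeros elsewhere, and let $\pi_i \colon \mathbb{K}^n \to \mathbb{K}$ be the $i$-th coordinate projection. Then the composition $A_{i, j} \coloneqq \pi_i \circ A \circ \iota_j \colon C([a, b], \mathbb{K}) \to \mathbb{K}$ is a continuous linear functional.

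Applying Theorem~\ref{thm: Riesz rep thm} to each $A_{i, j}$ yields a function $\alpha_{i, j} \colon [a, b] \to \mathbb{K}$ of bounded variation such that
\begin{equation*}
	A_{i, j}(g) = \int_a^b g(x) \mspace{2mu} \mathrm{d}\alpha_{i, j}(x)
\end{equation*}
for every $g \in C([a, b], \mathbb{K})$. Assemble these components into the matrix-valued function $\alpha \coloneqq (\alpha_{i, j})_{i, j \in \{1, \dots, n\}} \colon [a, b] \to M_n(\mathbb{K})$. Since each component $\alpha_{i, j}$ is of bounded variation and the operator norm on $M_n(\mathbb{K})$ is equivalent to the component-wise norm $\abs{\argdot}_2$ defined in \eqref{eq: matrix norm}, it follows that $\alpha$ is of bounded variation.

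It remains to verify the integral representation. For any $f = (f_1, \dots, f_n) \in C([a, b], \mathbb{K}^n)$, decompose $f = \sum_{j = 1}^n \iota_j(f_j)$ and use the linearity of $A$ and $\pi_i$ to obtain
\begin{equation*}
	\pi_i A(f)
	= \sum_{j = 1}^n A_{i, j}(f_j)
	= \sum_{j = 1}^n \int_a^b f_j(x) \mspace{2mu} \mathrm{d}\alpha_{i, j}(x).
\end{equation*}
By Lemma~\ref{lem: reduction to RS integrals of scalar-valued functions}, the right-hand side is exactly the $i$-th component of the matrix-valued Riemann-Stieltjes integral $\int_a^b \mathrm{d}\alpha(x) \mspace{2mu} f(x)$, which therefore exists and equals $A(f)$.

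There is no substantive obstacle here; the proof is essentially a bookkeeping exercise once the scalar Riesz theorem is in hand. The only subtle point worth flagging is that one must invoke Lemma~\ref{lem: reduction to RS integrals of scalar-valued functions} (rather than its converse) to glue the scalar integrals into a single matrix-valued Riemann-Stieltjes integral, since as noted in the example following that lemma, integrability of the matrix product does not decompose component-wise in general, but the sufficiency direction we need does.
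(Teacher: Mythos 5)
Your proof is correct and follows essentially the same route as the paper: define the scalar functionals $A_{i,j}(g) = A(g\bm{e}_j)_i$, apply the scalar Riesz representation theorem to each, and reassemble the resulting bounded-variation components via Lemma~\ref{lem: reduction to RS integrals of scalar-valued functions}. Your explicit remarks on the bounded variation of the assembled matrix-valued $\alpha$ and on using the sufficiency direction of that lemma are consistent with the paper's treatment.
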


Corollary~\ref{cor: a cor of Riesz rep thm} has been used in the literature of RFDEs (e.g., see \cite{Hale_1971_Springer}, \cite{Hale_1977}, \cite{Hale--VerduynLunel_1993}, and \cite{Diekmann--vanGils--Lunel--Walther_1995}).
We now give the proof of Corollary~\ref{cor: a cor of Riesz rep thm} because it is not given in these references.

\begin{proof}[Proof of Corollary~\ref{cor: a cor of Riesz rep thm}]
Let $(\bm{e}_1, \dots, \bm{e}_n)$ be the standard basis of $\mathbb{K}^n$.
For each $g \in C([a, b], \mathbb{K})$ and each $j \in \{1, \dots, n\}$, let $g\bm{e}_j \in C([a, b], \mathbb{K}^n)$ be defined by
\begin{equation*}
	(g\bm{e}_j)(x) \coloneqq g(x)\bm{e}_j
\end{equation*}
for $x \in [a, b]$.
For each $i, j \in \{1, \dots, n\}$, we define a functional $A_{i, j} \colon C([a, b], \mathbb{K}) \to \mathbb{K}$ by
\begin{equation*}
	A_{i, j}(g) \coloneqq A(g\bm{e}_j)_i.
\end{equation*}
Here $y_i$ denotes the $i$-th component of $y \in \mathbb{K}^n$.
Since $A_{i, j}$ is a continuous linear functional, one can choose a function $\alpha_{i, j} \colon [a, b] \to \mathbb{K}$ of bounded variation so that
\begin{equation*}
	A_{i, j}(g) = \int_a^b g(x) \mspace{2mu} \mathrm{d}\alpha_{i, j}(x)
\end{equation*}
holds for all $g \in C([a, b], \mathbb{K})$ from Theorem~\ref{thm: Riesz rep thm}.
By using $f = \sum_{j = 1}^n f_j\bm{e}_j$ for $f = (f_1, \dots, f_n)$, we have
\begin{equation*}
	A(f)_i
	= \sum_{j = 1}^n A(f_j\bm{e}_j)_i
	= \sum_{j = 1}^n A_{i, j}(f_j)
	= \sum_{j = 1}^n \int_a^b f_j(x) \mspace{2mu} \mathrm{d}\alpha_{i, j}(x).
\end{equation*}
From Lemma~\ref{lem: reduction to RS integrals of scalar-valued functions}, this yields that
\begin{equation*}
	A(f) = \int_a^b \mathrm{d}\alpha(x) \mspace{2mu} f(x)
\end{equation*}
holds for all $f \in C([a, b], \mathbb{K}^n)$ by defining a matrix-valued function $\alpha \colon [a, b] \to M_n(\mathbb{K})$ of bounded variation by $\alpha \coloneqq (\alpha_{i, j})_{i, j}$.
This completes the proof.
\end{proof}

\section{Variants of Gronwall's inequality}\label{sec: variants of Gronwall's ineq}

Throughout this appendix, let $[a, b]$ be a closed and bounded interval of $\mathbb{R}$.

\subsection{Gronwall's inequality and its generalization}\label{subsec: Gronwall's ineq}

The following is known as Gronwall's inequality.

\begin{lemma}[ref.\ \cite{Hale_1980}]\label{lem: Gronwall's ineq}
Let $\alpha \in \mathbb{R}$ be a constant and $\beta \colon [a, b] \to [0, \infty)$ be a continuous function.
If a continuous function $u \colon [a, b] \to \mathbb{R}$ satisfies
\begin{equation*}
	u(t) \le \alpha + \int_a^t \beta(s)u(s) \mspace{2mu} \mathrm{d}s
\end{equation*}
for all $t \in [a, b]$, then
\begin{equation*}
	u(t) \le \alpha \exp \biggl( \int_a^t \beta(s) \mspace{2mu} \mathrm{d}s \biggr)
\end{equation*}
holds for all $t \in [a, b]$.
\end{lemma}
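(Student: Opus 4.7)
The plan is to reduce the integral inequality to a differential inequality and then integrate an exact derivative. Define the auxiliary function
\[
	v(t) \coloneqq \alpha + \int_a^t \beta(s)u(s) \, \mathrm{d}s
	\mspace{25mu}
	(t \in [a, b]).
\]
By the fundamental theorem of calculus, $v$ is continuously differentiable on $[a, b]$ with $v'(t) = \beta(t)u(t)$, and the hypothesis of the lemma reads exactly $u(t) \le v(t)$ for all $t \in [a, b]$. Since $\beta(t) \ge 0$, multiplying this pointwise inequality by $\beta(t)$ yields the differential inequality
\[
	v'(t) \le \beta(t)v(t)
	\mspace{25mu}
	(t \in [a, b]).
\]

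Next I would introduce the integrating factor $E(t) \coloneqq \exp\bigl( -\int_a^t \beta(s) \, \mathrm{d}s \bigr)$, which is continuously differentiable on $[a, b]$ with $E'(t) = -\beta(t)E(t) < 0$ (or $E'(t) \le 0$). A direct computation using the product rule gives
\[
	(Ev)'(t)
	= E(t) \bigl[ v'(t) - \beta(t)v(t) \bigr]
	\le 0
\]
for all $t \in [a, b]$, so $Ev$ is non-increasing. Consequently $E(t)v(t) \le E(a)v(a) = \alpha$ for all $t \in [a, b]$, which after dividing by the positive quantity $E(t)$ gives
\[
	v(t) \le \alpha \exp \biggl( \int_a^t \beta(s) \, \mathrm{d}s \biggr).
\]
Combining with $u(t) \le v(t)$ yields the desired inequality.

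There is no real obstacle here; the only mildly delicate point is that the passage from $u \le v$ to $\beta u \le \beta v$ requires the sign hypothesis $\beta(t) \ge 0$, which is exactly why that assumption is made in the statement. One should also note that the argument works without any sign restriction on $\alpha$ or $u$: the proof uses only the pointwise inequality $u \le v$ and does not split into cases based on the sign of $v$.
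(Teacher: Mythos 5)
Your proof is correct and follows essentially the same route as the paper: reduce the integral inequality to the differential inequality $v' \le \beta v$ (the paper writes $v(t) = \int_a^t \beta(s)u(s)\,\mathrm{d}s$ and works with $v + \alpha$, which is exactly your $v$), then show the product with the integrating factor $\exp\bigl(-\int_a^t \beta(s)\,\mathrm{d}s\bigr)$ is non-increasing. Your explicit remarks on where $\beta \ge 0$ is used and on the absence of sign conditions on $\alpha$ and $u$ are accurate.
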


\begin{proof}[Outline of the proof]
To use a technique for scalar homogeneous linear ODEs, let $v(t) \coloneqq \int_a^t \beta(s)u(s) \mspace{2mu} \mathrm{d}s$.
Then the given inequality becomes
\begin{equation*}
	\Dot{v}(t) \le \beta(t)[v(t) + \alpha]
	\mspace{25mu}
	(t \in [a, b]),
\end{equation*}
where the non-negativity of $\beta$ is used.
Since the left-hand side is the derivative of the function $t \mapsto v(t) + \alpha$, it is natural to consider the derivative of
\begin{equation*}
	t \mapsto \exp\biggl( -\int_a^t \beta(s) \mspace{2mu} \mathrm{d}s \biggr)[v(t) + \alpha].
\end{equation*}
Then it holds that this function is strictly monotonically decreasing, which yields the conclusion.
\end{proof}

The following is a generalized version of Gronwall's inequality.

\begin{lemma}[refs.\ \cite{Hale_1977}, \cite{Hale_1980}, \cite{Hale--VerduynLunel_1993}]\label{lem: generalized Gronwall's ineq}
Let $\alpha \colon [a, b] \to \mathbb{R}$ and $\beta \colon [a, b] \to [0, \infty)$ be given continuous functions.
If a continuous function $u \colon [a, b] \to \mathbb{R}$ satisfies
\begin{equation*}
	u(t) \le \alpha(t) + \int_a^t \beta(s)u(s) \mspace{2mu} \mathrm{d}s
\end{equation*}
for all $t \in [a, b]$, then
\begin{equation*}
	u(t)
	\le \alpha(t) + \int_a^t \alpha(s)\beta(s) \exp\biggl( \int_s^t \beta(\tau) \mspace{2mu} \mathrm{d}\tau \biggr) \mspace{2mu} \mathrm{d}s
\end{equation*}
holds for all $t \in [a, b]$.
Furthermore, if $\alpha$ is monotonically increasing, then
\begin{equation*}
	u(t) \le \alpha(t)\exp\biggl( \int_a^t \beta(s) \mspace{2mu} \mathrm{d}s \biggr)
\end{equation*}
holds.
\end{lemma}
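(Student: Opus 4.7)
The plan is to adapt the integrating-factor argument used in the proof of Lemma~\ref{lem: Gronwall's ineq} so that it accommodates a non-constant $\alpha$. First I would set $v(t) \coloneqq \int_a^t \beta(s)u(s) \mspace{2mu} \mathrm{d}s$, so that $v$ is continuously differentiable on $[a, b]$ with $v(a) = 0$ and $v'(t) = \beta(t)u(t)$. Since $\beta(t) \ge 0$, the hypothesis $u(t) \le \alpha(t) + v(t)$ yields the linear differential inequality $v'(t) \le \beta(t)v(t) + \alpha(t)\beta(t)$ on $[a, b]$.

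Next I would multiply both sides by the integrating factor $\mu(t) \coloneqq \exp\bigl(-\int_a^t \beta(\tau) \mspace{2mu} \mathrm{d}\tau\bigr)$ and observe that $(\mu v)'(t) = \mu(t)[v'(t) - \beta(t)v(t)] \le \mu(t)\alpha(t)\beta(t)$. Integrating from $a$ to $t$ and using $v(a) = 0$ gives $\mu(t)v(t) \le \int_a^t \mu(s)\alpha(s)\beta(s) \mspace{2mu} \mathrm{d}s$, which after dividing by $\mu(t)$ is exactly
\[
v(t) \le \int_a^t \alpha(s)\beta(s)\exp\biggl( \int_s^t \beta(\tau) \mspace{2mu} \mathrm{d}\tau \biggr) \mspace{2mu} \mathrm{d}s.
\]
Inserting this into $u(t) \le \alpha(t) + v(t)$ proves the first assertion.

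For the sharpened conclusion under monotonicity, I would use $\alpha(s) \le \alpha(t)$ for $s \in [a, t]$ to pull $\alpha(t)$ out of the integral and then recognise the remaining integrand as a perfect derivative: the identity
\[
\frac{\mathrm{d}}{\mathrm{d}s}\exp\biggl( \int_s^t \beta(\tau) \mspace{2mu} \mathrm{d}\tau \biggr) = -\beta(s)\exp\biggl( \int_s^t \beta(\tau) \mspace{2mu} \mathrm{d}\tau \biggr)
\]
combined with the fundamental theorem of calculus evaluates the integral to $\exp\bigl(\int_a^t \beta(\tau) \mspace{2mu} \mathrm{d}\tau\bigr) - 1$, so that after cancellation $u(t) \le \alpha(t)\exp\bigl(\int_a^t \beta(\tau) \mspace{2mu} \mathrm{d}\tau\bigr)$. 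There is no serious obstacle; the only care needed is that $u$ is assumed only continuous, so the differential manipulation must be carried out on the $C^1$ function $v$ rather than on $u$ itself, and this is automatic in the setup above.
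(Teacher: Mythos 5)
Your proposal is correct and takes essentially the same route as the paper: you set $v(t) \coloneqq \int_a^t \beta(s)u(s)\,\mathrm{d}s$, bound the derivative of $\exp\bigl(-\int_a^t \beta(s)\,\mathrm{d}s\bigr)v(t)$, integrate, and combine with $u(t) \le \alpha(t) + v(t)$, exactly as the paper sketches. You also supply the details of the monotone case (pulling $\alpha(t)$ out of the integral, justified by $\beta \ge 0$, and evaluating $\int_a^t \beta(s)\exp\bigl(\int_s^t \beta(\tau)\,\mathrm{d}\tau\bigr)\mathrm{d}s = \exp\bigl(\int_a^t \beta(\tau)\,\mathrm{d}\tau\bigr) - 1$), which the paper leaves to the cited references; this is correct.
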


By letting $v(t) \coloneqq \int_a^t \beta(s)u(s) \mspace{2mu} \mathrm{d}s$, one can obtain
\begin{equation*}
	\dv{t} \mspace{2mu} \exp\biggl( -\int_a^t \beta(s) \mspace{2mu} \mathrm{d}s \biggr)v(t)
	\le \exp\biggl( -\int_a^t \beta(s) \mspace{2mu} \mathrm{d}s \biggr)\beta(t)\alpha(t).
\end{equation*}
Then the first inequality is obtained by integrating both sides in combination with $u(t) \le \alpha(t) + v(t)$.
See \cite[Section~I.6]{Hale_1980}, \cite[Lemma~3.1 in Section~1.3]{Hale_1977}, and \cite[Lemma~3.1 in Section~1.3]{Hale--VerduynLunel_1993} for the detail of the proof.

\subsection{Gronwall's inequality and RFDEs}\label{subsec: Gronwall's ineq and RFDEs}

In this subsection, let $r > 0$ and $E = (E, \norm{\argdot})$ be a normed space.
For each continuous function $u \colon [a - r, b] \to E$ and each $t \in [a, b]$, let $u_t \in C([-r, 0], E)$ be defined by
\begin{equation*}
	u_t(\theta) \coloneqq u(t + \theta)
	\mspace{25mu}
	(\theta \in [-r, 0]).
\end{equation*}
It holds that the function $[a, b] \ni t \mapsto u_t \in C([-r, 0], E)$ is continuous.

In the context of RFDEs, it is often convenient to use the following result rather than to use Gronwall's inequality directly.

\begin{lemma}[cf.\ \cite{Hartung_2011}]\label{lem: Gronwall's ineq and RFDEs}
Let $\alpha \in \mathbb{R}$ be a constant and $\beta \colon [a, b] \to [0, \infty)$ be a given continuous function.
If a continuous function $u \colon [a - r, b] \to E$ satisfies
\begin{equation*}
	\norm{u(t)} \le \alpha + \int_a^t \beta(s)\norm{u_s} \mspace{2mu} \mathrm{d}s
\end{equation*}
for all $t \in [a, b]$, then
\begin{equation*}
	\norm{u_t} \le \max\{\norm{u_a}, \alpha\} \exp\biggl( \int_a^t \beta(s) \mspace{2mu} \mathrm{d}s \biggr)
\end{equation*}
holds for all $t \in [a, b]$.
\end{lemma}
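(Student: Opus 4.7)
The plan is to reduce the statement to the standard Gronwall inequality (Lemma~\ref{lem: Gronwall's ineq}) applied to the continuous scalar function $v \colon [a, b] \to [0, \infty)$ defined by $v(t) \coloneqq \norm{u_t}$. The key point is to turn the hypothesis, which controls only $\norm{u(t)}$, into a pointwise-in-$\theta$ bound on $\norm{u(t + \theta)}$ uniform in $\theta \in [-r, 0]$, and then take the supremum over $\theta$.

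First I would fix $t \in [a, b]$ and $\theta \in [-r, 0]$, and split into two cases. If $t + \theta \ge a$, then by the hypothesis applied at the point $t + \theta$,
\begin{equation*}
	\norm{u(t + \theta)}
	\le \alpha + \int_a^{t + \theta} \beta(s) \norm{u_s} \mspace{2mu} \mathrm{d}s
	\le \alpha + \int_a^t \beta(s) \norm{u_s} \mspace{2mu} \mathrm{d}s,
\end{equation*}
where the second inequality uses $\beta \ge 0$ and $t + \theta \le t$. If instead $t + \theta < a$, then $t + \theta - a \in [-r, 0]$ and $u(t + \theta) = u_a(t + \theta - a)$, so $\norm{u(t + \theta)} \le \norm{u_a}$.

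Setting $M \coloneqq \max\{\norm{u_a}, \alpha\}$ and combining the two cases (noting that $M \ge \alpha$ and $M \ge \norm{u_a}$), one obtains
\begin{equation*}
	\norm{u(t + \theta)}
	\le M + \int_a^t \beta(s) \norm{u_s} \mspace{2mu} \mathrm{d}s
\end{equation*}
for every $\theta \in [-r, 0]$. Taking the supremum over $\theta$ yields
\begin{equation*}
	\norm{u_t}
	\le M + \int_a^t \beta(s) \norm{u_s} \mspace{2mu} \mathrm{d}s
	\mspace{25mu}
	(t \in [a, b]).
\end{equation*}

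Finally, since $[a, b] \ni t \mapsto \norm{u_t} \in [0, \infty)$ is continuous (this follows from the uniform continuity of $u$ on the compact set $[a - r, b]$), Lemma~\ref{lem: Gronwall's ineq} applied with the constant $M$ and the continuous non-negative function $\beta$ gives the desired conclusion. There is no real obstacle here; the only subtle point is recognizing that the $\theta$ for which $t + \theta < a$ must be handled separately by the initial data bound $\norm{u_a}$, which is precisely why the constant $M$ takes the form $\max\{\norm{u_a}, \alpha\}$ rather than simply $\alpha$.
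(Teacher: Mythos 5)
Your proof is correct and follows essentially the same route the paper takes: the case split on $t+\theta \ge a$ versus $t+\theta < a$, absorption of both cases into the constant $\max\{\norm{u_a}, \alpha\}$, and an application of Gronwall's inequality to the continuous function $t \mapsto \norm{u_t}$ is exactly the argument the paper gives for the generalized version (Lemma~\ref{lem: generalized Gronwall's inequality and RFDEs}), of which this lemma is the constant-$\alpha$ special case.
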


This should be compared with \cite[Lemma~2.1]{Hartung_2011}.
We note that the argument of the proof has appeared in \cite[Theorem~1.1 in Chapter~6]{Hale_1977} and \cite[Theorem~1.1 in Chapter~6]{Hale--VerduynLunel_1993}.

A generalization of Lemma~\ref{lem: Gronwall's ineq and RFDEs} is possible by using Lemma~\ref{lem: generalized Gronwall's ineq}.

\begin{lemma}\label{lem: generalized Gronwall's inequality and RFDEs}
Let $\alpha \colon [a, b] \to \mathbb{R}$ and $\beta \colon [a, b] \to [0, \infty)$ be given continuous functions.
If a continuous function $u \colon [a - r, b] \to E$ satisfies
\begin{equation*}
	\norm{u(t)} \le \alpha(t) + \int_a^t \beta(s)\norm{u_s} \mspace{2mu} \mathrm{d}s
\end{equation*}
for all $t \in [a, b]$ and $\alpha$ is monotonically increasing, then
\begin{equation*}
	\norm{u_t} \le \max\{\norm{u_a}, \alpha(t)\} \exp\biggl( \int_a^t \beta(s) \mspace{2mu} \mathrm{d}s \biggr)
\end{equation*}
holds for all $t \in [a, b]$.
\end{lemma}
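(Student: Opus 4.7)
The plan is to reduce this to the generalized Gronwall inequality of Lemma~\ref{lem: generalized Gronwall's ineq} applied to the scalar function $v(t) \coloneqq \norm{u_t}$ on $[a, b]$, exactly in the spirit of the proof of Lemma~\ref{lem: Gronwall's ineq and RFDEs}. The key device will be to package the two sources of upper bounds (the initial segment on $[a-r, a]$ and the forcing function $\alpha$ on $[a, b]$) into a single continuous, monotonically increasing majorant $M \colon [a, b] \to \mathbb{R}$ defined by
\begin{equation*}
    M(t) \coloneqq \max\{\norm{u_a}, \alpha(t)\}.
\end{equation*}
By construction $M$ inherits continuity from $\alpha$ and monotonicity from $\alpha$, and satisfies $M(t) \ge \norm{u_a}$ and $M(t) \ge \alpha(t)$.

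First, I will establish the pointwise estimate
\begin{equation*}
    \norm{u(t + \theta)} \le M(t) + \int_a^t \beta(s)\norm{u_s} \mspace{2mu}\mathrm{d}s
    \mspace{25mu} (t \in [a, b],\ \theta \in [-r, 0])
\end{equation*}
by a case distinction on the sign of $t + \theta - a$. When $t + \theta < a$, the argument lies in the initial segment, so $\norm{u(t + \theta)} \le \norm{u_a} \le M(t)$. When $t + \theta \ge a$, the hypothesis yields $\norm{u(t + \theta)} \le \alpha(t + \theta) + \int_a^{t + \theta} \beta(s)\norm{u_s} \mspace{2mu}\mathrm{d}s$, and I bound this from above by $\alpha(t) + \int_a^t \beta(s)\norm{u_s} \mspace{2mu}\mathrm{d}s \le M(t) + \int_a^t \beta(s)\norm{u_s} \mspace{2mu}\mathrm{d}s$ using the monotonicity of $\alpha$ and the non-negativity of $\beta$ and $\norm{u_s}$. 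Taking the supremum over $\theta \in [-r, 0]$ gives
\begin{equation*}
    v(t) = \norm{u_t} \le M(t) + \int_a^t \beta(s)v(s) \mspace{2mu}\mathrm{d}s.
\end{equation*}

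Since $v \colon [a, b] \to [0, \infty)$ is continuous and $M$ is continuous and monotonically increasing, Lemma~\ref{lem: generalized Gronwall's ineq} applies and yields $v(t) \le M(t)\exp\bigl( \int_a^t \beta(s) \mspace{2mu}\mathrm{d}s \bigr)$, which is exactly the desired conclusion. There is no genuine obstacle; the only subtlety is noticing that the initial-segment case and the forcing term must be absorbed into a \emph{single} majorant that is still monotone, so that the monotone-$\alpha$ conclusion of Lemma~\ref{lem: generalized Gronwall's ineq} is available rather than just its general form.
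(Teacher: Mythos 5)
Your proposal is correct and follows essentially the same route as the paper's proof: the same case distinction on $t + \theta \ge a$ versus $t + \theta < a$, the same combined estimate $\norm{u_t} \le \max\{\norm{u_a}, \alpha(t)\} + \int_a^t \beta(s)\norm{u_s}\,\mathrm{d}s$, and the same appeal to the monotone case of Lemma~\ref{lem: generalized Gronwall's ineq}. Your explicit remark that the majorant $\max\{\norm{u_a}, \alpha(\argdot)\}$ is monotonically increasing is a welcome clarification of a point the paper leaves implicit.
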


\begin{proof}
Let $t \in [a, b]$ be fixed and $\theta \in [-r, 0]$ be given.
When $t + \theta \ge a$, we have
\begin{align*}
	\norm{u(t + \theta)}
	&\le \alpha(t + \theta) + \int_a^{t + \theta} \beta(s)\norm{u_s} \mspace{2mu} \mathrm{d}s \\
	&\le \alpha(t) + \int_a^t \beta(s)\norm{u_s} \mspace{2mu} \mathrm{d}s.
\end{align*}
Here the property that $\alpha$ is monotonically increasing and the non-negativity of $\beta$ are used.
When $t + \theta \le a$, we have
\begin{equation*}
	\norm{u(t + \theta)} \le \norm{u_a}.
\end{equation*}
By combining the above inequalities, we obtain
\begin{equation*}
	\norm{u_t}
	\le \max\{\norm{u_a}, \alpha(t)\} + \int_a^t \beta(s)\norm{u_s} \mspace{2mu} \mathrm{d}s.
\end{equation*}
Since the functions $[a, b] \ni t \mapsto \norm{u_t} \in [0, \infty)$ and $[a, b] \ni t \mapsto \max\{\norm{u_a}, \alpha(t)\} \in [0, \infty)$ are continuous, the conclusion is obtained by applying Lemma~\ref{lem: generalized Gronwall's ineq}.
\end{proof}

\section{Lemmas on fixed point argument}\label{sec: lemmas on fixed pt argument}

Let $E = (E, \norm{\argdot})$ be a normed space and $r > 0$ be a constant.
For each $\gamma > 0$, let
\begin{equation*}
	Y_\gamma
	\coloneqq \Set{y \in C([-r, \infty), E)}{\text{$y_0 = 0$, $\norm{y}_\gamma < \infty$}}
\end{equation*}
be a normed space endowed with the norm $\norm{\argdot}_\gamma$ given by
\begin{equation*}
	\norm{y}_\gamma
	\coloneqq \sup_{t \ge 0} \mspace{2mu} (\mathrm{e}^{-\gamma t}\norm{y_t})
	< \infty.
\end{equation*}
For the notation $\norm{y_t}$, see Subsection~\ref{subsec: Gronwall's ineq and RFDEs}.

\begin{lemma}\label{lem: norm{argdot}_gamma}
For any continuous function $y \colon [-r, \infty) \to E$ with $y_0 = 0$,
\begin{equation*}
	\norm{y}_\gamma = \sup_{t \ge 0} \mspace{2mu} (\mathrm{e}^{-\gamma t}\norm{y(t)})
\end{equation*}
holds.
\end{lemma}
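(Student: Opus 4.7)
The proof proposal is to verify the two inequalities $\ge$ and $\le$ between the two suprema. Writing $M \coloneqq \sup_{t \ge 0} \mathrm{e}^{-\gamma t}\norm{y(t)}$, the inequality $\norm{y}_\gamma \ge M$ is immediate since $y(t) = y_t(0)$, so $\norm{y(t)} \le \norm{y_t}$ for every $t \ge 0$, and multiplying by $\mathrm{e}^{-\gamma t}$ and taking the supremum gives the bound.

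The nontrivial direction is $\norm{y}_\gamma \le M$. The plan is to fix $t \ge 0$ and $\theta \in [-r, 0]$ and estimate $\mathrm{e}^{-\gamma t}\norm{y(t + \theta)}$. The key observation is that the assumption $y_0 = 0$ means $y$ vanishes on $[-r, 0]$, so if $t + \theta < 0$ then $y(t + \theta) = 0$ and the contribution is zero. Hence one only needs to consider those $\theta$ with $t + \theta \ge 0$, in which case one rewrites
\begin{equation*}
	\mathrm{e}^{-\gamma t}\norm{y(t + \theta)}
	= \mathrm{e}^{\gamma \theta}\mathrm{e}^{-\gamma(t + \theta)}\norm{y(t + \theta)}
	\le \mathrm{e}^{-\gamma(t + \theta)}\norm{y(t + \theta)}
	\le M,
\end{equation*}
using $\mathrm{e}^{\gamma \theta} \le 1$ since $\theta \le 0$ and $\gamma > 0$, together with $t + \theta \ge 0$ so that the last quantity is dominated by the supremum defining $M$.

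Taking the supremum over $\theta \in [-r, 0]$ yields $\mathrm{e}^{-\gamma t}\norm{y_t} \le M$, and then the supremum over $t \ge 0$ gives $\norm{y}_\gamma \le M$, completing the proof. There is no real obstacle here; the only point requiring any care is the case distinction $t + \theta \ge 0$ versus $t + \theta < 0$, which is precisely where the hypothesis $y_0 = 0$ is used.
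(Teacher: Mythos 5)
Your proof is correct and follows essentially the same argument as the paper: the easy bound via $\norm{y(t)} \le \norm{y_t}$, and the reverse bound by writing $\mathrm{e}^{-\gamma t}\norm{y(t+\theta)} = \mathrm{e}^{\gamma\theta}\mathrm{e}^{-\gamma(t+\theta)}\norm{y(t+\theta)}$ with $\mathrm{e}^{\gamma\theta} \le 1$ and $y_0 = 0$ handling $t+\theta < 0$. Your explicit case distinction is exactly where the paper implicitly uses the hypothesis, so nothing further is needed.
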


\begin{proof}
Since $\norm{y(t)} \le \norm{y_t}$ holds for all $t \ge 0$,
\begin{equation*}
	\sup_{t \ge 0} \mspace{2mu} (\mathrm{e}^{-\gamma t}\norm{y(t)}) \le \norm{y}_\gamma
\end{equation*}
holds.
The reverse inequality also follows in view of
\begin{equation*}
	\mathrm{e}^{-\gamma t}\norm{y(t + \theta)}
	= \mathrm{e}^{-\gamma(t + \theta)}\norm{y(t + \theta)} \cdot \mathrm{e}^{\gamma \theta}
	\le \sup_{t \ge 0} \mspace{2mu} (\mathrm{e}^{-\gamma t}\norm{y(t)})
\end{equation*}
for $t \ge 0$ and $\theta \in [-r, 0]$.
Here $y_0 = 0$ and $\mathrm{e}^{\gamma \theta} \le 1$ are used.
\end{proof}

\begin{lemma}\label{lem: completeness of Y_gamma}
If $E$ is a Banach space, then $Y_\gamma$ is also a Banach space.
\end{lemma}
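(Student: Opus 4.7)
My plan is to prove completeness by the standard three-step procedure for weighted supremum spaces: first extract a candidate limit via uniform convergence on compacts, then show the limit lies in $Y_\gamma$, then show the convergence takes place in the norm $\norm{\argdot}_\gamma$.

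Let $(y^k)_{k \ge 1}$ be a Cauchy sequence in $Y_\gamma$. The key observation is that for every $T > 0$ and every pair $k, l$,
\begin{equation*}
	\sup_{t \in [-r, T]} \norm{y^k(t) - y^l(t)}
	\le \mathrm{e}^{\gamma T} \norm{y^k - y^l}_\gamma,
\end{equation*}
using Lemma~\ref{lem: norm{argdot}_gamma} together with the fact that $y^k_0 - y^l_0 = 0$ forces the sup on $[-r, 0]$ to vanish. Hence $(y^k)$ is uniformly Cauchy on each compact sub-interval of $[-r, \infty)$, and since $E$ is complete it converges uniformly on compacts to some continuous function $y \colon [-r, \infty) \to E$. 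Passing the identity $y^k_0 = 0$ to the limit yields $y_0 = 0$.

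Next I would verify that $y \in Y_\gamma$. A Cauchy sequence is bounded, so there exists $M \ge 0$ with $\norm{y^k}_\gamma \le M$ for all $k$; equivalently, $\mathrm{e}^{-\gamma t}\norm{y^k(t)} \le M$ for every $t \ge 0$. Letting $k \to \infty$ and using pointwise convergence gives $\mathrm{e}^{-\gamma t}\norm{y(t)} \le M$ for every $t \ge 0$, hence $\norm{y}_\gamma \le M < \infty$ by Lemma~\ref{lem: norm{argdot}_gamma}.

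Finally, to show $\norm{y^k - y}_\gamma \to 0$, fix $\ep > 0$ and choose $N$ with $\norm{y^k - y^l}_\gamma < \ep$ for all $k, l \ge N$. For each such $k, l$ and each $t \ge 0$,
\begin{equation*}
	\mathrm{e}^{-\gamma t}\norm{y^k(t) - y^l(t)} < \ep.
\end{equation*}
Letting $l \to \infty$ (with $k$ and $t$ fixed) gives $\mathrm{e}^{-\gamma t}\norm{y^k(t) - y(t)} \le \ep$, and taking the supremum over $t \ge 0$ yields $\norm{y^k - y}_\gamma \le \ep$ for all $k \ge N$. The only point requiring any care is the interchange between the uniform-on-compacts limit that produces $y$ and the weighted global estimate that controls $\norm{y}_\gamma$; once one notices that every weighted estimate is a \emph{pointwise} estimate in $t$, the passage to the limit is immediate and there is no genuine obstacle.
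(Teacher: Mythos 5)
Your proof is correct and follows essentially the same route as the paper: extract the limit from the pointwise/uniform-on-compacts Cauchy estimate, note continuity and $y_0 = 0$, and pass to the limit in the pointwise weighted estimate to get convergence in $\norm{\argdot}_\gamma$. Your explicit verification that $\norm{y}_\gamma < \infty$ via boundedness of the Cauchy sequence is a small addition the paper leaves implicit (there it follows from $\norm{y^k - y}_\gamma \le \varepsilon$), but the argument is the same.
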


\begin{proof}
Let $(y^k)_{k = 1}^\infty$ be a Cauchy sequence in $Y_\gamma$.
We choose $\varepsilon > 0$.
Then for all sufficiently large $k, \ell \ge 1$, we have $\norm{y^k - y^\ell}_\gamma \le \varepsilon$.
From Lemma~\ref{lem: norm{argdot}_gamma}, this means that for all sufficiently large $k, \ell \ge 1$,
\begin{equation*}
	\norm{y^k(t) - y^\ell(t)} \le \varepsilon \mathrm{e}^{\gamma t}
\end{equation*}
holds for all $t \ge 0$.
This implies that $(y^k(t))_{k = 1}^\infty$ is a Cauchy sequence for each $t \ge 0$, and therefore, $(y^k)_{k = 1}^\infty$ has the limit function $y \colon [-r, \infty) \to E$ with $y_0 = 0$.
Since the above relation shows that the convergence of $(y^k)_{k = 1}^\infty$ to $y$ is uniform on each closed and bounded interval of $\mathbb{R}$ by taking the limit as $\ell \to \infty$, the limit function $y$ is continuous.
Then it is concluded that
\begin{equation*}
	\norm{y^k - y}_\gamma \le \varepsilon
\end{equation*}
holds for all sufficiently large $k \ge 1$, which implies that $(y^k)_{k = 1}^\infty$ converges to $y$ in $Y_\gamma$.
\end{proof}

\section{Convolution continued}\label{sec: convolution continued}

In this appendix, we discuss the convolution for functions in $\mathcal{L}^1_\mathrm{loc}([0, \infty), M_n(\mathbb{K}))$.
The purpose here is to share results on the convolution and their proofs in the literature of RFDEs.
The results discussed here extend the results in Subsection~\ref{subsec: convolution and RS convolution}, but they will not be used in this paper.
See also \cite[Proposition~A.4, Theorems~A.5, A.6, A.7 in Appendix~A]{Kappel_2006}.

\subsection{Convolution for locally essentially bounded functions and locally Lebesgue integrable functions}\label{sebsec: convolution, L^infty_loc and L^1_loc}

We first recall that a function $g \in \mathcal{L}^1_\mathrm{loc}([0, \infty), M_n(\mathbb{K}))$ is said to be \textit{locally essentially bounded} if
\begin{equation*}
	\esssup_{t \in [0, T]} \abs{g(t)}
	\coloneqq \inf \Set{M > 0}{\text{$\abs{g(t)} \le M$ holds for almost all $t \in [0, T]$}}
\end{equation*}
is finite for all $T > 0$.
Let
\begin{align*}
	&\mathcal{L}^\infty_\mathrm{loc}([0, \infty), M_n(\mathbb{K})) \\
	&\coloneqq \Set{g \in \mathcal{L}^1_\mathrm{loc}([0, \infty), M_n(\mathbb{K}))}{\text{$g$ is locally essentially bounded}},
\end{align*}
which is a linear subspace of $\mathcal{L}^1_\mathrm{loc}([0, \infty), M_n(\mathbb{K}))$.
As in Definition~\ref{dfn: convolution for locally Riemann integrable functions}, we introduce the following.

\begin{definition}\label{dfn: convolution for L^infty_loc and L^1_loc}
For each $f \in \mathcal{L}^1_\mathrm{loc}([0, \infty), M_n(\mathbb{K}))$ and each $g \in \mathcal{L}^\infty_\mathrm{loc}([0, \infty), M_n(\mathbb{K}))$, we define a function $g * f \colon [0, \infty) \to M_n(\mathbb{K})$ by
\begin{equation*}
	(g * f)(t)
	\coloneqq \int_0^t g(t - u)f(u) \mspace{2mu} \mathrm{d}u
	= \int_0^t g(u)f(t - u) \mspace{2mu} \mathrm{d}u
\end{equation*}
for $t \ge 0$.
Here the integrals are Lebesgue integrals.
The function $g * f$ is called the \textit{convolution} of $g$ and $f$.
\end{definition}

We note that
\begin{equation*}
	\abs{(g * f)(t)}
	\le \esssup_{u \in [0, t]} \abs{g(u)} \cdot \int_0^t \abs{f(u)} \mspace{2mu} \mathrm{d}u
\end{equation*}
holds for all $t \ge 0$.
The following result should be compared with Lemma~\ref{lem: continuity of convolution, loc Riemann integrable function and continuous function}.

\begin{lemma}\label{lem: continuity of convolution, L^infty_loc and L^1_loc}
Let $f \in \mathcal{L}^1_\mathrm{loc}([0, \infty), M_n(\mathbb{K}))$ and $g \in \mathcal{L}^\infty_\mathrm{loc}([0, \infty), M_n(\mathbb{K}))$.
Then $g * f$ is continuous.
\end{lemma}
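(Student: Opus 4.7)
My approach follows the template already used for Lemma~\ref{lem: continuity of convolution, loc Riemann integrable function and continuous function} and Lemma~\ref{lem: continuity of RS convolution}, but I will invoke the $\mathcal{L}^1$-continuity of translations in place of uniform continuity. Fix an arbitrary $T > 0$ and $t \in [0, T]$; I will show that $(g * f)(t + h) \to (g * f)(t)$ as $h \to 0$. By symmetry it suffices to treat $h > 0$ sufficiently small (say with $t + h \le T + 1$). Using the first representation in Definition~\ref{dfn: convolution for L^infty_loc and L^1_loc} and splitting the integral at $u = t$, I would write
\begin{equation*}
(g * f)(t + h) - (g * f)(t) = \int_0^t g(u)[f(t + h - u) - f(t - u)] \mspace{2mu} \dd u + \int_t^{t + h} g(u) f(t + h - u) \mspace{2mu} \dd u.
\end{equation*}

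Setting $M \coloneqq \esssup_{u \in [0, T + 1]} \abs{g(u)} < \infty$, the substitution $v = t + h - u$ bounds the boundary term by $M \int_0^h \abs{f(v)} \mspace{2mu} \dd v$, which vanishes as $h \downarrow 0$ because $f|_{[0, T + 1]} \in \mathcal{L}^1$. For the main term, the substitution $v = t - u$ yields the estimate
\begin{equation*}
M \int_0^t \abs{f(v + h) - f(v)} \mspace{2mu} \dd v \le M \int_{\mathbb{R}} \abs{\Tilde{f}(v + h) - \Tilde{f}(v)} \mspace{2mu} \dd v,
\end{equation*}
where $\Tilde{f}$ denotes the extension of $f|_{[0, T + 1]}$ by zero to all of $\mathbb{R}$, so that $\Tilde{f} \in \mathcal{L}^1(\mathbb{R}, M_n(\mathbb{K}))$. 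The right-hand side tends to $0$ as $h \to 0$ by the classical continuity of translation in $\mathcal{L}^1$, which for matrix-valued functions reduces component-wise to the scalar case.

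The main obstacle I expect is the careful justification of the $\mathcal{L}^1$-translation continuity for matrix-valued, only locally integrable $f$: one must cut off and extend by zero in such a way that a single global $\mathcal{L}^1$ bound controls the translates uniformly for $v \in [0, T]$ and $\abs{h}$ small. Once that is in place, the case $h < 0$ is the obvious mirror argument (swap the roles of $t$ and $t + h$ in the decomposition), and continuity of $g * f$ at $t = 0$ follows from the boundary-term estimate alone, since $(g * f)(0) = 0$. Since $T > 0$ was arbitrary, $g * f$ is continuous on $[0, \infty)$, which is the desired conclusion.
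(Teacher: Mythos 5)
Your argument is correct and essentially identical to the paper's own proof: the paper likewise truncates $f$ to a bounded interval and extends by zero to obtain $\Tilde{f} \in \mathcal{L}^1(\mathbb{R}, M_n(\mathbb{K}))$, splits the increment of $g * f$ into a translation-difference term and a boundary term, and concludes from the essential bound on $g$ (H\"{o}lder's inequality), the continuity of translation in $\mathcal{L}^1$, and the integrability of $\Tilde{f}$. The only cosmetic difference is that the paper states the decomposition around a fixed $t_0$ with the zero-extension handling both one-sided limits simultaneously, whereas you treat $h > 0$ and $h < 0$ separately (and, incidentally, your displayed splitting uses the second, not the first, representation in Definition~\ref{dfn: convolution for L^infty_loc and L^1_loc}).
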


\begin{proof}[Outline of the proof]
We show the continuity of $g * f$ on $[0, T]$ for each fixed $T > 0$.
We define a function $\Tilde{f} \colon \mathbb{R} \to M_n(\mathbb{K})$ by
\begin{equation*}
	\Tilde{f}(t) \coloneqq
	\begin{cases}
		f(t) & (t \in \dom(f) \cap [0, T]), \\
		O & (\text{otherwise}).
	\end{cases}
\end{equation*}
Then $\Tilde{f} \in \mathcal{L}^1(\mathbb{R}, M_n(\mathbb{K}))$, and
\begin{equation*}
	(g * f)(t) = \int_0^t g(u)\Tilde{f}(t - u) \mspace{2mu} \mathrm{d}u
\end{equation*}
holds for all $t \in [0, T]$.
We fix $t_0 \in [0, T]$.
By the reasoning as in the proof of Lemma~\ref{lem: continuity of RS convolution}, we have
\begin{align*}
	&(g * f)(t) - (g * f)(t_0) \\
	&= \int_0^{t_0} g(u) \bigl[ \Tilde{f}(t - u) - \Tilde{f}(t_0 - u) \bigr] \mspace{2mu} \mathrm{d}u
	+ \int_{t_0}^t g(u)\Tilde{f}(t - u) \mspace{2mu} \mathrm{d}u
\end{align*}
for all $t \in [0, T]$.
Therefore, the continuity of $g * f$ on $[0, T]$ is obtained by H\"{o}lder's inequality, the continuity of the translation in $\mathcal{L}^1$, and the integrability of $\Tilde{f}$.
\end{proof}

\subsection{Convolution for locally Lebesgue integrable functions}

The notion of convolution in Definition~\ref{dfn: convolution for L^infty_loc and L^1_loc} is not satisfactory in the sense that the condition on $f$ and $g$ is not symmetry.
To introduce the notion of convolution for functions in $\mathcal{L}^1_\mathrm{loc}([0, \infty), M_n(\mathbb{K}))$, we need the following.

\begin{theorem}\label{thm: convolution thm}
Let $f, g \in \mathcal{L}^1_\mathrm{loc}([0, \infty), M_n(\mathbb{K}))$ be given.
Then the following statements hold:
\begin{enumerate}
\item For almost all $t > 0$, $u \mapsto g(t - u)f(u)$ belongs to $\mathcal{L}^1_\mathrm{loc}([0, t], M_n(\mathbb{K}))$.
\item The function $g * f$ defined by
\begin{equation*}
	(g * f)(t) \coloneqq \int_0^t g(t - u)f(u) \mspace{2mu} \mathrm{d}u
\end{equation*}
for almost all $t \ge 0$ belongs to $\mathcal{L}^1_\mathrm{loc}([0, \infty), M_n(\mathbb{K}))$.
\item For all $T \ge 0$,
\begin{equation*}
	\int_0^T \abs{(g * f)(t)} \mspace{2mu} \mathrm{d}t
	\le \biggl( \int_0^T \abs{g(t)} \mspace{2mu} \mathrm{d}t \biggr)
	\cdot \biggl( \int_0^T \abs{f(t)} \mspace{2mu} \mathrm{d}t \biggr)
\end{equation*}
holds.
\end{enumerate}
\end{theorem}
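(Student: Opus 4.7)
The plan is to fix an arbitrary $T > 0$ and establish all three statements on $[0, T]$; a standard exhaustion by a countable family of such $T$ then yields the locally integrable versions on $[0, \infty)$. The whole argument is a careful application of the Fubini--Tonelli theorem to the integrand $(t, u) \mapsto g(t - u) f(u)$ on the triangle
\[
\Delta_T = \Set{(t, u) \in [0, T]^2}{0 \le u \le t},
\]
or equivalently, after extending $g$ by $O$ on $(-\infty, 0)$, on the square $[0, T]^2$ (the extension just kills the region $u > t$).

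First I would verify measurability of $(t, u) \mapsto g(t - u) f(u)$ on $[0, T]^2$ as a matrix-valued function. The factor $(t, u) \mapsto f(u)$ is measurable since it depends only on one variable and $f$ is measurable. For the factor $(t, u) \mapsto g(t - u)$, the affine change of variables $\Phi(t, u) \coloneqq (t - u, u)$ is a homeomorphism of $\mathbb{R}^2$ with $|\det D\Phi| = 1$, and it maps the measurable set where $g \circ \pi_1$ is defined to a measurable set; precomposition with $\Phi$ preserves measurability. Since matrix multiplication is continuous, the product is measurable. Reducing to the scalar case by Lemma~\ref{lem: reduction to RS integrals of scalar-valued functions}-style component-wise considerations is an alternative, but the above is cleaner. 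The norm $(t, u) \mapsto \abs{g(t - u) f(u)}$ is then a non-negative measurable scalar function.

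Next I would apply Tonelli's theorem to $\abs{g(t - u) f(u)}$ on $[0, T]^2$, integrating first in $t$ for fixed $u$ and using the substitution $s = t - u$ together with the vanishing of $g$ on $(-\infty, 0)$:
\[
\int_0^T \!\int_0^T \abs{g(t - u) f(u)} \mspace{2mu} \mathrm{d}t \mspace{2mu} \mathrm{d}u
= \int_0^T \!\biggl( \int_{-u}^{T - u} \abs{g(s)} \mspace{2mu} \mathrm{d}s \biggr) \abs{f(u)} \mspace{2mu} \mathrm{d}u
\le \biggl( \int_0^T \abs{g(s)} \mspace{2mu} \mathrm{d}s \biggr) \biggl( \int_0^T \abs{f(u)} \mspace{2mu} \mathrm{d}u \biggr),
\]
which is finite. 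Fubini's theorem then guarantees that for almost all $t \in [0, T]$ the slice $u \mapsto g(t - u) f(u)$ is integrable on $[0, T]$ (hence on $[0, t]$ by restriction, since the integrand vanishes on $(t, T]$ after extension); this yields statement~(1). Furthermore the iterated integral $t \mapsto \int_0^T g(t - u) f(u) \mspace{2mu} \mathrm{d}u$ is well defined for a.e.\ $t$, is measurable in $t$, and agrees with $(g * f)(t)$ on the set where it is defined; applying $\abs{\argdot} \le \int \abs{\argdot}$ inside and Tonelli once more gives the norm inequality in statement~(3), which in turn implies (2).

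The main obstacle is the joint measurability of $(t, u) \mapsto g(t - u) f(u)$, because $f$ and $g$ are only defined almost everywhere and the usual product-of-measurable-functions lemma has to be justified through the affine change of variables; once this point is handled rigorously, everything else is a straightforward Fubini--Tonelli calculation. A minor technical annoyance is keeping track of the negligible sets where $f$ or $g$ are undefined to ensure that the iterated integral is defined for a.e.\ $t$ in $[0, T]$ independently of the choice of representatives; this is handled by working on $\dom(f)$ and $\dom(g)$ and noting that their complements have measure zero, so that the bad sets in the plane remain null.
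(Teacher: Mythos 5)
Your proposal is correct and follows essentially the same route as the paper's direct proof: truncate/extend $f$ and $g$ by $O$, check joint measurability of $(t, u) \mapsto g(t - u)f(u)$ on the triangle (the paper cites the standard measurability of $(t, u) \mapsto \Bar{g}(t - u)$, which amounts to your affine change of variables), then apply Tonelli to bound the double integral of the norm and Fubini to obtain the a.e.\ slice integrability, the measurability of $g * f$, and the estimate in statement 3. The paper additionally records a second proof reducing to the classical $\mathcal{L}^1(\mathbb{R})$ convolution theorem, but your argument matches its first proof in substance.
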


In the following, we give a direct proof of Theorem~\ref{thm: convolution thm} by using Fubini's theorem and Tonelli's theorem for functions on the Euclidean space $\mathbb{R}^d$.
See \cite[Theorems~3.1 and 3.2 in Section~3 of Chapter~2]{Stein--Shakarchi_2005} for these statements and their proofs.

\begin{proof}[A direct proof of Theorem~\ref{thm: convolution thm}]
Let $\Bar{f} \colon \mathbb{R} \to M_n(\mathbb{K})$ be the function defined by
\begin{equation*}
	\Bar{f}(t) \coloneqq
	\begin{cases}
		f(t) & (t \in \dom(f)), \\
		O & (t \in \mathbb{R} \setminus \dom(f)).
	\end{cases}
\end{equation*}
In the same way, we define the function $\Bar{g} \colon \mathbb{R} \to M_n(\mathbb{K})$.
Then $\Bar{f}, \Bar{g} \colon \mathbb{R} \to M_n(\mathbb{K})$ are locally Lebesgue integrable functions.

Let $T > 0$ be fixed.
The remainder of the proof is divided into the following steps.

\vspace{0.5\baselineskip}
\noindent
\textbf{Step 1: Setting of triangle region and function.}
We consider a closed set $A$ of $\mathbb{R}^2$ given by
\begin{equation*}
	A \coloneqq \Set{(t, u) \in \mathbb{R}^2}{\text{$t \in [0, T]$, $u \in [0, t]$}}.
\end{equation*}
See Fig.~\ref{fig: subset A} for the picture of $A$.
\begin{figure}[tbp]
\centering
\begin{tikzpicture}
	\draw[-latex,semithick] (-1,0) -- (4,0) node[right]{$t$}; 
	\draw[-latex,semithick] (0,-1) -- (0,4) node[above]{$u$};
	\draw (0,0) node[below left]{O}; 
	\fill[lightgray] (0,0) -- (3,3) -- (3,0) -- cycle;
	\draw (0,0) -- (3,3) -- (3,0) -- cycle;
	\draw (3,0) node[below]{$T$};
	\draw[dotted] (0,3) node[left]{$T$} -- (3,3);
\end{tikzpicture}
\caption{The light gray region is the subset $A$.}
\label{fig: subset A}
\end{figure}
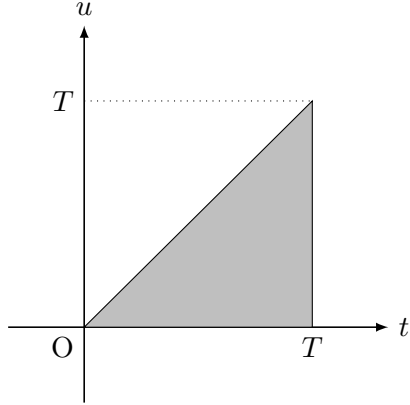
Then the characteristic function $\bm{1}_A$ is measurable and
\begin{equation*}
	\bm{1}_{[0, T]}(t)\bm{1}_{[0, t]}(u) = \bm{1}_A(t, u)
\end{equation*}
holds for all $(t, u) \in \mathbb{R}^2$.
We define a function $h \colon \mathbb{R}^2 \to M_n(\mathbb{K})$ by
\begin{equation*}
	h(t, u)
	\coloneqq \bm{1}_A(t, u)\Bar{g}(t - u)\Bar{f}(u).
\end{equation*}
Then $h$ is measurable because
\begin{equation*}
	\mathbb{R}^2 \ni (t, u) \mapsto \Bar{g}(t - u) \in M_n(\mathbb{K})
	\amd
	\mathbb{R}^2 \ni (t, u) \mapsto \Bar{f}(u) \in M_n(\mathbb{K})
\end{equation*}
are measurable.\footnote{See \cite[Corollary~3.7 and Proposition~3.9 in Section~3 of Chapter~2]{Stein--Shakarchi_2005} for the results of scalar-valued case.}
This implies that the function $\mathbb{R}^2 \ni (t, u) \mapsto \abs{h(t, u)} \in [0, \infty)$ is also measurable.

\vspace{0.5\baselineskip}
\noindent
\textbf{Step 2: Application of Tonelli's theorem.}
By applying Tonelli's theorem, the following statements hold:
\begin{itemize}
\item For almost all $u \in \mathbb{R}$, the function $\mathbb{R} \ni t \mapsto \abs{h(t, u)} \in [0, \infty)$ is measurable.
\item For almost all $t \in \mathbb{R}$, the function $\mathbb{R} \ni u \mapsto \abs{h(t, u)} \in [0, \infty)$ is measurable.
\item The functions
\begin{equation*}
	u \mapsto \int_{\mathbb{R}} \abs{h(t, u)} \mspace{2mu} \mathrm{d}t \in [0, \infty],
	\mspace{15mu}
	t \mapsto \int_{\mathbb{R}} \abs{h(t, u)} \mspace{2mu} \mathrm{d}u \in [0, \infty]
\end{equation*}
are measurable functions defined almost everywhere.
\item We have
\begin{equation*}
	\int_{\mathbb{R}}
		\biggl( \int_{\mathbb{R}} \abs{h(t, u)} \mspace{2mu} \mathrm{d}t \biggr)
	\mspace{2mu} \mathrm{d}u
	= \int_{\mathbb{R}}
		\biggl( \int_{\mathbb{R}} \abs{h(t, u)} \mspace{2mu} \mathrm{d}u \biggr)
	\mspace{2mu} \mathrm{d}t
	= \int_{\mathbb{R}^2} \abs{h(t, u)} \mspace{2mu} \mathrm{d}(t, u)
\end{equation*}
including the possibility that all the unsigned Lebesgue integrals are $\infty$.
\end{itemize}

\vspace{0.5\baselineskip}
\noindent
\textbf{Step 3: Application of Fubini's theorem.}
By Step 2, we have
\begin{align*}
	\int_{\mathbb{R}^2} \abs{h(t, u)} \mspace{2mu} \mathrm{d}(t, u)
	&= \int_{\mathbb{R}}
		\biggl( \int_{\mathbb{R}} \abs{h(t, u)} \mspace{2mu} \mathrm{d}u \biggr)
	\mspace{2mu} \mathrm{d}t \\
	&\le \int_0^T
		\biggl( \int_u^T \abs{\Bar{g}(t - u)} \mspace{2mu} \mathrm{d}t \biggr)
	\abs{\Bar{f}(u)} \mspace{2mu} \mathrm{d}u \\
	&\le \biggl( \int_0^T \abs{g(t)} \mspace{2mu} \mathrm{d}t \biggr) \cdot \biggl( \int_0^T \abs{f(t)} \mspace{2mu} \mathrm{d}t \biggr).
\end{align*}
Since the last term is finite, it holds that $h$ is integrable.
By applying Fubini's theorem component-wise, the following statements hold:
\begin{itemize}
\item For almost all $u \in \mathbb{R}$, the function $\mathbb{R} \ni t \mapsto h(t, u) \in M_n(\mathbb{K})$ is Lebesgue integrable.
\item For almost all $t \in \mathbb{R}$, the function $\mathbb{R} \ni u \mapsto h(t, u) \in M_n(\mathbb{K})$ is Lebesgue integrable.
\item The functions
\begin{equation*}
	u \mapsto \int_{\mathbb{R}} h(t, u) \mspace{2mu} \mathrm{d}t \in M_n(\mathbb{K}),
	\mspace{15mu}
	t \mapsto \int_{\mathbb{R}} h(t, u) \mspace{2mu} \mathrm{d}u \in M_n(\mathbb{K})
\end{equation*}
belong to $\mathcal{L}^1(\mathbb{R}, M_n(\mathbb{K}))$.
\item The equalities
\begin{equation*}
	\int_{\mathbb{R}}
		\biggl( \int_{\mathbb{R}} h(t, u) \mspace{2mu} \mathrm{d}t \biggr)
	\mspace{2mu} \mathrm{d}u
	= \int_{\mathbb{R}}
		\biggl( \int_{\mathbb{R}} h(t, u) \mspace{2mu} \mathrm{d}u \biggr)
	\mspace{2mu} \mathrm{d}t
	= \int_{\mathbb{R}^2} h(t, u) \mspace{2mu} \mathrm{d}(t, u)
\end{equation*}
hold.
\end{itemize}

\vspace{0.5\baselineskip}
\noindent
\textbf{Step 4: Conclusion.}
For each $t \in [0, T]$,
\begin{equation*}
	h(t, u) = g(t - u)f(u)
\end{equation*}
holds for almost all $u \in [0, t]$.
Therefore, for almost all $t \in [0, T]$, the function $u \mapsto g(t - u)f(u)$ belongs to $\mathcal{L}^1_\mathrm{loc}([0, t], M_n(\mathbb{K}))$.
Furthermore, we have
\begin{equation*}
	\int_\mathbb{R} h(t, u) \mspace{2mu} \mathrm{d}u
	= \int_0^t g(t - u)f(u) \mspace{2mu} \mathrm{d}u
\end{equation*}
for almost all $t \in [0, T]$, and it holds that the function
\begin{equation*}
	t \mapsto \int_0^t g(t - u)f(u) \mspace{2mu} \mathrm{d}u
\end{equation*}
is a Lebesgue integrable function defined almost everywhere on $[0, T]$.
Since $T > 0$ is arbitrary, the statements 1 and 2 hold.
The statement 3 also holds because we have
\begin{align*}
	\int_0^T \abs{(g * f)(t)} \mspace{2mu} \mathrm{d}t
	&\le \int_0^T
		\biggl( \int_0^t \abs{g(t - u)f(u)} \mspace{2mu} \mathrm{d}u \biggr)
	\mspace{2mu} \mathrm{d}t \\
	&= \int_{\mathbb{R}}
		\biggl( \int_{\mathbb{R}} \abs{h(t, u)} \mspace{2mu} \mathrm{d}u \biggr)
	\mspace{2mu} \mathrm{d}t \\
	&\le \biggl( \int_0^T \abs{g(t)} \mspace{2mu} \mathrm{d}t \biggr) \cdot \biggl( \int_0^T \abs{f(t)} \mspace{2mu} \mathrm{d}t \biggr),
\end{align*}
where the calculation in Step 3 is used.

This completes the proof.
\end{proof}

\begin{proof}[Another proof of Theorem~\ref{thm: convolution thm}]
Let $T > 0$ be fixed.
We define $\Tilde{f} \colon \mathbb{R} \to M_n(\mathbb{K})$ by
\begin{equation*}
	\Tilde{f}(t) \coloneqq
	\begin{cases}
		f(t) & (t \in \dom(f) \cap [0, T]), \\
		O & (\text{otherwise}).
	\end{cases}
\end{equation*}
In the same way, we define the function $\Tilde{g} \colon \mathbb{R} \to M_n(\mathbb{K})$.
Since $\Tilde{f}, \Tilde{g} \colon \mathbb{R} \to M_n(\mathbb{K})$ are Lebesgue integrable functions, one can prove the following statements as in the scalar-valued case:\footnote{See \cite[Exercise~21 in Chapter~2]{Stein--Shakarchi_2005} and \cite[8.13 and 8.14 of Chapter~8]{Rudin_1987} for the scalar-valued case.}
\begin{enumerate}[label=\arabic*$'$.]
\item For almost all $t \in \mathbb{R}$, the function $u \mapsto \Tilde{g}(t - u)\Tilde{f}(u)$ is a Lebesgue integrable function defined almost everywhere.
\item The function $\Tilde{g} \star \Tilde{f}$ defined by
\begin{equation*}
	\bigl( \Tilde{g} \star \Tilde{f} \bigr)(t)
	\coloneqq \int_\mathbb{R} \Tilde{g}(t - u)\Tilde{f}(u) \mspace{2mu} \mathrm{d}u
\end{equation*}
for almost all $t \in \mathbb{R}$ belongs to $\mathcal{L}^1(\mathbb{R}, M_n(\mathbb{K}))$.
\item An estimate
\begin{equation*}
	\int_\mathbb{R} \abs{\bigl( \Tilde{g} \star \Tilde{f} \bigr)(t)} \mspace{2mu} \mathrm{d}t
	\le \norm{\Tilde{g}}_1 \cdot \bigl\| \Tilde{f} \bigr\|_1
\end{equation*}
holds.
\end{enumerate}

1. For each $t \in [0, T]$, we have
\begin{equation*}
	\Tilde{g}(t - u)\Tilde{f}(u) = g(t - u)f(u)
\end{equation*}
for almost all $u \in [0, t]$.
By combining this and the above statement 1$'$, it holds that for almost all $t \in [0, T]$, $u \mapsto g(t - u)f(u)$ is a Lebesgue integrable function defined almost everywhere on $[0, t]$.
Since $T > 0$ is arbitrary, the statement 1 holds.
\vspace{0.2\baselineskip}

2. By the definitions of $\Tilde{f}$ and $\Tilde{g}$,
\begin{equation*}
	\bigl( \Tilde{g} \star \Tilde{f} \bigr)(t)
	= \int_0^t \Tilde{g}(t - u)\Tilde{f}(u) \mspace{2mu} \mathrm{d}u
	= \int_0^t g(t - u)f(u) \mspace{2mu} \mathrm{d}u
\end{equation*}
holds for all $t \in \dom \bigl( \Tilde{g} \star \Tilde{f} \bigr) \cap [0, T]$.
Since $T > 0$ is arbitrary, this shows that
\begin{equation*}
	t \mapsto \int_0^t g(t - u)f(u) \mspace{2mu} \mathrm{d}u
\end{equation*}
is a measurable function defined almost everywhere on $[0, \infty)$ from the statement 2$'$.
Furthermore, we also have
\begin{equation*}
	\int_0^T
		\abs{\int_0^t g(t - u)f(u) \mspace{2mu} \mathrm{d}u} 
	\mspace{2mu} \mathrm{d}t
	= \int_0^T \abs{\bigl( \Tilde{g} \star \Tilde{f} \bigr)(t)} \mspace{2mu} \mathrm{d}t
	< \infty.
\end{equation*}
Since $T > 0$ is arbitrary, the statement 2 holds.
\vspace{0.2\baselineskip}

3. By combining the proof of the statement 2 and the inequality in the statement 3$'$, we have
\begin{equation*}
	\int_0^T \abs{(g * f)(t)} \mspace{2mu} \mathrm{d}t
	\le \norm{\Tilde{g}}_1 \cdot \bigl\| \Tilde{f} \bigr\|_1
\end{equation*}
Here
\begin{equation*}
	\bigl\| \Tilde{f} \bigr\|_1
	= \int_0^T \abs{f(t)} \mspace{2mu} \mathrm{d}t,
	\mspace{15mu}
	\norm{\Tilde{g}}_1
	= \int_0^T \abs{g(t)} \mspace{2mu} \mathrm{d}t
\end{equation*}
holds since $f(t) = g(t) = O$ for $t \in (-\infty, 0) \cup (T, \infty)$.
Therefore, the inequality in the statement 3 is obtained.
\end{proof}

The above proof of Theorem~\ref{thm: convolution thm} is not given in \cite{VerduynLunel_1989}, \cite{Diekmann--vanGils--Lunel--Walther_1995}, \cite{Kappel_2006}, and \cite{Gripenberg--Londen--Staffans_1990}.
Based on Theorem~\ref{thm: convolution thm}, we introduce the following.

\begin{definition}\label{dfn: convolution}
Let $f, g \in \mathcal{L}^1_\mathrm{loc}([0, \infty), M_n(\mathbb{K}))$.
We call $g * f \in \mathcal{L}^1_\mathrm{loc}([0, \infty), M_n(\mathbb{K}))$ in Theorem~\ref{thm: convolution thm} defined by
\begin{equation*}
	(g * f)(t)
	\coloneqq \int_0^t g(t - u)f(u) \mspace{2mu} \mathrm{d}u
	= \int_0^t g(u)f(t - u) \mspace{2mu} \mathrm{d}u
\end{equation*}
the \textit{convolution} of $f$ and $g$.
\end{definition}

\subsection{Convolution under Volterra operator}

The convolution for functions in $\mathcal{L}^1_\mathrm{loc}([0, \infty), M_n(\mathbb{K}))$ and the Volterra operator are related in the following way.

\begin{theorem}\label{thm: convolution under Volterra operator}
For any pair of $f, g \in \mathcal{L}^1_\mathrm{loc}([0, \infty), M_n(\mathbb{K}))$,
\begin{equation}\label{eq: convolution under Volterra operator}
	V(g * f) = (Vg) * f = g * (Vf) 
\end{equation}
holds.
\end{theorem}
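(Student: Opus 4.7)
The plan is to derive both equalities from Fubini's theorem applied to an iterated integral over a triangular region, mimicking the technique already used in the proof of Theorem~\ref{thm: convolution thm}. Throughout, fix $t > 0$ and note that by Definition~\ref{dfn: Volterra operator} and Definition~\ref{dfn: convolution},
\begin{equation*}
	V(g * f)(t) = \int_0^t \biggl( \int_0^s g(s - u) f(u) \mspace{2mu} \mathrm{d}u \biggr) \mspace{2mu} \mathrm{d}s.
\end{equation*}
The goal is to exchange the order of integration in two different ways, one yielding $((Vg) * f)(t)$ and the other $(g * (Vf))(t)$.

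First, I would verify the integrability hypothesis needed to invoke Fubini's theorem. Define $h(s, u) \coloneqq \bm{1}_A(s, u) g(s - u) f(u)$ on the triangle $A \coloneqq \{(s, u) \in \mathbb{R}^2 : 0 \le u \le s \le t\}$, extending $f$ and $g$ by $O$ outside their domains. Measurability of $h$ follows as in the proof of Theorem~\ref{thm: convolution thm}, and Tonelli's theorem combined with the bound
\begin{equation*}
	\int_A \abs{h(s, u)} \mspace{2mu} \mathrm{d}(s, u) \le \biggl( \int_0^t \abs{g(\tau)} \mspace{2mu} \mathrm{d}\tau \biggr) \biggl( \int_0^t \abs{f(\tau)} \mspace{2mu} \mathrm{d}\tau \biggr) < \infty
\end{equation*}
gives the Lebesgue integrability of $h$ on $A$. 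This is the routine technical point; no new difficulty arises beyond what Theorem~\ref{thm: convolution thm} has already handled.

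Then, Fubini's theorem (applied component-wise) lets me exchange the orders freely. For the first equality, integrating in $s$ first over $u \le s \le t$ gives
\begin{equation*}
	V(g * f)(t) = \int_0^t \biggl( \int_u^t g(s - u) \mspace{2mu} \mathrm{d}s \biggr) f(u) \mspace{2mu} \mathrm{d}u = \int_0^t (Vg)(t - u) f(u) \mspace{2mu} \mathrm{d}u = ((Vg) * f)(t),
\end{equation*}
using the change of variable $s - u \mapsto s$. For the second equality, I would first rewrite the inner convolution using the identity $(g * f)(s) = \int_0^s g(v) f(s - v) \mspace{2mu} \mathrm{d}v$ from Definition~\ref{dfn: convolution} and then invoke Fubini over the triangle $\{(s, v) : 0 \le v \le s \le t\}$, integrating in $s$ first over $v \le s \le t$. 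After the substitution $s - v \mapsto s$ this produces
\begin{equation*}
	V(g * f)(t) = \int_0^t g(v) \biggl( \int_0^{t - v} f(s) \mspace{2mu} \mathrm{d}s \biggr) \mspace{2mu} \mathrm{d}v = \int_0^t g(v) (Vf)(t - v) \mspace{2mu} \mathrm{d}v = (g * (Vf))(t).
\end{equation*}

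The only mild obstacle is being careful that the equalities above hold for \emph{almost all} $t \ge 0$ (as is implicit in our working with $\mathcal{L}^1_\mathrm{loc}$ classes), since $V(g*f)$ and $(Vg)*f$, $g*(Vf)$ are a priori defined almost everywhere. This is handled by noting that $Vg$ and $Vf$ are continuous (indeed, locally absolutely continuous), so $(Vg)*f$ and $g*(Vf)$ are well-defined by Definition~\ref{dfn: convolution for L^infty_loc and L^1_loc} as continuous representatives, and $V(g * f)$ is likewise locally absolutely continuous; hence the two sides of \eqref{eq: convolution under Volterra operator}, agreeing almost everywhere, agree on $[0, \infty)$.
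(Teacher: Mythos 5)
Your proof is correct, and it matches the paper's proof for the first equality: the paper also fixes $t$, writes $V(g*f)(t)$ as an iterated integral over the triangle, and applies Fubini's theorem ``in the similar way as in the direct proof of Theorem~\ref{thm: convolution thm}'' to obtain $V(g*f)(t) = \int_0^t (Vg)(t-u)f(u)\,\mathrm{d}u = [(Vg)*f](t)$. Where you diverge is the second equality: the paper does not run Fubini a second time, but instead passes from $(Vg)*f$ to $g*(Vf)$ by the integration by parts formula for matrix-valued absolutely continuous functions,
\begin{equation*}
	[(Vg)*f](t) = [(Vg)(t-u)(Vf)(u)]_{u=0}^{t} + \int_0^t g(t-u)(Vf)(u)\,\mathrm{d}u = [g*(Vf)](t),
\end{equation*}
using $(Vg)(0) = (Vf)(0) = O$, whereas you exploit the commuted form $(g*f)(s) = \int_0^s g(v)f(s-v)\,\mathrm{d}v$ built into Definition~\ref{dfn: convolution} and exchange the order of integration once more. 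Both are valid. Your route is the more symmetric and self-contained one, resting only on Tonelli/Fubini and the commutativity of the convolution, at the cost of a second (routine) integrability verification; the paper's route is shorter once the integration by parts formula for locally absolutely continuous matrix-valued functions (already invoked in Subsection~\ref{subsec: derivation of VOC formula}) is taken as available, since it converts the first identity into the second in one line. Your closing remark about the identity holding for every $t$ rather than merely almost every $t$ is also handled correctly: since $Vg$ and $Vf$ are continuous, $(Vg)*f$ and $g*(Vf)$ are defined pointwise, and in fact the Fubini argument at each fixed $t$ already yields the equality for all $t$, so the continuity argument is a safe but not strictly necessary cushion.
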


The above theorem is an extension of Corollary~\ref{cor: convolution of loc BV function and continuous function under Volterra operator}.

\begin{proof}[Proof of Theorem~\ref{thm: convolution under Volterra operator}]
For each $t > 0$,
\begin{equation*}
	V(g * f)(t)
	= \int_0^t
		\biggl( \int_0^s g(s - u)f(u) \mspace{2mu} \mathrm{d}u \biggr)
	\mspace{2mu} \mathrm{d}s
\end{equation*}
holds by the definition of convolution.
By applying Fubini's theorem in the similar way as in the direct proof of Theorem~\ref{thm: convolution thm}, the right-hand side is calculated as
\begin{align*}
	\int_0^t
		\biggl( \int_0^s g(s - u)f(u) \mspace{2mu} \mathrm{d}u \biggr)
	\mspace{2mu} \mathrm{d}s
	&= \int_0^t
		\biggl( \int_{u}^t g(s - u) \mspace{2mu} \mathrm{d}s \biggr) f(u)
	\mspace{2mu} \mathrm{d}u \\
	&= \int_0^t (Vg)(t - u)f(u) \mspace{2mu} \mathrm{d}u,
\end{align*}
where the last term is equal to $[(Vg) * f](t)$.
Therefore, the integration by parts formula for matrix-valued absolutely continuous functions (see Subsection~\ref{subsec: derivation of VOC formula}) yields
\begin{align*}
	[(Vg) * f](t)
	&= [(Vg)(t - u)(Vf)(u)]_{u = 0}^t + \int_0^t g(t - u)(Vf)(u) \mspace{2mu} \mathrm{d}u \\
	&= [g * (Vf)](t),
\end{align*}
where $(Vg)(0) = (Vf)(0) = O$ is used.
This completes the proof.
\end{proof}

\begin{remark}
Eq.~\eqref{eq: convolution under Volterra operator} is a special case of the \textit{associativity of convolution}
\begin{equation}\label{eq: associativity of convolution}
	(h * g) * f = h * (g * f)
\end{equation}
for $f, g, h \in \mathcal{L}^1_\mathrm{loc}([0, \infty), M_n(\mathbb{K}))$ because
\begin{equation*}
	(f * \mathcal{I})(t)
	= (\mathcal{I} * f)(t)
	= \int_0^t f(s) \mspace{2mu} \mathrm{d}s
	= (Vf)(t)
	\mspace{25mu}
	(t \ge 0)
\end{equation*}
holds for any $f \in \mathcal{L}^1_\mathrm{loc}([0, \infty), M_n(\mathbb{K}))$.
Here $\mathcal{I} \colon [0, \infty) \to M_n(\mathbb{K})$ denote the constant function whose value is equal to the identity matrix.
\end{remark}

The following is a result on the regularity of convolution.
It should be compared with Theorem~\ref{thm: convolution of loc BV function and continuous function}.

\begin{theorem}\label{thm: convolution of loc AC function and loc L^1 function}
Let $f \in \mathcal{L}^1_\mathrm{loc}([0, \infty), M_n(\mathbb{K}))$ and $g \colon [0, \infty) \to M_n(\mathbb{K})$ be a locally absolutely continuous function.
Then $g * f$ is expressed by
\begin{equation}
	g * f = V \bigl( g(0)f + g' * f \bigr). \label{eq: g * f, Volterra operator} 
\end{equation}
Consequently, $g * f$ is locally absolutely continuous, differentiable almost everywhere, and satisfies
\begin{equation*}
	(g * f)'(t) = g(0)f(t) + (g' * f)(t) 
\end{equation*}
for almost all $t \ge 0$.
\end{theorem}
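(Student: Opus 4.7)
The plan is to reduce \eqref{eq: g * f, Volterra operator} to the associativity-type identity \eqref{eq: convolution under Volterra operator} by using the fundamental theorem of calculus to decompose $g$ into its initial value and the Volterra operator of its derivative. Since $g$ is locally absolutely continuous, applying the corresponding scalar-valued result componentwise yields the pointwise expression $g(t) = g(0) + (Vg')(t)$ for all $t \ge 0$, where $g'$ exists almost everywhere and belongs to $\mathcal{L}^1_\mathrm{loc}([0, \infty), M_n(\mathbb{K}))$. In particular, $g' * f$ is well-defined via Theorem~\ref{thm: convolution thm} and Definition~\ref{dfn: convolution}.

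First I would compute $g * f$ directly from Definition~\ref{dfn: convolution} by substituting the above decomposition of $g(t - u)$ under the integral sign and using linearity to split into two pieces. The first piece is $g(0) \int_0^t f(u) \mspace{2mu} \mathrm{d}u = V(g(0)f)(t)$, where the constant matrix $g(0)$ is pulled out of the Lebesgue integral. The second is $\int_0^t (Vg')(t - u) f(u) \mspace{2mu} \mathrm{d}u = ((Vg') * f)(t)$, to which I would apply Theorem~\ref{thm: convolution under Volterra operator} to obtain $(Vg') * f = V(g' * f)$. Combining both pieces with the linearity of the Volterra operator establishes \eqref{eq: g * f, Volterra operator}.

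The regularity and differentiability claims then follow immediately: since $g(0)f + g' * f$ belongs to $\mathcal{L}^1_\mathrm{loc}([0, \infty), M_n(\mathbb{K}))$ (both summands do, the latter by Theorem~\ref{thm: convolution thm}), the discussion just after Definition~\ref{dfn: Volterra operator}, obtained by applying the Lebesgue differentiation theorem componentwise, shows that $V(g(0)f + g' * f)$ is locally absolutely continuous and differentiable almost everywhere with derivative equal to $g(0)f + g' * f$ a.e.

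The main obstacle, such as it is, is verifying that the componentwise reduction is legitimate in the matrix-valued setting — specifically, that the decomposition $g = g(0) + Vg'$ and the splitting of the defining integral for $g * f$ are justified when the integrand is matrix-valued. These steps are routine consequences of the equivalence of the operator norm and the norm $\abs{\argdot}_2$ on $M_n(\mathbb{K})$ together with the linearity of the Lebesgue integral, but they must be handled in the same careful manner as in the proof of Theorem~\ref{thm: convolution thm} to avoid slipping into a scalar-only argument.
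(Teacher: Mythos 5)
Your proposal is correct and follows essentially the same route as the paper: decompose $g = g(0) + Vg'$ via the fundamental theorem of calculus, identify the constant part with $V(g(0)f)$, apply Theorem~\ref{thm: convolution under Volterra operator} to get $(Vg') * f = V(g' * f)$, and conclude by linearity of $V$ together with its standard regularity properties. No gaps.
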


We note that for a locally absolutely continuous function $g \colon [0, \infty) \to M_n(\mathbb{K})$, the derivative $g'$ belongs to $\mathcal{L}^1_\mathrm{loc}([0, \infty), M_n(\mathbb{K}))$.
Therefore, the convolution $g' * f$ makes sense from Theorem~\ref{thm: convolution thm}.

\begin{proof}[Proof of Theorem~\ref{thm: convolution of loc AC function and loc L^1 function}]
Since $g = g(0) + Vg'$, we obtain
\begin{equation*}
	g * f
	= g(0)Vf + (Vg') * f
	= g(0)Vf + V(g' * f)
\end{equation*}
by using Theorem~\ref{thm: convolution under Volterra operator}.
This yields the expression~\eqref{eq: g * f, Volterra operator} because the Volterra operator is linear.
The remaining properties of $g * f$ are derived by the properties of Volterra operator.
\end{proof}

See also \cite[7.4 Corollary in Chapter~3]{Gripenberg--Londen--Staffans_1990} for related results.

\begin{remark}
From Theorem~\ref{thm: convolution of loc AC function and loc L^1 function}, we have
\begin{equation*}
	(Vg) * f = V(g * f)
\end{equation*}
for $f, g \in \mathcal{L}^1_\mathrm{loc}([0, \infty), M_n(\mathbb{K}))$.
We also have
\begin{equation*}
	g * (Vf) = V(g * f)
\end{equation*}
in the similar way.
This means that Theorems~\ref{thm: convolution under Volterra operator} and \ref{thm: convolution of loc AC function and loc L^1 function} are logically equivalent.
\end{remark}

We note that the statement 2 of Corollary~\ref{cor: convolution of loc AC function and continuous function} also follows by Lemma~\ref{lem: continuity of convolution, L^infty_loc and L^1_loc} and Theorem~\ref{thm: convolution of loc AC function and loc L^1 function}.

\end{document}